\def\comment #1{{\sf [#1]}}
\def\P{{\mathbb P}}
\def\Z{{\mathbb Z}}
\def\Q{{\mathbb Q}}
\def\R{{\mathbb R}}
\def\C{{\mathbb C}}
\def\N{{\mathbb N}}
\def\A{{\mathbb A}}
\def\D{{\mathbb D}}
\def\F{{\mathbb F}}
\def\V{{\mathbb V}}
\def\cC{{\mathcal C}}
\def\E{{\mathcal E}}
\def\L{{\mathcal L}}
\def\M{{\mathcal M}}
\def\O{{\mathcal O}}
\def\cV{{\mathcal V}}
\def\X{{\mathcal X}}
\def\h{{\mathfrak h}}
\def\w{\omega}
\def\G{\Gamma}
\def\ftilde{\tilde{f}}
\def\jtilde{\tilde{\jmath}}
\def\stilde{\tilde{s}}
\def\Ebar{\overline{\E}}
\def\Lbar{\overline{\L}}
\def\Mbar{\overline{\M}}
\def\That{\widehat{T}}
\def\Ttilde{\widetilde{T}}
\def\Xtilde{\widetilde{X}}
\def\Gtilde{\widetilde{\G}}
\def\Phitilde{\widetilde{\Phi}}
\def\aa{\mathbf{a}}
\def\bb{\mathbf{b}}
\def\ibar{\overline{\imath}}
\def\wbar{\overline{\w}}
\def\v{\vec{v}}
\def\bmu{{\pmb{\mu}}}
\def\Gm{{\mathbb{G}_m}}
\def\SL{\mathrm{SL}}
\def\GL{\mathrm{GL}}
\def\SO{\mathrm{SO}}
\def\PSL{\mathrm{PSL}}
\def\SLtilde{\widetilde{\SL}}
\def\gl{\mathfrak{gl}}
\def\red{\mathrm{red}}
\def\dot{\bullet}
\def\bs{{\backslash}}
\def\bbs{{\bs\negthickspace \bs}}	
\def\disjt{\overset{.}{\cup}}
\def\triv{\pmb{1}}
\def\Pminus{{\P^1 - \{0,1,\infty\}}}
\def\coarseM{\overline{M}}
\def\PZ{{\text{\small$\begin{pmatrix}1 & \Z \cr 0 & 1\end{pmatrix}$}}}
\newcommand\id{\operatorname{id}}
\newcommand\Aut{\operatorname{Aut}}
\newcommand\Hom{\operatorname{Hom}}
\newcommand\im{\operatorname{im}}
\newcommand\ord{\operatorname{ord}}
\newcommand\rank{\operatorname{rank}}
\newcommand\tr{\operatorname{tr}}
\newcommand\Iso{\operatorname{Iso}}
\newcommand\Div{\operatorname{Div}}
\newcommand\Pic{\operatorname{Pic}}
\newcommand\Char{\operatorname{Char}}
\newcommand\Spec{\operatorname{Spec}}
\newcommand\Cl{\operatorname{\cC\ell}}
\newcommand\Tr{\operatorname{Tr}}
\renewcommand\div{\operatorname{div}}
\renewcommand\Im{\operatorname{Im}}
\renewcommand\Re{\operatorname{Re}}
\newtheorem{theorem}{Theorem}           [section]
\newtheorem{lemma}[theorem]{Lemma}
\newtheorem{proposition}[theorem]{Proposition}
\newtheorem{corollary}[theorem]{Corollary}
\theoremstyle{definition}
\newtheorem{definition}[theorem]{Definition}
\newtheorem{example}[theorem]{Example}
\theoremstyle{remark}
\newtheorem{remark}[theorem]{Remark}
\newtheorem{exercise}{Exercise}
\begin{document}

\title{Lectures on Moduli Spaces of Elliptic Curves}
\author{Richard Hain}
\address{Department of Mathematics\\ Duke University\\
Durham, NC 27708-0320}
\email{hain@math.duke.edu}

\date{\today}

\thanks{Supported in part by grant DMS-0706955 from the National Science
Foundation.}

\subjclass{Primary 14-02, 14H52, 14J15; Secondary 14D23, 32G15, 57R18}

\keywords{moduli of curves, elliptic curves, riemann surface, orbifold, stack}

\begin{abstract}
The goal of these notes is to introduce and motivate basic concepts and
constructions (such as orbifolds and stacks) important in the study of moduli
spaces of curves and abelian varieties through the example of elliptic curves.
Moduli spaces of elliptic curves are rich enough so that one encounters most of
the important issues associated with moduli spaces, yet simple enough that most
of the constructions are elementary and explicit. These notes touch on all four
aspects of the study of moduli spaces of curves -- complex analytic,
topological, algebro-geometric, and number theoretic.
\end{abstract}

\maketitle

\tableofcontents

These informal notes are an expanded version of lectures on the moduli space of
elliptic curves given at Zhejiang University in July, 2008.\footnote{{\em
Geometry of Teichm\"uller spaces and moduli spaces of curves}, Zhejiang
University, July 14--20, 2008.} Their goal is to introduce and motivate  basic
concepts and constructions important in the study of moduli spaces of curves and
abelian varieties through the example of elliptic curves. The advantage of
working with elliptic curves is that most constructions are elementary and
explicit. All four approaches to moduli spaces of curves --- complex analytic,
topological, algebro-geometric, and number theoretic --- are considered. Topics
covered reflect my own biases. Very little, if anything, in these notes is
original, except perhaps the selection of topics and the point of view. 

Many moduli spaces are usefully regarded as orbifolds or stacks. The notes
include a detailed exposition of orbifolds, which is motivated by a discussion
of how the quotient of the upper half plane by the modular group $\SL_2(\Z)$ is
related to families of elliptic curves. The moduli space of elliptic curves
$\M_{1,1}$ and its Deligne-Mumford compactification $\Mbar_{1,1}$ are
constructed as orbifolds. Modular forms arise naturally as holomorphic sections
of powers of the Hodge bundle over the orbifold $\Mbar_{1,1}$. They, in turn,
are used to construct the extension of the universal elliptic curve
$\E\to\M_{1,1}$ to the universal stable elliptic curve $\Ebar \to \Mbar_{1,1}$.
The homotopy types and Picard groups of the orbifolds $\M_{1,1}$ and
$\Mbar_{1,1}$ are computed explicitly. The discussion of orbifolds is used to
motivate the definition of stacks, which is discussed very briefly in
Appendix~\ref{app:stacks}.

\bigskip

\noindent {\bf Note to the reader:} These notes are intended for students. The
exposition is generally elementary, but some sections, especially those later in
the notes, are more demanding.
\begin{enumerate}

\item The best way to learn about moduli spaces and orbifolds (and stacks) is to
work with them. For this reason, these notes contain over 100 exercises. The
reader is encouraged to work as many of them as possible.

\item Sections not central to the exposition are marked with an asterisk $\ast$.
These can be skipped.

\item Some basic background material on Riemann surfaces is reviewed in
Appendix~\ref{app:RS}.

\end{enumerate}

\bigskip

\noindent{\bf Background:} These notes assume familiarity with the definition of
and basic facts about Riemann surfaces, including the definition of holomorphic
and meromorphic functions and 1-forms, and of holomorphic line bundles. They
also assume familiarity with the basic concepts of algebraic topology, including
homology, fundamental groups and covering spaces. Some familiarity with sheaves
is desirable, but not essential. Good basic references for Riemann surfaces
include Forster's book \cite{forster} and Griffiths' China notes
\cite{griffiths}; Clemens' book \cite{clemens} is an excellent supplement.

\bigskip

\noindent{\bf Acknowledgments:} I am very grateful to Dan Edidin, Shahed Sharif
and the referee for numerous comments and corrections.

\section{Introduction to Elliptic Curves and the Moduli Problem}

A Moduli space of Riemann surfaces is a space whose points correspond to all
isomorphism classes of Riemann surface structures on a fixed compact oriented
surface. They themselves are algebraic varieties (or {\em orbifolds}). In this
section, we will construct the moduli space of elliptic curves, which is itself
a Riemann surface.

Before attempting to understand the moduli space of a structure such as a
Riemann surface, it is desirable to first understand the basic properties of the
structure itself. As we shall see in the case of elliptic curves, properties of
the object are reflected in properties of the moduli space. We therefore begin
with some basic facts from the theory of elliptic curves.

An elliptic curve is a ``1-pointed'' genus 1 curve:

\begin{definition}
An {\em elliptic curve} is a compact Riemann surface $X$ of genus 1 together
with the choice of a point $P\in X$.
\end{definition}

Since the genus of a compact Riemann surface is, by definition, the dimension of
its space of holomorphic 1-forms, the space of holomorphic 1-forms of a genus 1
Riemann surface has dimension 1.

\begin{exercise}
Use the Riemann-Roch formula (Appendix~\ref{sec:rr}) to prove that if $w$ is a
non-zero holomorphic 1-form on an elliptic curve $X$, then $w$ has no zeros.
Deduce that the canonical divisor $K_X$ of every elliptic curve is zero.
\end{exercise}

\begin{definition}
A subgroup $\Lambda$  of a finite dimensional real vector space $V$ is a
{\em lattice} if it is discrete and if $V/\Lambda$ is compact.
\end{definition}

\begin{exercise}
Show that a subgroup $\Lambda$ of the finite dimensional real vector space $V$
is a lattice if and only if $\Lambda$ is a finitely generated abelian group such
that every $\Z$-basis $\lambda_1,\dots,\lambda_n$ of $\Lambda$ is an $\R$-basis
of $V$. Deduce that if $\Lambda$ is a lattice in $V$, then $V/\Lambda$ is
diffeomorphic to the real $n$-torus $\R^n/\Z^n$.
\end{exercise}

The simplest examples of elliptic curves are 1-dimensional complex tori
$$
(X,P) = (\C/\Lambda,0)
$$
which are quotients of $\C$ by a lattice $\Lambda$. It is easy to write down
the holomorphic differentials on a complex torus:
$$
H^0(X,\Omega^1_X) = \C\,dz
$$
where $z$ is the coordinate in $\C$.

We shall show shortly that every elliptic curve is isomorphic to a 1-dimensional
complex torus. Before we do this, we need to introduce {\em periods}.

Suppose that $(X,P)$ is an elliptic curve. Fix a holomorphic 1-form $\w$ on
$X$. Define
$$
\Lambda = \big\{\int_\gamma \w : \gamma \in H_1(X;\Z)\big\}
$$
This is a group, elements of which are called the periods of $\w$.

\begin{lemma}
The group $\Lambda$ is a lattice in $\C$.
\end{lemma}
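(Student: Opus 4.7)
The plan is to reduce the claim to showing that two specific periods are $\R$-linearly independent in $\C$, and then to derive a contradiction from the hypothesis that they are not by constructing a submersion from the compact surface $X$ to $\R$. Since $X$ has genus $1$, $H_1(X;\Z) \cong \Z^2$. Pick generators $\alpha,\beta$ and set $A = \int_\alpha \w$, $B = \int_\beta \w$, so that $\Lambda = \Z A + \Z B$. By the preceding exercise, showing $\Lambda$ is a lattice in $\C$ amounts to showing that $A$ and $B$ are $\R$-linearly independent, for then they automatically form a $\Z$-basis of $\Lambda$ which is simultaneously an $\R$-basis of $\C$.

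Next I would construct the period map on the universal cover. Let $p: \Xtilde \to X$ be the universal cover; since $X$ has genus $1$, $\pi_1(X)$ is abelian and agrees with $H_1(X;\Z)$. Fix a lift $\tilde P_0$ of $P$ and define $\phi:\Xtilde \to \C$ by $\phi(\tilde P) = \int_{\tilde P_0}^{\tilde P} p^*\w$. This is well-defined because $\Xtilde$ is simply connected, and because $\w$ (hence $p^*\w$) has no zeros by the earlier exercise, $\phi$ is a local biholomorphism. A direct check shows that for each deck transformation $\gamma \in \pi_1(X) = H_1(X;\Z)$ one has $\phi(\gamma \cdot \tilde P) = \phi(\tilde P) + \int_\gamma \w$, so the deck action corresponds under $\phi$ to translation of $\C$ by elements of $\Lambda$.

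Now suppose for contradiction that $A$ and $B$ are $\R$-linearly dependent. Then $\Lambda$ is contained in some real line $\ell \subset \C$. Let $q: \C \to \C/\ell \cong \R$ be the quotient projection. Because the deck group acts on $\C$ by translations lying in $\ell$, the map $q \circ \phi$ is deck-invariant and descends to a smooth map $\psi: X \to \R$. Since $\phi$ is a local diffeomorphism and $q$ is a linear surjection, $q\circ\phi$, and hence $\psi$, are submersions. But $X$ is compact, so $\psi$ must attain a maximum, where $d\psi = 0$; this contradicts submersivity. Hence $A,B$ are $\R$-linearly independent and $\Lambda$ is a lattice.

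The main obstacle is the setup rather than the finish: one needs the nonvanishing of $\w$ (to make $\phi$ a local diffeomorphism) together with the identification of the deck group with $H_1(X;\Z)$ acting on $\C$ by periods. An alternative route, closer in spirit to later Hodge-theoretic arguments, would compute $\int_X \w \wedge \bar\w$ by cutting $X$ along a symplectic basis into a fundamental polygon, obtaining $\pm(A\bar B - B \bar A) = \pm 2i\,\Im(A\bar B)$; positivity of the integrand (again using that $\w$ has no zeros) forces $\Im(A\bar B)\ne 0$, which is equivalent to the desired $\R$-linear independence.
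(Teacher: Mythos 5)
Your proof is correct, but it takes a genuinely different route from the paper's. The paper argues cohomologically: if the two periods were $\R$-dependent, then $\int\w$ and $\int\overline{\w}$ would be proportional as functionals on $H_1(X;\Z)$, forcing $\int_X \w\wedge\overline{\w}=0$, which contradicts the pointwise positivity of $i\,\w\wedge\overline{\w}$ (Riemann's second bilinear relation). You instead build the developing map $\phi:\Xtilde\to\C$ and observe that $\R$-dependence of the periods would confine $\Lambda$ to a real line $\ell$, so that $q\circ\phi$ descends to a submersion $X\to\C/\ell\cong\R$ from a compact manifold --- impossible, since such a map must have a critical point at its maximum. Your argument is more geometric and leans on the nonvanishing of $\w$ (legitimately available from the earlier Riemann--Roch exercise), whereas the paper's needs only $\w\not\equiv 0$ and is the version that generalizes directly to the period matrices of higher-genus surfaces; note also that the paper's positivity claim is really about the integral, since pointwise strict positivity would itself require nonvanishing. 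Your sketched alternative --- cutting $X$ along a symplectic basis to get $\int_X\w\wedge\overline{\w}=\pm 2i\,\Im(A\overline{B})$ --- is essentially the paper's argument made explicit via the fundamental polygon, and is the cleanest bridge between the two approaches.
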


\begin{proof}
Choose a basis $\sigma_1,\sigma_2$ of $H_1(X;\Z)$. Set
$$
\lambda_j = \int_{\sigma_j}\w,\quad j=1,2.
$$
To prove that $\Lambda$ is a lattice, we have to show that $\lambda_1$ and
$\lambda_2$ are linearly independent over $\R$. If $\lambda_1 = a\lambda_2$ for
some $a\in \R$, then
$$
\int_{a\sigma_1 - \sigma_2}\overline\w = \int_{a\sigma_1 - \sigma_2}\w = 0,
$$
which implies that $\int \w$ and $\int \overline \w$ are linearly dependent over
$\C$ as functions $H_1(X;\Z) \to \C$. This implies that they represent
proportional elements of $H^1(C;\C)$ and therefore that
$$
\int_C \w \wedge \overline\w = 0.
$$

On the other hand, for each local holomorphic coordinate $w=u+iv$ on $C$, we can
write (locally) $\w = h(w)dw$. Consequently
$$
i\, \w \wedge \overline\w = 2 |h(w)|^2 du \wedge dv > 0
$$
from which it follows that
$$
i\int_C \w \wedge \overline\w \ > 0.
$$
It follows that $\lambda_1$ and $\lambda_2$ are linearly independent over $\R$
and that $\Lambda$ is indeed a lattice in $\C$.
\end{proof}

\begin{proposition}
\label{prop:torus}
Every elliptic curve is isomorphic to a 1-dimensional complex torus.
\end{proposition}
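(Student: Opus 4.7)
The plan is to construct an explicit isomorphism from $(X,P)$ to $(\C/\Lambda,0)$, where $\Lambda$ is the period lattice furnished by the preceding lemma. Fix a nonzero $\omega \in H^0(X,\Omega^1_X)$, let $\Lambda = \{\int_\gamma \omega : \gamma \in H_1(X;\Z)\}$, and consider
$$
\phi : X \to \C/\Lambda, \qquad \phi(Q) = \int_P^Q \omega \pmod \Lambda.
$$
The first thing to check is well-definedness: two paths from $P$ to $Q$ differ by a 1-cycle, whose integral lies in $\Lambda$ by construction. Note also that $\phi(P) = 0$, so $\phi$ is a morphism of pointed curves.

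Next I would show that $\phi$ is a local biholomorphism. On any simply connected open $U \subset X$, choose a holomorphic antiderivative $F$ of $\omega$; then $\phi|_U$ factors as $F$ followed by the projection $\C \to \C/\Lambda$. The crucial input here is the exercise just above: Riemann-Roch forces $K_X = 0$, so $\omega$ has no zeros. Hence $dF = \omega$ is nowhere vanishing, $F$ is a local biholomorphism, and so is $\phi$.

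Because $X$ is compact, $\phi$ is proper, and a proper local homeomorphism between connected Hausdorff manifolds is a finite covering map. The main obstacle is to check that this covering has degree one; this is where one uses both the definition of $\Lambda$ and the topology of genus $1$ surfaces. I would look at the induced map on first homology
$$
\phi_* : H_1(X;\Z) \to H_1(\C/\Lambda;\Z) = \Lambda.
$$
Unwinding definitions, $\phi_*(\gamma) = \int_\gamma \omega$, so $\phi_*$ is surjective \emph{by the very definition} of $\Lambda$. Since $\pi_1(\C/\Lambda) = \Lambda$ is abelian, surjectivity on $H_1$ is the same as surjectivity on $\pi_1$, and the degree of a connected covering equals the index $[\pi_1(\C/\Lambda) : \phi_*\pi_1(X)]$. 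Therefore $\phi$ has degree $1$, i.e.\ is a biholomorphism, and the proposition follows.
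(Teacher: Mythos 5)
Your proof is correct and follows essentially the same route as the paper: the Abel--Jacobi-type map $Q\mapsto \int_P^Q\w \bmod \Lambda$, well-definedness via $H_1$, local biholomorphy from the nonvanishing of $\w$, the covering-map property, and degree one from surjectivity of the induced map $H_1(X;\Z)\to\Lambda$, which holds by the very definition of the period lattice. The only cosmetic differences are that you justify the covering-map step by properness rather than citing the paper's exercise, and you spell out the degree-equals-index argument slightly more explicitly.
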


\begin{proof}
Let $(X,P)$ be an elliptic curve.
Choose a non-zero holomorphic differential $\w$ on $X$. We will show that
$(X,P)$ is isomorphic to $(\C/\Lambda,0)$ where $\Lambda$ is the period lattice
of $\w$. Define a holomorphic mapping
$$
F : X \to \C/\Lambda
$$
by
$$
F(x) = \int_P^x \w \mod \Lambda
$$
Here the integral is over any path in $X$ from $P$ to $x$. Since any two such
paths differ by an element of $H_1(X;\Z)$, the function $F$ is well defined.

By elementary calculus, the derivative of $F$ is $\w$. Since this is
holomorphic, this implies that $F$ is holomorphic. Further, since $\w$ has no
zeros, $F$ is a local biholomorphism at each point of $X$. By
Exercise~\ref{ex:order}, this implies that $F$ is a covering map. To complete
the proof, we show that $F$ has degree 1. To do this, it suffices to show that
the induced mapping
$$
F_\ast : H_1(X;\Z) \to H_1(\C/\Lambda;\Z),
$$
which is injective by covering space theory, is surjective. But this follows
as there is a natural isomorphism $H_1(\C/\Lambda;\Z) \cong \Lambda$ and
as, under this identification,
$$
F_\ast(\gamma) = \int_\gamma \w.
$$
\end{proof}

\begin{remark}
This also follows directly from the Uniformization Theorem, which implies that
the universal covering of $X$ is biholomorphic to $\C$. 
\end{remark}

Since every elliptic curve is isomorphic to a complex torus, and since every
complex torus is a group, we obtain:

\begin{corollary}
Every elliptic curve $(X,P)$ has the structure of a group with identity $P$
and where the multiplication $X\times X \to X$ and inverse $X \to X$ are 
holomorphic.
\end{corollary}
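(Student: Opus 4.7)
The plan is to transport the group structure on the complex torus $\C/\Lambda$ across the isomorphism provided by Proposition~\ref{prop:torus}. The content of the corollary is almost entirely bookkeeping once that proposition is in hand; the only genuine substance is checking that the group operations are holomorphic, and this follows from the fact that they are already holomorphic on the model $\C/\Lambda$.

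First I would observe that $\C/\Lambda$, being the quotient of the additive topological group $\C$ by the discrete subgroup $\Lambda$, inherits a canonical abelian group structure with identity $0$. The addition map $\C\times\C\to\C$, $(z,w)\mapsto z+w$, and the inverse map $\C\to\C$, $z\mapsto -z$, are holomorphic, and they descend to well-defined maps on the quotient because $\Lambda$ is a subgroup. Since $\C/\Lambda$ carries the quotient complex structure (for which the projection $\C\to\C/\Lambda$ is a local biholomorphism), the induced maps $(\C/\Lambda)\times(\C/\Lambda)\to\C/\Lambda$ and $\C/\Lambda\to\C/\Lambda$ are holomorphic as well; locally one just lifts along the covering projection and composes with the holomorphic operations on $\C$.

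Next, by Proposition~\ref{prop:torus} there is a biholomorphism $F\colon X\to\C/\Lambda$ with $F(P)=0$, where $\Lambda$ is the period lattice of a chosen non-zero holomorphic 1-form on $X$. I would use $F$ to transport the group structure: define multiplication and inversion on $X$ by
\[
x\cdot y := F^{-1}\bigl(F(x)+F(y)\bigr), \qquad x^{-1} := F^{-1}\bigl(-F(x)\bigr).
\]
The group axioms are immediate from the corresponding axioms on $\C/\Lambda$, the identity element is $F^{-1}(0)=P$, and holomorphicity of multiplication and inversion follows because they are compositions of the biholomorphisms $F$ and $F^{-1}$ with the holomorphic operations on the torus.

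There is essentially no obstacle; the only subtle point worth flagging is that the resulting group structure \emph{a priori} depends on the choice of the 1-form $\w$, and hence on the isomorphism $F$. One should note (though the corollary does not require it) that changing $\w$ by a non-zero scalar replaces $\Lambda$ by a scaled lattice and $F$ by a scaled version of itself, so the induced group laws on $X$ agree. In any case, to prove precisely what is stated, fixing one such $\w$ and the corresponding $F$ is enough.
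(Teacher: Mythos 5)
Your proposal is correct and follows exactly the route the paper intends: the corollary is stated immediately after the observation that every elliptic curve is isomorphic to a complex torus (Proposition~\ref{prop:torus}) and that every complex torus is a group, and you simply fill in the transport-of-structure details, including the holomorphicity of the operations on $\C/\Lambda$ and the remark about independence of the choice of $\w$ (which the paper defers to the uniqueness statement proved shortly afterward). Nothing is missing.
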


Shortly we will show that this group structure is unique.

\begin{corollary}
If $X$ is a compact Riemann surface of genus 1 and if $P,Q\in X$, then
the elliptic curves $(X,P)$ and $(X,Q)$ are isomorphic.
\end{corollary}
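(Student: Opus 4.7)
The plan is to use Proposition~\ref{prop:torus} twice, with the same 1-form but different basepoints. Fix a nonzero holomorphic 1-form $\w$ on $X$, and let $\Lambda \subset \C$ be its period lattice. Crucially, $\Lambda$ depends only on $\w$ and on $H_1(X;\Z)$, not on the choice of basepoint.

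The construction in the proof of Proposition~\ref{prop:torus} then gives two isomorphisms of Riemann surfaces,
$$
F_P : X \to \C/\Lambda, \quad x \mapsto \int_P^x \w \mod \Lambda,
$$
and
$$
F_Q : X \to \C/\Lambda, \quad x \mapsto \int_Q^x \w \mod \Lambda,
$$
both onto the \emph{same} complex torus. By construction $F_P(P)=0$ and $F_Q(Q)=0$, so $F_P$ is an isomorphism of elliptic curves $(X,P) \to (\C/\Lambda,0)$ and $F_Q$ is an isomorphism of elliptic curves $(X,Q) \to (\C/\Lambda,0)$. Composing, $F_Q^{-1}\circ F_P : (X,P) \to (X,Q)$ is an isomorphism of elliptic curves.

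There is no real obstacle here; the only thing to check is that $\Lambda$ is independent of the basepoint used to define the map, which is clear from its definition. (Equivalently, one may observe that $F_P$ and $F_Q$ differ by translation by the constant $\int_P^Q \w \bmod \Lambda$, and translation is a biholomorphism of $\C/\Lambda$.) This gives the desired isomorphism and completes the proof.
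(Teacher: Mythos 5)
Your proof is correct and is essentially the paper's argument: the paper also reduces to the complex torus via Proposition~\ref{prop:torus} and then notes that translation by $Q-P$ gives the isomorphism, which is exactly your parenthetical observation that $F_P$ and $F_Q$ differ by the translation $\int_P^Q \w \bmod \Lambda$. Your version just makes the reduction step more explicit by writing out both period maps.
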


\begin{proof}
It suffices to prove this when $X$ is a complex torus $ \C/\Lambda$. In this
case the isomorphism is given by translation by $Q-P$:
$$
(\C/\Lambda,P) \to (\C/\Lambda,Q),\quad x\mapsto x-P+Q.
$$
\end{proof}

\begin{remark}
It is easier to construct moduli spaces of structures that have at most a finite
number of automorphisms. Since every genus 1 Riemann surface $X$ is isomorphic
to $\C/\Lambda$, its automorphism group $\Aut X$ contains $X$ as a group of
translations. For this reason, moduli problem for genus 1 curves is not well
behaved. We will see shortly that the automorphism group $\Aut(X,P)$ of an
elliptic curve is finite, which is one reason why we study the moduli problem
for elliptic curves rather than for genus 1 curves. In general, the automorphism
group of an $n$-pointed compact Riemann surface $(X,\{x_1,\dots,x_n\})$ is
finite if and only if $2g-2+n > 0$. This condition may seem mysterious, but it
is equivalent to the condition that the Euler characteristic of the punctured
surface $X':=X-\{x_1,\dots,x_n\}$ be negative. This, in turn (by the
Uniformization Theorem) is equivalent to the condition that $X'$ has a complete
hyperbolic metric.
\end{remark}

\begin{lemma}
\label{lem:isom}
Suppose that $\Lambda_1$ and $\Lambda_2$ are two lattices in $\C$. If
$f : (\C/\Lambda_1,0) \to (\C/\Lambda_2,0)$ is a holomorphic mapping, then
there exists $c\in \C$ such that $c\Lambda_1 \subseteq \Lambda_2$ and
$$
f\big(z + \Lambda_1\big) = cz + \Lambda_2.
$$
In particular, $f$ is a group homomorphism.
\end{lemma}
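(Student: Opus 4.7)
The plan is to lift $f$ to the universal cover $\C$ of both tori and show that the lift must be an affine linear map. Since $\C$ is simply connected and the quotient maps $\pi_i : \C \to \C/\Lambda_i$ are covering maps, the composition $f\circ \pi_1 : \C \to \C/\Lambda_2$ lifts through $\pi_2$ to a holomorphic $\ftilde : \C \to \C$, normalized by $\ftilde(0) \in \Lambda_2$. I would choose the lift with $\ftilde(0) = 0$, using that $f$ sends the basepoint $0$ to $0$.

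Next I would show $\ftilde$ is affine. For each $\lambda \in \Lambda_1$, the function $z \mapsto \ftilde(z+\lambda) - \ftilde(z)$ takes values in $\Lambda_2$: indeed it descends to the constant map (as $\pi_1(z+\lambda) = \pi_1(z)$) composed with $f$, shifted by itself, and then lifted. Since $\Lambda_2$ is discrete and this expression is continuous in $z$, it is constant. Differentiating in $z$ gives
$$
\ftilde'(z+\lambda) = \ftilde'(z)
$$
for every $\lambda \in \Lambda_1$. Thus $\ftilde'$ is a $\Lambda_1$-periodic entire function and so descends to a holomorphic function on the compact Riemann surface $\C/\Lambda_1$, which by the maximum modulus principle must be a constant $c \in \C$. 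Integrating and using $\ftilde(0)=0$ gives $\ftilde(z) = cz$.

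Finally, from the well-definedness of $f$ on the quotient, $\ftilde(\Lambda_1) = c\Lambda_1$ must land in $\Lambda_2$, giving $c\Lambda_1 \subseteq \Lambda_2$ and the stated formula $f(z+\Lambda_1) = cz + \Lambda_2$. The multiplication-by-$c$ map is a group homomorphism of $\C$ carrying $\Lambda_1$ into $\Lambda_2$, hence $f$ is a group homomorphism of quotients.

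The main obstacle is really just the first step: justifying the existence of the lift and the reason the difference $\ftilde(z+\lambda) - \ftilde(z)$ is constant. Once that discrete-continuity argument is in place, the remaining steps are immediate from the compactness of $\C/\Lambda_1$ and the maximum modulus principle. No appeal to the explicit shape of $\Lambda_2$ or to Weierstrass functions is needed.
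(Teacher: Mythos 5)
Your proof is correct, and it begins exactly as the paper's does: both lift $f$ through the universal covers to a holomorphic $\ftilde:(\C,0)\to(\C,0)$ with $f(z+\Lambda_1)=\ftilde(z)+\Lambda_2$. Where you diverge is in how you show the lift is linear. The paper pulls back the holomorphic differential $\w_2=dz$ on $\C/\Lambda_2$ and invokes the fact that $H^0(\C/\Lambda_1,\Omega^1)=\C\,dz$ is one-dimensional, so $f^\ast\w_2=c\,\w_1$ and hence $d\ftilde=c\,dz$. You instead argue directly: for each $\lambda\in\Lambda_1$ the difference $\ftilde(z+\lambda)-\ftilde(z)$ is a continuous $\Lambda_2$-valued function of $z$, hence constant by discreteness and connectedness, so $\ftilde'$ is $\Lambda_1$-periodic, descends to the compact torus, and is therefore constant. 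These are really two packagings of the same underlying fact --- the reason $f^\ast\w_2$ is a constant multiple of $\w_1$ is precisely that its coefficient function $\ftilde'$ is doubly periodic --- but your version is more elementary and self-contained: it needs only covering space theory, the discreteness of $\Lambda_2$, and Liouville (or the maximum principle on a compact Riemann surface), whereas the paper's version is shorter and deliberately routes the argument through the space of holomorphic differentials, which is the machinery these notes are developing. Your normalization $\ftilde(0)=0$ also lets you read off $c\Lambda_1\subseteq\Lambda_2$ immediately from the periodicity relation at $z=0$, which is a clean touch.
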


\begin{proof}
Note that $(\C,0) \to (\C/\Lambda,0)$ is a pointed universal covering of
$\C/\Lambda$. Covering space theory implies that there is a holomorphic map $F :
(\C,0) \to (\C,0)$ such that
$$
f\big(z + \Lambda_1\big) = F(z) + \Lambda_2.
$$
The result will follow if we show that $F$ is linear. For $j=1,2$ set
$$
\w_j = dz \in H^0(\C/\Lambda_j,\Omega^1_{\C/\Lambda_j}).
$$
Then there is a constant $c\in \C$ such that $f^\ast \w_2 = c\w_1$. Consequently
$dF = cdz$. Since $F(0) =0$, this implies that $F(z) = cz$.
\end{proof}

This yields the following fact, which will give us the leverage we need to
construct and understand the moduli space of elliptic curves.

\begin{corollary}
Two complex tori $(\C/\Lambda_1,0)$ and $(\C/\Lambda_2,0)$ are isomorphic if
and only if there is $c\in \C^\ast$ such that $\Lambda_2 = c\Lambda_1$.
\end{corollary}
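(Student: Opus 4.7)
The plan is to deduce both directions directly from Lemma \ref{lem:isom}, which already does most of the work: it tells us that any pointed holomorphic map between complex tori lifts to multiplication by some constant on the universal cover $\C$.

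For the ``if'' direction, suppose $\Lambda_2 = c\Lambda_1$ for some $c\in\C^\ast$. I would exhibit the isomorphism explicitly: multiplication by $c$ on $\C$ is a biholomorphism that sends the lattice $\Lambda_1$ bijectively to $\Lambda_2$, and so descends to a biholomorphism
$$
\C/\Lambda_1 \to \C/\Lambda_2, \qquad z + \Lambda_1 \mapsto cz + \Lambda_2,
$$
which evidently sends $0$ to $0$. Its inverse is given by multiplication by $c^{-1}$, so this is an isomorphism of pointed complex tori.

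For the ``only if'' direction, suppose $f\colon (\C/\Lambda_1,0) \to (\C/\Lambda_2,0)$ is an isomorphism. By Lemma \ref{lem:isom} applied to $f$, there exists $c\in\C$ with $c\Lambda_1 \subseteq \Lambda_2$ and $f(z+\Lambda_1) = cz + \Lambda_2$. Applying the same lemma to the holomorphic inverse $f^{-1}$, we obtain $c'\in\C$ with $c'\Lambda_2 \subseteq \Lambda_1$ and $f^{-1}(w+\Lambda_2) = c'w + \Lambda_1$. Composing the two lifts on $\C$, we get that multiplication by $cc'$ agrees with the identity on $\C$ modulo $\Lambda_1$, and since both lifts fix the origin, $cc' = 1$. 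In particular $c\in\C^\ast$, so the inclusion $c\Lambda_1 \subseteq \Lambda_2$ combined with $c^{-1}\Lambda_2 \subseteq \Lambda_1$ forces $c\Lambda_1 = \Lambda_2$.

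There is no real obstacle here: the whole content is already packaged in Lemma \ref{lem:isom}. The only subtlety worth flagging is that one must apply the lemma to both $f$ and $f^{-1}$ to upgrade ``$c\Lambda_1 \subseteq \Lambda_2$'' to the equality ``$c\Lambda_1 = \Lambda_2$'' that the statement demands; using only $f$ would leave us with a sublattice rather than the lattice itself.
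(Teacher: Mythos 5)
Your proof is correct and follows exactly the route the paper intends: the corollary is stated there as an immediate consequence of Lemma~\ref{lem:isom} with no written proof, and your argument supplies the intended deduction. The point you flag --- applying the lemma to both $f$ and $f^{-1}$ so that $cc'=1$ upgrades the inclusion $c\Lambda_1 \subseteq \Lambda_2$ to the equality $c\Lambda_1 = \Lambda_2$ --- is precisely the small step the paper leaves to the reader.
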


\begin{exercise}
Show that
$$
\Aut(\C/\Lambda,0) = \{u \in \C^\ast : u\Lambda = \Lambda\}.
$$
Note that $\pm 1$ are automorphisms. Show that every $u \in \Aut(\C/\Lambda,0)$
has modulus 1. Deduce that $\Aut(\C/\Lambda,0)$ is isomorphic to the group
$\bmu_{2n}$ of $2n$th roots of unity for some $n\ge 1$. Deduce that the
automorphism group of every elliptic curve $(X,P)$ is a finite cyclic group of
even order.
\end{exercise}

The proposition also yields the following useful fact, which follows as every
elliptic curve is isomorphic to a 1-dimensional torus.

\begin{corollary}
Every holomorphic mapping $f : (X,P) \to (Y,Q)$ between elliptic curves is
a group homomorphism.
\end{corollary}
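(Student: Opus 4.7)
The plan is to reduce the statement to the already-proved Lemma~\ref{lem:isom} via Proposition~\ref{prop:torus}. Since the group structure on an elliptic curve $(X,P)$ is, by Corollary 1.6, transported from the additive structure on $\C/\Lambda$ through \emph{some} isomorphism of pointed Riemann surfaces, the whole problem becomes one about pointed holomorphic maps between pointed complex tori.

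First I would invoke Proposition~\ref{prop:torus} to fix biholomorphisms of pointed Riemann surfaces
\[
\phi_X : (X,P) \xrightarrow{\ \sim\ } (\C/\Lambda_1, 0), \qquad
\phi_Y : (Y,Q) \xrightarrow{\ \sim\ } (\C/\Lambda_2, 0),
\]
for some lattices $\Lambda_1,\Lambda_2 \subset \C$. By construction these are the very isomorphisms that, via Corollary 1.6, install the group structures on $(X,P)$ and $(Y,Q)$, so $\phi_X$ and $\phi_Y$ are group isomorphisms tautologically.

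Next, since a morphism $f:(X,P)\to(Y,Q)$ of elliptic curves sends the distinguished point $P$ to the distinguished point $Q$, the conjugated map
\[
\tilde f := \phi_Y \circ f \circ \phi_X^{-1} : (\C/\Lambda_1, 0) \to (\C/\Lambda_2, 0)
\]
is a pointed holomorphic map between complex tori. Lemma~\ref{lem:isom} then applies verbatim to produce $c\in\C$ with $c\Lambda_1 \subseteq \Lambda_2$ and $\tilde f(z+\Lambda_1) = cz + \Lambda_2$; in particular $\tilde f$ is a group homomorphism.

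Finally, $f = \phi_Y^{-1}\circ\tilde f\circ\phi_X$ is a composition of three group homomorphisms, hence itself a group homomorphism, which is what we had to show. There is no real obstacle here: all of the analytic content lives in Lemma~\ref{lem:isom}, whose proof used the universal cover $\C\to\C/\Lambda$ and the linearity forced by pulling back the translation-invariant differential $dz$. The only thing to be careful about is the implicit hypothesis $f(P)=Q$, which is exactly what is built into the notation $(X,P)\to(Y,Q)$ and which is what allows us to base the lifts at $0$ on both sides.
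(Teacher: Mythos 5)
Your proof is correct and is exactly the argument the paper intends: the paper dispenses with the corollary in one line (``which follows as every elliptic curve is isomorphic to a 1-dimensional torus''), and your write-up simply makes that reduction to Lemma~\ref{lem:isom} explicit, including the sensible care taken to choose $\phi_X,\phi_Y$ to be the isomorphisms that define the group structures. Nothing is missing.
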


\begin{remark}
Another consequence of Proposition~\ref{prop:torus} is the well known statement
that every compact Riemann surface of genus 1 has a flat riemannian metric whose
conformal class is determined by the complex structure. The metric is unique up
to multiplication by a constant. Lemma~\ref{lem:isom} implies that holomorphic
maps between genus 1 Riemann surfaces are orientation preserving homotheties
with respect to their flat metrics. In higher genus, a similar statement holds
with flat replaced by hyperbolic. The main difference with the genus 1 case is
that there is a unique hyperbolic metric in each conformal class.
\end{remark}

\subsection{Moduli of elliptic curves}

To determine the moduli space of elliptic curves, we need only determine the
moduli space of lattices in $\C$.

As is typical in constructing the moduli space of curves in higher genus via
Teichm\"uller theory, and when constructing the moduli of principally polarized
abelian varieties, we begin by ``framing'' the object of interest.

\begin{definition}
A {\em framed elliptic curve} is an elliptic curve $(X,P)$ together with an
ordered basis $\aa,\bb$ of $H_1(X,\Z)$ with the property that the intersection
number $\aa\cdot\bb$ is 1.
\end{definition}

If $\Lambda$ is a lattice in $\C$ then $\lambda,\lambda' \in \Lambda$ are
linearly independent over $\R$ if and only if $\Im(\lambda'/\lambda)\neq 0$.
The condition that the corresponding elements of $H_1(\C/\Lambda)$ intersect
positively is that $\Im(\lambda'/\lambda)>0$.

\begin{definition}
A framing of a lattice $\Lambda$ in $\C$ is an ordered basis
$\lambda_1,\lambda_2$ such that $\lambda_2/\lambda_1$ has positive imaginary
part.
\end{definition}

Since $H_1(\C/\Lambda;\Z)$ is naturally isomorphic to $\Lambda$, a framing of
$(\C/\Lambda,0)$ corresponds to a framing of $\Lambda$.

Isomorphism of framed elliptic curves is defined in the obvious way. Two framed
lattices $(\Lambda;\lambda_1,\lambda_2)$ and $(\Lambda';\lambda_1',\lambda_2')$
are isomorphic if there is a non-zero complex number $u$ such that $\lambda_j' =
u\lambda_j$.

Clearly a framed lattice $(\Lambda;\lambda_1,\lambda_2)$ is determined by
its framing $\lambda_1,\lambda_2$ as
$$
\Lambda = \Z\lambda_1 \oplus \Z\lambda_2.
$$

\begin{exercise}
Show that the framed lattice with basis $\lambda_1,\lambda_2$ is isomorphic to
the framed lattice with basis $\w_1,\w_2$ if and only if $\lambda_2/\lambda_1 =
\w_2/\w_1$.
\end{exercise}

An immediate consequence is that every framed lattice is isomorphic to a unique
framed lattice of the form
$$
(\Z \oplus \Z\tau; 1,\tau)
$$
where $\tau$ lies in the upper half plane $\h$.

In summary:

\begin{proposition}
There are natural bijections
$$
\h \leftrightarrow
\left\{
\parbox{1.45in}{isomorphism classes of framed lattices}
\right\}
\leftrightarrow
\left\{
\parbox{1.6in}{isomorphism classes of framed elliptic curves}
\right\}.
$$
\end{proposition}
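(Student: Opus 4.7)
The plan is to establish each of the two bijections in turn, using the preceding exercise and the results already proved about isomorphisms of complex tori.

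For the bijection $\h \leftrightarrow \{\text{framed lattices}\}/{\sim}$, I would define the map $\h \to \{\text{framed lattices}\}/{\sim}$ by $\tau \mapsto [\Z \oplus \Z\tau;\, 1,\tau]$ and the map in the reverse direction by $[\Lambda;\lambda_1,\lambda_2] \mapsto \lambda_2/\lambda_1$. The latter lands in $\h$ by the definition of a framing, and it is well-defined on isomorphism classes because multiplying $(\lambda_1,\lambda_2)$ by a common $u\in \C^\ast$ leaves the ratio unchanged. The exercise preceding the proposition says precisely that two framed lattices are isomorphic if and only if they have the same ratio, so this map is injective; surjectivity is the observation (stated just before the proposition) that every framed lattice is equivalent to $(\Z\oplus \Z\tau;1,\tau)$ for a unique $\tau\in\h$, namely $\tau = \lambda_2/\lambda_1$.

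For the bijection between framed lattices and framed elliptic curves, the forward map is straightforward: a framed lattice $(\Lambda;\lambda_1,\lambda_2)$ gives the framed elliptic curve $(\C/\Lambda,\,0;\,\aa,\bb)$, where $\aa,\bb\in H_1(\C/\Lambda;\Z)$ correspond to $\lambda_1,\lambda_2$ under the natural identification $H_1(\C/\Lambda;\Z)\cong \Lambda$. The intersection condition $\aa\cdot\bb = 1$ translates to $\Im(\lambda_2/\lambda_1)>0$, which is exactly the framing condition on the lattice. For the reverse map, given a framed elliptic curve $(X,P;\aa,\bb)$, I choose a non-zero holomorphic $1$-form $\w$ and form the period lattice $\Lambda = \{\int_\gamma \w : \gamma \in H_1(X;\Z)\}$; the framing $(\int_\aa \w,\,\int_\bb \w)$ is a framing of $\Lambda$ by the lemma that $\Lambda$ is a lattice together with the positivity of $\aa\cdot\bb$. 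By Proposition~\ref{prop:torus} the induced map $X\to \C/\Lambda$ is an isomorphism carrying $\aa,\bb$ to the chosen generators, so this is inverse (on isomorphism classes) to the forward map.

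The one point that requires care, and is essentially the main obstacle, is well-definedness of the reverse map: a different choice of $\w$ must yield an isomorphic framed lattice, and isomorphic framed elliptic curves must yield isomorphic framed lattices. Replacing $\w$ by $c\w$ for $c\in\C^\ast$ multiplies every period by $c$, which is exactly the equivalence relation on framed lattices. More generally, if $f:(X,P;\aa,\bb)\to (X',P';\aa',\bb')$ is an isomorphism of framed elliptic curves, then after passing to the associated tori Lemma~\ref{lem:isom} tells us that $f$ is of the form $z\mapsto cz\bmod\Lambda'$ for some $c\in\C^\ast$ with $c\Lambda=\Lambda'$, and this $c$ carries the framing of $\Lambda$ to that of $\Lambda'$. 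Composing the two bijections then gives the required natural bijection between $\h$ and isomorphism classes of framed elliptic curves.
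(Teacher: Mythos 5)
Your proposal is correct and follows essentially the same route as the paper, which records the first bijection via the ratio $\lambda_2/\lambda_1$ (the preceding exercise) and the second via the period lattice of a holomorphic differential, relying on Proposition~\ref{prop:torus} and Lemma~\ref{lem:isom} exactly as you do. The only difference is that the paper leaves the well-definedness checks implicit, whereas you spell them out; that is a welcome addition, not a divergence.
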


Under this correspondence, $\tau\in \h$ corresponds to the framed elliptic curve
$(\C/\Lambda_\tau;1,\tau)$ and the framed elliptic curve $(X,P;\aa,\bb)$
corresponds to
$$
\textstyle{\int_\bb\w/\int_\aa\w} \in \h,
$$
where $\w$ is a non-zero holomorphic differential on $X$ and
$$
\Lambda_\tau := \Z \oplus \Z\tau.
$$

At this stage, these correspondences are simply bijections of sets. In the next
lecture, we will show that the right-hand set has a natural Riemann surface
structure and that the bijection with $\h$ is a biholomorphism.

As a set, the moduli space of elliptic curves is the set of isomorphism classes
of elliptic curves. It is the quotient of the set of isomorphism classes of
framed elliptic curves that is obtained by forgetting the framing.

Two framings $(\aa,\bb)$ and $(\aa',\bb')$ of an elliptic curve are related by
\begin{equation}
\label{eqn:frames}
\begin{pmatrix}\bb' \cr \aa' \end{pmatrix} =
\begin{pmatrix}a & b \cr c & d\end{pmatrix}
\begin{pmatrix} \bb \cr \aa \end{pmatrix}.
\end{equation}
where
\begin{equation}
\label{eqn:gamma}
\gamma = \begin{pmatrix}a & b \cr c & d\end{pmatrix}
\end{equation}
is a $2\times 2$ integral matrix. Since $\aa\cdot \bb = \aa'\cdot\bb' = 1$,
$\gamma$ has determinant 1 and is thus an element of $\SL_2(\Z)$.

Denote the isomorphism class of the framed elliptic curve $(X,P;\aa,\bb)$ by
$[X,P;\aa,\bb]$. Define a left action of $\SL_2(\Z)$ on 
$$
\left\{
\parbox{1.6in}{isomorphism classes of framed elliptic curves}
\right\}
$$
by
$$
\begin{pmatrix}a & b \cr c & d\end{pmatrix} : [X,P;\aa,\bb] \mapsto
[X,P;\aa',\bb']
$$
where $\aa',\bb'$ are given by (\ref{eqn:frames}). The quotient is the set
of isomorphism classes of elliptic curves.

\begin{proposition}
The set of isomorphism classes of elliptic curves is the quotient
$$
\SL_2(\Z)\bs
\left\{
\parbox{1.6in}{isomorphism classes of framed elliptic curves}
\right\}.
$$
\end{proposition}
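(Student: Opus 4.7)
The plan is to verify that the forgetful map
\[
\{(X,P;\aa,\bb)\}/\!\sim \ \longrightarrow\ \{(X,P)\}/\!\sim,
\]
which sends an isomorphism class of framed elliptic curves to its underlying isomorphism class of elliptic curves, is surjective with fibers exactly the $\SL_2(\Z)$-orbits defined above. Once that is established the statement follows immediately.

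Surjectivity is easy: any elliptic curve $(X,P)$ has $H_1(X;\Z)\cong\Z^2$ and admits $\Z$-bases; given any such basis, its intersection number is $\pm 1$ by Poincar\'e duality, and swapping the two elements if necessary yields a framing with $\aa\cdot\bb=1$. That the $\SL_2(\Z)$-action is well defined is built into the definition, since it alters only the framing and not the pointed Riemann surface underneath. Two framed elliptic curves in the same $\SL_2(\Z)$-orbit therefore tautologically have the same underlying elliptic curve.

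The key step is the converse: suppose $(X,P;\aa,\bb)$ and $(X',P';\aa',\bb')$ are framed elliptic curves whose underlying elliptic curves are isomorphic, via some biholomorphism $f:(X,P)\to(X',P')$. The induced map $f_\ast:H_1(X;\Z)\to H_1(X';\Z)$ is an isomorphism of abelian groups that preserves the intersection pairing (since $f$ is a homeomorphism respecting orientation, as every biholomorphism does). Consequently $(f_\ast\aa,f_\ast\bb)$ is a framing of $(X',P')$, and the class $[X',P';f_\ast\aa,f_\ast\bb]$ equals $[X,P;\aa,\bb]$ under the identification coming from $f$. Both $(\aa',\bb')$ and $(f_\ast\aa,f_\ast\bb)$ are then $\Z$-bases of $H_1(X';\Z)$ with intersection number $1$, so they are related by a change of basis matrix $\gamma\in\GL_2(\Z)$; the condition that intersection numbers agree forces $\det\gamma=1$, i.e.\ $\gamma\in\SL_2(\Z)$. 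Hence $[X',P';\aa',\bb']=\gamma\cdot[X,P;\aa,\bb]$, which shows that the fibers of the forgetful map are exactly the $\SL_2(\Z)$-orbits.

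The only subtle point, and the step I would take most care over, is the claim that any two symplectic $\Z$-bases of $H_1(X';\Z)$ differ by an element of $\SL_2(\Z)$ rather than merely $\GL_2(\Z)$; this is a short check using that $\aa\cdot\bb=\aa'\cdot\bb'=1$, done explicitly in equation~(\ref{eqn:frames}) above. Everything else is bookkeeping.
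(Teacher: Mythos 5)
Your proof is correct and follows the same route the paper takes: the paper states this proposition without a formal proof, relying on exactly the preceding discussion (any two framings of a fixed elliptic curve differ by an integral matrix of determinant $1$, via equation~(\ref{eqn:frames})), which you have simply spelled out carefully, including the surjectivity of the forgetful map and the transport of framings along an isomorphism $f$. No gaps.
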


Let's compute the corresponding action of $\SL_2(\Z)$ on $\h$:
$$
\xymatrix{
\tau \ar@{<->}[r]\ar@{..>}[d]_(0.35)\gamma & (\C/\Lambda_\tau;1,\tau)
\ar[d]^\gamma\cr
\displaystyle{\frac{a\tau + b}{c\tau + d}}\ar@{<->}[r] &
(\C/\Lambda_\tau;c\tau + d,a\tau+b)
}
$$

To summarize:

\begin{proposition}
The set of isomorphism classes of elliptic curves is isomorphic to the quotient
$\SL_2(\Z)\bs \h$ of the upper half plane by the $\SL_2(\Z)$-action
$$
\begin{pmatrix}a & b \cr c & d \end{pmatrix}
: \tau \mapsto \frac{a\tau + b}{c\tau + d}.
$$
\end{proposition}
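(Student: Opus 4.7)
The plan is to combine the two preceding propositions. The first identifies isomorphism classes of elliptic curves with the quotient $\SL_2(\Z)\bs\{\text{framed elliptic curves}\}$ under the action (\ref{eqn:frames}), and the second identifies framed elliptic curves bijectively with $\h$ via $\tau \leftrightarrow (\C/\Lambda_\tau;1,\tau)$. Thus the only thing left to do is transport the $\SL_2(\Z)$-action through this bijection and verify that it becomes the stated M\"obius action on $\h$.

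To carry this out, I would start with $\tau\in\h$, pass to the framed curve $(\C/\Lambda_\tau;\aa,\bb)$ with $\aa$ and $\bb$ the cycles in $H_1(\C/\Lambda_\tau;\Z)\cong\Lambda_\tau$ represented by $1$ and $\tau$, and apply $\gamma$ as in (\ref{eqn:frames}). This produces the new frame $\aa'=c\bb+d\aa$, $\bb'=a\bb+b\aa$, whose numerical values in $\Lambda_\tau\subset\C$ are $c\tau+d$ and $a\tau+b$; hence the resulting framed elliptic curve is $(\C/\Lambda_\tau;c\tau+d,a\tau+b)$, matching the bottom-left entry of the diagram preceding the proposition.

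Next I would push this back to $\h$ along the inverse of the bijection, which sends $(X,P;\aa',\bb')$ to $\int_{\bb'}\w\big/\int_{\aa'}\w$ for any nonzero holomorphic $\w$. Taking $\w=dz$ on $\C/\Lambda_\tau$, the period of $dz$ along the class represented by $\lambda\in\Lambda_\tau$ equals $\lambda$, so the image is $(a\tau+b)/(c\tau+d)$, as desired. One then checks the two consistency conditions: that $c\tau+d\ne 0$ (immediate, since $\tau\in\h$ and $\gamma\in\SL_2(\Z)$), and that the image actually lies in $\h$, which follows from the standard identity $\Im\bigl((a\tau+b)/(c\tau+d)\bigr)=\Im(\tau)/|c\tau+d|^2$ together with $ad-bc=1$.

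The main pitfall is bookkeeping rather than mathematical depth: the rows of $\gamma$ in (\ref{eqn:frames}) are interchanged relative to the usual M\"obius convention, so one must keep track that the top row $(a,b)$ acts on $\bb$ (producing $a\tau+b$ in the numerator) and that the bijection with $\h$ sends $\tau$ to the frame with $\bb=\tau$ above $\aa=1$ rather than the reverse. Once these conventions are fixed the computation is direct, and the resulting map $\gamma\cdot\tau=(a\tau+b)/(c\tau+d)$ is automatically a left action because matrix multiplication on the column $\bigl(\begin{smallmatrix}\bb\\ \aa\end{smallmatrix}\bigr)$ is a left action, and scaling of the frame by $c\tau+d\in\C^\ast$ descends unambiguously to $\h$.
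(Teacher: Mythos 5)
Your proposal is correct and follows exactly the route the paper takes: combine the two preceding propositions and transport the $\SL_2(\Z)$-action on framings through the bijection $\tau\leftrightarrow(\C/\Lambda_\tau;1,\tau)$, computing that $\gamma$ sends the frame $(1,\tau)$ to $(c\tau+d,\,a\tau+b)$ and hence $\tau$ to $(a\tau+b)/(c\tau+d)$. The paper records precisely this computation in the displayed diagram preceding the proposition; your added checks that $c\tau+d\neq 0$ and that the image lies in $\h$ are sound and merely make explicit what the paper leaves to the reader.
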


\begin{exercise}
Show that the isotropy group
$$
\{\gamma \in \SL_2(\Z) : \gamma \tau = \tau\}
$$
of $\tau \in \h$ is isomorphic to $\Aut(\C/\Lambda_\tau,0)$.
\end{exercise}

The problem of finding a fundamental domain for the $\SL_2(\Z)$-action on $\h$
can be solved easily by thinking of the upper half plane as the moduli space of
framed lattices in $\C$. We seek a natural basis (possibly up to finite
ambiguity) of every lattice $\Lambda$ in $\C$. A natural choice for the first
basis vector is a shortest vector $u \in \Lambda$. Since $\Lambda$ is a discrete
subset of $\C$, there is a finite number (generically 1) of these.

\begin{exercise}
Show that if $v \in \Lambda$ is a shortest vector that is not a real
multiple of $u$, then $u,v$ is a basis of $\Lambda$.
\end{exercise}

By replacing $v$ by $-v$ if necessary, we may assume that $\Im v/u > 0$.
The framed lattice $(\Lambda;u,v)$ is isomorphic to $(u^{-1}\Lambda,1,\tau)$,
where $\tau = v/u$, which we assume to be in $\h$.

\begin{exercise}
Show that, with these choices, $|\tau|\ge 1$ and that $|\Re \tau|\le 1/2$.
\end{exercise}

With a little more work (cf.\ \cite[p.~78]{serre}), we have:

\begin{proposition}
Every framed lattice in $\C$ is isomorphic to
one with basis $1,\tau$, where $\tau \in \h$ lies in the region
$$
F :=  \{\tau \in \h : |\Re(\tau)| \le 1/2 \text{ and } |\tau|\ge 1\}.
$$
If $\tau,\tau' \in F$ lie in the same $\SL_2(\Z)$ orbit, then either
$$
|\Re \tau| = |\Re \tau'| = 1/2 \text{ and } \tau' = \tau \pm 1,
$$
or
$$
|\tau| = 1 \text{ and } \tau' = -1/\tau.
$$
If $\gamma\tau = \tau$, then either $\gamma = \pm\id$ or
$$
\tau = i \text{ and } \gamma \text{ is a power of }
\begin{pmatrix} 0 & -1 \cr 1 & 0 \end{pmatrix},
$$
$$
\tau = \rho := e^{2\pi i/3} \text{ and } \gamma \text{ is a power of }
\begin{pmatrix} 0 & -1 \cr 1 & 1 \end{pmatrix},
$$
or
$$
\tau = -1/\rho \text{ and } \gamma \text{ is a power of }
\begin{pmatrix} 1 & -1 \cr 1 & 0 \end{pmatrix}.
$$
\end{proposition}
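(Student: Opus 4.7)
The plan is to analyze the transformation law
$$
\Im(\gamma\tau) = \frac{\Im\tau}{|c\tau+d|^2}, \qquad \gamma = \begin{pmatrix}a & b\\ c & d\end{pmatrix} \in \SL_2(\Z),
$$
and extract all three conclusions (existence of a representative in $F$, the boundary identifications, and the stabilizer computations) from the constraints this places on $\gamma$ when both $\tau$ and $\gamma\tau$ lie in $F$.

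For existence, fix $\tau_0 \in \h$ and observe that the values $|c\tau_0 + d|^2$ for $(c,d)\in\Z^2\setminus\{(0,0)\}$ are the squared norms of nonzero vectors in the lattice $\Z + \Z\tau_0$, hence form a discrete subset of $(0,\infty)$. Therefore $\Im(\gamma\tau_0)$ attains a maximum on the orbit $\SL_2(\Z)\cdot\tau_0$; pick $\gamma_0$ realizing this maximum and set $\tau_1 = \gamma_0\tau_0$. Replacing $\gamma_0$ by $T^n\gamma_0$ with $T = \begin{pmatrix}1 & 1\\ 0 & 1\end{pmatrix}$ (which preserves imaginary parts) I may arrange $|\Re\tau_1| \le 1/2$. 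If $|\tau_1| < 1$ then $S\tau_1 = -1/\tau_1$, where $S = \begin{pmatrix}0 & -1\\ 1 & 0\end{pmatrix}$, has $\Im(S\tau_1) = \Im\tau_1/|\tau_1|^2 > \Im\tau_1$, contradicting maximality; so $\tau_1 \in F$.

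For the identifications, suppose $\tau,\tau'\in F$ and $\gamma\tau = \tau'$; after possibly swapping the roles of $\tau$ and $\tau'$ assume $\Im\tau' \ge \Im\tau$, so $|c\tau + d|\le 1$. Every $\tau \in F$ satisfies $\Im\tau \ge \sqrt{3}/2$, hence $|c|\Im\tau \le 1$ forces $c\in\{-1,0,1\}$. If $c = 0$ then $\gamma = \pm\begin{pmatrix}1 & b\\ 0 & 1\end{pmatrix}$, and $|\Re\tau|, |\Re(\tau+b)|\le 1/2$ forces either $b = 0$ (so $\gamma = \pm I$, $\tau = \tau'$) or $|\Re\tau| = 1/2$ and $\tau' = \tau \pm 1$. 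For $c = \pm 1$, after replacing $\gamma$ by $-\gamma$ I may take $c = 1$, and the condition $|\tau+d|\le 1$ combined with $|\tau|\ge 1$, $|\Re\tau|\le 1/2$ restricts $d$ to $\{-1,0,1\}$ and, by inspecting the three corresponding closed unit disks centered at $0,-1,1$, forces $\tau$ onto the arc $|\tau| = 1$; the value $d=0$ gives $\tau' = a - 1/\tau$ with $a\in\{-1,0,1\}$, recovering $\tau' = -1/\tau$, while $d = \pm 1$ pin $\tau$ to the corners $\rho$ or $-1/\rho$ and reduce to the same identification.

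For the stabilizer assertion, set $\tau = \tau'$ in the preceding case analysis and solve the fixed-point quadratic $c\tau^2 + (d-a)\tau - b = 0$ together with $ad - bc = 1$. The case $c = 0$ yields only $\pm I$; for $c = 1$, the three admissible values of $d$ produce $\tau = i$ (with $d=0$), $\tau = \rho$ (with $d=1$), and $\tau = -1/\rho$ (with $d=-1$), and in each case these conditions pin down $\gamma$ up to a power as the matrix named in the statement. The main obstacle is the case $c = 1$ in the uniqueness argument: simultaneously balancing the inequalities $|\tau|\ge 1$, $|\Re\tau|\le 1/2$, and $|\tau+d|\le 1$ requires a careful picture of how the three unit disks centered at $0,-1,1$ meet $F$, but each subcase collapses to a short planar argument that confines $\tau$ to the boundary of $F$.
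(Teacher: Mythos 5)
Your proof is correct and complete. It is worth noting that the paper does not actually prove this proposition: it sketches only the existence part, phrased lattice-theoretically (take a shortest vector $u$ of $\Lambda$, then a shortest $v$ not a real multiple of $u$, and normalize to $\tau = v/u$), and defers the boundary identifications and stabilizer computations to Serre. Your argument is precisely Serre's: the two existence arguments are equivalent because $\Im(\gamma\tau) = \Im\tau/|c\tau+d|^2$ and $|c\tau+d|$ is the length of the lattice vector $c\tau+d \in \Z\oplus\Z\tau$, so maximizing the imaginary part over the orbit amounts to choosing a shortest primitive lattice vector as the first basis element --- the paper's footnote even remarks that this is essentially the LLL algorithm. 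What your write-up buys is a self-contained treatment of the parts the paper outsources: the estimate $|c|\,\Im\tau \le |c\tau+d| \le 1$ together with $\Im\tau \ge \sqrt{3}/2$ confining $c$ to $\{-1,0,1\}$, the disk-intersection picture pinning the $c=\pm1$, $d=\pm1$ cases to the corners $\rho$ and $-1/\rho$, and the explicit solution of the fixed-point quadratic giving the stabilizers. All of these steps check out (for instance, $U^2 = \bigl(\begin{smallmatrix}-1 & -1\cr 1 & 0\end{smallmatrix}\bigr)$ accounts for the $d=0$, $a=-1$ stabilizing element of $\rho$, consistent with the claim that the stabilizer consists of powers of $U$).
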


It is convenient (and standard) to set
$$
S = \begin{pmatrix} 0 & -1 \cr 1 & 0 \end{pmatrix},\
T = \begin{pmatrix} 1 & 1 \cr 0 & 1 \end{pmatrix},
\text{ and }
U = ST = \begin{pmatrix} 0 & -1 \cr 1 & 1 \end{pmatrix},
$$
Then $S$ has order 4, $U$ has order 6 and $S^2=U^3 = -I$. The stabilizer of $i$
is generated by $S$, the stabilizer of $\rho$ is generated by $U$. Serre
\cite[p.~78]{serre} proves that $\SL_2(\Z)$ is generated by $S$ and $T$ and has
presentation:\footnote{This is easily proved. Let $\G$ be the subgroup of
$\SL_2(\Z)$ generated by $S$ and $T$. Show that $F$ is a fundamental domain for
the action of $\G$ on $\h$. This is essentially the LLL algorithm.}
\begin{equation}
\label{eqn:presentation}
\SL_2(\Z) = \langle S,T : S^2 = (ST)^3, S^4\rangle
=\langle S, U : S^2 = U^3, S^4\rangle.
\end{equation}

\begin{exercise}
\label{ex:autos}
Show that if $\tau \in \h$, then
$$
\Aut(\C/\Lambda_\tau,0) \cong \{\gamma\in \SL_2(\Z) : \gamma(\tau) = \tau\}.
$$
Deduce that
$$
\Aut (\C/\Lambda_\tau) \cong \{\pm 1\}
$$
unless $\gamma$ lies in the $\SL_2(\Z)$-orbit of $i$ or $\rho$. Show that
$$
\Aut(\C/\Lambda_i,0) \cong \bmu_4 \text{ and }
\Aut(\C/\Lambda_\rho,0) \cong \bmu_6, 
$$
where $\bmu_n$ denotes the group of $n$th roots of unity.
\end{exercise}

\begin{corollary}
$\SL_2(\Z)\bs\h$ is homeomorphic to the disk.
\end{corollary}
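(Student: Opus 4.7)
The plan is to use the fundamental domain $F$ from the previous proposition, recognize the gluing pattern on $\partial F$ as a single reflection, and then exhibit an explicit homeomorphism from the quotient to $\C$.

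Since $\SL_2(\Z)$ acts properly discontinuously on $\h$ and $F$ is a closed fundamental domain, the composition $F \hookrightarrow \h \to \SL_2(\Z)\bs\h$ descends to a homeomorphism $F/\!\sim \,\to\, \SL_2(\Z)\bs\h$, where $\sim$ is the equivalence relation cut out by the previous proposition. The key observation is that both non-trivial identifications on $\partial F$ are instances of the single reflection $\tau \mapsto -\bar\tau$: the translation $T$ identifies $-1/2 + iy$ with $1/2 + iy$ on the vertical edges, and $S(e^{i\theta}) = e^{i(\pi - \theta)}$ identifies $(x,y)$ with $(-x,y)$ on the unit arc. This reflection has exactly one fixed point on $\partial F$, namely $i$.

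Next I would construct a homeomorphism $\Phi : F \to \bH := \{z : \Im z \geq 0\}$ that sends $\partial F$ onto $\R$ and intertwines $\tau \mapsto -\bar\tau$ on $\partial F$ with $s \mapsto -s$ on $\R$. Both involutions have a unique fixed point, so one parameterizes $\partial F$ by a real coordinate $t$, sending $i \mapsto 0$, the right half of $\partial F$ (right half-arc followed by the right vertical edge going up to the cusp) onto $[0,\infty)$, and the left half onto $(-\infty,0]$; then one extends into the interior by any convenient scheme, e.g.\ sending the ray $\{iy : y \geq 1\} \subset F$ to the positive imaginary axis of $\bH$ and interpolating. Under $\Phi$, the quotient $F/\!\sim$ becomes $\bH/\!\sim'$, where $(s,0) \sim' (-s,0)$.

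The squaring map $z \mapsto z^2 : \bH \to \C$ is continuous, surjective, and proper (since $|z^2| \to \infty$ as $|z| \to \infty$), and its fibers are precisely the $\sim'$-orbits. Being proper it is closed, so the induced map $\bH/\!\sim' \to \C$ is a continuous closed bijection, hence a homeomorphism. Composing gives $\SL_2(\Z)\bs\h \cong \C$, which is homeomorphic to the open disk. The most delicate step is making the construction of $\Phi$ fully rigorous; the picture is clear but writing down an explicit formula is fiddly. A less hands-on alternative is to verify that the quotient is a connected, simply connected, non-compact Hausdorff 2-manifold (simple connectedness follows from the presentation (\ref{eqn:presentation}), whose relations are all consequences of the torsion generators $S$ and $U$) and then appeal to the classification of non-compact surfaces.
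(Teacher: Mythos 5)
Your argument is correct and is exactly the route the paper intends: the corollary is stated there without proof as a direct consequence of the fundamental-domain proposition, and your identification of the edge gluings as the single reflection $\tau\mapsto-\bar\tau$ followed by the squaring map $z\mapsto z^2$ is the standard way to make that folding explicit. The only points you leave informal --- that $F/\!\sim\;\to\SL_2(\Z)\bs\h$ is closed (which follows from local finiteness of the translates of $F$) and the construction of $\Phi$ --- are routine, and you flag them honestly.
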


By Exercise~\ref{ex:quotient}, we have:

\begin{theorem}
The quotient $\SL_2(\Z)\bs \h$ has a unique Riemann surface structure such that
the quotient mapping $\h \to \SL_2(\Z)\bs \h$ is holomorphic.
\end{theorem}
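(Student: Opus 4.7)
The plan is to invoke Exercise~\ref{ex:quotient}, which asserts that a properly discontinuous action of a group by biholomorphisms on a Riemann surface, with finite stabilizers, induces a unique Riemann surface structure on the quotient making the projection holomorphic. Thus the task reduces to verifying the hypotheses for the $\SL_2(\Z)$-action on $\h$ and, for concreteness, sketching the local chart construction that the exercise abstracts.

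First I would check proper discontinuity. Using the identity $\Im(\gamma\tau) = \Im\tau/|c\tau + d|^2$ together with the discreteness of integer pairs $(c,d)$, every compact $K\subset\h$ meets only finitely many translates $\gamma F$ of the fundamental domain constructed in the previous proposition, whence $\{\gamma \in \SL_2(\Z) : \gamma K\cap K\neq\emptyset\}$ is finite. Finiteness of each stabilizer is Exercise~\ref{ex:autos}.

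Next I would construct charts. At a point $\tau_0\in\h$ whose image in $\PSL_2(\Z)$ has trivial stabilizer, I choose a small disk $U$ about $\tau_0$ with $\gamma U\cap U = \emptyset$ for all nontrivial $\gamma\in\PSL_2(\Z)$; the projection restricts to a homeomorphism on $U$ and serves directly as a holomorphic chart. At an elliptic point in the orbit of $i$ or $\rho$, whose $\PSL_2(\Z)$-stabilizer is cyclic of order $n\in\{2,3\}$, I would apply an element of $\PSL_2(\R)$ carrying $\tau_0$ to the origin of the unit disk $\D$; since any finite group of biholomorphisms of $\D$ fixing $0$ linearizes to rotations, the stabilizer becomes $\bmu_n$ acting by $z\mapsto \zeta z$, and the $n$th-power map $z\mapsto z^n$ descends to a homeomorphism from the quotient of a small $\bmu_n$-invariant disk onto a disk in $\C$, furnishing the chart. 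Transition maps between generic charts are restrictions of Möbius transformations, so they are biholomorphic; transitions involving an elliptic chart are biholomorphic away from the branch point (as both the quotient map and $z\mapsto z^n$ are there) and extend holomorphically across it by Riemann's removable singularities theorem.

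For uniqueness, if $Y_1$ and $Y_2$ are two Riemann surface structures on $\SL_2(\Z)\bs\h$ each making the quotient holomorphic, then the identity $\id : Y_1\to Y_2$, composed with the holomorphic surjection $\h\to Y_1$, yields the holomorphic surjection $\h\to Y_2$; since the quotient $\h\to Y_1$ is open and locally a finite branched covering, this forces $\id : Y_1\to Y_2$ to be holomorphic, and symmetry gives a biholomorphism. The main obstacle is the chart construction at the two elliptic orbits, where one must use the linearization of a finite holomorphic group action on a disk together with the $n$th-power map to identify the quotient of a disk by $\bmu_n$ with a disk; the rest of the argument is routine bookkeeping modulo the general exercise invoked at the start.
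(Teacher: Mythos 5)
Your proposal is correct and is essentially the paper's own argument: the paper simply cites Exercise~\ref{ex:quotient}, whose content (via the appendix exercises on properly discontinuous, virtually free actions and on linearizing a finite cyclic stabilizer at a fixed point so that the chart is furnished by $z\mapsto z^n$) is exactly the proper-discontinuity check and the elliptic-point chart construction you carry out. The only organizational difference is that the paper's route first passes to the free action of a finite-index subgroup $\SL_2(\Z)[m]$, $m\ge 3$, and then quotients the resulting Riemann surface by the finite group $\SL_2(\Z/m\Z)$, whereas you build the charts on the quotient directly; the key linearization step is the same.
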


This is our first, but not final, version of the moduli space of elliptic
curves.

\section{Families of Elliptic Curves and the Universal Curve}

In the first lecture, we showed that the quotient $\SL_2(\Z)\bs \h$ of the upper
half plane $\h$ by the standard action of $\SL_2(\Z)$ is a Riemann surface
whose points correspond to the isomorphism classes of elliptic curves. Denote
this quotient by $M_{1,1}$.\footnote{In general, when $2g-2+n>0$, $M_{g,n}$
will denote the moduli space of compact Riemann surfaces of genus $g$ with $n$
marked points.}

A (holomorphic) family of elliptic curves over a complex manifold $T$ is a
complex manifold $X$ together with a holomorphic mapping $\pi : X \to T$ of
maximal rank and a section $s : T \to X$ of $\pi$ such that for each $t\in T$,
each fiber $(\pi^{-1}(t),s(t))$ is an elliptic curve.
$$
\xymatrix{
X \ar[r]_\pi & T\ar@/_/[l]_s
}
$$
For convenience, we denote the fiber $\pi^{-1}(t)$ of $\pi$ over $t\in T$ by
$X_t$.

To such a family we can associate the function $\Phi : T \to M_{1,1}$
defined by
$$
\Phi : t \mapsto [X_t,s(t)].
$$

We would like such a family of elliptic curves to be ``classified'' by mappings
$T \to M_{1,1}$. More precisely, we would like $M_{1,1}$ to satisfy:
\begin{enumerate}

\item the function $\Phi : T \to M_{1,1}$ is holomorphic;

\item every holomorphic mapping from a complex manifold $T$ to $M_{1,1}$
corresponds to a family of elliptic curves over $T$;

\item there should be a holomorphic family of elliptic curves $E \to M_{1,1}$
that is universal in the sense that the family $\pi :  X \to T$ should be
isomorphic to the pullback family\footnote{A slightly stronger version of (ii)
implies (iii). Namely, if in (ii) we also insist that the period mapping of the
pullback family $f^\ast X \to S$ of elliptic curves associated to $f:S \to T$ be
$\Phi\circ f :S \to M_{1,1}$, then the universal family in (iii) is the family
corresponding to the identity $M_{1,1}\to M_{1,1}$.}
$$
\xymatrix{
X \ar[r]^\simeq \ar[d]_\pi & \Phi^\ast E \ar[r]\ar[d] & E \ar[d] \cr
T \ar@{=}[r] & T \ar[r]^{\Phi} & M_{1,1}.
}
$$
The isomorphism $X\to \Phi^\ast E$ is unique up to an automorphism of the family
$X\to T$ that is the identity on the zero section.
\end{enumerate}
We will see shortly that the Riemann surface $M_{1,1}$ possesses the first
property, but not the second or third. Later in this lecture, we will define the
{\em orbifold} $\M_{1,1}$, which is endowed with a universal elliptic curve
$\E\to \M_{1,1}$ and has all three properties. In preparation for this, we first
consider families of {\em framed} elliptic curves.\footnote{The fancy
terminology is that $M_{1,1}$ is a {\em coarse} moduli space. The orbifold
$\M_{1,1}$ is a {\em fine} moduli space.}

\subsection{The universal elliptic curve $\E_\h$ over $\h$}

Recall that $\Lambda_\tau$ denotes the lattice $\Z\oplus\Z\tau$. It is easy
to construct a family of elliptic curves over $\h$ whose fiber over $\tau$
is $\C/\Lambda_\tau$.

The group $\Z^2$ acts on $\C\times \h$ on the left:
\begin{equation}
\label{eq:Z2}
(m,n) : (z,\tau)
\mapsto \bigg(z + \begin{pmatrix} m & n \end{pmatrix}
\begin{pmatrix} \tau \cr 1 \end{pmatrix}, \tau \bigg)
\end{equation}
This action is properly discontinuous and fixed point free. Therefore
the quotient
$$
\Z^2 \bs \big(\C\times \h\big)
$$
is a 2-dimensional complex manifold. Denote it by $\E_\h$. The projection
$\C\times \h \to \h$ induces a projection $\E_\h \to \h$ whose fiber over $\tau$
is $\C/\Lambda_\tau$.

\subsection{Families of framed elliptic curves}

Every family of elliptic curves
$$
\xymatrix{
X \ar[r]_\pi & T\ar@/_/[l]_s
}
$$
is locally trivial as a $C^\infty$ fiber bundle. For an open subset $U$ of $T$
set $X_U = \pi^{-1}(U)$. Suppose that $U$ is an open ball in $T$ over which $X$
is topologically trivial and $o\in U$:
$$
X_U \cong U\times X_o \text{ as a smooth manifolds, where } o\in U.
$$
Since $U$ is contractible, the inclusion $j_t : X_t \hookrightarrow X_U$ is a
homotopy equivalence for each $t\in U$. So if $s,t \in U$, then there are
natural isomorphisms
\begin{equation}
\label{eq:comparison}
\xymatrix{
H_1(X_t;\Z) \ar[r]^{j_{t\ast}} & H_1(X_U;\Z) & \ar[l]_{j_{s\ast}} H_1(X_s;\Z).
}
\end{equation}
A family of elements
$$
\big\{
c(t) \in H_1(X_t;\Z): t\in T\big\}
$$
is locally constant if for each open ball $U$ in $T$ over which $X_U$ is
topologically trivial and each pair $s,t$ of elements of $U$, $c(s)$ and $c(t)$
correspond under the isomorphism (\ref{eq:comparison}).

A family of framings
$$
\big\{
\aa(t),\bb(t) \in H_1(X_t;\Z): \aa(t)\cdot\bb(t) = 1,\ t\in T\big\}
$$
is locally constant if $\aa(t)$ and $\bb(t)$ are locally constant.

\begin{definition}
A family of elliptic curves is {\em framed} if its has a locally constant
framing.
\end{definition}

\begin{example}
The family $\E_\h \to \h$ is framed. The basis $\aa(\tau), \bb(\tau)$ of
$H_1(\C/\Lambda_\tau;\Z) \cong \Lambda_\tau$ is $1,\tau$.
\end{example}

\begin{exercise}
Show that every family of elliptic curves $X \to T$ over a simply connected
base $T$ has a framing.
\end{exercise}

Since $\h$ is the set of isomorphism classes of framed elliptic curves, a framed
family of elliptic curves $\pi : X \to T$ determines a function
$$
\Phi : T \to \h.
$$
It is defined by
$$
t \mapsto \textstyle{\int_{\bb(t)} \w_t/\int_{\aa(t)} \w_t}
$$
where $\w_t$ is any non-zero holomorphic differential on $X_t$. The mapping
$\Phi$ is called the {\em period mapping} of the family.

\begin{proposition}
If $\pi : X \to T$ is a family of framed elliptic curves, then the period
mapping $\Phi$ is holomorphic.
\end{proposition}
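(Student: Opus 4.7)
The plan is to reduce to a local computation over small open balls $U\subset T$ and to show that within such a ball the numerator and denominator of $\Phi(t) = \int_{\bb(t)}\w_t / \int_{\aa(t)}\w_t$ are holomorphic functions of $t$, with the denominator nowhere zero; since the ratio is independent of the choice of the nonzero $\w_t$, there is freedom in picking a convenient one. First I would fix a contractible open ball $U\subset T$ over which $X_U := \pi^{-1}(U) \to U$ is smoothly trivial, via a diffeomorphism $\phi: X_U \to U\times X_o$ for some point $o\in U$. Under $\phi$ the locally constant framings $(\aa(t),\bb(t))$ transport to a single constant framing of $X_o$; write those fixed cycles as $\aa,\bb \subset X_o$. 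From this point on, only the $1$-form $\w_t$ and the complex structure on the fiber depend on $t$.

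Next I would construct a holomorphic family of nowhere-zero holomorphic $1$-forms on the fibers. Because $\pi$ is a holomorphic submersion with one-dimensional fibers, the relative cotangent sheaf $\Omega^1_{X_U/U}$ is a holomorphic line bundle on $X_U$, and the Hodge bundle $\pi_\ast\Omega^1_{X_U/U}$ is a coherent $\O_U$-module whose fibers $H^0(X_t,\Omega^1_{X_t})$ have constant dimension $1$. After possibly shrinking $U$, it is a trivial holomorphic line bundle; choose a nowhere-vanishing holomorphic section $\w$, so that each restriction $\w_t$ is a nonzero holomorphic $1$-form on $X_t$. Transporting by $\phi$, the form $\w_t$ becomes a smooth $1$-form on the fixed manifold $X_o$ whose coefficients depend holomorphically on the parameter $t$.

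Finally I would integrate. The functions $t\mapsto \int_\aa\w_t$ and $t\mapsto \int_\bb\w_t$ are holomorphic on $U$, because differentiation under the integral sign over a compact cycle converts holomorphic $t$-dependence of the integrand into holomorphy of the integral. Neither vanishes: by the same argument used to prove that the period group $\Lambda$ is a lattice, the period homomorphism $H_1(X_t;\Z)\to\C$ attached to the nonzero $1$-form $\w_t$ is injective, so $\int_\aa \w_t \neq 0$. Hence $\Phi|_U$ is the quotient of two holomorphic functions with nonvanishing denominator, and covering $T$ by such balls shows $\Phi$ is holomorphic on all of $T$. The main obstacle is the Hodge-bundle input: verifying that the spaces $H^0(X_t,\Omega^1_{X_t})$ assemble into a holomorphic line bundle so that local holomorphic sections exist. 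Once this coherent-sheaf fact is in hand, the remainder of the argument is the routine principle that fiber integration preserves holomorphic dependence on parameters.
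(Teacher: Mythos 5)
Your overall strategy coincides with the paper's: both proofs reduce to (a) producing, locally on $T$, a holomorphic family $\w_t$ of non-zero holomorphic differentials on the fibers, and (b) showing that the periods $\int_{\aa(t)}\w_t$ and $\int_{\bb(t)}\w_t$ depend holomorphically on $t$, the denominator being non-zero because the period homomorphism of a non-zero $1$-form is injective (its image is a lattice). The genuine difference is in step (a). You invoke the coherence of $\pi_\ast\Omega^1_{X/T}$ and the constancy of $\dim H^0(X_t,\Omega^1_{X_t})=1$ to conclude that the Hodge bundle is locally free of rank one and hence locally trivial; this is correct, but it rests on Grauert's direct image and base change theorems, which are much heavier machinery than anything else in the argument. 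The paper instead exploits the special feature of genus $1$ that $\Omega^1_{X_t}$ is trivial, so a holomorphic $1$-form on $X_t$ is determined by its value at the single point $s(t)$: a local non-vanishing holomorphic section of the dual of the normal bundle of the zero section therefore assembles, by elementary means, into the desired local section of $\pi_\ast\Omega^1_{X/T}$. Your route generalizes to higher genus unchanged; the paper's is self-contained and elementary but specific to elliptic curves.

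One assertion in your integration step is not correct as stated: after transporting $\w_t$ through a merely smooth trivialization $\phi$ of $X_U\to U$, the resulting $1$-form on the fixed fiber $X_o$ has coefficients that depend smoothly, not holomorphically, on $t$, so ``differentiation under the integral sign'' does not apply verbatim. The conclusion survives because the $\bar\partial_t$-derivative of the resulting integrand is exact in the cycle parameter and so integrates to zero over the closed loop; alternatively (and this is what the paper does) one chooses representatives $\alpha_t,\beta_t$ of $\aa(t),\bb(t)$ that vary holomorphically in $t$ via the holomorphic implicit function theorem, keeping the form itself holomorphic, so that the integrand is genuinely holomorphic in $t$. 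You should either adopt that device or justify the vanishing of the antiholomorphic derivative of the period integral directly.
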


\begin{proof}[Sketch of Proof]
The main task is to show that we can choose the holomorphic differential $\w_t$
to vary holomorphically with $t$. In other words, we need to show that each
$o\in T$ has an open neighbourhood $U$ such that there is a holomorphic 1-form
$\w$ on $X_U$, defined modulo 1-forms that vanish on the fibers,  whose
restriction to $X_o$ is non-zero. More precisely, we need to construct an
element $\w$ of $H^0(U,\pi_\ast \Omega^1_{X/T})$ whose restriction to $X_o$ is
non-zero. By shrinking $U$ if necessary, the restriction $\w_t$ of $\w$ to $X_t$
will be non-zero for all $t\in U$.

Once we have done this, after further shrinking $U$ if necessary, we can
construct continuous mappings $\alpha$ and $\beta$ from $S^1 \times U \to X_U$
such that
$$
\xymatrix{
S^1 \times U \ar[rr]^{\alpha,\beta}\ar[dr]_{pr_2} && X_U\ar[dl]^{\pi} \cr
& U
}
$$
commutes and
\begin{enumerate}

\item for each $t \in U$, $\alpha_t : \theta \mapsto \alpha(\theta,t)$ and
$\beta_t : \theta \mapsto \beta(\theta,t)$ are piecewise smooth representatives
of $\aa(t)$ and $\bb(t)$;

\item for each $\theta$, $t \mapsto \alpha(\theta,t)$  and $t \mapsto
\beta(\theta,t)$ are holomorphic. (That this is possible follows from the
holomorphic implicit function theorem.)

\end{enumerate}
Basic calculus now implies that
$$
\int_{\alpha_t} \w_t \text{ and } \int_{\beta_t} \w_t
$$
vary holomorphically with $t\in U$ from which it follows that $\Phi(t)$ varies
holomorphically with $t \in T$.

We now establish the existence of $\w$. Let $N$ be the holomorphic normal bundle
in $X$ of the zero section $\im s$ of $X\to T$. This is a holomorphic line
bundle on the zero section of $X \to T$. Denote by $L$ the pullback to $T$ of
the dual of $N$ along the zero section $s : T \to X$. This has a local
holomorphic section $\sigma$ defined in a neighbourhood $U$ of $o\in T$ that
does not vanish at $o$. Since the holomorphic cotangent bundle of each $X_t$ is
trivial, there is a unique holomorphic differential $\w_t$ on $X_t$ whose value
at the identity $s(t)$ is $\sigma(t)$. The form defined is a holomorphic section
$\w$ of the sheaf $\pi_\ast \Omega^1_{X/T}$, as required.
\end{proof}

Each framed family $X \to T$ of elliptic curves determines a family of elliptic
curves by pulling back the family $\E_\h \to \h$ along the period mapping:
$$
\xymatrix{
\Phi^\ast \E_\h \ar[r]\ar[d] & \E_\h \ar[d] \cr
T \ar[r]_\Phi & \h
}
$$

\begin{exercise}
Show that the framed families $X \to T$ and $\Phi^\ast\E_\h \to T$ are naturally
isomorphic. That is, there is a biholomorphism $F : X \to \Phi^\ast \E_\h$ that
commutes with the projections to $T$:
$$
\xymatrix{
X \ar[r]^F\ar[d] & \Phi^\ast\E_\h\ar[d] \cr
T \ar@{=}[r] & T
}
$$
takes the zero section of $X$ to the zero section of $\Phi^\ast\E_\h$, and
preserves the framings. Hint: First observe that $(X_t,s(t);\aa(t),\bb(t))$ is
canonically isomorphic to $(\C/\Lambda_{\Phi(t)},0;1,\Phi(t))$. Show that these
isomorphisms can be assembled (locally) into a holomorphic mapping $X \to \E_\h$
by taking $x\in X_t$ to
$$
\textstyle{\int_c\w_t/\int_{\alpha(t)}\w_t} \mod \Lambda_{\Phi(t)}
$$
where $c$ is a smooth path in $X_t$ from $s(t)$ to $x$.
\end{exercise}

This proves that $\h$ is a fine moduli space for framed families of elliptic
curves.

\begin{proposition}
There is a 1-1 correspondence between framed families of elliptic curves $X \to
T$ and holomorphic mappings $\Phi :T \to \h$. Moreover, if $X \to T$ corresponds
to $\Phi : T \to \h$, then the framed family $X \to T$ is isomorphic to the
framed family $\Phi^\ast \E_\h \to T$.
\end{proposition}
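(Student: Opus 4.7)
The proposition packages together two things that have essentially been set up already: the construction of the period mapping $\Phi$ from a framed family (and its holomorphicity, established in the preceding proposition), and the pullback construction $\Phi \mapsto \Phi^\ast\E_\h$ from a holomorphic map $T\to\h$. My plan is to exhibit these as mutually inverse constructions by identifying $X\to T$ with $\Phi^\ast\E_\h\to T$ as framed families, which is the content of the preceding exercise and the real work of the proposition.

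First I would set up the two arrows. Given a framed family $\pi:X\to T$ with framing $\aa(t),\bb(t)$, the period mapping $\Phi:T\to\h$ of the previous proposition is holomorphic. Conversely, given a holomorphic $\Phi:T\to\h$, the pullback $\Phi^\ast\E_\h\to T$ inherits a framing from the canonical framing $1,\tau$ on $\E_\h\to\h$, because pullback of a locally constant section along a continuous map is locally constant. It is immediate that the period mapping of $\Phi^\ast\E_\h\to T$ is $\Phi$ itself, since the period mapping of $\E_\h\to\h$ is the identity. So the composition in one direction is the identity, and the only real content is that the composition in the other direction recovers the original framed family up to canonical isomorphism.

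For this, I would follow the hint. On each fiber, Proposition~\ref{prop:torus} says the map
$$
F_t:X_t\to\C/\Lambda_{\Phi(t)},\qquad x\mapsto \Big(\textstyle\int_{s(t)}^x\w_t\Big)\Big/\Big(\textstyle\int_{\aa(t)}\w_t\Big)\mod\Lambda_{\Phi(t)},
$$
is a well-defined isomorphism of elliptic curves sending $s(t)$ to $0$, and by construction it sends the framing $\aa(t),\bb(t)$ to the framing $1,\Phi(t)$. These fiberwise maps assemble to a bijective map $F:X\to\Phi^\ast\E_\h$ over $T$ preserving zero sections and framings. To see $F$ is holomorphic, I would work locally on $T$: on a neighbourhood $U$ of $o\in T$ choose a holomorphic section $\w$ of $\pi_\ast\Omega^1_{X/T}$ nowhere vanishing on fibers (as constructed in the proof of the preceding proposition) together with holomorphically varying piecewise-smooth representatives $\alpha_t$ of $\aa(t)$. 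Then for a local section $x=x(t)$ of $\pi$, the integral $\int_{s(t)}^{x(t)}\w_t$ depends holomorphically on $t$, by the same argument used previously, and similarly for $\int_{\alpha_t}\w_t$, so $F$ is holomorphic in a neighbourhood of each point.

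The main obstacle is precisely this last step, the coherent holomorphic glueing of the fiberwise isomorphisms $F_t$. It is not hard once one has the local holomorphic frame $\w$ of $\pi_\ast\Omega^1_{X/T}$ and the holomorphic families $\alpha_t$ from the implicit function theorem, both of which were produced in the proof of the previous proposition, so I would simply invoke that construction rather than rebuild it. With $F$ in hand, bijectivity on fibers plus holomorphicity makes $F$ a biholomorphism, and the verifications that it respects zero sections and framings are built into the formula for $F_t$.
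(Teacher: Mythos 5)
Your proof is correct and follows essentially the same route as the paper, which states this proposition as a summary of the preceding results and relegates the real work --- assembling the fiberwise isomorphisms $x\mapsto \int_{s(t)}^x\w_t/\int_{\aa(t)}\w_t \bmod \Lambda_{\Phi(t)}$ into a holomorphic map $X\to\Phi^\ast\E_\h$ using the local holomorphic frame of $\pi_\ast\Omega^1_{X/T}$ --- to the exercise whose hint you are following. Your added observation that the period mapping of $\Phi^\ast\E_\h\to T$ is $\Phi$ itself cleanly closes the loop in the other direction.
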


\begin{remark}
Every family $X \to T$ of elliptic curves can be framed locally --- that is,
each $t\in T$ has a neighbourhood $U$ such that the restricted family $X_U \to
U$ has a framing and therefore admits a period mapping $\Phi_U : U \to \h$ so
that $X_U \to U$ is the pullback of the universal framed family $\E_\h \to \h$
along $\Phi_U$. The period mapping $T \to M_{1,1}$ associated to a family of
elliptic curves is thus ``locally liftable'' to a mapping $T \to \h$. Since the
action of $\SL_2(\Z)$ on $\h$ has fixed points, the identity $M_{1,1} \to
M_{1,1}$ is not locally liftable, and therefore not the period mapping of a
family of elliptic curves. It is this phenomenon which will lead us naturally
to orbifolds and stacks.
\end{remark}

The group $\SL_2(\Z)$ acts on the set of framings of a framed family $X \to T$
of elliptic curves via the formula
$$
\begin{pmatrix}a & b \cr c & d\end{pmatrix} :
\begin{pmatrix}\bb \cr \aa \end{pmatrix} \mapsto
\begin{pmatrix}a & b \cr c & d\end{pmatrix}
\begin{pmatrix} \bb \cr \aa \end{pmatrix}.
$$

\begin{exercise}
Show that if $\Phi : T \to \h$ is the period map of a family of elliptic
curves with framing $\aa,\bb$, then the period mapping with respect to the
framing
$$
\begin{pmatrix}a & b \cr c & d\end{pmatrix}
\begin{pmatrix} \bb \cr \aa \end{pmatrix}
$$
is $(a\Phi + b)/(c\Phi + d) : T \to \h$.
\end{exercise}

\subsection{The universal elliptic curve}

If
$$
\gamma = \begin{pmatrix}a & b \cr c & d\end{pmatrix} \in \SL_2(\Z)
$$
then the isomorphism
$$
(\C/\Lambda_\tau,0) \to (\C/\Lambda_{\gamma\tau},0)
$$
is induced by the mapping $z \mapsto (c\tau + d)^{-1} z$. This suggests that
we consider the action of $\SL_2(\Z)$ on $\C\times \h$ defined by
$$
\gamma : (z,\tau) \mapsto \big(z/(c\tau+d),(a\tau+b)/(c\tau+d)\big).
$$

\begin{exercise}
Prove that this is indeed an action.
\end{exercise}

We would like to combine this with the action of $\Z^2$ that we used to define
the universal curve over $\h$.

The group $\SL_2(\Z)$ acts on $\Z^2$ by {\em right} multiplication:
\begin{equation}
\label{eq:CH}
\begin{pmatrix} a & b \cr c & d\end{pmatrix} :
\begin{pmatrix} m & n \end{pmatrix} \mapsto
\begin{pmatrix}m & n\end{pmatrix}\begin{pmatrix} a & b \cr c & d\end{pmatrix}.
\end{equation}
Denote the corresponding semi-direct product $\SL_2(\Z) \ltimes \Z^2$ by $\G$.
This is the set $\SL_2(\Z)\times \Z^2$ with multiplication:
$$
(\gamma_1,v_1)(\gamma_2,v_2) = (\gamma_1\gamma_2,v_1\gamma_2+v_2)
$$
where $\gamma_1,\gamma_2 \in \SL_2(\Z)$ and $v_1,v_2 \in \Z^2$.

\begin{exercise}
Show that $\SL_2(\Z)\ltimes \Z^2$ is isomorphic to the group
$$
\Big\{
\begin{pmatrix} \gamma & 0 \cr v & 1 \end{pmatrix}
: \gamma \in \SL_2(\Z) \text{ and } v \in \Z^2
\Big\}
$$
\end{exercise}

\begin{exercise}
\label{ex:big_action}
Show that (\ref{eq:Z2}) and (\ref{eq:CH}) determine a well defined left action
of $\G$ on $\C\times \h$. Show that if $(\gamma,v) : (z,\tau) \to (z',\tau')$,
where
$$
\gamma = \begin{pmatrix} a & b \cr c & d\end{pmatrix} \text{ and } v = (m,n),
$$
then
$$
\begin{pmatrix}\tau' \cr 1 \cr z'\end{pmatrix}
= (c\tau + d)^{-1}
\begin{pmatrix} a & b & 0 \cr c & d & 0 \cr m & n & 1 \end{pmatrix}
\begin{pmatrix}\tau \cr 1 \cr z\end{pmatrix}
$$
\end{exercise}

Set
$$
E = (\SL_2(\Z) \ltimes \Z^2)\bs (\C\times \h).
$$
There is a projection $E \to M_{1,1}$.

\begin{exercise}
Show that the fiber of $E$ over the point $[X,P]$ of $M_{1,1}$ corresponding to 
the elliptic curve $(X,P)$ is $X/\Aut(X,P)$. (Cf.\ Exercise~\ref{ex:autos}.)
Show that if $\Aut (X,P)$ is cyclic of order 2, then $X/\Aut(X,P)$ is isomorphic
to the Riemann sphere $\P^1$. In particular, no fiber of $E$ is an elliptic
curve.
\end{exercise}

This problem can be rectified by pulling back the family $X\to T$ of elliptic
curves to the universal covering $p : \Ttilde \to T$ of $T$:
$$
\xymatrix{
p^\ast X \ar[r]\ar[d] & X \ar[d] \cr
\Ttilde \ar[r]^p & T
}
$$
Since $\Ttilde$ is simply connected, the family $p^\ast X \to \Ttilde$ admits
a framing. It is therefore obtained by pulling back the universal framed 
family $\E_\h \to \h$ along the period mapping $\Phi : \Ttilde \to \h$.

Note that the fiber of $p^\ast X$ over $t$ is canonically isomorphic to the
fiber of $X\to T$ over $p(t)$. This means that if $\gamma \in \Aut(\Ttilde/T)$
and $t\in\Ttilde$, then the fibers of $p^\ast X$ over $t$ and $\gamma t$ are
canonically isomorphic. So if $\aa(t),\bb(t)$ is a framing of $p^\ast X \to T$,
then $\aa(t),\bb(t)$ and $\aa(\gamma t),\bb(\gamma t)$ are both framings
of $H_1(X_{p(t)};\Z)$, and therefore differ by an element of $\SL_2(\Z)$.

Define a homomorphism $\phi : \Aut(\Ttilde/T) \to \SL_2(\Z)$ from the group of
deck transformations to $\SL_2(\Z)$ by
$$
\begin{pmatrix} \bb(\gamma t) \cr \aa(\gamma t) \end{pmatrix}
=
\phi(\gamma)
\begin{pmatrix} \bb(t) \cr \aa(t) \end{pmatrix}.
$$

\begin{exercise}
Show that the period mapping $\Phi$ is equivariant with respect to $\phi$ in the
sense that
$$
\Phi(\gamma t) = \phi(\gamma)\Phi(t)
$$
for all $t\in \Ttilde$ and $\gamma\in \Aut(\Ttilde/T)$.
\end{exercise}

\begin{exercise}
Show that the action of $\SL_2(\Z)\ltimes \Z^2$ on $\C\times \h$ induces an
action of $\SL_2(\Z)$ on $E_\h$.
\end{exercise}

\section{The Orbifold $\M_{1,1}$}

\subsection{Local theory: basic orbifolds}

The discussion in the previous section suggests a generalization of topological
spaces which includes quotients $\G\bs X$ and in which morphisms $\G\bs X \to
\G'\bs X'$ are $\G$-equivariant mappings $X \to X'$ with respect to a group
homomorphism $\phi : \G \to \G'$.

\begin{definition}
A basic {\em pointed orbifold} is a triple $(X,\G,\rho)$ where $X$ is a 
connected, simply connected topological space $X$ (typically a smooth manifold)
and $\G$ is a discrete group that acts on $X$ via the homomorphism $\rho : \G
\to \Aut X$. A {\em pointed morphism}
$$
(f,\phi) : (X,\G,\rho) \to (X',\G',\rho')
$$
of orbifolds consists of a continuous mapping $f : X \to X'$ and a group
homomorphism $\phi : \G \to \G'$ such that for all $\gamma\in \G$, the diagram
$$
\xymatrix{
X \ar[d]_{\rho(\gamma)}\ar[r]^f & X' \ar[d]^{\rho'(\phi(\gamma))} \cr
X \ar[r]^f & X'
}
$$
commutes. A morphism $(X,\G,\rho) \to (X',\G',\rho')$ of orbifolds is a
$\G$-orbit of pointed morphisms where $\G$ acts on the pointed morphism
$(f,\phi) : (X,\G,\rho) \to (X',\G',\rho')$ by conjugation:
$$
g : (f,\phi) \to (gfg^{-1},g\phi g^{-1}),\qquad g \in \G.
$$
Define the pointed orbifolds $(X,\G,\rho)$ and $(X,\G,\rho')$ to be {\em
equivalent} if there exists $g\in \G$ such that $\rho' = g\rho g^{-1}$. In this
case, there is an isomorphism
$$
(g,\text{conjugation by }g) : (X,\G,\rho) \to (X,\G,\rho').
$$
A {\em basic orbifold} is an equivalence class of pointed orbifolds.
\end{definition}

We will usually omit $\rho$ from the notation. We shall write $\G\bbs X$ for
$(X,\G)$, which we will regard as the orbifold quotient of $X$ by $\G$. When
$\G$ is trivial we shall denote the orbifold $(X,\triv)$ by $X$. The identity $X
\to X$ induces a natural quotient morphism $p:X \to \G\bbs X$, which we shall
regard as a universal covering of $\G\bbs X$.

The quotient mapping $p:X \to \G\bbs X$ should be thought of as a base point of
$\G\bbs X$. Pointed morphisms preserve these base points.

A path connected topological space $X$ with a universal covering $p: \Xtilde \to
X$ will be regarded as the orbifold $(\Xtilde,\Aut(\Xtilde/X))$.

\begin{exercise}
Show that if $\G$ acts on $X$, then there is an orbifold mapping from the
orbifold $\G\bbs X$ to the topological space $\G\bs X$.
\end{exercise}

\begin{exercise}
\label{ex:principal_bundles}
Suppose that the discrete group $\G$ acts (trivially) on the one point space
$\ast$. Show that there is a 1-1 correspondence between orbifold mappings $T \to
\G\bbs \ast$ from a topological space $T$ to $\G\bbs \ast$ and isomorphism
classes of principal $\G$-bundles over $T$.
\end{exercise}

We shall regard any $\G$-invariant structure on $X$ as a structure on the
orbifold quotient $\G\bbs X$. For example, if $X$ is a Riemann surface with a
$\G$-invariant complex structure, then we will regard $\G\bbs X$ as a Riemann
surface in the category of orbifolds. The holomorphic functions on $\G\bbs X$
are, by definition, the $\G$-invariant holomorphic functions on $X$. Properties
of holomorphic functions between Riemann surfaces extend to holomorphic mappings
$f : \G\bbs X \to \G'\bbs X'$ between orbifold Riemann surfaces. For example, we
say that $f$ is unramified if the mapping $X \to X'$ of ``universal coverings''
is unramified.

The quotient of a non-simply connected space $X$ by a discrete group $\G$ has a
natural orbifold structure. Suppose, for simplicity, that $X$ is path connected.
Suppose that $p:\Xtilde \to X$ is a universal covering of $X$. Set $G =
\Aut(\Xtilde/X) = \pi_1(X,p)$. Define $\Gtilde$ to be the group consisting of
the pairs $(\gamma,g) \in \G \times \Aut \Xtilde$ such that the diagram
$$
\xymatrix{
\Xtilde \ar[r]^g \ar[d] & \Xtilde\ar[d] \cr
X \ar[r]^{\gamma} & X
}
$$
commutes. This is an extension
$$
1 \to G \to \Gtilde \to \G \to 1
$$ 
Define $\G\bbs X$ to be the orbifold $\Gtilde\bbs \Xtilde$.

\begin{remark}
\label{rem:groupoid}
Orbifolds are examples of stacks. Stacks can be defined as groupoids in an
appropriate category, such as the category of complex analytic manifolds. (See
Appendix~\ref{app:stacks} for the definition.) A basic orbifold $(X,\G)$ may be
viewed as a groupoid in the category of topological spaces. The set of objects
of the groupoid is $X$, and the set of morphisms is $\G \times X$. The morphism
$(\gamma,x)$ has source $x$ and target $\gamma x$. Two morphisms $(\gamma,x)$
and $(\mu,y)$ are composable when $y = \gamma x$. Their composition is given by
$$
(\mu,\gamma x) \circ (\gamma,x) = (\mu\gamma,x).
$$
The inverse of $(\gamma,x)$ is $(\gamma^{-1},\gamma x)$. For more details,
see Appendix~\ref{app:stacks}.
\end{remark}

\subsection{Points*}

One has to distinguish various kinds of ``points'' of orbifolds. Suppose that
$\G\bbs X$ is an orbifold. A point $x$ of $X$ corresponds to an inclusion $i_x :
\ast \to X$. The composite
$$
\xymatrix{\ast \ar[r]^{i_x} & X \ar[r] & \G\bbs X}
$$
will be denoted $x : \ast \to \G\bbs X$ and regarded as a {\em closed point} of
$\G\bbs X$. The closed point $x$ induces an orbifold mapping
$$
\ibar_x : \G_x \bbs \ast \to \G\bbs X
$$
where $\G_x$ denotes the isotropy group of $x$ in $\G$. When $\G_x$ is finite
and non-trivial we will call $\ibar_x$ an {\em orbifold point} of $\G\bbs X$. In
this case, we define the {\em degree} $\deg(x)$ of $x$ to be the order of if its
isotropy group $\G_x$.

Two closed points $x,x' \in X$ are said to be {\em conjugate} if they lie in the
same $\G$ orbit. Denote the conjugacy class of $x$ by $(x)$. Conjugacy classes
of points of $X$ are in 1-1 correspondence with the points of the orbit space
$\G\bs X$. If $\G$ acts virtually freely\footnote{That is, $\G$ has a finite
index subgroup that acts fixed point freely on $X$.} on $X$, then conjugate
closed points have the same degree.

\subsection{Homotopy theory of basic orbifolds}
\label{sec:htpy_type}

Suppose that $(X,\G,\rho)$ is a pointed orbifold. Denote the unit interval
$[0,1]$ by $I$. Define $I\times(X,\G,\rho)$ to be the pointed orbifold $(I\times
X,\G,\id_I\times\rho)$, where $\id_I\times\rho : \G \to \Aut(I\times X)$ is
given by
$$
\id_I\times\rho(\gamma) : (t,x) \mapsto (t,\rho(\gamma)(x)).
$$

\begin{definition}
A homotopy between two morphisms $(f,\phi),(g,\psi) : (X,\G) \to
(X',\G')$ of pointed orbifolds is a morphism
$$
(F,\phi) : I\times (X,\G) \to (X',\G')
$$
of pointed orbifolds that satisfies
\begin{enumerate}

\item 
$\phi = \psi$,

\item $\phi : \G \to \G'$ is a homomorphism,

\item  $f(x) = F(0,x)$ and $g(x)=F(1,x)$ for all $x\in X$.

\end{enumerate}
\end{definition}

Homotopy of orbifold morphisms is an equivalence relation. Two pointed orbifolds
$(X,\G)$ and $(X',\G')$ are defined to be {\em homotopy equivalent} if there are
morphisms $(f,\phi) : (X,\G) \to (X',\G')$ and $(g,\psi) : (X',\G') \to (X,\G)$
such that $(g,\psi)\circ(f,\phi)$ is homotopic to
$\id_{(X,\G)}$ and $(f,\phi)\circ(g,\psi)$ is homotopic to $\id_{(X',\G')}$.

Denote by $S^1$ the orbifold $(\R,\Z)$, where $\Z$ acts on $\R$ by translation.
The fundamental group $\pi_1(\G\bbs X,p)$ of the pointed orbifold $(X,\G)$ with
respect to $p : X\to \G\bbs X$ is defined by
\begin{multline*}
\pi_1(\G\bbs X,p) \cr
= \big\{\text{homotopy classes of pointed morphisms }
(f,\phi) : S^1 \to (X,\G)\big\}.
\end{multline*}
Denote the homotopy class of $(f,\phi) : S^1 \to (X,\G)$ by $[f,\phi]$.

\begin{exercise}
Show that the fundamental group of a basic orbifold is a group. Show that the
function $\pi_1(\G\bbs X,p) \to \G$ that takes $[f,\phi]$ to $\phi(1)$ is a
group
isomorphism.
\end{exercise}

The orbifold and usual fundamental groups of $\G\bbs X$ agree when $\G$ acts
freely and discontinuously on $X$. In particular, if $X$ is a topological space
with universal covering $p:\Xtilde \to X$, then $\pi_1(X,p) = \Aut(\Xtilde/X)$,
which agrees with the standard definition of $\pi_1(X,p)$.

Define a morphism $(f,\phi) : (X,\G) \to (X',\G')$ between two pointed orbifolds
to be a weak homotopy equivalence if $\phi$ is an isomorphism and $f : X \to X'$
induces an isomorphism $H_\dot(X) \to H_\dot(X')$.\footnote{Since $X$ and $X'$
are simply connected, this is equivalent, by a classical theorem of Hurewicz,
to the statement that $X \to X'$ induces an isomorphism on homotopy groups. A
proof may be found in a standard text such as \cite{spanier}.} \footnote{A
classical theorem of J.~H.~C.\ Whitehead states that a weak homotopy equivalence
of CW-complexes is a homotopy equivalence. This is proved in many standard
texts, such as \cite{spanier}.}

Every basic orbifold $(X,\G)$ is weak homotopy equivalent to a topological
space. Indeed, if $(X,\G)$ is a basic orbifold, then we can consider the
orbifold
$$
(E\G\times X,\G)
$$
where $E\G$ is any contractible space on which $\G$ acts properly
discontinuously and fixed point freely and where $\G$ acts diagonally on
$E\G\times X$.\footnote{One can take $E\G$ to be the simplicial complex whose
set of $n$-simplices is $\G^{n+1}$. The group $\G$ acts diagonally on this
complex. This complex is contractible as it is a cone with vertex the identity.}
The projection
$$
(E\G\times X,\G) \to (X,\G)
$$
is a weak homotopy equivalence.\footnote{If $X$ has the homotopy type of a
CW-complex and $\G$ acts properly discontinuously and fixed point freely on $X$,
then this morphism is a homotopy equivalence. The homotopy inverse is given by
any $\G$-invariant mapping $X \to E\G\times X$ that is the identity in the
second factor.}

In other words, the weak homotopy type of a basic orbifold $(X,\G)$ is the
homotopy type of the homotopy quotient $E\G\times_\G X := \G\bs (E\G\times X)$
of $X$ by $\G$. This motivates the following definition of the homology,
cohomology and higher homotopy groups of a basic orbifold, which agree with
the standard definitions on topological spaces.

A {\em local system} $\V$ on the pointed orbifold $(X,\G)$ with fiber $V$ over
the base point $p$ is simply a representation $\G \to \Aut V$. We will denote
the corresponding local system  on $E\G\times_\G X$ by $\V$.

\begin{definition}
Suppose that $(X,\G)$ is a basic orbifold and that $V$ is a $\G$-module. Define
the higher homotopy, homology, and cohomology groups of the orbifold $\G\bbs X$
by
\begin{enumerate}

\item $\pi_n(\G\bbs X,p) := \pi_n(X)$, when $n\ge 2$,

\item $H_\dot(\G\bbs X;\V) = H_\dot(E\G\times_\G X;\V)$,

\item $H^\dot(\G\bbs X;\V) = H^\dot(E\G\times_\G X;\V)$.

\end{enumerate}
\end{definition}

Note that $H^\dot(E\G\times_\G X;\V)$ is the $\G$-equivariant cohomology
$H^\dot_\G(X;\V)$ of $X$.

\begin{exercise}
Show that there is a natural isomorphism
$$
\pi_1(\G\bbs X,p) \cong \pi_1(E\G\times_\G X,p')
$$
where $p'$ is the quotient mapping $E\G\times X\to E\G\times_\G X$.
\end{exercise}

\begin{example}
The homotopy type of the orbifold quotient $\G\bbs \ast$ of a one-point space
$\ast$ by a discrete group $\G$ is that of the classifying space $B\G := \G\bs
E\G$ of $\G$. The cohomology groups of $\G\bbs \ast$ with coefficients in the
local system that corresponds to the $\G$-module $V$ are those of the $B\G$,
which, by definition, are the cohomology groups of $\G$:
$$
H^\dot(\G\bbs \ast;\V) = H^\dot(\G,V):= H^\dot(B\G;\V).
$$
The higher homotopy groups of $\G\bbs \ast$ vanish.
\end{example}

\begin{example}
The orbifold quotient $\bmu_d\bbs\D$ of the unit disk $\D$ by the natural action
of the group $\bmu_d$ of $d$th roots of unity on $\D$. The quotient of $\D$ by
$\bmu_d$ in the category of topological spaces is the disk, which is simply
connected. Thus the topological and orbifold quotients can be different, even
when the group action is effective. The projection $\D \to \ast$ is a
$\bmu_d$-equivariant homotopy equivalence which induces a homotopy equivalence
$$
\bmu_d\bbs \D \simeq \bmu_d\bbs \ast = B\bmu_d.
$$
The cohomology groups of $\bmu_d\bbs \D$ are therefore those of the group
$\bmu_d$. In particular, $\bmu\bbs \D$ does not have finite cohomological
dimension.
\end{example}

A vector bundle over the orbifold $\G\bbs X$ is a vector bundle $V \to X$
together with a lift of the $\G$-action on $X$ to $V$. The bundle over $\G\bbs
X$ is denoted by $\G\bbs V \to \G\bbs X$. Sections of this bundle correspond
precisely to $\G$ invariant sections of $V \to X$.

\begin{example}
A vector bundle over $\G\bbs \ast$ is simply a $\G$-module $V$. Its space of
sections consists of all $\G$-equivariant functions $f : \ast \to V$ and is
therefore the subspace $V^\G$ of $\G$-invariant vectors in $V$:
$$
H^0(\G\bbs \ast, V) = V^\G.
$$
\end{example}

\begin{example}
If $X$ is a manifold, then a smooth $\G$-action on $X$ lifts naturally to a
smooth $\G$-action on the tangent bundle $TX$ of $X$. In this case, the quotient
$\G\bbs X$ can be regarded as a manifold in the category of orbifolds with
tangent bundle $\G\bbs (TX) \to \G\bbs X$. Denote this by $T(\G\bbs X)$.
Sections of $T(\G\bbs X)$ are $\G$-invariant vector fields on $X$. This example
extends to all natural bundles on $X$, such as the cotangent bundle and its
exterior powers. In particular, differential $k$-forms on $\G\bbs X$ are
$\G$-invariant sections of $\Lambda^k T^\ast X \to X$, which are just
$\G$-invariant differential forms on $X$.
\end{example}

Note that the de~Rham theorem does not hold for all orbifolds. For example,
it does not hold for $\Z\bbs \ast$.

\begin{definition}
\label{def:vfree}
An action of a group $\G$ on a space $X$ is {\em virtually free} if $\G$ has a
finite index subgroup $\G'$ that acts freely.
\end{definition}

The action of $\SL_2(\Z)$ on $\h$ is virtually free by
Exercise~\ref{ex:quotient}.

\begin{exercise}
Show that if $\G$ is a finitely generated discrete group that acts properly
discontinuously and virtually freely on the simply connected manifold $X$, then
there is a natural ring isomorphism
$$
H^\dot(\G\bbs X;\R) \cong
H^\dot(\text{$\G$-invariant, smooth real-valued forms on $X$}).
$$
\end{exercise}

\subsection{Orbifold Euler characteristic}
\label{para:euler}
The notion of Euler characteristic of a finite complex extends to orbifolds that
satisfy some finiteness restrictions.

Suppose that the discrete group $\G$ acts virtually freely and properly
discontinuously on $X$. If $\G'$ is a finite index normal subgroup $\G'$ that
acts freely and discontinuously on $X$, then the quotient mapping $X \to \G'\bs
X$ is an unramified covering. The map $\G'\bs X \to \G\bbs X$ is an orbifold
morphism that can be thought of as an unramified covering of degree $[\G:\G']$.

When $\G'\bs X$ is a finite complex, we can define the {\em orbifold Euler
characteristic} of $\G\bbs X$ by
$$
\chi\big(\G\bbs X\big) = \frac{1}{[\G:\G']}\chi\big(\G'\bs X\big)
$$
where $\chi(\G'\bs X)$ is the usual Euler characteristic of $\G'\bs X$.

\begin{exercise}
Show that the orbifold Euler characteristic of $\G\bbs X$ is well defined ---
that is, it is independent of the choice of the finite index subgroup $\G'$.
\end{exercise}

\begin{example}
The orbifold Euler characteristic of $\bmu_d\bs \D$ is $1/d$.
\end{example}

\begin{exercise}
Suppose that $X$ is a finite simplicial complex and that $\G$ acts simplicially
on $X$. Show that if $\G$ acts virtually freely on $X$, then the orbifold Euler
characteristic of the semi-simplicial complex $\G\bbs X$ is given by
\begin{equation}
\label{eqn:simp}
\chi\big(\G\bbs X\big) = \sum_{k\ge 0} (-1)^k \sum_{\sigma \in (\G\bs X)_k}
\frac{1}{|\G_\sigma|} 
\end{equation}
where $(\G\bs X)_k$ denotes the set of $k$-simplices of $\G\bs X$ and
$\G_\sigma$ denotes the isotropy group of $\sigma$.
\end{exercise}

\begin{example}
Suppose that $K$ is the hexagonal decomposition of the disk. Let $\G$ be the
symmetric group $S_3$, which is generated by the reflections in the 3 diagonals.
This action is simplicial.
\begin{figure}[!ht]
\epsfig{file=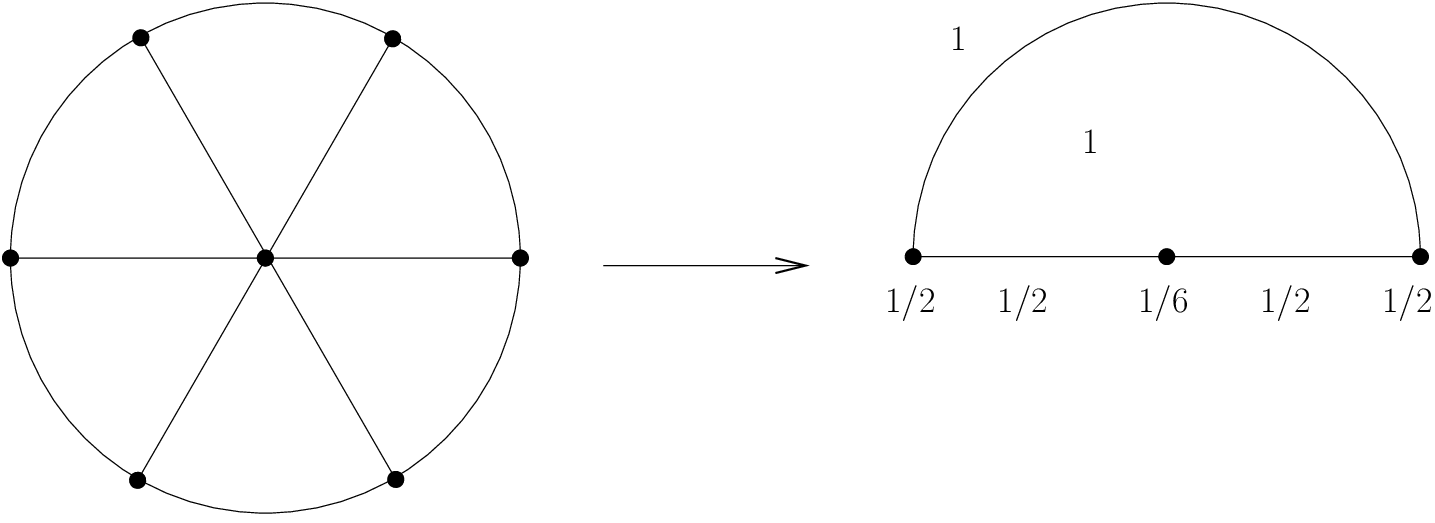, width=2.75in}
\caption{The quotient map}\label{fig:quot}
\end{figure}
The orders of the stabilizers of orbit representatives are shown in
Figure~\ref{fig:quot}. For example, the origin is ``$1/6$ of a point'' and the
orbit of a diagonal consists of two copies of a ``1/2'' edge. Formula
(\ref{eqn:simp}) thus gives
$$
\chi(S_3\bbs K) = (1/6 + 1/2 + 1/2) - (1/2 + 1/2 + 1) + 1 = 1/6 = \chi(K)/|S_3|
$$
as it should.
\end{example}

\subsection{The orbifold $\M_{1,1}$} Our primary example of an orbifold is the
moduli space of elliptic curves.

\begin{definition}
Define $\M_{1,1}$ to be the orbifold $\SL_2(\Z)\bbs\h$. It is a Riemann surface
in the category of orbifolds. There is a natural isomorphism
$$
\pi_1(\M_{1,1},p)\cong \SL_2(\Z)
$$
where the base point is the covering projection $p : \h \to \M_{1,1}$.
\end{definition}

Recall that $M_{1,1}$ is the quotient $\SL_2(\Z)\bs \h$ in the category of
Riemann surfaces. It is called the {\em coarse moduli space associated to}
$\M_{1,1}$. There is a natural morphism $\M_{1,1} \to M_{1,1}$. Each elliptic
curve $(E,P)$ determines a point $[E,P]$ of $\M_{1,1}$, which is called the {\em
moduli point of $(E,P)$}.

Families of elliptic curves $X \to T$ whose coarse period mapping $T \to
M_{1,1}$ is constant are said to be {\em isotrivial}.

\begin{exercise}[isotrivial families]
\label{ex:isotrivial1}
Show that if $T$ is a compact Riemann surface and $X \to T$ is a family of
elliptic curves over $T$, then the coarse period mapping $T \to M_{1,1}$ is
constant. (Hint: $M_{1,1}$ is a non-compact Riemann surface.) Examples of such
families over a (not necessarily compact) base $T$ can be constructed as
follows: fix an elliptic curve $(E,0)$ and an automorphism $\sigma \in
\Aut(E,0)$ of order $d$. Choose a cyclic unramified covering $S \to T$ of degree
$d$. Choose a generator $\phi$ of $\Aut(S/T)$. Then $\Z/d\Z$ acts diagonally on
$E\times S$ by $k : (x,s)\mapsto (\sigma^k(x),\phi^k(s))$. Define $X \to T$ to
be the quotient
$$
(\Z/d\Z)\bs(E\times S) \to (\Z/d\Z)\bs S.
$$
Show that $X\to T$ is a family of elliptic curves and that the coarse period
mapping $T \to M_{1,1}$ takes the constant value $[E,0]$. Show that the period
mapping $T\to \M_{1,1}$ factors through $\Aut(E,0)\bbs \ast$. Finally, show that
a family of elliptic curves $X\to T$ is isotrivial if and only if its coarse
period mapping is constant. 
\end{exercise}

\begin{proposition}
The low-dimensional homology and cohomology groups of $\M_{1,1}$ are
$$
H_1(\M_{1,1};\Z) = \Z/12\Z,\quad H^1(\M_{1,1};\Z) = 0,\quad
H^2(\M_{1,1};\Z) = \Z/12\Z.
$$
The morphism $\M_{1,1} \to M_{1,1}$ induces an isomorphism on rational homology
and rational cohomology, so that $\M_{1,1}$ has the rational homology and
cohomology of a point.
\end{proposition}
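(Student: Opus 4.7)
The plan is to reduce the computation to the group (co)homology of $\SL_2(\Z)$. Since $\h$ is contractible, the projection $E\SL_2(\Z)\times_{\SL_2(\Z)}\h \to B\SL_2(\Z)$ is a homotopy equivalence, so by the definition of orbifold (co)homology we have $H_\dot(\M_{1,1};\Z)\cong H_\dot(\SL_2(\Z);\Z)$ and similarly for cohomology. The presentation (\ref{eqn:presentation}) then realizes $\SL_2(\Z)$ as the amalgamated free product $\Z/4\Z\ast_{\Z/2\Z}\Z/6\Z$, with factors generated by $S$ and $U$, amalgamated over $\langle -I\rangle = \langle S^2\rangle = \langle U^3\rangle$. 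This reduces everything to a computation with finite cyclic groups.

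For $H_1$, I would simply abelianize the presentation. Writing the abelianization additively, the relations $S^4=1$ and $S^2=U^3$ become $4S=0$ and $2S-3U=0$, presenting $H_1(\SL_2(\Z);\Z)$ as the cokernel of the matrix $\left(\begin{smallmatrix}4 & 0 \\ 2 & -3\end{smallmatrix}\right)$. Its Smith normal form is $\mathrm{diag}(1,12)$, so $H_1\cong\Z/12\Z$. For $H_2$, I would apply the Mayer--Vietoris sequence for the amalgamated product. Since $H_2$ of every finite cyclic group vanishes, the relevant portion reads
$$
0 \to H_2(\SL_2(\Z);\Z) \to H_1(\Z/2\Z;\Z)
\xrightarrow{(i_\ast,-j_\ast)} H_1(\Z/4\Z;\Z)\oplus H_1(\Z/6\Z;\Z),
$$
where $i,j$ are the two inclusions of the amalgamated subgroup. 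Under the standard identifications $H_1(\Z/n\Z;\Z)=\Z/n\Z$, these inclusions correspond to multiplication by $2$ and by $3$ respectively, so the map sends a generator to $(2,-3)\in\Z/4\Z\oplus\Z/6\Z$, which is non-zero; hence $H_2(\SL_2(\Z);\Z)=0$. The cohomology statements then follow from the universal coefficient theorem: $H^1(\M_{1,1};\Z)=\Hom(\Z/12\Z,\Z)=0$ and $H^2(\M_{1,1};\Z)=\operatorname{Ext}(\Z/12\Z,\Z)\cong\Z/12\Z$.

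For the rational assertion, the principal congruence subgroup $\Gamma(2)$ is free of rank $2$ (it acts freely on $\h$ with quotient the thrice-punctured sphere $\P^1-\{0,1,\infty\}$) and has finite index in $\SL_2(\Z)$. Hence $\SL_2(\Z)$ is virtually free, so $H^n(\SL_2(\Z);\Q)=0$ for $n\ge 2$; combined with $H^1(\SL_2(\Z);\Q)=\Hom(\Z/12\Z,\Q)=0$, this shows $\M_{1,1}$ has the rational (co)homology of a point. Since $M_{1,1}$ is homeomorphic to a disk, the morphism $\M_{1,1}\to M_{1,1}$ of connected spaces then automatically induces an isomorphism on rational (co)homology (both sides concentrated in degree zero, where the map is the tautological identification $\Q\to\Q$). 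The main obstacle I anticipate is verifying the sign and identification conventions in the Mayer--Vietoris boundary map in degree one; everything else is bookkeeping once the amalgamated product structure is in place.
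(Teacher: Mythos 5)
Your argument is correct in outline and begins exactly as the paper does --- both pass from $\M_{1,1}$ to $B\SL_2(\Z)$ via contractibility of $\h$ and compute $H_1$ by abelianizing the presentation (\ref{eqn:presentation}) --- but you handle $H^2$ by a genuinely different route. The paper never computes $H_2$ integrally: it shows $H^k(\SL_2(\Z);V)=0$ for $k\ge 2$ and $V$ a $\Q$-module by restricting to a free congruence subgroup $\SL_2(\Z)[m]$, $m\ge 3$, concludes that $H^2(\SL_2(\Z);\Z)$ is a finitely generated torsion group, and then identifies it with $\Hom(H_1(\SL_2(\Z)),\Q/\Z)\cong\Z/12\Z$. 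You instead exploit the splitting $\SL_2(\Z)\cong\Z/4\ast_{\Z/2}\Z/6$ and the Mayer--Vietoris sequence of the amalgam to prove $H_2(\SL_2(\Z);\Z)=0$ outright, after which the universal coefficient theorem delivers both cohomology groups. Your route is more self-contained (no appeal to level structures) and yields strictly more --- the vanishing of $H_2$, and with a little more effort the full periodic (co)homology of $\SL_2(\Z)$ --- at the cost of having to check that the presentation really does exhibit the amalgamated product (it does: $U^6=(S^2)^2=S^4=1$ is a consequence of the given relations, and $S$, $U$ genuinely have orders $4$ and $6$) and of the sign conventions you flag.

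One correction in your treatment of the rational statement: $\Gamma(2)=\SL_2(\Z)[2]$ is \emph{not} free and does not act freely on $\h$, since it contains the torsion element $-I$, which acts trivially; it is $\PSL_2(\Z)[2]$ that is free of rank $2$ with quotient the thrice-punctured sphere, and $\SL_2(\Z)[2]\cong F_2\times C_2$. To exhibit a finite-index free subgroup of $\SL_2(\Z)$ itself you should take $\SL_2(\Z)[m]$ with $m\ge 3$, which is torsion free and acts freely on $\h$ with non-compact Riemann surface quotient, hence is free (Exercise~\ref{ex:level_tf}). With that substitution your virtual-freeness argument, and hence the vanishing of $H^n(\SL_2(\Z);\Q)$ for $n\ge 2$ and the identification of the map to $M_{1,1}$ on rational (co)homology, goes through as intended.
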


\begin{proof}
Since $\h$ is contractible, $\M_{1,1}$ has the homotopy type of the classifying
space $B\SL_2(\Z)$ of $\SL_2(\Z)$. Therefore
$$
H_\dot(\M_{1,1};\Z) \cong H_\dot(\SL_2(\Z);\Z)
\text{ and }
H^\dot(\M_{1,1};\Z) \cong H^\dot(\SL_2(\Z);\Z).
$$
Since $\SL_2(\Z)$ is finitely presented, this implies that its homology and
cohomology are finitely generated in degrees 1 and 2.\footnote{It is not
difficult to see that they are finitely generated in all degrees as $B\SL_2(\Z)$
can be realized as a CW-complex with a finite number of cells in each degree.}
In particular, $H_1(\M_{1,1};\Z)$ is the maximal abelian quotient of
$\SL_2(\Z)$. Using the presentation (\ref{eqn:presentation}) of $\SL_2(\Z)$, we
have
$$
H_1(\M_{1,1};\Z) = (\Z s \oplus \Z u)/\langle 2s-3u,\ 4s\rangle \cong \Z/12\Z
$$
from which it follows that
$$
H^1(\M_{1,1};\Z) = \Hom(H_1(\M_{1,1}),\Z) = 0.
$$
By Exercise~\ref{ex:level_tf}, the finite index, normal subgroup $\SL_2(\Z)[m]$
of $\SL_2(\Z)$ is free when $m\ge 3$.\footnote{A group $\G$ is {\em virtually
free} if it has a free subgroup of finite index. Thus $\SL_2(\Z)$ is virtually
free.} Standard arguments imply that
$$
H^k(\SL_2(\Z);V) = H^k(\SL_2(\Z)[m];V)^{\SL_2(\Z/m)}
$$
whenever $V$ is a $\Q$-module. Since $\SL_2(\Z)[m]$ is free, these vanish when
$k\ge 2$. It follows that $H^2(\SL_2(\Z);\Z)$ is a finitely generated torsion
group. The universal coefficient theorem implies that
$$
H^2(\SL_2(\Z);\Z) \cong \Hom(H_1(\SL_2(\Z),\Q/\Z)) \cong \Z/12\Z.
$$
\end{proof}

The purpose of the following examples and exercises is to give some feel for the
orbifold structure of $\M_{1,1}$.

\begin{exercise}
\label{ex:no_lift}
Consider the orbifold morphism $\M_{1,1} \to M_{1,1}$. Show that if $j
:\D\hookrightarrow M_{1,1}$ is the inclusion of a coordinate disk centered at
the orbit $[i]$ of $i$ (or $[\rho]$ of $\rho$), then there is no orbifold lift
$\jtilde : \D \to \M_{1,1}$ of the restriction of $j$ to $\D$. Deduce that there
is no universal curve over the coarse moduli space $M_{1,1}$.
\end{exercise}

\begin{example}
Denote the subgroup $\{\pm I\}$ of $\SL_2(\Z)$ by $C_2$. It acts trivially on
$\h$. The fundamental group of the quotient $\{\pm I\}\bbs\h$ is cyclic of order
2. The projection $\h \to C_2\bbs\h$ is viewed as a 2:1 cover of orbifolds, even
though it is a homeomorphism in the category of topological spaces. The group
$\PSL_2(\Z)$, which is the quotient $\SL_2(\Z)/\{\pm I\}$, acts faithfully on
$\h$. The orbifold quotient $\PSL_2(\Z)\bbs\h$ has fundamental group
$\PSL_2(\Z)$ and is not isomorphic to $\M_{1,1}$.
\end{example}

Given two sets $\{a_1,a_2,a_3\}$ and $\{b_1,b_2,b_3\}$ of 3 distinct points of
the Riemann sphere $\P^1$, there is a unique $\phi \in \Aut \P^1$ such that
$\phi(a_j) = b_j$ for all $j$. The permutations of $\{0,1,\infty\}$ therefore
define an action of the symmetric group $S_3$ on $\P^1$ which restricts to a
homomorphism $S_3 \hookrightarrow \Aut \Pminus$. The group $C_2\times S_3$ acts
on $\Pminus$ via the projection onto $S_3$:
$$
C_2 \times S_3 \to S_3 \hookrightarrow \Aut(\C-\{0,1\}).
$$

\begin{proposition}
\label{prop:quot}
The orbifold Riemann surface $\M_{1,1}$ is isomorphic to the orbifold
$(C_2\times S_3)\bbs(\C-\{0,1\})$.
\end{proposition}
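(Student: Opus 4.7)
\noindent\emph{Proof plan.}
The plan is to use the classical modular $\lambda$-function $\lambda:\h\to \Pminus$ to present $\Pminus$ as the ``level-$2$'' cover of $\M_{1,1}$, and then to descend the universal-cover construction of orbifolds to obtain the advertised presentation.

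First I would set up $\lambda$. For $\tau\in\h$ the elliptic curve $\C/\Lambda_\tau$ has Weierstrass form $y^2=4(x-e_1)(x-e_2)(x-e_3)$, where the $e_j$ are the values of the Weierstrass $\wp$-function at the three nontrivial $2$-torsion points $\tfrac{1}{2},\tfrac{\tau}{2},\tfrac{1+\tau}{2}$. Setting $\lambda(\tau):=(e_3-e_1)/(e_2-e_1)$ puts the curve in Legendre form $y^2=x(x-1)(x-\lambda(\tau))$, so $\lambda(\tau)\in\Pminus$. A direct (or theta-function) computation shows that $\lambda$ is holomorphic and invariant under the principal level-$2$ congruence subgroup $\G(2)\subset\SL_2(\Z)$, and that it induces a biholomorphism $\G(2)\bs\h\xrightarrow{\sim}\Pminus$. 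Since $-I\in\G(2)$ acts trivially on $\h$, this says that $\lambda$ is a holomorphic universal covering with deck group $P\G(2):=\G(2)/\{\pm I\}$.

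Second, I would identify $\SL_2(\Z)/\G(2)\cong\SL_2(\F_2)\cong S_3$ and check that the induced $S_3$-action on $\Pminus$ by M\"obius transformations is exactly the permutation action on $\{0,1,\infty\}$ used in the statement. Conceptually, $\SL_2(\F_2)$ acts by reorderings of the three nontrivial $2$-torsion points of $\C/\Lambda_\tau$, and under the Legendre identification these $2$-torsion points correspond to the three finite branch points $0,1,\lambda$ of $y^2=x(x-1)(x-\lambda)$.

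Finally, I would assemble the orbifold isomorphism. By the construction for basic orbifolds over non-simply-connected bases given at the end of Section~3.1, $(C_2\times S_3)\bbs\Pminus$ equals $\Gtilde\bbs\h$ for a certain extension
\[
1 \to P\G(2) \to \Gtilde \to C_2\times S_3 \to 1
\]
obtained by lifting the $(C_2\times S_3)$-action along $\lambda$. The remaining task, and the main obstacle, is to match $\Gtilde$ with $\SL_2(\Z)$. Using the splitting $\G(2)\cong\{\pm I\}\times F$ with $F$ a free subgroup of rank $2$ (valid since $H^2(F_2;\Z/2)=0$), the central $\{\pm I\}\subset\SL_2(\Z)$ supplies the $C_2$-factor of $\Gtilde$ (this is the universal hyperelliptic involution of each elliptic curve, which acts trivially on $\h$), while the $S_3$-factor is lifted through the quotient $\PSL_2(\Z)\to\PSL_2(\Z)/P\G(2)=S_3$. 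Verifying that these two sources of automorphisms assemble into $\SL_2(\Z)$ yields $\Gtilde\cong\SL_2(\Z)$, and hence $(C_2\times S_3)\bbs\Pminus\cong\SL_2(\Z)\bbs\h=\M_{1,1}$.
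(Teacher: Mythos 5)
Your plan follows the paper's proof almost step for step: both arguments pass to the level~$2$ subgroup, use $\SL_2(\Z)/\SL_2(\Z)[2]\cong\SL_2(\F_2)\cong S_3$, identify $\PSL_2(\Z)[2]\bs\h$ with $\Pminus$ (you do this via the $\lambda$-function; the paper invokes the fundamental domain and \cite{clemens}), and split $\SL_2(\Z)[2]\cong C_2\times F_2$ using the freeness of $\PSL_2(\Z)[2]$. The one place you diverge is the final assembly, and you have correctly identified it as the main obstacle.

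That final verification is where there is a genuine gap: the group $\Gtilde$ you construct is \emph{not} isomorphic to $\SL_2(\Z)$. By the paper's construction of $\G\bbs X$ for non-simply-connected $X$, the extension $\Gtilde$ attached to the $(C_2\times S_3)$-action on $\Pminus$ consists of pairs $\big((c,\sigma),g\big)$ with $g\in\Aut\h$ covering the action of $\sigma$; since $c$ acts trivially, $\Gtilde\cong C_2\times N$ where $N$ is the full group of lifts of $S_3$, namely $N=\PSL_2(\Z)$. Thus $\Gtilde\cong C_2\times\PSL_2(\Z)$, which cannot be $\SL_2(\Z)$: every torsion element of $C_2\times(C_2\ast C_3)$ has order dividing $6$, whereas $S\in\SL_2(\Z)$ has order $4$ (equivalently, $H_1(\SL_2(\Z))\cong\Z/12\Z$ is cyclic while $H_1(C_2\times\PSL_2(\Z))\cong\Z/2\Z\oplus\Z/6\Z$ is not). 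Concretely, the two ``sources of automorphisms'' cannot assemble as a direct product: any homomorphism $\phi:\SL_2(\Z)\to C_2\times S_3$ compatible with reduction mod $2$ and injective on $\{\pm I\}$ must send $-I$ to $(-1,1)$, yet $\phi(S)$ lies over a transposition and every such element of $C_2\times S_3$ squares to the identity, contradicting $S^2=-I$. The correct target is the \emph{nonsplit} central extension of $S_3$ by $C_2$ (the dicyclic group of order $12$), acting on $\Pminus$ through $S_3$. You are in good company --- the paper's own proof elides exactly this point with the phrase ``since the actions of $C_2$ and $S_3$ commute,'' but commutativity of the actions on $\Pminus$ does not trivialize the extension class. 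The discrepancy is invisible to the subsequent application (the orbifold Euler characteristic only sees the order $12$, and the reduced orbifolds genuinely agree), but as stated the isomorphism of orbifolds does not hold, and your proposed verification of $\Gtilde\cong\SL_2(\Z)$ would fail.
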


\begin{proof}
The level 2 subgroup $\SL_2(\Z)[2]$ of $\SL_2(\Z)$ is the subgroup of
$\SL_2(\Z)$ consisting of matrices congruent to the identity mod 2. The quotient
$\SL_2(\Z)/\SL_2(\Z)[2]$ is isomorphic to $\SL_2(\F_2)$, which is isomorphic to
the symmetric group $S_3$. It follows from Exercise~\ref{ex:level_tf} that the
image $\PSL_2(\Z)[2]$ of $\SL_2(\Z)$ in $\PSL_2(\Z)$ is torsion free. The
quotient $\PSL_2(\Z)[2]\bs \h$ is thus a Riemann surface. It is biholomorphic to
$\Pminus$ as can be seen, for example, by considering the fundamental domain of
the action of $\SL_2(\Z)$ on $\h$. (See \cite{clemens}.) It follows that
$\PSL_2(\Z)[2]$ is a free group $F_2$ of rank 2. Choose a splitting of
$\SL_2(\Z)[2]\to F_2$ (not unique!) and use it to identify $\SL_2(\Z)[2]$ with
$F_2\times C_2$. Then
$$
\SL_2(\Z)[2]\bbs \h \cong C_2\bbs (F_2\bs \h) \cong C_2\bbs(\C-\{0,1\}.
$$
Since the actions of $C_2$ and $S_3$ on $\C-\{0,1\}$ commute, we have
$$
\M_{1,1} \cong S_3 \bbs \big(C_2\bbs(\C-\{0,1\})\big)
\cong (C_2\times S_3)\bbs (\C-\{0,1\}).
$$
\end{proof}

Since $\chi(\C-\{0,1\})=-1$ and $C_2\times S_3$ has order 12, we have:

\begin{corollary}
The orbifold Euler characteristic of $\M_{1,1}$ is $-1/12$.
\end{corollary}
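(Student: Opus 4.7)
The plan is to feed the identification $\M_{1,1} \cong (C_2 \times S_3) \bbs \Pminus$ of Proposition~\ref{prop:quot} into the definition of orbifold Euler characteristic from Section~\ref{para:euler}, arriving at $\chi(\M_{1,1}) = \chi(\Pminus)/|C_2 \times S_3| = -1/12$.

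The two numerical ingredients are immediate: $\Pminus$ is the three-punctured sphere, so $\chi(\Pminus) = \chi(\P^1) - 3 = -1$, and $|C_2 \times S_3| = 12$. The definition then asks me to produce a finite-index normal subgroup $\G' \subseteq \G := C_2 \times S_3$ acting freely on $\Pminus$, at which point $\chi(\G \bbs \Pminus) = \chi(\G' \bs \Pminus)/[\G : \G']$.

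The only substantive subtlety --- and hence the ``main obstacle,'' such as it is --- is the choice of $\G'$: no nontrivial subgroup of $\G$ acts freely on $\Pminus$, because the $C_2$ factor acts trivially and thus fixes every point. Since $\G$ is finite, however, I will take $\G' = \{1\}$: the trivial subgroup is normal of index $12$, acts freely vacuously (there being no non-identity elements to check), and the ``quotient'' $\G' \bs \Pminus = \Pminus$ is a finite CW complex (it has the homotopy type of a wedge of two circles). The formula then delivers $\chi(\M_{1,1}) = -1/12$ directly, and the well-definedness exercise following Definition~\ref{def:vfree} guarantees that the answer is independent of this choice.

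As a sanity check I would redo the computation via the original presentation $\M_{1,1} = \SL_2(\Z)\bbs\h$ with $\G' = \SL_2(\Z)[3]$: this subgroup is torsion-free and does not contain $-I$, so it acts freely on $\h$; its index is $|\SL_2(\F_3)| = 24$ and the quotient $Y(3)$ is a Riemann surface of genus $0$ with four cusps, of Euler characteristic $-2$, recovering $-2/24 = -1/12$ and confirming the answer.
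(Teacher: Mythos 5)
Your proof is correct and follows the same route as the paper, which deduces the corollary immediately from Proposition~\ref{prop:quot} together with $\chi(\C-\{0,1\})=-1$ and $|C_2\times S_3|=12$. Your extra care in choosing $\G'=\{1\}$ (since no nontrivial subgroup acts freely) and your cross-check via $\SL_2(\Z)[3]$ are sound elaborations of what the paper leaves implicit.
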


\begin{exercise}
\label{ex:disk}
Show that the function $q : \h \to \D^\ast$ defined by $q(\tau) = \exp(2\pi
i\tau)$ induces an orbifold isomorphism
$$
\{\pm 1\}\PZ\bbs \h \to C_2\bbs\D^\ast.
$$
Deduce that there is an orbifold morphism $\D^\ast \to \M_{1,1}$ which factors
through the quotient of $\D^\ast$ by the {\em trivial} $C_2$-action:
$$
\xymatrix{
\D^\ast \ar[r] & C_2 \bbs \D^\ast \ar[r] & \M_{1,1}.
}
$$
\end{exercise}

\begin{example}
The cyclic group $C_2 = \{\pm I\}$ acts on the line bundle $\C\times \h \to \h$
by $-I : (z,\tau) \mapsto (-z,\tau)$. Sections of the orbifold line bundle
$$
C_2\bbs (\C\times \h) \to C_2 \bbs \h
$$
correspond to $C_2$ invariant functions $f : \h \to \C$, and are thus zero.
\end{example}

\begin{exercise}
\label{ex:L}
Show that the function
$$
\SL_2(\Z)\times \C\times \h \to \C \times \h
$$
defined by
$$
\begin{pmatrix} a & b \cr c & d \end{pmatrix} :
(z,\tau) \mapsto \big((c\tau+d)^k z,(a\tau+b)/(c\tau +d) \big)
$$
is an action that lifts the standard action of $\SL_2(\Z)$ on $\h$. Set
$$
\L_k = \SL_2(\Z)\bs(\C\times \h)
$$
This is an orbifold line bundle over $\M_{1,1}$. Show that $\L_k = \L_1^{\otimes
k}$. Show that the holomorphic sections of $\L_k$ correspond to holomorphic
functions $f : \h \to \C$ that satisfy
$$
f\big((a\tau+b)/(c\tau+d)\big) = (c\tau+d)^k f(\tau).
$$
Show that $\L_k$ has no non-zero sections when $k$ is odd.
\end{exercise}

\subsection{The universal elliptic curve $\E \to \M_{1,1}$}

Define $\E$ to be the orbifold quotient
$$
(\SL_2(\Z)\ltimes \Z^2)\bs (\C \times \h)
$$
where the action is defined in Exercise~\ref{ex:big_action}. The projection
$\C\times \h \to \h$ induces an orbifold morphism
$$
\E \to \M_{1,1}.
$$
It is a family of elliptic curves over $\M_{1,1}$ in the category of orbifolds.

\begin{exercise}
Show that every orbifold morphism $T \to \M_{1,1}$ is a locally liftable
mapping $T\to M_{1,1}$. Show that the universal elliptic curve $\E \to \M_{1,1}$
pulls back along an orbifold morphism $\Phi : T \to \M_{1,1}$ to a family of
elliptic curves $X \to T$. Use this to prove the following theorem.
\end{exercise}

\begin{theorem}
\label{thm:families}
There is a 1-1 correspondence between isomorphism classes of families of
elliptic curves $X \to T$ over a complex manifold and holomorphic orbifold
morphisms $T \to \M_{1,1}$. This orbifold morphism is induced by the period
mapping . The morphism $\Phi : T \to \M_{1,1}$ corresponds to the isomorphism
class of the pullback family $\Phi^\ast \E \to T$.
\end{theorem}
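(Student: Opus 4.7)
The plan is to construct maps in both directions between $\{\text{isomorphism classes of families } X\to T\}$ and $\{\text{orbifold morphisms } T\to\M_{1,1}\}$, and to check that they are mutually inverse, invoking throughout the results already established for the framed case and for the universal cover construction.

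First I would go from a family to an orbifold morphism. Given $\pi:X\to T$, pull it back along the universal covering $p:\Ttilde\to T$ to get $p^\ast X\to\Ttilde$. Because $\Ttilde$ is simply connected, this family admits a framing $\aa,\bb$ (by the framing exercise for simply connected bases), so by the proposition for framed families it has a holomorphic period map $\Phitilde:\Ttilde\to\h$ and is canonically isomorphic to $\Phitilde^\ast\E_\h$. The preceding exercise produces a homomorphism $\phi:\Aut(\Ttilde/T)=\pi_1(T,p)\to\SL_2(\Z)$ encoding how the framing transforms under deck transformations, and the equivariance exercise gives $\Phitilde(\gamma t)=\phi(\gamma)\Phitilde(t)$. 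Viewing $T$ as the orbifold $(\Ttilde,\pi_1(T))$, the pair $(\Phitilde,\phi)$ is exactly a pointed orbifold morphism $T\to(\h,\SL_2(\Z))=\M_{1,1}$. A different choice of framing of $p^\ast X$ differs from the first by a single global element of $\SL_2(\Z)$, which conjugates both $\Phitilde$ and $\phi$; thus the induced morphism is well-defined modulo the conjugation equivalence that defines an orbifold (as opposed to pointed) morphism. Isomorphic families produce the same data up to this equivalence.

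In the other direction, an orbifold morphism $\Phi:T\to\M_{1,1}$ is represented by a conjugacy class of pairs $(\Phitilde,\phi)$ satisfying the same equivariance property. The pullback $\Phitilde^\ast\E_\h\to\Ttilde$ is a framed family of elliptic curves, and the equivariance exactly says that the diagonal action of $\pi_1(T)$ on $\Ttilde$ (by deck transformations) and on $\C\times\h$ (through $\phi$ and the $\SL_2(\Z)$-action of Exercise~\ref{ex:big_action}) descends to an action on $\Phitilde^\ast\E_\h$ with quotient a family $\Phi^\ast\E\to T$. This is the same thing as forming the fiber product in the orbifold category, using the definition of $\E=(\SL_2(\Z)\ltimes\Z^2)\bs(\C\times\h)$ and the fact that the pullback of an orbifold covering is determined by pulling back along the representing map to universal covers. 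Conjugate representatives give canonically isomorphic quotients, so the family is well defined up to isomorphism.

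Finally I would check the two constructions are mutually inverse. Starting from $X\to T$, form $(\Phitilde,\phi)$ as in the first step; then $\Phi^\ast\E\to T$ is, by construction, the $\pi_1(T)$-quotient of $\Phitilde^\ast\E_\h\to\Ttilde$. But the framed isomorphism $p^\ast X\cong\Phitilde^\ast\E_\h$ from the framed classification is $\pi_1(T)$-equivariant (both sides carry compatible actions coming from the framing transformation law), so it descends to an isomorphism $X\cong\Phi^\ast\E$ over $T$ that is the identity on zero sections. Conversely, starting from $\Phi$, reading off the period map and monodromy of the framed family $\Phitilde^\ast\E_\h$ recovers the pair $(\Phitilde,\phi)$, hence $\Phi$.

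The main obstacle is keeping the bookkeeping of choices consistent: the universal cover, the framing, and the representative $(\Phitilde,\phi)$ are all non-canonical, and the essential content of the theorem is that the conjugation equivalence in the definition of orbifold morphism is precisely what kills these ambiguities. Once one is convinced that a change in any of these choices translates into simultaneous conjugation of $\Phitilde$ and $\phi$ by an element of $\SL_2(\Z)$, the rest is a matter of unwinding definitions against the already-established framed classification.
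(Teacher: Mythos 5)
Your argument is correct and follows exactly the route the paper intends: the theorem is left there as an exercise whose ingredients (pullback to the universal cover, choice of framing, the period map $\Phitilde$, the monodromy homomorphism $\phi:\Aut(\Ttilde/T)\to\SL_2(\Z)$, and the equivariance $\Phitilde(\gamma t)=\phi(\gamma)\Phitilde(t)$) are precisely the ones you assemble. The point you correctly isolate --- that every ambiguity of choice (framing, representative pair) amounts to simultaneous conjugation of $(\Phitilde,\phi)$ by a single element of $\SL_2(\Z)$, which is exactly the equivalence built into the definition of an orbifold morphism --- is the heart of the matter.
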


\begin{exercise}
Denote the $\SL_2(\Z)$-orbit of $\tau\in \h$ by $[\tau]$. Show that the
inclusion $j: M_{1,1}-\{[i],[\rho]\} \hookrightarrow M_{1,1}$ is locally
liftable to a map to $\h$, but that there is no orbifold mapping $\jtilde$ such
that the diagram
$$
\xymatrix{
& \M_{1,1} \ar[d]\cr
M_{1,1}-\{[i],[\rho]\} \ar[r] \ar@{..>}[ur]^{\jtilde} & M_{1,1}
}
$$
commutes. Deduce that there is no universal elliptic curve over either $M_{1,1}$
or $M_{1,1}-\{[i],[\rho]\}$.
\end{exercise}

Theorem~\ref{thm:families} is more subtle than it may at first appear due to the
subtleties of orbifold mappings. This is illustrated by {\em isotrivial
families} --- non-trivial families with constant period mappings.

\begin{example}
\label{ex:isotrivial2}
This example is a continuation of Exercise~\ref{ex:isotrivial1}. Let $X \to T$
be the isotrivial family of elliptic curves associated to $\sigma \in \Aut(E,0)$
and an unramified covering $S\to T$.  The coarse period mapping $T \to M_{1,1}$
takes the constant value $[E,0]$. Even though the period mapping $T \to M_{1,1}$
to the coarse moduli space is constant, the orbifold period mapping $T \to
\M_{1,1}$ is non-trivial when $\sigma$ is non-trivial. To compute the period
mapping, fix a framing $\aa,\bb$ of $H_1(E;\Z)$. Let
$$
\tau = \textstyle{\int_\bb \w /\int_\aa \w}.
$$
This is the point of $\h$ that corresponds to the framed elliptic curve
$(E,0;\aa,\bb)$. Define $A\in \SL_2(\Z)$ by
$$
\sigma_\ast
\begin{pmatrix}
\bb \cr \aa
\end{pmatrix}
= A
\begin{pmatrix}
\bb \cr \aa
\end{pmatrix}
$$
Then $A\tau = \tau$ and the induced mapping $\rho : \pi_1(T) \to \pi_1(\M_{1,1}) =
\SL_2(\Z)$ is the composite $\pi_1(T) \to \Aut(S/T) \to \SL_2(\Z)$ where $\phi
\in \Aut(S/T)$ is mapped to $A$.
The period mapping $T \to \M_{1,1}$ is represented by the mapping
$(\Ttilde,\pi_1(T)) \to (\h,\SL_2(\Z))$ that takes $(t,\gamma)$ to
$(\tau,\rho(\gamma))$.
\end{example}

\section{The Orbifold $\Mbar_{1,1}$ and Modular Forms}

In this section we explain how to construct the orbifold compactification
$\Mbar_{1,1}$ of $\M_{1,1}$; it is the prototypical example of an orbifold
obtained by patching. We are able to do this as the orbifold $\Mbar_{1,1}$ is
easy to define because it is obtained by gluing two basic orbifolds along
another basic orbifold. In general, to construct an orbifold, it is necessary to
glue more than two basic orbifolds. In this case, one has a compatibility
condition for the gluing maps that is analogous to the familiar cocycle
condition $g_{\alpha\gamma} = g_{\alpha\beta}g_{\beta\gamma}$. For completeness,
we  discuss stacks briefly in Appendix~\ref{app:stacks}. Analytical (resp.\
topological) orbifolds are stacks in the category of analytic varieties (resp.\
topological spaces).

To construct the orbifold $\Mbar_{1,1}$, we begin with the diagram
$$
\xymatrix{
& \h\ar@(ur,ul)[]_{C_2\times\Z}
\ar[dl]_p\ar[dr]^q \cr
\h\ar@(ul,dl)[]_{\SL_2(\Z)} && \ar@(dr,ur)[]_{C_2}\D
}
$$
of spaces with compatible group actions. Here $C_2\times \Z$ acts on $\h$ by
$(\pm1,n) : \tau \mapsto \tau + n$, $C_2$ acts trivially on $\D$, $q(\tau) =
\exp(2\pi i\tau)$, $p$ is the identity, and $(\pm 1,n) \in C_2\times \Z$ is
mapped to $\pm\begin{pmatrix} 1 & n \cr 0 & 1 \end{pmatrix} \in \SL_2(\Z)$ and
to $\pm 1$ in $C_2$. These induce orbifold mappings
$$
\xymatrix{
& C_2\bbs \D^\ast \ar[dl]\ar[dr] \cr
\M_{1,1} && C_2\bbs\D
}
$$
where we identify $\Z\bs\h$ with the punctured $q$-disk $\D^\ast$, via the
mapping $\tau \mapsto \exp(2\pi i \tau)$, and the right hand arrow is the
quotient of the inclusion $\D^\ast \hookrightarrow \D$ by the trivial $C_2$
action.

The compactification $\Mbar_{1,1}$ of $\M_{1,1}$ is essentially obtained by
adding one point with automorphism group $C_2$ to $\M_{1,1}$. Formally,
$\Mbar_{1,1}$ is the orbifold obtained by gluing $\M_{1,1}$ and $C_2\bbs\D$
together via $C_2\bbs\D^\ast$. It is a Riemann surface in the category of
orbifolds. The parameter $q$ is a local holomorphic coordinate about the new
closed point $\infty$. One works with $\Mbar_{1,1}$ in the obvious way. For
example, a line bundle on $\Mbar_{1,1}$ consists of a line bundle on each of the
orbifolds $\M_{1,1}$ and $C_2\bbs\D$, together with an isomorphism of their
pullbacks to $C_2\bbs\D^\ast$. Sections of a line bundle over $\Mbar_{1,1}$
consist of sections of the line bundle over the charts $\M_{1,1}$ and
$C_2\bbs\D$ that agree on their pullbacks to $C_2\bbs\D^\ast$.

The coarse moduli space associated to $\Mbar_{1,1}$ is
$$
\overline{M}_{1,1} := M_{1,1}\cup_{\D^\ast} \D
$$
where $\D$ is the $q$-disk.

The orbifold $\Mbar_{1,1}$ can also be expressed as a finite quotient of the
Riemann sphere. This gives an algebraic description of $\Mbar_{1,1}$.

\begin{exercise}
Show that the orbifold isomorphism of Proposition~\ref{prop:quot} extends to
an orbifold isomorphism
$$
\Mbar_{1,1} \cong (C_2\times S_3)\bbs \P^1
$$
where $C_2$ acts trivially on $\P^1$ and $S_3$ acts on $\P^1$ by permuting
$\{0,1,\infty\}$. Show that the quotient mapping $\P^1 \to \Mbar_{1,1}$ is
ramified at $\{0,1,\infty\}$ and that it is locally 2:1 about each of these
points. Deduce that the orbifold Euler characteristic of $\Mbar_{1,1}$ is
$5/12$.
\end{exercise}

The line bundles $\L_k$ extend naturally to $\Mbar_{1,1}$.

\begin{proposition}
The orbifold line bundle $\L_k \to \M_{1,1}$ extend  naturally to a holomorphic
line bundles $\Lbar_k \to \Mbar_{1,1}$.
\end{proposition}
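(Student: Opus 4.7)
The plan is to use the gluing description of $\Mbar_{1,1}$ explicitly: a line bundle on $\Mbar_{1,1}$ consists of a line bundle on the chart $\M_{1,1}$, a line bundle on the chart $C_2\bbs\D$, and an isomorphism of the two pullbacks to the overlap $C_2\bbs\D^\ast$. We already have $\L_k$ on $\M_{1,1}$; we must produce a compatible bundle on the $q$-disk chart and exhibit the transition isomorphism on the punctured disk.

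First I would note that for the matrix $\pm\begin{pmatrix}1 & n \cr 0 & 1\end{pmatrix}\in\SL_2(\Z)$ one has $c=0$, $d=\pm 1$, so the defining action of Exercise~\ref{ex:L} restricted to the subgroup $C_2\times\Z \hookrightarrow \SL_2(\Z)$ sends $(z,\tau)$ to $(\epsilon^k z,\tau+n)$, where $\epsilon=\pm 1$. In particular the $\Z$-part acts trivially on the fiber and the $C_2$-part acts by the character $\epsilon\mapsto \epsilon^k$. This fiberwise triviality on translations is exactly what allows $\L_k$ to descend through the $q$-coordinate.

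Next I would define $\Lbar_k$ on the chart $C_2\bbs\D$ as the orbifold line bundle $C_2\bbs(\C\times\D)\to C_2\bbs\D$, where $C_2$ acts trivially on $\D$ and acts on the fiber by $\epsilon\mapsto \epsilon^k$. This is a well-defined holomorphic line bundle on the orbifold chart, whose underlying coarse space line bundle is trivial on $\D$ for $k$ even and is the bundle whose local sections are odd functions for $k$ odd (more precisely, a section of $\Lbar_k$ over $C_2\bbs\D$ is a holomorphic function $g(q)$ with $g=\epsilon^k g$, i.e.\ $g=0$ for $k$ odd and arbitrary for $k$ even).

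Finally I would construct the gluing isomorphism over $C_2\bbs\D^\ast$ using the identification $q=\exp(2\pi i\tau)$, which induces an orbifold isomorphism $(C_2\times\Z)\bbs\h \xrightarrow{\sim} C_2\bbs\D^\ast$ as in Exercise~\ref{ex:disk}. Pulling back $\L_k$ along the morphism $C_2\bbs\D^\ast\to\M_{1,1}$ yields $(C_2\times\Z)\bbs(\C\times\h)$ with action $(\epsilon,n):(z,\tau)\mapsto(\epsilon^k z,\tau+n)$ by the computation above; pulling back the chart bundle along $C_2\bbs\D^\ast\hookrightarrow C_2\bbs\D$ yields the same orbifold quotient with the same action after changing variables to $\tau$. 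The identity map $\C\times\h\to\C\times\h$ is equivariant and therefore descends to the desired transition isomorphism. The main point to be careful about is matching the $C_2$-actions on fibers in both charts, which is why the exponent $k$ must appear identically in both constructions; since the $-I\in\SL_2(\Z)$-action on $\L_k$ is multiplication by $(-1)^k$ and this coincides with the $C_2$-action we chose on the $q$-disk chart, the cocycle condition (which here reduces to the single compatibility on $C_2\bbs\D^\ast$) is automatic and the extension $\Lbar_k$ is defined.
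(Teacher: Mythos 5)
Your proposal is correct and follows essentially the same route as the paper: define the bundle on the chart $C_2\bbs\D$ as the quotient of the trivial bundle $\C\times\D$ by $\pm 1 : (z,q)\mapsto((\pm 1)^k z,q)$, and glue via the identity over $C_2\bbs\D^\ast$. The only difference is that you spell out the (correct) computation that the subgroup $C_2\times\Z\subset\SL_2(\Z)$ acts on the fiber of $\L_k$ only through the character $\epsilon\mapsto\epsilon^k$, which the paper leaves implicit.
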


\begin{proof}
Define $\Lbar_k$ to be the line bundle over $\Mbar_{1,1}$ whose restriction to
$\M_{1,1}$ is $\L_k$, whose restriction to $C_2\bbs\D$ is the quotient of the
trivial bundle $\C\times \D \to \D$ by the action $\pm 1 : (z,q)\mapsto ((\pm
1)^k z, q)$. The isomorphism
$$
C_2\bbs(\C\times \D^\ast) \cong p^\ast \L_k \to q^\ast \Lbar_k \cong
C_2 \bbs(\C\times \D^\ast)
$$
is the identity.
\end{proof}

\begin{exercise}
Prove that $\Lbar_k$ is isomorphic to $\Lbar_1^{\otimes k}$.
\end{exercise}

\subsection{Modular forms}
\label{sec:modular_forms}

A holomorphic (resp.\ meromorphic) modular  function of weight $k \in \N$ is a
holomorphic (resp.\ meromorphic) function $f : \h \to \C$ that satisfies
$$
f(\gamma\tau) = (c\tau + d)^k f(\tau)
$$
for all
$$
\gamma = \begin{pmatrix} a & b \cr c & d \end{pmatrix} \in \SL_2(\Z).
$$
Since $-I \in \SL_2(\Z)$, each modular function of weight $k$ satisfies $f(\tau)
= (-1)^kf(\tau)$, from which it follows that all modular functions of odd weight
vanish. As we have seen in Exercise~\ref{ex:L}, holomorphic (resp.\ meromorphic)
modular functions of weight $k$ are precisely the holomorphic (resp.\
meromorphic) sections of the orbifold line bundle $\L_k$ over $\M_{1,1}$.

\begin{example}[Eisenstein Series]
Fix an integer $k>2$. For a lattice $\Lambda$ in $\C$ define $S_{k}$ by the
absolutely convergent series
$$
S_{k}(\Lambda) = \sum_{\substack{\lambda \in \Lambda\cr \lambda \neq 0}}
\frac{1}{\lambda^{k}}.
$$
Observe that when $u\in \C^\ast$
\begin{equation}
\label{eqn:fun_eqn}
S_k(u\Lambda) = u^{-k}S_k(\Lambda)
\end{equation}
Since $\Lambda = -\Lambda$, this implies that $S_k$ is identically zero when
$k$ is odd.

Recall that for $\tau\in\h$, $\Lambda_\tau = \Z\oplus \Z\tau$. For $\tau \in
\h$, define
$$
G_k(\tau) := S_k(\Lambda_\tau).
$$
This is holomorphic on $\h$. Since $\Lambda_{\gamma\tau} = (c\tau +
d)^{-1}\Lambda_\tau$ for all
$$
\gamma = \begin{pmatrix}a & b \cr c & d \end{pmatrix} \in \SL_2(\Z),
$$
the identity (\ref{eqn:fun_eqn}) becomes
$G_k(\gamma\tau) = (c\tau + d)^k G_k(\tau)$. That is, $G_k$ is a holomorphic
modular function of weight $k$.  For more details, see any standard book on
modular forms, such as \cite[pp.~81--84]{serre}.
\end{example}

Since {\small $\begin{pmatrix} 1 & 1 \cr 0 & 1 \end{pmatrix}$} $\in \SL_2(\Z)$
each modular function satisfies $f(\tau + 1) = f(\tau)$ and thus has a Fourier
expansion (its ``$q$-expansion''):
$$
f(\tau) = \sum_{-\infty}^\infty a_n q^n, \quad q = e^{2\pi i \tau}.
$$

The $q$-expansion of $G_{2k}$ is
\begin{equation}
\label{eqn:q_exp}
G_{2k}(\tau) = 2\zeta(2k)
+2\frac{(2\pi i)^{2k}}{(2k-1)!}\sum_{n=1}^\infty\sigma_{2k-1}(n)q^n,
\end{equation}
where $\zeta$ denotes the Riemann zeta function and $\sigma_k(n) := \sum_{d|n}
d^k$. Details can be found in \cite[p.~92]{serre}.

\begin{definition}
Suppose that $k\in \N$. A {\em modular form of weight $k$} is a holomorphic
modular function that is holomorphic at $q=0$. That is, the coefficients $a_n$
of its $q$-expansion vanish when $n < 0$. A {\em cusp form} is a modular form
whose $q$-expansion vanishes at $q=0$. A {\em meromorphic modular form} of
weight $k$ is a meromorphic modular function of weight $k$ whose $q$-expansion
is meromorphic on the $q$-disk.
\end{definition}

\begin{exercise}
Show that the holomorphic modular forms of weight $k$ correspond to holomorphic
sections of $\Lbar_k \to \Mbar_{1,1}$. Show that cusp forms correspond to those
sections that vanish at the point $\infty$, the origin of the $q$-disk.
\end{exercise}

Equation (\ref{eqn:q_exp}) implies that each $G_{2k}$ is a modular form of
weight $2k$ when $k\ge 2$. Since
$$
\big(60\cdot 2 \cdot \zeta(4)\big)^3 = \frac{64}{27}\,\pi^{12}
= 27\big(140\cdot 2 \cdot \zeta(6)\big)^2,
$$
the {\em Ramanujan tau function}
$$
\Delta(\tau) := g_2(\tau)^3 - 27 g_3(\tau)^2
$$
is a cusp form of weight $12$, where $g_2(\tau) := 60\,G_4(\tau)$ and $g_3(\tau)
:= 140\,G_6(\tau)$. It has $q$-expansion \cite[p.~95]{serre}
$$
\Delta = (2\pi)^{12} q\prod_{n=1}^\infty(1-q^n)^{24}.
$$
The function $\Delta$ has no zeros in $\h$ \cite[p,~88]{serre} and a simple zero
at $q=0$.\footnote{A geometric proof is given in Section~\ref{sec:e_bar}. Cf.
Corollary~\ref{cor:tau_zerofree}.}

For a line bundle $L$ over $\Mbar_{1,1}$ and $d\in\Z$, define $L(d\infty) =
L\otimes \O_{\Mbar_{1,1}}(d\infty)$.\footnote{Sections of the orbifold sheaf
$\O_{\Mbar_{1,1}}(d\infty)$ are meromorphic functions on $\Mbar_{1,1}$ that are
holomorphic on $\M_{1,1}$ and whose Fourier expansion in the $q$-disk has a pole
of order $\le d$. Cf.\ Section~\ref{sec:picard}.}

\begin{exercise}
\label{ex:relation}
Show that $\Lbar_{12}\cong \O_{\Mbar_{1,1}}(\infty)$.
\end{exercise}

\begin{proposition}
\label{prop:log_canonical}
The log canonical bundle $\Omega^1_{\Mbar_{1,1}}(\infty)$ of $\Mbar_{1,1}$ is
isomorphic to $\Lbar_2$.
\end{proposition}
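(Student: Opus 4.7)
The plan is to build the isomorphism $\Lbar_2 \cong \Omega^1_{\Mbar_{1,1}}(\infty)$ chart-by-chart on the two basic orbifolds $\M_{1,1}$ and $C_2\bbs\D$ whose gluing yields $\Mbar_{1,1}$, and then to verify compatibility on the overlap $C_2\bbs\D^\ast$. Over $\M_{1,1} = \SL_2(\Z)\bbs\h$ I would write down the map
$$
\L_2 \longrightarrow \Omega^1_{\M_{1,1}},\qquad f \longmapsto f(\tau)\,d\tau.
$$
The key computation is that for $\gamma = \begin{pmatrix} a & b \cr c & d \end{pmatrix}\in \SL_2(\Z)$ one has $d(\gamma\tau) = (c\tau+d)^{-2}\,d\tau$ (immediate from $\det\gamma = 1$). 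Combined with the weight-$2$ transformation law $f(\gamma\tau) = (c\tau+d)^2 f(\tau)$ satisfied by sections of $\L_2$ (Exercise~\ref{ex:L}), this makes $f(\tau)\,d\tau$ an $\SL_2(\Z)$-invariant form, so it descends to $\M_{1,1}$. Since $d\tau$ is nowhere vanishing on $\h$, this is an isomorphism of orbifold line bundles.

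Next I would handle the chart $C_2\bbs\D$ near the cusp. Here $\Lbar_2$ is trivial because the $C_2$-action $\pm 1\colon (z,q)\mapsto ((\pm 1)^2 z,q)$ defining it is trivial, and $\Omega^1_{\Mbar_{1,1}}(\infty)$ restricts to $\Omega^1_\D(\{0\})$, trivialized by $dq/q$. I would define the local isomorphism to send the constant section $1$ of $\Lbar_2$ to $(2\pi i)^{-1}\,dq/q$. Compatibility on $C_2\bbs\D^\ast$ is then a pure chain-rule check: since $q = e^{2\pi i\tau}$, we have $d\tau = (2\pi i)^{-1}\,dq/q$, so the two local trivializations agree on the overlap. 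Concretely, a weight-$2$ modular form with $q$-expansion $f = \sum_{n\ge 0} a_n q^n$ is sent to
$$
f(\tau)\,d\tau = \frac{1}{2\pi i}\Bigl(\frac{a_0}{q}+a_1+a_2 q+\cdots\Bigr)dq,
$$
a form with at worst a simple pole at $q=0$, i.e., a legitimate section of $\Omega^1_{\Mbar_{1,1}}(\infty)$ whose principal part is the expected one.

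I do not anticipate a real obstacle. The orbifold structure at $\infty$ is harmless here precisely because the stabilizer $C_2 = \{\pm I\}$ acts trivially on $\h$ (and on the $q$-disk) and acts on the fiber of $\L_2$ by $(\pm 1)^2 = 1$, so every equivariance check collapses. The whole argument thus reduces to the two identities $d(\gamma\tau) = (c\tau+d)^{-2}d\tau$ and $d\tau = (2\pi i)^{-1}dq/q$, together with the description of $\Lbar_2$ as the trivial bundle on the $C_2\bbs\D$-chart patched to $\L_2|_{C_2\bbs\D^\ast}$ by the identity --- exactly the ingredients needed to realize $f d\tau$ as a global section of $\Omega^1_{\Mbar_{1,1}}(\infty)$ for every section $f$ of $\Lbar_2$, and vice versa.
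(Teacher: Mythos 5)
Your proposal is correct and follows essentially the same route as the paper: both rest on the two identities $d(\gamma\tau) = (c\tau+d)^{-2}d\tau$ (matching the weight-$2$ transformation law) and $2\pi i\,d\tau = dq/q$ (showing $d\tau$ trivializes the log canonical bundle near the cusp). Your version is, if anything, slightly more explicit about the chart-by-chart gluing and the triviality of the $C_2$-action on the fiber of $\Lbar_2$, whereas the paper instead exhibits a particular weight-$0$ combination of Eisenstein series times $d\tau$ to check nonvanishing at $\infty$; the substance is the same.
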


\begin{proof}
Since $q = \exp(2\pi i \tau)$, we have that $2 \pi i d\tau = dq/q$. That is,
$d\tau$ is a trivialization of the pullback of the log canonical bundle of
$\Mbar_{1,1}$ to the $q$-disk $\D$. On the other hand, since
$$
d\bigg(\frac{a\tau+b}{c\tau+d}\bigg) = \frac{d\tau}{(c\tau + d)^2},
$$
a meromorphic form $f(\tau)d\tau$ on $\h$ descends to a meromorphic section of
$\Omega^1_{\M_{1,1}}$ if and only if $f(\tau)$ is a meromorphic modular function
of weight 2. In particular,
$$
\w = \frac{G_4(\tau)}{G_2(\tau)} d\tau
$$
is such a meromorphic form. Since $G_4$ and $G_2$ are both non-zero at infinity
(Cf.\ equation~(\ref{eqn:q_exp})), the restriction of $\w$ to a neighbourhood of
$\infty$ is a nowhere vanishing holomorphic multiple of $dq/q$. From this it
follows that $f(\tau)$ is a meromorphic section of $\Lbar_2$ if and only if
$f(\tau)d\tau$ is a meromorphic section of $\Omega_{\Mbar_{1,1}}(\infty)$. The
result follows.
\end{proof}

Since $g_2$ does not vanish at $q=0$, the modular function
$$
j(\tau) := 1728\, g_2(\tau)^3/\Delta(\tau)
$$
of weight 0 is holomorphic on $\h$ and has a simple pole at $q=0$. It has
$q$-expansion
$$
j(\tau) = \frac{1}{q} + 744 + \sum_{n=1}^\infty c_n q^n,
$$
where each $c_n \in \Z$.

\begin{exercise}
Show that $j$ may be viewed as a holomorphic function $j: \Mbar_{1,1} \to \P^1$.
Show that its restriction to $\M_{1,1}$ induces a biholomorphism $M_{1,1}\to
\C$:
$$
\xymatrix@R1pc{
&  \ar[dl] \M_{1,1} \ar[dd]^j\ar[r] & \Mbar_{1,1}\ar[dd]^j \cr
M_{1,1} \ar[dr]_\approx \cr
& \C \ar[r] & \P^1
}
$$
Note that $j$ can be used to define a local parameter about each point of
$M_{1,1}$ except the points $[i]$ and $[\rho]$ as the map $\h \to M_{1,1}$ is
ramified above $[i]$ and $[\rho]$.
\end{exercise}

\begin{exercise}
Show that if $X \to T$ is a family of smooth elliptic curves over a {\em
compact} Riemann surface $T$, then the period mapping $T \to M_{1,1}$ is
constant. Deduce that every such family is isotrivial.
\end{exercise}

Denote the space of holomorphic modular forms of weight $k$ by $M_k$. These form
an evenly graded ring $M_\dot$ with respect to multiplication of functions. For
each even $k$, evaluation at $q=0$ defines a linear surjection $M_k \to \C$
whose kernel is $M_k^o$, the space of cusp forms of weight $k$. Since $G_{2k}$
does not vanish at $q=0$
$$
M_{2k} = M_{2k}^o \oplus \C G_{2k}
$$
for each $k>1$.

The following basic result is proved in \cite[p.~89]{serre}. It can be deduced
from the results of Exercise~\ref{ex:mod_forms}.

\begin{proposition}
\label{prop:dimensions}
The graded ring $M_\dot$ is generated freely by $G_4$ and $G_6$. The subspace of
cusp forms $M_\dot^o$ is the ideal of $M_\dot$ generated by $\Delta$. In
addition
$$
\dim M_{2k} = 1 + \dim M_{2k}^o =
\begin{cases}
\lfloor k/6\rfloor & k = 1 \bmod 6,\ k \ge 0;\cr 
1+\lfloor k/6\rfloor & k \neq 1 \bmod 6,\ k \ge 0. 
\end{cases}
$$
\qed
\end{proposition}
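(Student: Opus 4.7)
The plan is to establish the valence formula for holomorphic modular forms, use it to compute $\dim M_k$ by a recursion, and then extract the ring structure.

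\textbf{Step 1: Valence formula.} For every nonzero $f \in M_k$ I will establish
$$
v_\infty(f) + \tfrac{1}{2}v_{[i]}(f) + \tfrac{1}{3}v_{[\rho]}(f) + \sum_{P \ne [i],[\rho],\infty} v_P(f) = \tfrac{k}{12}.
$$
Viewing $f$ as a holomorphic section of $\Lbar_k$ on $\Mbar_{1,1}$, this equates the orbifold degree of its divisor with the orbifold degree of $\Lbar_k$. The latter equals $k/12$ because $\Lbar_{12} \cong \O_{\Mbar_{1,1}}(\infty)$ has orbifold degree $1$ by Exercise~\ref{ex:relation}, and the weights $\tfrac{1}{2}, \tfrac{1}{3}$ reflect the orders of the effective $\PSL_2(\Z)$-stabilizers at $[i]$ and $[\rho]$.

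\textbf{Step 2: Dimension recursion.} Nonnegativity of every term on the left of the valence formula forces $M_k = 0$ when $k < 0$ or $k = 2$, and gives $M_0 = \C$. For $k \in \{4,6,8,10\}$ the equation admits a unique divisor solution, so any two nonzero elements of $M_k$ have the same divisor and their ratio is a nonvanishing holomorphic function on the compact orbifold $\Mbar_{1,1}$, hence constant; thus $\dim M_k \le 1$, with equality coming from $G_{2k}$, which is nonzero at $\infty$ by \eqref{eqn:q_exp}. For general even $k \ge 4$, multiplication by $\Delta$ is an isomorphism $M_{k-12} \xrightarrow{\sim} M_k^o$: injectivity is clear, and for surjectivity, if $f \in M_k^o$ then $f/\Delta$ is holomorphic on $\h$ (since $\Delta$ has no zeros there) and at $\infty$ (since $\Delta$ has a simple zero at $\infty$ cancelling that of $f$). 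Combined with the splitting $M_k = M_k^o \oplus \C G_{2k}$ from the statement, this yields $\dim M_k = 1 + \dim M_{k-12}$, and inducting from the base cases produces the stated formula.

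\textbf{Step 3: Ring structure.} Applying Step 1 to $G_4$ (weight $4$, so $k/12 = 1/3$) forces $v_{[\rho]}(G_4) = 1$ with all other orders zero; symmetrically $v_{[i]}(G_6) = 1$ with all other orders zero. Hence $v_{[\rho]}(G_4^a G_6^b) = a$ and $v_{[i]}(G_4^a G_6^b) = b$, so the monomials $G_4^a G_6^b$ of common weight $4a + 6b = k$ have pairwise distinct divisors and are linearly independent, proving algebraic independence of $G_4$ and $G_6$. For generation I induct on $k$: for even $k \ge 4$ choose $a,b \ge 0$ with $4a + 6b = k$ (always possible since every even integer $\ge 4$ is a nonnegative combination of $4$ and $6$); given $f \in M_k$, subtract a scalar multiple of $G_4^a G_6^b$ to kill the value at $\infty$, landing in $M_k^o = \Delta \cdot M_{k-12}$, and invoke the inductive hypothesis together with $\Delta = (60G_4)^3 - 27(140G_6)^2 \in \C[G_4, G_6]$. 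This gives $M_\dot = \C[G_4, G_6]$ and $M_\dot^o = \Delta\, M_\dot$.

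The main obstacle is Step 1: bookkeeping the orbifold structure at $[i]$, $[\rho]$, and $\infty$ to extract the correct weights in the valence formula. Once that is in hand, Steps 2 and 3 are largely combinatorial unwinding.
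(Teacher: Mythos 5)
Your proof is correct and follows the same route the paper intends: the paper defers to Serre and to Exercise~\ref{ex:mod_forms}, whose valence formula (obtained, as you do, from $\Lbar_{12}\cong\O_{\Mbar_{1,1}}(\infty)$ rather than Serre's contour integral) is exactly your Step 1, and your Steps 2 and 3 are the standard deduction from it. The only blemish is notational: having switched to indexing spaces by the weight $k$ itself, the Eisenstein series spanning the complement of $M_k^o$ in $M_k$ should be written $G_k$, not $G_{2k}$.
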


\subsection{Level structures*}
\label{sec:level}

In the first Chapter, we used a framing of $H_1(E;\Z)$ to rigidify and then
solve the moduli problem for elliptic curves. Since $H_1(E;\Z)$ has an infinite
number of framings, the moduli space of framed elliptic curves is an infinite,
and therefore transcendental, covering of $\M_{1,1}$. To construct finite
coverings of $\M_{1,1}$, we consider the moduli space of elliptic curves plus a
framings of $H_1(E;\Z/m\Z)$, where $m\ge 1$. 

\begin{definition}
Suppose that $m$ is a positive integer. A {\em level $m$ structure} on an
elliptic curve is a basis $\aa,\bb$ of $H_1(E;\Z/m\Z)$, where the mod $m$
intersection number $\aa\cdot \bb$ is $1$.
\end{definition}

\begin{exercise}
Show that the homomorphism $\SL_2(\Z) \to \SL_2(\Z/m\Z)$ that takes a matrix to
its reduction mod $m$ is surjective. Its kernel is called the {\em level $m$
subgroup of $\SL_2(\Z)$} and will be denoted by $\SL_2(\Z)[m]$. Show that the
set of isomorphism classes of elliptic curves with a level $m$ structure is the
quotient $\SL_2(\Z)[m]\bs \h$. Show that this is a Riemann surface with
fundamental group $\SL_2(\Z)[m]$ for all $m\ge 3$. (Cf.\
Exercise~\ref{ex:level_tf}.)
\end{exercise}

Denote the orbifold Riemann surface $\SL_2(\Z)[m]\bbs \h$ by $\M_{1,1}[m]$. It
is called the moduli space of elliptic curves with a level $m$ structure. Points
of the corresponding coarse moduli space $M_{1,1}[m] := \SL_2(\Z)[m]\bbs \h$ are
isomorphism classes of elliptic curves with a level $m$ structure.  Since
$\SL_2(\Z)[m]$ is torsion free for all $m\ge 3$, $\M_{1,1}[m] = M_{1,1}[m]$ for
all $m\ge 3$. The group $\SL_2(\Z/m\Z)$ acts on $\M_{1,1}[m]$ and $\M_{1,1}$ is
the orbifold quotient $\SL_2(\Z/m\Z)\bbs \M_{1,1}[m]$. The projection 
$\M_{1,1}[m]\to \M_{1,1}$ takes the isomorphism class of $(X,P;\aa,\bb)$ to the
isomorphism class of $(X,P)$. It has orbifold degree equal to the order of
$\SL_2(\Z/m\Z)$.

\begin{exercise}
Use the Chinese Remainder Theorem to show that
$$
\SL_2(\Z/m\Z) \cong \prod_{p} \SL_2(\Z/p^{\nu_p}),
$$
where $p$ ranges over all prime numbers and where $\nu_p := \ord_p(m)$. Show
that $I+pA \in \GL_2(\Z/p\Z)$ for all $A \in \gl_2(\Z/p^{n-1}\Z)$.\footnote{Here
$\gl_n(R)$ denotes the set of $n\times n$ matrices over $R$.} Deduce that there
is an exact sequence
$$
1 \to I + p\gl_2(\Z/p^{n-1}\Z) \to \GL_2(\Z/p^n\Z) \to \GL_2(\F_p) \to 1.
$$
Use this to show that, for all $n\ge 1$,
$$
|\GL_2(\Z/p^n\Z)| = p^{4n-3}(p^2-1)(p-1)
$$
and that
$$
|\SL_2(\Z/p^n\Z)| = |\GL_2(\Z/p^n\Z)|/|(\Z/p^n\Z)^\times| = p^{3n}(1-p^{-2}).
$$
Deduce that
$$
|\SL_2(\Z/m\Z)| = m^3 \prod_{p|m}\Big(1-\frac{1}{p^2}\Big).
$$
\end{exercise}

The moduli space $\M_{1,1}[m]$ can be compactified by adding a finite number of
points, called {\em cusps}, as we now explain. (Cf.\
Exercise~\ref{ex:completion}.)

The boundary of the upper half place is $\R\cup\{\infty\}$, which is usefully
regarded as the real projective line $\P^1(\R)$. The $\SL_2(\R)$-action on the
upper half plane extends to its boundary $\P^1(\R)$; it acts by fractional
linear transformations.

\begin{exercise}
Show that the $\SL_2(\Z)$-orbit of $\infty \in \P^1(\R)$ is the subset
$\P^1(\Q)$ of $\P^1(\R)$. Deduce that $\SL_2(\Z)[m]\bs \P^1(\Q)$ is finite.
\end{exercise}

Let $U_\infty$ denote the subset $\Im(\tau) > 1$ of $\h$. The stabilizer of
$U_\infty$ in $\SL_2(\Z)$ is the isotropy group of $\infty$, which is
$$
\G_\infty := \{\pm 1\}\times \PZ.
$$
The stabilizer of $U_\infty$ in $\SL_2(\Z)[m]$ is $\G_\infty[m] := \G_\infty
\cap \SL_2(\Z)[m]$. For each $x \in \P^1(\Q)$, choose $\gamma\in \SL_2(\Z)$ such
that $x = \gamma \infty$. Set
$$
U_x = \gamma U_\infty.
$$
Its stabilizer in $\SL_2(\Z)[m]$ is $\G_x[m] := \gamma \G_\infty[m]
\gamma^{-1}$. Both $U_x$ and $\G_x[m]$ depend only on $x$ and not on the choice
of $\gamma \in \SL_2(\Z)$. Further $\gamma$ induces a biholomorphism
$$
\G_\infty[m]\bs U_\infty \overset{\simeq}{\longrightarrow} \G_x[m]\bs U_x.
$$

For each $x\in \P^1(\Q)$, the quotient mapping (in the category of Riemann
surfaces)
$$
\G_x[m]\bs U_x \cong \G_\infty[m]\bs U_\infty \to \G_\infty\bs U_\infty
\cong \D_R^\ast
$$
has degree $m$ where $R = \exp(-2\pi)$. It follows that $\G_x[m]\bs U_x$ is a
punctured disk with coordinate the $m$th root $\exp(2\pi i \tau/m)$ of $q$.

Fix $m\ge 3$. The inclusion $U_x \to \h$ induces an inclusion
$$
\G_x[m]\bs U_x \to \M_{1,1}[m]
$$
of a punctured disk. With the identifications above, this map depends only on
the $\SL_2(\Z)[m]$-orbit $c$ of $x$. We shall therefore denote this punctured
disk by $V_c$.

\begin{definition}
For each $m\ge 3$, define $\Mbar_{1,1}[m]$ to be the Riemann surface whose
underlying set is
$$
\SL_2(\Z)[m]\bs\big(\h\cup \P^1(\Q)\big).
$$
As a Riemann surface, it is obtained from $\M_{1,1}[m]$ by attaching one disk
$\D_S$ of radius $S=\sqrt[m]{R}$ for each $c\in D_m := \SL_2(\Z)[m]\bs \P^1(\Q)$
by identifying the punctured disk $\D_S^\ast$ with the punctured disk $V_c$ in
$\M_{1,1}[m]$. Elements of $C_m := \SL_2(\Z)[m]$ are called {\em
cusps}.\footnote{This terminology is confusing as each $c\in D_m$ is a smooth
point of $\Mbar_{1,1}[m]$. Elements of $D_m$ are not cusps in the sense of
singularity theory, but they are related to cusp forms, which are sections of
powers of $\Lbar$ that vanish on $D_m$.}
\end{definition}

\begin{exercise}
Suppose that $m\ge 3$. Show that the Riemann surface $\Mbar_{1,1}[m]$ is
compact, that the action of $\SL_2(\Z/m\Z)$ on $\M_{1,1}[m]$ extends to
$\Mbar_{1,1}[m]$, and that the isomorphism $\M_{1,1} \cong \SL_2(\Z/m\Z)\bbs
\M_{1,1}[m]$ extends to an orbifold isomorphism $\Mbar_{1,1} \cong
\SL_2(\Z/m\Z)\bbs \Mbar_{1,1}[m]$.
\end{exercise}

When $m=2$, one attaches one copy of $D_2\bbs\D_{\sqrt[m]{R}}$ to $\M_{1,1}[2]$
for each cusp $c\in \{0,1,\infty\}=\SL_2(\Z)[2]\bs \P^1(\Q)$ to obtain
$\Mbar_{1,1}[2]$.

\begin{exercise}
Show that $\Mbar_{1,1}[2]$ is isomorphic to the quotient of $\P^1$ by the
trivial $C_2$-action. (Cf.\ the proof of Prop.~\ref{prop:quot}.)
\end{exercise}

\begin{exercise}
Suppose that $m\ge 3$. Set $d_m = |SL_2(\Z/m\Z)|$ and $c_m = \# D_m$. Show that
$\chi(\M_{1,1}[m]) = d_m\chi(\M_{1,1}) = -d_m/12$ and that
$$
c_m = \frac{d_m}{2m} = \frac{m^2}{2}\prod_{p|m}\Big(1-\frac{1}{p^2}\Big).
$$
Use this to show that
$$
\chi(\Mbar_{1,1}[m]) = c_m + \chi(\M_{1,1}[m]) = 
\frac{m^2}{2}\Big(1-\frac{m}{6}\Big)\prod_{p|m}\Big(1-\frac{1}{p^2}\Big).
$$
Deduce that the genus $g_m$ of $\Mbar_{1,1}[m]$ is given by
$$
g_m = 1-\frac{m^2}{4}\Big(1-\frac{m}{6}\Big)\prod_{p|m}\Big(1-\frac{1}{p^2}\Big).
$$
Except when $m$ is small, $\Mbar_{1,1}[m]$ is not rational. For example, $g_3 =
g_4 = g_5 =0$, $g_7 = 3$, $g_8=5$, $g_{41} = 2\,451$, $g_{5^3} = 74\,376$.
\end{exercise}

Modular forms of weight $k$ for $\SL_2(\Z)[m]$ are simply holomorphic sections
of $\Lbar_k$ over $\Mbar_{1,1}[m]$; cusp forms are holomorphic sections of
$\Lbar_k$ that vanish at the cusps.

\begin{remark}
Moduli spaces of elliptic curves with a level are frequently used by  number
theorists. They typically work with more refined level structures, such as the
moduli space of elliptic curves $E$ together with an element order $m$ of
$H_1(E;\Z/m\Z)$, or of elliptic curves plus a cyclic subgroup of $H_1(E;\Z/m\Z)$
of order $m$.
\end{remark}

\section{Cubic Curves and the Universal Curve $\Ebar \to \Mbar_{1,1}$}
\label{sec:e_bar}

In this section we construct the extension of the universal curve $\E\to
\M_{1,1}$ to the universal stable elliptic curve $\Ebar \to \Mbar_{1,1}$.

\subsection{Plane cubics}

The description of an elliptic curve as the quotient of $\C$ by a lattice is
very transcendental. In algebraic geometry it is more natural to consider an
elliptic curve as a smooth plane cubic curve rather than as the quotient of $\C$
by a lattice.

\begin{exercise}
\label{ex:smooth_cubics}
Suppose that $f(x) \in \C[x]$ is a cubic polynomial. Show that the curve $C$ in
$\P^2$ defined by $y^2 = f(x)$ is smooth if and only if $f(x)$ has 3 distinct
roots. (To study this curve at infinity, use the homogenized version $y^2 z =
z^3f(x/z)$.) Show that the algebraic differential $dx/y$ is a non-zero
holomorphic differential on $C$.
\end{exercise}

\begin{proposition}
Every smooth plane cubic curve has genus 1.
\end{proposition}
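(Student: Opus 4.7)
The plan is to reduce to Weierstrass form and then compute the genus using the Riemann--Hurwitz formula applied to a double cover of $\P^1$.

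First, I would invoke the classical fact that every smooth plane cubic $C \subset \P^2$ is projectively equivalent to one in Weierstrass normal form $y^2 = f(x)$, with $f$ a cubic polynomial having three distinct roots. The reduction rests on the existence of a flex point (Bezout applied to $C$ and its Hessian cubic) followed by a linear change of coordinates placing the flex at $[0:1:0]$ with its tangent line at infinity. By Exercise \ref{ex:smooth_cubics}, any such curve is smooth, so it suffices to compute the genus of a Weierstrass curve.

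Next, I would consider the projection $\pi : C \to \P^1$ given by $(x,y) \mapsto x$, extended to the point at infinity of $C$. Over a generic value of $x$ there are two points $\pm\sqrt{f(x)}$, so $\deg \pi = 2$. The ramification points of $\pi$ are precisely the points where $y = 0$, namely the three affine points lying above the roots of $f$; in addition, by inspecting the homogenization $y^2 z = z^3 f(x/z)$ at $z=0$, one sees that $C$ has a single smooth point at infinity, and $\pi$ is ramified there because $\deg f = 3$ is odd. Thus there are exactly $4$ ramification points, each of index $2$.

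Applying Riemann--Hurwitz to $\pi : C \to \P^1$ then gives
\[
2g(C) - 2 \; = \; 2\cdot(2\cdot 0 - 2) \; + \; \sum_{P \in C}(e_P - 1) \; = \; -4 + 4 \cdot 1 \; = \; 0,
\]
so $g(C) = 1$. The main obstacle is the reduction to Weierstrass form for a \emph{general} smooth plane cubic, which requires the Hessian/flex-point argument from projective geometry. An alternative that bypasses this step is the adjunction formula: the canonical sheaf of a smooth plane curve of degree $d$ is $\O_{\P^2}(d-3)|_C$, which for $d=3$ is trivial, so $\deg K_C = 2g - 2 = 0$ and $g = 1$. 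This fits nicely with the earlier exercise showing that the canonical divisor of any elliptic curve is zero, and is consistent with the exhibited nonzero holomorphic differential $dx/y$ on the Weierstrass model.
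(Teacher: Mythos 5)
Your argument is correct, but your primary route is genuinely different from the paper's. The paper computes the genus directly from the exact sequence $0 \to TC \to T\P^2|_C \to N \to 0$: since $N \cong \O_{\P^2}(3)|_C$ has degree $9$ and $K_{\P^2} = \O_{\P^2}(-3)$ restricts to degree $-9$, one gets $2-2g = \deg TC = 9 - 9 = 0$. This is exactly the adjunction computation you mention as an ``alternative'' at the end, and it has two advantages: it needs no normal form at all, and it generalizes verbatim to give the genus formula $(d-1)(d-2)/2$ for smooth plane curves of any degree. Your main route instead front-loads the work into the reduction to Weierstrass form and then finishes with the double cover $x : C \to \P^1$ and Riemann--Hurwitz (which is Exercise~\ref{ex:RH} in these notes); the Riemann--Hurwitz count itself is correct, including the ramification at the flex at infinity. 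The one point to be careful about is the normal-form step: you must use the classical Hessian/flex argument, as you do, and not the paper's Proposition~\ref{prop:plane_cubics}, whose proof of the Weierstrass form relies on Riemann--Roch for a curve already known to have genus $1$ and would therefore be circular here. Granting that classical fact, your proof is complete; it buys a more concrete, covering-space flavored argument at the cost of a nontrivial projective-geometry input that the paper's Chern class computation avoids entirely.
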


\begin{proof}
This is an immediate consequence of the genus formula.\footnote{The genus
formula states that the genus of a smooth curve in $\P^2$ of degree $d$ is
$(d-1)(d-2)/2$.} It can also be proved directly as follows. Suppose that $C$ is
a smooth plane cubic curve. Consider the exact sequence
$$
0 \to TC \to T\P^2|_C \to N \to 0
$$
of vector bundles over $C$. Since $C$ is a cubic, its normal bundle $N$ is the
restriction of $\O_{\P^2}(3)$ to $C$. This has degree 9. The first Chern class
of the $T\P^2$ is the negative of the first Chern class of the canonical bundle
$K_{\P^2}$ of $\P^2$. Since the canonical bundle of $\P^2$ is $\O_{\P^2}(-3)$,
its restriction to $C$ has degree $-9$. By the standard formula,
$$
c_1(T\P^2|_C) = - c_1(K_{\P^2}|_C) = c_1(TC) + c_1(N) \in H^2(C;\Z).
$$
Since the degree of a line bundle $L$ on $C$ is $\int_C c_1(L)$, we have
$$
2-2g(C) = \deg TC = -\deg(K_{\P^2}|_C) - \deg N = 9-9 = 0.
$$
Thus $g(C) = 1$.
\end{proof}

\begin{exercise}
Show that $y^2 = x^3 - x$ and $y^2 = x^3 - 1$ are smooth elliptic curves (with
distinguished point $[0,1,0]$). Show that the first has an automorphism of order
4 and the second an automorphism of order 6. Deduce that they are isomorphic to
$\C/\Z[i]$ and $\C/\Z[\rho]$, respectively. Cf.\ Exercise~\ref{ex:autos}.
\end{exercise}

The discriminant of the polynomial
$$
f(x) = 4 x^3 - ax - b 
$$
is $16(a^3 - 27 b^2)$. For convenience, we will divide it by $16$ and instead
call $D(a,b) := a^3 - 27b^2$ the discriminant of $f(x)$. Every curve of the form
$$
y^2 = 4 x^3 - ax - b 
$$
is an elliptic curve, with distinguished point $[0,1,0]$. The converse is
also true.

\begin{proposition}
\label{prop:plane_cubics}
Every elliptic curve $(X,P)$ is isomorphic to a smooth plane cubic of the
form
$$
y^2 = 4 x^3 - ax - b,
$$
where $P\in X$ corresponds to $[0,1,0]\in\P^2$ and $D(a,b)\neq 0$. Moreover, the
elliptic curve $(y^2 = 4 x^3 - ax - b,[0,1,0])$ is isomorphic to
$$
(y^2 = 4 x^3 - Ax - B,[0,1,0])
$$
if and only if there exists $u\in \C^\ast$ such that $A = u^2 a$ and $B = u^3b$.
\end{proposition}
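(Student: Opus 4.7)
My plan is to use the Weierstrass $\wp$-function construction. By Proposition~\ref{prop:torus}, we may assume $(X,P) = (\C/\Lambda,0)$ for some lattice $\Lambda \subset \C$. First I would define
\[
\wp(z) = \frac{1}{z^2} + \sum_{\lambda \in \Lambda \setminus 0}\left(\frac{1}{(z-\lambda)^2} - \frac{1}{\lambda^2}\right),
\]
verify absolute and uniform convergence on compact subsets of $\C \setminus \Lambda$, and conclude that $\wp$ is a $\Lambda$-periodic meromorphic function (i.e.\ an elliptic function) on $\C$ with a double pole at each lattice point. Its derivative $\wp'$ is then elliptic with triple poles at the lattice points. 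By inspecting the Laurent expansion of $\wp$ at $0$, I would extract the coefficients $g_2 = 60 G_4(\Lambda)$ and $g_3 = 140 G_6(\Lambda)$ in
\[
\wp(z) = z^{-2} + \tfrac{g_2}{20} z^2 + \tfrac{g_3}{28} z^4 + O(z^6).
\]

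Next I would derive the Weierstrass differential equation $(\wp')^2 = 4\wp^3 - g_2 \wp - g_3$. The standard trick is to form the combination $f(z) := (\wp'(z))^2 - 4\wp(z)^3 + g_2 \wp(z) + g_3$; by direct computation this is a holomorphic elliptic function vanishing at $z=0$, hence identically zero by Liouville (a holomorphic function on the compact $\C/\Lambda$ is constant). Using this identity, I would define the holomorphic map
\[
\Phi : \C/\Lambda \to \P^2, \qquad z \mapsto [\wp(z) : \wp'(z) : 1],\ 0 \mapsto [0:1:0],
\]
whose image lies on the plane cubic $C_\Lambda : y^2 z = 4 x^3 - g_2 x z^2 - g_3 z^3$. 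To see $\Phi$ is an isomorphism, I would argue that $\Phi$ is nonconstant and that the elliptic function $\wp$ takes every value in $\P^1$ exactly twice (counted with multiplicity), while for generic $c$ the two preimages $\pm z_0$ are distinguished by the sign of $\wp'$, so $\Phi$ is injective off the 2-torsion; checking the 2-torsion and infinity cases by hand completes injectivity, and a local computation at $0$ (where $z$ itself is a local parameter and $\wp'/\wp$ has the expected behavior) shows $\Phi$ is an immersion everywhere. Together with the genus formula $(d-1)(d-2)/2 = 1$ for a smooth cubic, this forces $C_\Lambda$ to be smooth and $\Phi$ to be a biholomorphism onto it.

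The smoothness of $C_\Lambda$ is equivalent to $D(g_2,g_3) = g_2^3 - 27 g_3^2 \neq 0$, and the cleanest proof is to observe that the three roots of $4x^3 - g_2 x - g_3$ are exactly $\wp(\omega/2)$ as $\omega$ ranges over the three nonzero elements of $\tfrac{1}{2}\Lambda/\Lambda$: indeed $\wp'$ is odd and $\Lambda$-periodic, so it vanishes at each half-period $\omega/2$, and the ODE then forces $\wp(\omega/2)$ to be a root of the cubic in $x$. These three values are distinct because $\wp$ has degree $2$ as a map $\C/\Lambda \to \P^1$, so the three half-periods (where $\wp'$ vanishes) cannot map to the same value.

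For the uniqueness statement, suppose $(\C/\Lambda_1,0) \cong (\C/\Lambda_2,0)$ via the cubic-model isomorphism. By Lemma~\ref{lem:isom}, every such isomorphism lifts to $z \mapsto u z$ for some $u \in \C^\ast$ with $\Lambda_2 = u \Lambda_1$. The defining identity $\wp_{u\Lambda}(uz) = u^{-2}\wp_\Lambda(z)$ (visible from the series) and the analogous identity for $\wp'$ together with the Eisenstein transformation $G_{2k}(u\Lambda) = u^{-2k} G_{2k}(\Lambda)$ give $g_2(\Lambda_2) = u^{-4} g_2(\Lambda_1)$ and $g_3(\Lambda_2) = u^{-6} g_3(\Lambda_1)$, which is the asserted transformation law (after relabeling $u$ appropriately to match the proposition's convention). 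Conversely, given such a rescaling, the linear change of coordinates $(x,y) \mapsto (u^{-2} x, u^{-3} y)$ on $\P^2$ exhibits the required isomorphism of pointed cubics. The main obstacle I anticipate is a careful verification that $\Phi$ is an isomorphism onto $C_\Lambda$ rather than merely a map to a possibly singular cubic; this is where degree counting for $\wp$ and the genus formula must be combined cleanly.
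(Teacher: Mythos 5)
Your existence argument is correct in outline, but it is not the route the paper takes for this proposition. The paper proves Proposition~\ref{prop:plane_cubics} by pure Riemann--Roch: it produces $x\in L(2P)$ and $y\in L(3P)$, normalizes $x$ so that its three finite critical values sum to zero and $y$ so that it is anti-invariant under the hyperelliptic involution, and extracts the Weierstrass relation from the linear dependence of the seven functions $1,x,y,x^2,xy,x^3,y^2$ in the $6$-dimensional space $L(6P)$. Your $\wp$-function construction is precisely what the paper then offers as a ``second proof'' of the existence statement, carried out in Proposition~\ref{prop:embedding}. The trade-off is worth noting: the Riemann--Roch proof is intrinsic, works in any characteristic $\neq 2,3$, and delivers the uniqueness clause almost for free, since $x$ and $y$ are pinned down up to scalars by their pole orders and normalizations; the transcendental proof is more explicit (it identifies $a,b$ with the Eisenstein series $g_2,g_3$), and your half-period argument for $D(g_2,g_3)\neq 0$ is clean and parallel to the paper's Riemann--Hurwitz count of critical values.

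There is, however, a genuine gap in your ``only if'' direction of the uniqueness clause. You argue by realizing the two cubics as $C_{\Lambda_1}$ and $C_{\Lambda_2}$, lifting the isomorphism to $z\mapsto uz$ via Lemma~\ref{lem:isom}, and invoking $g_k(u\Lambda)=u^{-2k}g_k(\Lambda)$. But the proposition concerns \emph{arbitrary} pairs $(a,b)$ and $(A,B)$ with nonvanishing discriminant, and your argument silently assumes that every such pair equals $\bigl(g_2(\Lambda),g_3(\Lambda)\bigr)$ for some lattice $\Lambda$, i.e.\ the surjectivity of $\Lambda\mapsto(g_2(\Lambda),g_3(\Lambda))$ onto $\C^2-\Delta$. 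That is a nontrivial theorem (essentially Proposition~\ref{prop:equivariant}, proved much later in the paper using modular forms), not something available at this point. You can close the gap either by proving the surjectivity directly --- uniformize $y^2=4x^3-ax-b$ as $\C/\Lambda$, rescale $\Lambda$ so that $dx/y$ pulls back to $dz$, and verify by comparing principal parts and constant terms that $x$ then pulls back to $\wp_\Lambda$, forcing $(a,b)=(g_2(\Lambda),g_3(\Lambda))$ --- or, more economically, by running the Riemann--Roch normalization argument, which characterizes $x$ up to a factor $u^2$ and $y$ up to $u^3$ without mentioning lattices at all.
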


\begin{proof}
This is an exercise in the use of the Riemann-Roch formula. There is an
inclusion of vector spaces
$$
L(P)\subseteq L(2P) \subseteq L(3P) \subseteq L(4P) \subseteq L(5P)
\subseteq L(6P).
$$
The Riemann-Roch formula implies that, when $n\ge 1$,
$$
\ell(nP) := \dim L(nP) = n.
$$
Since $\C \subseteq L(P)$, $L(P)$ is spanned by the constant function 1. Since
$\ell(2P) = 2$, there exists a non-constant function $x : X \to \P^1$ that is
holomorphic away from $P$ and where $P$ is at worst a double pole. If the pole
had degree 1, then $x : X \to \P^1$ would have degree 1, and would therefore be
a biholomorphism. This is impossible as $g(X)=1$. Thus $x$ has a double pole at
$P$ and $x : X \to \P^1$ has degree 2. The Riemann-Hurwitz formula
(Exercise~\ref{ex:RH}) implies that $x$ has 4 critical values, one of which is
infinity. Let $c_1,c_2,c_3$ be the 3 critical values in $\C$. By adding a
constant to $x$ if necessary, we may assume that $c_1+c_2+c_3 = 0$. This
condition determines $x$ up to a constant multiple.

Since $\ell(3P) = 3$, there is a function $y : X \to \P^1$ whose only pole is
$P$ and which does not lie in $L(2P)$. The pole is therefore a triple pole.
Denote the deck transformation of the covering $x : X \to \P^1$ by $\sigma$.
Observe that $\sigma$ acts trivially on $L(2P)$, but that $\sigma^\ast y \neq y$
as $y$ has an odd order pole at $P$. By replacing $y$ by $y-\sigma^\ast y$ if
necessary, we may assume that $\sigma^\ast y = -y$. This condition determines
$y$ up to a constant. Thus $1,x,y$ is a basis of $L(3P)$.

Since $\ell(4P) = 4$, and since $1,x,y,x^2 \in L(4P)$ are linearly independent,
they comprise a basis. Likewise, $1,x,y,x^2,xy$ is a basis of $L(5P)$.

Since $L(6P)$ contains the 7 functions $1,x,y,x^2,xy,x^3,y^2$, and since
$\ell(6P)= 6$, it follows that they are linearly dependent. This linear
dependence can be written as the sum of a $\sigma$-invariant term and a
$\sigma$-anti-invariant term. Since the $\sigma$-anti-invariant functions $y,xy$
are in $L(5P)$, they are linearly independent. Consequently, the
$\sigma$-invariant basis elements must be linearly dependent. The coefficients
of $x^3$ and $y^2$ in this dependence are both non-zero, otherwise $x^3$ or $y^2
\in L(5P)$, which is a contradiction. We therefore have a relation of the form
$$
ey^2 = 4x^3 - u x^2 - a x - b
$$
where $e\neq 0$. Since the critical values $c_1,c_2,c_3$ are the roots of the
right-hand side, and since we chose $x$ so that their sum is 0, $u=0$. By
rescaling $y$, we may assume that $e = 1$. That is, $X$ has an equation of the
form
$$
y^2 = 4x^3 - ax - b
$$
in which $x(P) = \infty$. As remarked, $x$ is unique up to multiplication by a
constant. If we multiply it by $u^2$, we have to multiply $y$ by $u^3$ so that
$y^2-4x^3$ remains in $L(2P)$.

The uniqueness statement is easily proved and is left as an exercise for the
reader.
\end{proof}

It is useful to give a second proof that every elliptic curve is isomorphic to a
curve of the form $y^2 = 4x^3 - ax - b$. Recall from
Proposition~\ref{prop:torus} that every elliptic curve is isomorphic to a
1-dimensional complex torus $(\C/\Lambda,0)$. So it suffices to show that every
1-dimensional complex torus is isomorphic to a plane cubic.

Suppose that $\Lambda$ is a lattice in $\C$. The Weierstrass
$\wp_\Lambda$-function is defined by
$$
\wp_\Lambda(z) =
\frac{1}{z^2} + \sum_{\substack{\lambda\in\Lambda\cr \lambda \neq 0}}
\bigg(\frac{1}{(z-\lambda)^2} - \frac{1}{\lambda^2}\bigg).
$$
This converges to a meromorphic function on $\C$ that is periodic with respect
to $\Lambda$ and has a double pole at each lattice point and is holomorphic
elsewhere. Consequently, the induced holomorphic function $\wp_\Lambda :
\C/\Lambda \to \P^1$ has a unique double pole at $0\in \C/\Lambda$ and is 2:1.
Since $\wp_\Lambda(z) = \wp_\Lambda(-z)$, the automorphism of the map
$x : \C/\Lambda \to \P^1$ is $z\mapsto -z$.

For $\tau \in \h$, denote the Weierstrass $\wp$-function of the lattice
$\Lambda_\tau := \Z\oplus \Z\tau$ by $\wp_\tau$:
$$
\wp_\tau(z) := \wp_{\Lambda_\tau}(z).
$$

\begin{exercise}
Show that if $u\in \C^\ast$, then
$$
\wp_{u\Lambda}(uz) = u^{-2}\wp_\Lambda(z).
$$
Deduce that if
$$
\gamma = \begin{pmatrix}a & b \cr c & d\end{pmatrix}\in \SL_2(\Z)
$$
then
$$
\wp_\tau(\gamma\tau)(z/(c\tau+d)) = (c\tau+d)^2 \wp_\tau(z).
$$
\end{exercise}

\begin{proposition}
For all $\tau \in \h$
$$
(\wp_\tau')^2 = 4\wp_\tau^3 - g_2(\tau)\wp_\tau - g_3(\tau)
$$
where $\wp_\tau' := \partial \wp_\tau/\partial z$ and $g_2=60 G_4$ and $g_3 =
140 G_6$ are Eisenstein series of weights 4 and 6, respectively.
\end{proposition}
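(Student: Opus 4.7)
The plan is to verify the identity by a Laurent expansion argument at $z=0$, followed by an application of the maximum principle on the compact Riemann surface $\C/\Lambda_\tau$. The key observation is that both sides are meromorphic functions on $\C/\Lambda_\tau$ whose only possible pole is at the origin, so it suffices to show their Laurent expansions at $0$ agree through the constant term.

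First I would compute the Laurent expansion of $\wp_\tau$ at $z=0$. Fix $\tau$ and write $\Lambda = \Lambda_\tau$. For $|z|$ smaller than the shortest nonzero vector of $\Lambda$, expand
\begin{equation*}
\frac{1}{(z-\lambda)^2} - \frac{1}{\lambda^2} = \sum_{n\ge 1} \frac{(n+1)z^n}{\lambda^{n+2}}
\end{equation*}
and sum over $\lambda\in\Lambda\setminus\{0\}$, interchanging sums by absolute convergence. Since $\lambda \mapsto -\lambda$ is a bijection of $\Lambda\setminus\{0\}$, all odd-index terms vanish, yielding
\begin{equation*}
\wp_\tau(z) = \frac{1}{z^2} + 3G_4(\tau)\, z^2 + 5G_6(\tau)\, z^4 + O(z^6).
\end{equation*}
Differentiating term-by-term gives
\begin{equation*}
\wp_\tau'(z) = -\frac{2}{z^3} + 6G_4(\tau)\, z + 20G_6(\tau)\, z^3 + O(z^5).
\end{equation*}

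Next I would square $\wp_\tau'$ and cube $\wp_\tau$ and collect terms through order $z^0$. A short computation yields
\begin{equation*}
(\wp_\tau')^2 = \frac{4}{z^6} - \frac{24\, G_4(\tau)}{z^2} - 80\, G_6(\tau) + O(z^2),
\end{equation*}
\begin{equation*}
4\wp_\tau^3 = \frac{4}{z^6} + \frac{36\, G_4(\tau)}{z^2} + 60\, G_6(\tau) + O(z^2).
\end{equation*}
Subtracting and using $g_2 = 60G_4$, $g_3 = 140G_6$, one obtains
\begin{equation*}
(\wp_\tau')^2 - 4\wp_\tau^3 + g_2(\tau)\wp_\tau(\tau) + g_3(\tau) = O(z^2)
\end{equation*}
near $z=0$; in particular the principal part and the constant term cancel.

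Finally I would argue that this difference, call it $f(z)$, is $\Lambda$-periodic (obvious from the defining properties of $\wp_\tau$ and $\wp_\tau'$) and, by the cancellation above, holomorphic at $z=0$. Since $\wp_\tau$ and $\wp_\tau'$ are holomorphic away from $\Lambda$, $f$ descends to a holomorphic function on the compact Riemann surface $\C/\Lambda$, hence is constant. Evaluating at $z=0$ using the expansion shows this constant is $0$, proving the identity. The main obstacle is purely bookkeeping: correctly computing the coefficients in the Laurent expansions of $(\wp_\tau')^2$ and $4\wp_\tau^3$ far enough to see the cancellation of the $z^{-2}$ and $z^0$ terms; no serious analytic difficulty arises, since the Weierstrass series converges absolutely and uniformly on compact subsets of $\C\setminus\Lambda$, legitimizing all termwise manipulations.
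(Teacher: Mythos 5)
Your proof is correct and follows essentially the same route as the paper: expand $\wp_\tau$ and $\wp_\tau'$ in Laurent series at $z=0$ via the Eisenstein series, check that the combination $(\wp_\tau')^2-4\wp_\tau^3+g_2\wp_\tau+g_3$ has vanishing principal part and constant term, and conclude it is a holomorphic function on the compact torus vanishing at the origin, hence identically zero. (The only blemish is the typo $\wp_\tau(\tau)$ for $\wp_\tau(z)$ in the displayed cancellation.)
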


\begin{proof}
The identity
$$
\frac{1}{(z-\lambda)^2} = \frac{d}{dz}\bigg(\frac{1}{\lambda-z}\bigg) =
\frac{1}{\lambda^2}\sum_{m=0}^\infty (m+1)\frac{z^m}{\lambda^m}
$$
implies that
\begin{equation}
\label{eqn:wp_ident}
\wp_\tau(z) = \frac{1}{z^2} + \sum_{m=1}^\infty (2m+1)G_{2m+2}(\tau) z^{2m}.
\end{equation}
Differentiating, we obtain the expansion
$$
\wp_\tau'(z) =
-\frac{2}{z^3} + \sum_{m=1}^\infty 2m(2m+1)G_{2m+2}(\tau) z^{2m-1}.
$$
Since
$$
\wp_\tau(z)^3 \equiv \frac{1}{z^6} + 9G_4(\tau)\frac{1}{z^2} + 15 G_6(\tau)
\mod (z)
$$
and
$$
\wp'_\tau(z)^2 \equiv 4\frac{1}{z^6} - 24G_4(\tau)\frac{1}{z^2} - 80 G_6(\tau)
 \mod (z)
$$
it follows that
$$
4\wp_\tau(z)^3 - g_2(\tau)\wp_\tau(z) - g_3(\tau) - \wp_\tau'(z)^2 
\equiv 0 \mod (z).
$$
Since $\wp_\tau$ and $\wp_\tau'$ are holomorphic away from $\Lambda_\tau$, the
left hand side of the previous expression is a holomorphic function on
$\C/\Lambda_\tau$ that vanishes at $0$. It is therefore zero.
\end{proof}

\begin{proposition}
\label{prop:embedding}
For all $\tau\in \h$, the polynomial $y^2 = 4x^3 - g_2(\tau)x - g_3(\tau)$ has
non-vanishing discriminant and the holomorphic map
$$
[\wp_\tau,\wp_\tau',1] : \C/\Lambda_\tau \to \P^2
$$
imbeds $E_\tau$ in $\P^2$ as the smooth cubic $y^2 = 4x^3 - g_2(\tau)x -
g_3(\tau)$. Moreover, the rational differential $dx/y$ on $\P^2$ pulls back to
the holomorphic differential $dz$ on $\C/\Lambda_\tau$.
\end{proposition}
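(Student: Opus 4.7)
The plan is to show that $\varphi := [\wp_\tau : \wp_\tau' : 1]$ extends to a holomorphic embedding of $\C/\Lambda_\tau$ whose image is the projective cubic
\[
C_\tau : Y^2 Z = 4 X^3 - g_2(\tau)\, X Z^2 - g_3(\tau)\, Z^3,
\]
and then to extract both the smoothness of $C_\tau$ (equivalently, $D(g_2(\tau), g_3(\tau)) \neq 0$) and the pullback formula $\varphi^\ast(dx/y) = dz$ as consequences.

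First I would check that $\varphi$ extends holomorphically across $z = 0$ to the point $[0:1:0]$: multiplying the three coordinates by $z^3$ and using the Laurent expansions $\wp_\tau(z) = z^{-2} + O(1)$ and $\wp_\tau'(z) = -2 z^{-3} + O(z)$, one has $[\wp_\tau : \wp_\tau' : 1] = [z^3\wp_\tau : z^3\wp_\tau' : z^3] \to [0 : -2 : 0] = [0 : 1 : 0]$. The preceding proposition already places the image in $C_\tau$.

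Next I would prove that $\varphi$ is injective and an immersion. If $\varphi(z_1) = \varphi(z_2)$ with both $z_j \not\equiv 0$, then $\wp_\tau(z_1) = \wp_\tau(z_2)$; since $\wp_\tau$ realizes $\C/\Lambda_\tau$ as a degree-$2$ cover of $\P^1$ with deck involution $z \mapsto -z$, this forces $z_2 \equiv \pm z_1$, and looking at the $Y$-coordinate then forces $z_1 \equiv z_2$ (if $z_2 \equiv -z_1$, then $\wp_\tau'(z_1) = 0$, so $z_1$ is a half-period and already $z_1 \equiv -z_1$). The fiber over $[0:1:0]$ is clearly $\{0\}$. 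For the immersion property I would split into three cases: on $\{\wp_\tau' \neq 0\}$, the chart $Z = 1$ with map $(\wp_\tau, \wp_\tau')$ has non-zero $X$-derivative $\wp_\tau'(z)$; at a half-period $z_0$ one has $\wp_\tau'(z_0) = 0$ (since $\wp_\tau'$ is odd and $-z_0 \equiv z_0$), but $\wp_\tau'$ has only a triple pole at $0$, hence exactly three zeros with multiplicity, which must be the three distinct half-periods, so each zero is simple and $\wp_\tau''(z_0) \neq 0$ gives a non-zero $Y$-derivative; at $z = 0$ I would switch to the chart $Y \neq 0$ with coordinates $u = X/Y = \wp_\tau/\wp_\tau' = -z/2 + O(z^3)$ and $v = Z/Y = 1/\wp_\tau' = -z^3/2 + O(z^5)$, so $du/dz|_{0} = -1/2 \neq 0$.

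A compact, injective, holomorphic immersion is an embedding, so the image is a smooth, compact, irreducible, one-dimensional complex submanifold of $\P^2$ of genus $1$ contained in $C_\tau$. By the genus formula, a smooth plane curve of genus $1$ has degree at least $3$, so the image has degree exactly $3$ and therefore coincides with $C_\tau$. Hence $C_\tau$ is smooth; smoothness at $[0:1:0]$ is automatic from $\partial F/\partial Z = 1 \neq 0$ there, while smoothness on the affine chart $Z = 1$ amounts to $4x^3 - g_2 x - g_3$ having distinct roots, i.e.\ $D(g_2(\tau), g_3(\tau)) \neq 0$. The final claim is immediate: $x = \wp_\tau$ and $y = \wp_\tau'$ give $dx = \wp_\tau'(z)\, dz = y\, dz$, so $\varphi^\ast(dx/y) = dz$. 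The main obstacle I anticipate is the immersion check at the half-periods and at $z = 0$, which rests on knowing that $\wp_\tau'$ has exactly three simple zeros on $\C/\Lambda_\tau$ and on passing to the correct affine chart at the identity; once $\varphi$ is known to be an embedding, everything else is degree and dimension bookkeeping.
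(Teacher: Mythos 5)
Your proof is correct, but it runs in essentially the opposite direction from the paper's. The paper first establishes $D(g_2(\tau),g_3(\tau))\neq 0$: it applies the Riemann--Hurwitz formula to the degree-two map $\wp_\tau:E_\tau\to\P^1$ to conclude that there are exactly four critical points and hence (since the degree is two) four distinct critical values; the three finite ones are the roots of $4x^3-g_2(\tau)x-g_3(\tau)$ because $(\wp_\tau')^2=4\wp_\tau^3-g_2(\tau)\wp_\tau-g_3(\tau)$, so the cubic has distinct roots, the discriminant is non-zero, and the plane cubic is smooth by Exercise~\ref{ex:smooth_cubics}; the embedding assertion is then stated with the injectivity and immersion checks left implicit. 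You instead prove directly that $[\wp_\tau,\wp_\tau',1]$ is an injective holomorphic immersion of the compact curve $E_\tau$ --- verifying the immersion condition at generic points, at the half-periods via the simplicity of the three zeros of the degree-three function $\wp_\tau'$, and at $z=0$ in the chart $Y\neq 0$ --- and only afterwards deduce smoothness of the cubic and $D\neq 0$ by identifying the image with the full cubic through the genus formula and a degree/irreducibility count. The paper's route is shorter and isolates the discriminant statement cleanly (which is what Corollary~\ref{cor:tau_zerofree} needs), at the cost of glossing over why the map is actually an embedding; your route supplies exactly those missing checks, at the cost of invoking the genus formula and the containment argument to see that the image exhausts the cubic. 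Note also that your key fact --- the zeros of $\wp_\tau'$ are precisely the three non-zero half-periods, each simple --- is the same information the paper extracts from Riemann--Hurwitz (four distinct critical points of $\wp_\tau$), just obtained by counting zeros of $\wp_\tau'$ instead.
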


\begin{proof}
Set $E_\tau = \C/\Lambda_\tau$. Since $E_\tau$ has genus 1 and $\wp_\tau :
(E_\tau,0) \to (\P^1,0)$ has degree two, the Riemann-Hurwitz formula
(Exercise~\ref{ex:RH}) implies that $\wp_\tau$ has 4 distinct critical
points.\footnote{Since $\wp_\tau(z) = \wp_\tau(-z)$, these are $\infty$ and the
3 non-zero points of order 2 of $E_\tau$.} Consequently, $\wp_\tau$ has three
distinct critical values in $\C$. Since
$$
(\wp_\tau')^2 = 4\wp_\tau^3 - g_2(\tau)\wp_\tau - g_3(\tau),
$$
these are the three roots of the cubic $4x^3 - g_2(\tau)x - g_3(\tau)$. Since
they are distinct, its discriminant
$$
\Delta(\tau) = g_2(\tau)^3 - 27 g_3(\tau)^2
$$
is non-zero. By Exercise~\ref{ex:smooth_cubics}, this implies that
$[\wp_\tau,\wp_\tau',1]$ imbeds $E_\tau$ as a smooth cubic.

The last statement holds because $x = \wp_\tau$ and $y = \wp_\tau'$, so that
$$
\frac{dx}{y} = \frac{\wp_\tau' dz}{\wp_\tau'} = dz.
$$
\end{proof}

An immediate consequence of the proof is a topological/geometric proof that the
Ramanujan tau function has no zeros in $\h$. (Cf.\ \cite[p.~84]{serre}.)

\begin{corollary}
\label{cor:tau_zerofree}
The Ramanujan tau function $\Delta := g_2^3 - 27 g_3^2$ has no zeros in $\h$.
\end{corollary}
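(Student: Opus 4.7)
The plan is to simply read off the corollary from the just-proved Proposition~\ref{prop:embedding}. The bulk of the work has already been done there, and the corollary is essentially a restatement of one sentence in its proof.

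First I would fix an arbitrary $\tau \in \h$ and recall that $\wp_\tau : \C/\Lambda_\tau \to \P^1$ is a degree 2 holomorphic map, since $\wp_\tau$ has a unique double pole at the origin. By Riemann--Hurwitz applied to this degree 2 map from a genus 1 curve to $\P^1$, there are exactly 4 ramification points, hence 4 distinct critical values in $\P^1$. One critical value is $\infty$ (the image of the pole), so the other three are distinct finite complex numbers.

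Next I would identify those three finite critical values with the roots of $4x^3 - g_2(\tau)x - g_3(\tau)$ via the Weierstrass differential equation $(\wp_\tau')^2 = 4\wp_\tau^3 - g_2(\tau)\wp_\tau - g_3(\tau)$: indeed, a finite point $x_0 = \wp_\tau(z_0)$ is a critical value exactly when $\wp_\tau'(z_0)=0$, which by the differential equation is equivalent to $4x_0^3 - g_2(\tau)x_0 - g_3(\tau) = 0$. Distinctness of the three critical values therefore forces the cubic $4x^3 - g_2(\tau)x - g_3(\tau)$ to have three distinct roots, which is equivalent to the non-vanishing of its discriminant $\Delta(\tau) = g_2(\tau)^3 - 27 g_3(\tau)^2$.

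Since $\tau \in \h$ was arbitrary, $\Delta(\tau) \neq 0$ for all $\tau \in \h$, proving the corollary. There is no main obstacle here: the genuine content lies upstream in Proposition~\ref{prop:embedding}, so this proof is a one-line appeal to the distinctness of the critical values of $\wp_\tau$ established there.
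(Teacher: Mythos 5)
Your proposal is correct and is exactly the paper's argument: the corollary is stated as an immediate consequence of the proof of Proposition~\ref{prop:embedding}, where the degree-2 map $\wp_\tau$, Riemann--Hurwitz, and the Weierstrass differential equation already force the three finite critical values to be the distinct roots of $4x^3-g_2(\tau)x-g_3(\tau)$, hence $\Delta(\tau)\neq 0$. Nothing further is needed.
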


\subsection{Extending the universal curve}
\label{sec:extended_curve}

The description of an elliptic curve as a plane cubic curve allows us to  extend
explicitly the universal elliptic curve over $\M_{1,1}$ to $\Mbar_{1,1}$.

Consider the family
$$
E = \big\{([x,y,z],q) \in \P^2 \times \D :
zy^2 = 4x^3 - g_2(\tau)xz^2 - g_3(\tau)z^3,\ q = e^{2\pi i \tau} \big\}
$$
of cubic curves over the disk. This family has a natural $C_2$-action in which
the generator acts by taking $([x,y,z],q)$ to $([x,-y,z],q)$.

\begin{exercise}
Show that $E$ is a smooth surface in $\P^2 \times \D$. Show that the projection
$E \to \D$ is proper.
\end{exercise}

\begin{lemma}
The restriction of $E\to \D$ to the punctured disk $\D^\ast$ is the pullback of
the universal elliptic $\E\to \M_{1,1}$ along the natural mapping
$$
\D^\ast = \PZ\bs \h \to \M_{1,1}.
$$
which is equivariant with respect to the natural $C_2$-actions on $\E_h$ and
$E$.
\end{lemma}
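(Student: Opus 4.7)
The plan is to construct the isomorphism explicitly via the Weierstrass embedding of Proposition~\ref{prop:embedding}, then verify equivariance with respect to the groups whose quotient produces the orbifold pullback, and with respect to the $C_2$-actions.

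First I would define $\tilde\Phi : \C \times \h \to \P^2\times \h$ by
$$\tilde\Phi(z,\tau) = ([\wp_\tau(z):\wp_\tau'(z):1],\tau),$$
with the convention that lattice points are sent to $[0:1:0]$; the local model near $z=0$ is $[z^3\wp_\tau(z):z^3\wp_\tau'(z):z^3]$, which extends holomorphically with limit $[0:-2:0]=[0:1:0]$. Uniform convergence of the Weierstrass series on compacta of $\C\times\h$ away from the lattice locus, combined with this local rescaling, shows $\tilde\Phi$ is holomorphic. By Proposition~\ref{prop:embedding} its image is the family of smooth cubics
$$\tilde E := \{([x:y:z],\tau)\in\P^2\times\h : zy^2 = 4x^3 - g_2(\tau)xz^2 - g_3(\tau)z^3\},$$
and each fiber map $\C/\Lambda_\tau \to \tilde E_\tau$ is a biholomorphism. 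By $\Lambda_\tau$-periodicity of $\wp_\tau$ and $\wp_\tau'$ in $z$, the map descends to an isomorphism of families $\E_\h \to \tilde E$ over $\h$.

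Next I would pass to the $\PZ$-quotient and reparametrise via $q=e^{2\pi i\tau}$. Since $\Lambda_{\tau+1}=\Lambda_\tau$, the Weierstrass functions and the Eisenstein series $g_2,g_3$ are invariant under $\tau\mapsto\tau+1$; and the generator $T$ of $\PZ$ acts on $\C\times\h$ as $(z,\tau)\mapsto(z,\tau+1)$. Hence $\tilde\Phi$ is $\PZ$-equivariant with $\PZ$ acting on $\tilde E$ by translation of the $\h$-factor. Taking quotients and using the identification $\PZ\bs\h\cong\D^*$ from Exercise~\ref{ex:disk} yields an isomorphism $(\PZ\ltimes\Z^2)\bs(\C\times\h) \to E|_{\D^*}$ of families over $\D^*$. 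The source is, by construction, the orbifold pullback of $\E=(\SL_2(\Z)\ltimes\Z^2)\bs(\C\times\h)$ along the morphism $\D^*\to\M_{1,1}$, represented by the identity on $\h$ together with the inclusion $\PZ\hookrightarrow\SL_2(\Z)$.

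For $C_2$-equivariance, the $C_2$-action on $\E$ comes from $-I\in\SL_2(\Z)$ acting on $\C\times\h$ as $(z,\tau)\mapsto(-z,\tau)$. Evenness of $\wp_\tau$ and oddness of $\wp_\tau'$ give
$$\tilde\Phi(-z,\tau)=([\wp_\tau(z):-\wp_\tau'(z):1],\tau),$$
which matches the $C_2$-action $[x:y:z]\mapsto[x:-y:z]$ on $E$, so the isomorphism is equivariant. The main obstacle is the first step: pinning down holomorphicity of $\tilde\Phi$ across the lattice locus and checking that the fiberwise embeddings genuinely assemble into a morphism of families over $\h$. Once that is secured, the $\PZ$- and $C_2$-equivariance and the identification of the quotient with the orbifold pullback all follow formally from invariance and parity properties of $\wp$, $g_2$, and $g_3$.
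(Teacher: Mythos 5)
Your proposal is correct and follows essentially the same route as the paper: both use the Weierstrass map $(z,\tau)\mapsto([\wp_\tau(z),\wp_\tau'(z),1],e^{2\pi i\tau})$ together with Proposition~\ref{prop:embedding} to get a fiberwise isomorphism $\E_\h\to E$ over $\tau\mapsto e^{2\pi i\tau}$, and then deduce $C_2$-equivariance from $\wp_\tau(-z)=\wp_\tau(z)$ and $\wp_\tau'(-z)=-\wp_\tau'(z)$. You simply supply more detail than the paper on two points it leaves implicit --- holomorphy of the embedding across the lattice locus and the explicit identification of $(\PZ\ltimes\Z^2)\bs(\C\times\h)$ with the orbifold pullback along $\D^\ast\to\M_{1,1}$ --- both of which you handle correctly.
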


\begin{proof}
Proposition~\ref{prop:embedding} implies that the mapping $\C\times \h \to E$
defined by
\begin{equation}
\label{eqn:embedding}
(z,\tau) \mapsto [\wp_\tau(z),\wp_\tau'(z),e^{2\pi i \tau}]
\end{equation}
induces a holomorphic mapping $q : \E_\h \to E$ such that the diagram
$$
\xymatrix@C=4pc{
\E_\h \ar[r]^q \ar[d] & E\ar[d] \cr
\h \ar[r]^{\tau \mapsto e^{2\pi i \tau}} & \D
}
$$
commutes and that it is an isomorphism on each fiber. The first assertion
follows.

The generator of $C_2$ acts on $\E_h$ by $(z,\tau) \mapsto (-z,\tau)$. It acts
on $E$ by $([x,y,z],q) \mapsto ([x,-y,z],q)$. The $C_2$ equivariance $q$ follows
as $\wp_\tau(-z) = \wp_\tau(z)$ and $\wp_\tau'(-z) = - \wp_\tau'(z)$.
\end{proof}

We can thus extend the universal curve $\E_\h \to \M_{1,1}$ to $\Mbar_{1,1}$ by
gluing it to a copy of $E$:
$$
\xymatrix@R=-0.5pc@C=3pc{
& \boxed{\begin{CD}\E_\h \cr @VVV \cr \h \end{CD}}
\ar@(ul,dl)[]_{\pm\PZ} \ar[dl]_p \ar[dr]^q \cr
\boxed{\begin{CD}\E_\h \cr @VVV \cr \h\end{CD}}
\ar@(ul,dl)[]_{\SL_2(\Z)} &&
\ar@(ul,dl)[]_{C_2}\boxed{\begin{CD} E_\D \cr @VVV \cr\D\end{CD}}
}
$$
The extended family $\E \to \Mbar_{1,1}$ has smooth total space. Its fiber
over $\infty$ is the nodal cubic:

\begin{proposition}
\label{prop:nodal_cubic}
The fiber $E_0$ of $\Ebar \to \Mbar_{1,1}$ over $q=0$ is isomorphic to the nodal
cubic
$$
Y^2 = \frac{4}{27} (3X+2)(3X-1)^2.
$$
\end{proposition}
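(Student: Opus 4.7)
The plan is to read the equation of the fiber directly from the family $E$ constructed in Section~\ref{sec:extended_curve} and then identify it via an explicit linear change of coordinates with the stated nodal cubic.

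First, by construction the family $E\to\D$ has fiber over $q$ given by the closure in $\P^2$ of the affine curve $y^2 = 4x^3 - g_2(\tau)x - g_3(\tau)$, where $q=e^{2\pi i\tau}$. So the fiber $E_0$ over $q=0$ is the projective curve $zy^2 = 4x^3 - g_2(0)xz^2 - g_3(0)z^3$. The first step is to evaluate $g_2$ and $g_3$ at $q=0$. From the $q$-expansion (\ref{eqn:q_exp}) one reads off $G_{2k}(\tau)\big|_{q=0} = 2\zeta(2k)$. Using $\zeta(4)=\pi^4/90$ and $\zeta(6)=\pi^6/945$, this gives
$$
g_2(0) = 60\cdot 2\zeta(4) = \frac{4\pi^4}{3},\qquad
g_3(0) = 140\cdot 2\zeta(6) = \frac{8\pi^6}{27}.
$$

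Next, I would perform the rescaling $x = -\pi^2 X$, $y = i\pi^3 Y$ in the affine equation $y^2 = 4x^3 - g_2(0)x - g_3(0)$. A direct substitution transforms it into
$$
Y^2 = 4X^3 - \tfrac{4}{3}X + \tfrac{8}{27} = \tfrac{4}{27}\big(27X^3 - 9X + 2\big).
$$
It remains to check that the cubic $27X^3 - 9X + 2$ factors as $(3X+2)(3X-1)^2$, which is immediate: expanding $(3X+2)(3X-1)^2 = (3X+2)(9X^2-6X+1) = 27X^3 - 9X + 2$. This yields the stated nodal cubic $Y^2 = \tfrac{4}{27}(3X+2)(3X-1)^2$, with node at $(X,Y) = (1/3,0)$.

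As a sanity check (and to confirm that $E_0$ is indeed nodal rather than smooth), one computes the discriminant
$$
\Delta(0) = g_2(0)^3 - 27\,g_3(0)^2 = \tfrac{64\pi^{12}}{27} - 27\cdot\tfrac{64\pi^{12}}{729} = 0,
$$
consistent with Corollary~\ref{cor:tau_zerofree} and the fact that $\Delta$ has a simple zero at $q=0$. No step here is a real obstacle; the only thing to be careful with is choosing the right rescaling of $x,y$ so that the signs match the form written in the proposition (the absolute form of the cubic is of course independent of this cosmetic choice, so the isomorphism class of $E_0$ is unambiguous).
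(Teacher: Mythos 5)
Your proof is correct and follows essentially the same route as the paper: evaluate $g_2(0)=\tfrac{4}{3}\pi^4$ and $g_3(0)=\tfrac{8}{27}\pi^6$ from the $q$-expansion, rescale $x=(\pi i)^2X$, $y=\pm(\pi i)^3Y$, and factor $27X^3-9X+2=(3X+2)(3X-1)^2$. The only differences are cosmetic (you spell out the zeta values and add the vanishing-discriminant check, which the paper omits).
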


\begin{proof}
Set $x = (\pi i)^2 X$ and $y = (\pi i)^3 Y$. In these coordinates the equation
of $E$ is
$$
Y^2 = 4 X^3 - \frac{g_2}{(\pi i)^4} X - \frac{g_3}{(\pi i)^6}.
$$
When $q=0$,
$$
g_2 = 60\, G_4|_{q=0} = \frac{2^2}{3}\,\pi^4 \text{ and }
g_3 = 140\, G_6|_{q=0} = \frac{2^3}{27}\,\pi^6
$$
so that the equation of $E_0$ is
\begin{equation}
\label{eqn:nodal_cubic}
Y^2 = 4X^3 - \frac{4}{3} X + \frac{8}{27} = \frac{4}{27} (3X+2)(3X-1)^2.
\end{equation}
\end{proof}

\begin{exercise}
\label{ex:E0}
Use the identity (\ref{eqn:wp_ident}) to show that when $q=0$
$$
(2\pi i)^{-2}\wp_0(z)
= \frac{1}{(2\pi i z)^2}
-\sum_{m = 1}^\infty \frac{B_{2m+2}}{(2m+2)(2m)!}(2\pi i z)^{2m}
$$
where $B_n$ denotes the $n$th Bernoulli number.\footnote{Bernoulli numbers are
defined by the power series $x/(e^x-1) = \sum_{n=0}^\infty B_n x^n/n!$. The
first few Bernoulli numbers are $B_0 = 1$, $B_1 = -1/2$, $B_2 = 1/6$. When $k\ge
1$, $B_{2k+1} = 0$. Bernoulli numbers are related to values of the Riemann zeta
function at positive even integers by $\zeta(2k) = - (2\pi i)^{2k}
B_{2k}/4(2k)!$} Deduce that when $q=0$, the mapping (\ref{eqn:embedding})
factors through the quotient mapping $\C \to \C/\Z \cong \C^\ast$ defined by $w=
\exp(2\pi i z)$. Show that the rational differential $2\pi i dx/y$ on $\P^2$
pulls back to $dw/w \in H^0(\P^1,\Omega^1([0]+[\infty]))$.

Differentiate the identity
$$
\frac{1}{2} \coth\big(u/2\big) =
\sum_{m=0}^\infty \frac{B_{2m}}{(2m)!}u^{2m-1}
$$
(that is obtained by manipulating the defining series for Bernoulli numbers) to
show that
$$
\frac{1/4}{\sinh^2(u/2)}
= \frac{1}{u^2} - 
\sum_{m=0}^\infty \frac{B_{2m+2}}{(2m+2)(2m)!}u^{2m+1}.
$$
Deduce that
$$
(2\pi i)^{-2} \wp_0(z) = \frac{1}{12} + \frac{1/4}{\sinh^2(\pi i z)}
= \frac{1}{12} + \frac{w}{(w-1)^2}.
$$
Now use the fact that $X/4 = x/(2\pi i)^2 = (2\pi i)^{-2} \wp_0$ and the
equation (\ref{eqn:nodal_cubic}) to show that
$$
X = \frac{1}{3} + \frac{4w}{(w-1)^2},\text{ and } Y = \frac{8w(w+1)}{(w-1)^3}.
$$
Finally show that the map $\P^1\to \P^2$ defined by $w\mapsto [X(w),Y(w),1]$
maps $\P^1$ onto $E_0$. Show that it takes the identity $1\in \C^\ast$ to the
identity $[0,1,0]$ of $E_0$ and that $0$ and $\infty$ are both mapped to the
double point $[1/3,0,1]$ of $E_0$; show that the map is otherwise injective.
\end{exercise}

\begin{remark}
The nodal cubic $(E_0,[0,1,0])$ is an example of a {\em stable} pointed curve.
In general, a stable pointed curve is a pointed compact, connected, complex
analytic (or algebraic) curve all of whose singularities are nodes (i.e., 
analytically isomorphic to $xy=0$) and whose automorphism group (as a pointed
curve) is finite. The marked point is required to be distinct from the nodes.
\end{remark}

\begin{exercise}
Prove that all singular stable 1-pointed genus 1 curves are isomorphic to $E_0$.
\end{exercise}

\subsection{Families of stable elliptic curves}

The extension of the universal curve to $\Mbar_{1,1}$ allows us to study the
period mappings of algebraic families of smooth elliptic curves.

\begin{exercise}
Suppose that $F$ is a non-empty finite subset of a compact Riemann surface $T$.
Let $D$ be the divisor $\sum_{P\in F} [P]$. Show that there exists a positive
integer $n$ such that the linear system $H^0(T,\O(nD))$ embeds $T$ into
projective space. Deduce that $T-F$ is an affine complex algebraic curve.
\end{exercise}

Suppose that $T$ is a compact Riemann surface and that $F$ is a (possibly empty)
finite subset of $T$. Set $T' = T-F$. The following result establishes the
Stable Reduction Theorem (cf.\ \cite[p.~118]{harris-morrison}) in the special
case of elliptic curves.

\begin{theorem}[Stable reduction for families of elliptic curves]
\label{thm:stable_redn}
If $X \to T'$ is a family of smooth elliptic curves over $T'$, then
\begin{enumerate}

\item the coarse period mapping $T' \to M_{1,1}$ extends to a holomorphic
mapping $T \to \coarseM_{1,1}$;

\item after passing to a finite covering $S \to T$ unramified over $T'$, the
period mapping $\Phi : T'\to \M_{1,1}$ extends to a morphism $\Phitilde : S \to
\Mbar_{1,1}$:
$$
\xymatrix@R=1pc@C=1pc{
& S\ar[dr]\ar[rr]^\Phitilde && \Mbar_{1,1} \cr
S'\ar[ur]\ar[dr]_(0.435){\text{\rm unramified}} & & T \cr
& T' \ar[ur]\ar[rr]^\Phi && \M_{1,1}\ar[uu]
}
$$

\end{enumerate}
In other words, after passing to a finite unramified covering of $T'$, the
family $X$ extends to a family of stable elliptic curves.
\end{theorem}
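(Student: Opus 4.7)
The strategy is to work locally at each puncture $P \in F$ and then glue. For each such $P$, fix a coordinate disk $U \subset T$ centred at $P$ with $U \cap F = \{P\}$, and set $U^\ast = U \setminus \{P\}$. Since $\widetilde{U^\ast}$ is simply connected, the family $X|_{U^\ast}$ admits a framing on $\widetilde{U^\ast}$, so the period map lifts to a holomorphic map $\psi : \widetilde{U^\ast} \to \h$ equivariant for a monodromy homomorphism $\pi_1(U^\ast) \to \SL_2(\Z)$; write $\gamma$ for the image of a positive generator. The crucial claim is that $\gamma$ is quasi-unipotent: either (a) $\gamma$ has finite order (``elliptic'': $|\tr \gamma|<2$, so $\gamma$ is conjugate in $\SL_2(\Z)$ to $\pm I$ or to a power of $S$ or $U$), or (b) $\gamma$ is conjugate to $\pm T^n$ for some $n\neq 0$ (``parabolic'': $|\tr \gamma|=2$). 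This is the local monodromy theorem for the weight-$1$ variation of Hodge structure $R^1\pi_\ast \Z$; in the elliptic-curve setting it admits a direct proof via the Schwarz--Pick property of $\psi$, which rules out hyperbolic $\gamma$.

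Part (i) follows by pushing forward through the $j$-invariant, which identifies $\coarseM_{1,1}$ with $\P^1$. The $q$-expansion $j(\tau) = q^{-1} + 744 + O(q)$ controls everything: in case (a), suitable $\gamma$-translates confine $\psi$ to a bounded region of $\h$, so $j\circ\psi$ stays bounded near $P$ and $j\circ\Phi$ extends as a removable singularity; in case (b), after conjugating $\gamma$ to $T^n$, the identity $\psi(\xi+1)=\psi(\xi)+n$ together with $\Im\psi>0$ forces $\Im\psi(\xi)\to\infty$ as $\xi\to i\infty$, so $j\circ\Phi$ acquires a pole of order $|n|$ at $P$. Either way $j\circ\Phi$ extends meromorphically to all of $T$, giving the desired holomorphic map $T \to \P^1 \cong \coarseM_{1,1}$.

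For part (ii) I build $S$ by taking a local cover near each $P$. In case (a) with $\gamma$ of order $d$, I pull $U$ back along $s\mapsto s^d$: the new monodromy is $\gamma^d=I$, so $\psi$ descends to a single-valued holomorphic map from a punctured disk to $\h$, and this extends across the origin by Riemann's removable-singularities theorem since $\h\cong\D$ is bounded. In case (b) with $\gamma=T^n$, no ramification is needed: the period map factors through $\PZ\bs\h \cong \D^\ast$, and the bound just above shows $U^\ast\to\D^\ast$ extends to $U\to\D$ sending $P$ to the cusp; composing with $C_2\bbs\D \hookrightarrow \Mbar_{1,1}$ produces the orbifold extension, whose fibre at $P$ is the nodal cubic of Proposition~\ref{prop:nodal_cubic}. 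The sign case $\gamma=-T^n$ is reduced to case (b) by a preliminary double cover of $U$. Since $F$ is finite these local covers patch together into a single finite cover $S\to T$ unramified over $T'$. The main obstacle throughout is the quasi-unipotence of $\gamma$: ruling out hyperbolic monodromy is what makes the theorem work, while Riemann extension, the $q$-expansion computation, and the gluing of local covers are routine once $\gamma$ has been classified.
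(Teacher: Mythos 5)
Your local analysis is correct and is essentially the paper's Proposition~\ref{prop:loc_stab_redn}: the Schwarz lemma rules out hyperbolic local monodromy and essential singularities of $j\circ\Phi$, the elliptic case is handled by a degree-$d$ local cover, the parabolic case extends into $\Mbar_{1,1}$ with nodal fibre, and the sign is absorbed by a double cover. Part (i) also matches the paper. The gap is in the last step of part (ii): the assertion that ``since $F$ is finite these local covers patch together into a single finite cover $S\to T$ unramified over $T'$'' is not a construction. A finite covering of $T'$ is the same thing as a finite-index subgroup of $\pi_1(T')$ (or a finite $\pi_1(T')$-set), and prescribing its restriction to each local group $\pi_1(U_P^\ast)$ at \emph{every} puncture simultaneously is a genuinely global constraint: the local loops $\gamma_{P}$ are not independent in $\pi_1(T')$ (their product is a product of commutators of the genus generators), so, for instance, the naive abelian cover sending $\gamma_P\mapsto 1\in\Z/d_P$ need not exist, and literally gluing a degree-$d_P$ branched cover of $U_P$ to the identity cover of the rest of $T$ does not yield a covering space over $T'$. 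Some global input is needed to produce one subgroup of $\pi_1(T')$ that works at all punctures at once.

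The paper supplies exactly this input, and it is the one idea your argument is missing: fix $m\ge 3$ and let $S'\to T'$ be the covering corresponding to $\ker\bigl(\pi_1(T')\to\SL_2(\Z)\to\SL_2(\Z/m\Z)\bigr)$, completed to $S\to T$ by Exercise~\ref{ex:completion}. The pulled-back monodromy then lands in $\SL_2(\Z)[m]$, which is torsion free and contains no element of the form $-\bigl(\begin{smallmatrix}1&n\cr 0&1\end{smallmatrix}\bigr)$; hence every local monodromy on $S'$ is automatically unipotent (the elliptic and sign cases cannot occur), and the last clause of Proposition~\ref{prop:loc_stab_redn} extends the period map across every point of $S-S'$ with no further local covers. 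If you want to keep your puncture-by-puncture covers, you must still justify their assembly into a single covering of $T'$ --- for example by exhibiting a suitable permutation representation of $\pi_1(T')$ --- and the congruence-subgroup trick is the standard and cleanest way to do so.
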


This says that \'etale locally on $T'$ every family of elliptic curves can be
extended to a family of elliptic curves over a compact curve $S$, each of whose
fibers is stable. Note, however, that the total space of the extended family
$\Phitilde^\ast \Ebar$ over $S$ is typically singular over $S-S'$.

To prove Theorem~\ref{thm:stable_redn}, we first study the local version of
stable reduction.

\begin{proposition}
\label{prop:loc_stab_redn}
If $f : \D^\ast \to \M_{1,1}$ is a holomorphic mapping, then either
\begin{enumerate}

\item the image of $f_\ast : \pi_1(\D^\ast) \to \SL_2(\Z)$ is finite. In this
case, there is a finite covering $p : \D^\ast \to \D^\ast$ and a holomorphic
mapping $\ftilde : \D \to \M_{1,1}$ whose restriction to $\D^\ast$ is $f\circ
p$; or

\item the image of $f_\ast : \pi_1(\D^\ast) \to \SL_2(\Z)$ is infinite. In this
case, there is a double covering $p : \D^\ast \to \D^\ast$ and a holomorphic
mapping $\ftilde : \D \to \Mbar_{1,1}$ whose restriction to $\D^\ast$ is $f\circ
p$.

\end{enumerate}
If the image of $f_\ast : \pi_1(\D^\ast) \to \SL_2(\Z)$ lies $\SL_2(\Z)[m]$ for
some $m\ge 3$, then $f$ extends to a holomorphic mapping $\ftilde : \D \to
\Mbar_{1,1}$ without passing to a finite covering $p$.
\end{proposition}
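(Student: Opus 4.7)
My plan is to dichotomize on the behaviour of the $j$-invariant at $0$. Since $j$ identifies $M_{1,1}$ with $\C$, the composite $j\circ f:\D^\ast\to\C$ extends meromorphically to $\D\to\P^1$, so either it is holomorphic at $0$ or has a pole there; these two cases will correspond precisely to (i) and (ii).

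\textbf{Case A ($j$ holomorphic at $0$).} Choose $\tau_0\in\h$ with $j(\tau_0)=\lim_{t\to 0}j(f(t))$. A neighbourhood of $[\tau_0]$ in $\M_{1,1}$ has the basic orbifold form $G_{\tau_0}\bbs U$, where $U\subset\h$ is a small disk about $\tau_0$ and $G_{\tau_0}=\Aut(\C/\Lambda_{\tau_0},0)$ is finite cyclic. For sufficiently small $\epsilon$, $f(\D_\epsilon^\ast)$ lies in this chart, so $f_\ast$ factors through $\pi_1(G_{\tau_0}\bbs U)=G_{\tau_0}$ and hence is finite. Let $n$ be the order of its image, and take $p(s)=s^n$; then $f\circ p$ has trivial monodromy and lifts (after choosing the branch landing near $\tau_0$) to a holomorphic map $\D^\ast\to U$, which extends by Riemann removable singularity to $\D\to U$ with value $\tau_0$ at $0$. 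Composing with $U\to\M_{1,1}$ yields the desired $\ftilde:\D\to\M_{1,1}$.

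\textbf{Case B ($j$ has a pole at $0$).} Then $f(t)\to\infty$ in $\coarseM_{1,1}$, so for small $\epsilon$ the map $f$ factors through the orbifold cusp chart $C_2\bbs\D_q^\ast$, where $q=e^{2\pi i\tau}$. From the construction of $\Mbar_{1,1}$ in the previous section, $\pi_1(C_2\bbs\D_q^\ast)=C_2\times\Z$, embedded in $\SL_2(\Z)$ by $(\pm 1,k)\mapsto\pm T^k$. Writing the monodromy of $f$ as $(\varepsilon,k)$, the underlying topological map $\D^\ast\to\D_q^\ast$ has winding number $k$; the hypothesis that $f$ approaches the cusp forces $k\neq 0$ (otherwise the Riemann extension of the underlying map $\D\to\D_q$ would satisfy $g(0)\neq 0$, so $f$ would not approach $\infty$). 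Thus the monodromy is $\pm T^k$ with $k\neq 0$, of infinite order, placing us in (ii). After the double cover $p(s)=s^2$ the monodromy becomes $(1,2k)$; the $C_2$-factor disappears and $f\circ p$ lifts to a holomorphic $g:\D^\ast\to\D_q^\ast$ of winding $2k$. Riemann extension yields $g:\D\to\D_q$ with $g(0)=0$, and composing with $\D_q\to C_2\bbs\D_q\subset\Mbar_{1,1}$ produces $\ftilde$.

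For the refinement, assume the monodromy lies in $\SL_2(\Z)[m]$ with $m\ge 3$. Torsion-freeness gives $G_{\tau_0}\cap\SL_2(\Z)[m]=\{I\}$, so in Case A the monodromy is already trivial and no cover is required. Since $-I\notin\SL_2(\Z)[m]$ for $m\ge 3$, in Case B the monodromy is $T^k$ rather than $\pm T^k$, the $C_2$-factor being automatically absent; then $f$ itself lifts directly to $g:\D^\ast\to\D_q^\ast$ and extends to $\D\to\D_q\subset\Mbar_{1,1}$ without doubling.

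The main technical hurdle is the local analysis at the cusp: pinning down the orbifold chart $C_2\bbs\D_q$ together with the explicit map $\pi_1(C_2\bbs\D_q^\ast)=C_2\times\Z\to\SL_2(\Z)$, $(\pm 1,k)\mapsto\pm T^k$, and converting ``$f$ approaches the cusp'' into the clean winding-number statement $k\neq 0$. Once these ingredients are in hand, extension in each case is a direct application of the Riemann removable singularity theorem to a bounded holomorphic function.
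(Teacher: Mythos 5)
There is a genuine gap, and it sits at the very first step. You assert that, because $j$ identifies $M_{1,1}$ with $\C$, the composite $j\circ f:\D^\ast\to\C$ ``extends meromorphically to $\D\to\P^1$, so either it is holomorphic at $0$ or has a pole there.'' Nothing in what you have written justifies this: a holomorphic function on the punctured disk (e.g.\ $e^{1/z}$) need not extend meromorphically, and ruling out an essential singularity of $j\circ f$ at the origin is precisely the main analytic content of the proposition --- the entire dichotomy on which your Cases A and B rest depends on it. The paper's proof supplies this step by lifting $f$ to a $\pi_1$-equivariant map $F:\h\to\h$ and invoking the Schwarz lemma: $F$ is distance-decreasing for the Poincar\'e metric, so the image of a small angular sector of $\D_\epsilon^\ast$ (equivalently, of a bounded vertical strip high up in $\h$) has bounded hyperbolic diameter and cannot be dense in $\M_{1,1}$, which is what an essential singularity would force via Casorati--Weierstrass. (An alternative route: pass to a finite cover of $\D^\ast$ on which $f$ lifts to the level-$2$ moduli space $\cong\Pminus$ and apply the big Picard theorem.) Some argument of this kind must be inserted before your dichotomy is available.

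The remainder of your argument is sound and runs parallel to the paper's: the local orbifold chart $G_{\tau_0}\bbs U$ at an interior limit point forces finite monodromy and yields the extension after a cyclic cover; at the cusp the monodromy is $\pm T^k$, the integer $k$ is the winding number of the underlying map into the punctured $q$-disk, $k\neq 0$ because $j$ has a pole, and the double cover kills the $\pm$ ambiguity; torsion-freeness of $\SL_2(\Z)[m]$ and $-I\notin\SL_2(\Z)[m]$ for $m\ge 3$ give the refinement. Note, though, that you flag the cusp-chart analysis as ``the main technical hurdle''; in fact that part is routine, and the real hurdle is exactly the removable-singularity statement you assumed without proof.
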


\begin{proof}
Denote the image of the positive generator of $\pi_1(\D^\ast)$ under $f_\ast :
\pi_1(\D^\ast) \to \SL_2(\Z)$ by $A$. Identify $(\coarseM_{1,1},\infty)$ with
$(\P^1,\infty)$ via the modular function $j$. Let $q : \h \to \D^\ast$ be the
universal covering $z \mapsto \exp(2\pi i z)$. Let $F :\h \to \h$ be a
$\pi_1$-equivariant lift of $f$:
$$
\xymatrix{
\h \ar[d]_q \ar[r]^F & \h \ar[d] \cr
\D^\ast \ar[r]^f & \M_{1,1}
}
$$
The Schwartz Lemma \cite{ahlfors} implies that $F$ is distance decreasing in the
Poincar\'e metric. This implies that the composite
$$
\xymatrix{\D^\ast \ar[r]^f & \M_{1,1} \ar[r]^j & \P^1}
$$
cannot have an essential singularity at the origin as we now explain. If it did,
the image of each angular sector of each subdisk about the origin of
$\D_\epsilon^\ast$ would be dense in $\P^1$. But this implies that the image of
every strip $\Im(z)\ge c$, $|\Re(z-z_o)| \le  \epsilon$ has dense image in
$\M_{1,1}$, which contradicts Schwartz's Lemma. It follows that $j\circ f$ has a
removable singularity.\footnote{A pole is a removable singularity of a map to
$\P^1$.} Denote its extension to $\D$ by $G : \D \to \P^1$. If $G(0) \in
M_{1,1}$, then there is a finite covering $p : \D \to \D$ and a holomorphic
mapping $\ftilde : \D \to \h$ that lifts $G$. In this case $A$ fixes
$\ftilde(0)$, and thus has finite order.

Suppose now that $G(0) = \infty$. By standard complex variables, one can choose
a holomorphic coordinate $w$ on $\D$ centered at the origin such that $G$ is
given by $q = w^n$ in a neighbourhood of the origin for some positive integer
$n$. By choosing $r > 0$ to be small enough, we may assume that $\D$ is the disk
$|w| < r$. But this implies that
$$
A = \pm \begin{pmatrix} 1 & n \cr 0 & 1\end{pmatrix}.
$$
If the diagonal entries of $A$ are 1, then $f : \D^\ast \to \M_{1,1}$ extends to
a holomorphic mapping $\D \to \Mbar_{1,1}$. If the diagonal entries of $A$ are
$-1$, the composition of $f$ with a double covering $p : \D^\ast \to \D^\ast$
extends to a holomorphic function $\ftilde : \D \to \Mbar_{1,1}$.

Finally, if $A \in \SL_2(\Z)[m]$ where $m\ge 3$, then $A$ cannot have finite
order (as $\SL_2(\Z)[m]$ is torsion free), and $A$ cannot be the negative of a
unipotent matrix:
$$
A \notin \begin{pmatrix}-1 & \Z \cr 0 & -1\end{pmatrix}
$$
It follows that one can take $p$ to be the identity when $A$ lies in a subgroup
of level $m\ge 3$.
\end{proof}

\begin{proof}[Proof of Theorem~\ref{thm:stable_redn}]
Suppose that $T' = T-F$, where $T$ is a compact Riemann surface and $F$ is a
finite subset. Suppose that $\Phi : T' \to \M_{1,1}$ is the period mapping of a
family $X \to T'$ of smooth elliptic curves. Fix an integer $m\ge 3$. The kernel
of the homomorphism
$$
\Phi_\ast : \pi_1(T') \to \SL_2(\Z/m\Z)
$$
is a finite index subgroup of $\pi_1(T')$. It determines a finite, unramified
covering $p : S' \to T'$. By standard arguments (cf.\
Exercise~\ref{ex:completion}), there is a compact Riemann surface $S$ and with a
finite subset $F_S$ such that $S'=S-F_S$ and a holomorphic mapping $S \to T$
whose restriction to $S'$ is $p$. The composite
$$
\xymatrix{
S' \ar[r]^p & T' \ar[r]^\Phi & \M_{1,1}
}
$$
is the period mapping of the family $p^\ast X \to S'$. The associated monodromy
representation $(\Phi\circ p)_\ast$ is the composite
$$
\pi_1(S') \to \pi_1(T') \to \SL_2(\Z).
$$
The image of $(\Phi\circ p)_\ast$ lies in $\SL_2(\Z)[m]$.

For each $P\in F_S$, choose a coordinate disk $U_P \cong \D$ centered at $P$
such that $U_P\cap F = \{P\}$. Since the image of a generator of
$\pi_1(U_P^\ast)$ under $(\Phi\circ p)_\ast$ lies in $\SL_2(\Z)[m]$ ($m\ge 3$),
it follows from Proposition~\ref{prop:loc_stab_redn} that the period mapping $S'
\to \M_{1,1}$ extends across $P$ and that the period mapping extends to a
holomorphic mapping
$$
\Phitilde : S \to \Mbar_{1,1}.
$$
The family $p^\ast X \to S'$ of smooth elliptic curves extends to the family
$\Phitilde^\ast \Ebar \to S$ of stable curves.
\end{proof}

\begin{example}
Suppose that $\sigma$ is a non-trivial automorphism of the elliptic curve
$(E,0)$. Let $d$ be the order of $\sigma$. Let $X \to \D^\ast$ be the isotrivial
family associated to $\sigma$ and the $d$-fold covering $p:\D^\ast \to \D^\ast$.
(See Exercise~\ref{ex:isotrivial1} for the construction.) It follows from
Example~\ref{ex:isotrivial2} that the period mapping $\D^\ast \to \M_{1,1}$ does
not extend to a mapping $\D \to \Mbar_{1,1}$, for if it did extend, the induced
mapping $\pi_1(\D^\ast) \to \SL_2(\Z)$ would be trivial. Since the pullback of
$X\to \D^\ast$ along the $d$-fold covering $p:\D^\ast \to \D^\ast$ is the
trivial family $E\times\D^\ast \to \D^\ast$, the period mapping of $p^\ast X$ is
the constant map with value $[E]$, which trivially extends to a mapping $\D \to
\M_{1,1}$.
\end{example}

\begin{exercise}
For $e\in \Z$ set
$$
X_e =
\big\{([x,y,z],t) \in \P^2\times \D^\ast : t^e zy^2 = (x^2 - tz)(x-z)\big\}.
$$
This is a family of elliptic curves over $\D^\ast$ with zero section $t\mapsto
[0,1,0]$. Show that the fiber of $X_e$ over $t\in \D^\ast$ is isomorphic to the
fiber of $X_0$ over $t$. Show that the monodromy representation $\pi_1(\D^\ast)
\to \SL_2(\Z)$ takes the positive generator of $\pi_1(\D^\ast)$ to a conjugate
of
$$
(-1)^e \begin{pmatrix}1 & 1 \cr 0 & 1\end{pmatrix}.
$$
(Hint: set $Y = (\sqrt{t})^e y$.) Deduce that the families $X_0$ and $X_1$ are
not isomorphic.\footnote{The families $X_0$ and $X_1$ are said to differ by a
{\em quadratic twist}.} Show that the period mapping $\D^\ast \to \M_{1,1}$
extends to $\D$ if and only if $e$ is even.
\end{exercise}

One consequence of the stable reduction theorem is that families of elliptic
curves over affine algebraic curves are either isotrivial (cf.\
Exercise~\ref{ex:isotrivial1} and Example~\ref{ex:isotrivial2}) or have ``large
monodromy''.

\begin{corollary}
Suppose that $X\to T'$ is a family of elliptic curves over a Riemann surface
$T'$. If the coarse period mapping $T' \to M_{1,1}$ is constant, then the
monodromy representation
$$
\phi : \pi_1(T') \to \SL_2(\Z)
$$
has finite image and the family is isotrivial.  If $T' = T-F$ where $F$ is a
finite subset of a compact Riemann surface $T$, and if the coarse period mapping
is non-constant, then the image of the monodromy representation $\phi$ has
finite index in $\SL_2(\Z)$.
\end{corollary}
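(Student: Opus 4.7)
For the first assertion (constant coarse period map), my plan is to lift the period mapping to the universal cover. Let $\hat{\Phi} \colon \widetilde{T'} \to \h$ be a $\phi$-equivariant lift of the period map, where $\widetilde{T'} \to T'$ is the universal cover. If the coarse period mapping is constantly $[E,0] \in M_{1,1}$, then the image of $\hat{\Phi}$ lies in the preimage of $[E,0]$ in $\h$, which is the $\SL_2(\Z)$-orbit of some $\tau_0 \in \h$ --- a discrete subset. Connectedness of $\widetilde{T'}$ forces $\hat{\Phi} \equiv \tau_0$, and the equivariance $\hat{\Phi}(\gamma t) = \phi(\gamma)\hat{\Phi}(t)$ then places $\im \phi$ inside the isotropy group of $\tau_0$ in $\SL_2(\Z)$, which by Exercise~\ref{ex:autos} equals $\Aut(E,0)$ and is finite. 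To conclude isotriviality in the stronger sense of Exercise~\ref{ex:isotrivial1}, I pass to the finite unramified covering $q \colon R \to T'$ corresponding to $\ker \phi$: the pullback $q^\ast X \to R$ has trivial monodromy and constant period $\tau_0$, so by Theorem~\ref{thm:families} it is isomorphic to the trivial family $E \times R \to R$. Descending by the deck action of $\pi_1(T')/\ker\phi \hookrightarrow \Aut(E,0)$ (which acts diagonally via the framing-automorphism correspondence) recovers $X \to T'$ in precisely the form of Exercise~\ref{ex:isotrivial1}.

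For the second assertion, my strategy is to work at a level-$m$ cover with $m \ge 3$ so that everything becomes a map between honest compact Riemann surfaces, then deduce finite index from a branch-cover/degree argument. Fix $m \ge 3$ and set $H := \phi^{-1}(\SL_2(\Z)[m])$; since $\SL_2(\Z/m\Z)$ is finite, $H$ has finite index in $\pi_1(T')$. Let $p \colon S' \to T'$ be the associated finite unramified covering and complete $S'$ to a compact Riemann surface $S$ by adding a finite puncture set $F_S$. By construction the monodromy of $p^\ast X \to S'$ has image $\phi(H) = \im \phi \cap \SL_2(\Z)[m] \subseteq \SL_2(\Z)[m]$, so the last sentence of Proposition~\ref{prop:loc_stab_redn} applies around each puncture in $F_S$ and the period map extends to a holomorphic map $\overline{\Phi}_S \colon S \to \Mbar_{1,1}[m]$. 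Since $m \ge 3$, both $\M_{1,1}[m] = \SL_2(\Z)[m]\bs \h$ and its compactification $\Mbar_{1,1}[m]$ are genuine compact Riemann surfaces with $\pi_1(\M_{1,1}[m]) = \SL_2(\Z)[m]$.

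Because the coarse period map of $X$ is non-constant and $p$ is surjective, $\overline{\Phi}_S$ is a non-constant holomorphic map between compact Riemann surfaces, hence a surjective branched cover of some finite degree $d$. Restricting, $\overline{\Phi}_S|_{S'} \colon S' \to \M_{1,1}[m]$ is a degree-$d$ branched cover; removing the finite branch locus $B \subset \M_{1,1}[m]$ and its preimage $A \subset S'$ yields an unramified $d$-fold covering whose $\pi_1$-image has index $d$ in $\pi_1(\M_{1,1}[m] - B)$. Pushing forward along the surjection $\pi_1(\M_{1,1}[m] - B) \twoheadrightarrow \pi_1(\M_{1,1}[m]) = \SL_2(\Z)[m]$ shows that $\phi(H) = \im \phi \cap \SL_2(\Z)[m]$ has index at most $d$ in $\SL_2(\Z)[m]$, and since $\SL_2(\Z)[m]$ has finite index in $\SL_2(\Z)$, so does $\im \phi$. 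The main obstacle I anticipate is the careful verification that the cuspidal monodromy of $p^\ast X$ around each puncture of $S$ really lies in $\SL_2(\Z)[m]$ so that Proposition~\ref{prop:loc_stab_redn} produces an extension without further covers --- this is precisely where the definition $H = \phi^{-1}(\SL_2(\Z)[m])$ does the work --- together with the bookkeeping identity $\phi(\phi^{-1}(\SL_2(\Z)[m])) = \im\phi \cap \SL_2(\Z)[m]$; the final degree-counting step is then standard.
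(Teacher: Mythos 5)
Your proposal is correct and follows essentially the same route as the paper: lift the period map to the universal cover and use equivariance plus discreteness of the fiber over $[E,0]$ to land $\im\phi$ in a finite isotropy group for the first part, and pass to the level-$m$ cover ($m\ge 3$), extend to $S\to\Mbar_{1,1}[m]$ via Proposition~\ref{prop:loc_stab_redn}, and deduce finite index from surjectivity for the second. The only difference is that you inline the degree/branch-locus argument that the paper delegates to Exercise~\ref{ex:fte_index} (taking care, as its hint suggests, to enlarge the removed finite sets so the restriction becomes an honest unramified covering).
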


\begin{proof}[Sketch of Proof]
Denote the period mapping of the family $X \to T'$ by $\Phi : T' \to \M_{1,1}$.
Fix a universal covering $Y \to T'$. If the coarse period mapping $T'\to
M_{1,1}$ is constant, then the framed period mapping $\Phitilde : Y \to \h$ is
constant. Let $\{\tau\}$ be the image of $\Phitilde$. Since $\Phitilde$ is
equivariant with respect to $\pi_1(T') \to \SL_2(\Z)$, this implies that the
image of $\phi : \pi_1(T') \to \SL_2(\Z)$ fixes $\tau$ and is therefore finite.
The pullback of the family $X \to T'$ to the covering $S'\to T'$ determined by
$\ker\phi$ is trivial. The family $X\to T'$ is a quotient of the trivial family
$E_\tau\times S'\to S'$ by $\pi_1(T')/\ker\phi$ and is thus isotrivial.

Now suppose that $T'=T-F$ where $F$ is a finite subset of a compact Riemann
surface $T$. Suppose also that the period mapping $T' \to M_{1,1}$ is
non-constant. Fix an integer $m\ge 3$. The  inverse image of $\SL_2(\Z)[m]$ in
$\pi_1(T)$ is a finite index normal subgroup of $\pi_1(T)$. It determines a
finite covering $S' \to T'$. This extends to a finite holomorphic mapping $S\to
T$, where $S$ is a compact Riemann surface that contains $S'$ as the complement
of a finite subset. The period mapping $T'\to \M_{1,1}$ lifts to a holomorphic
mapping $S' \to \M_{1,1}[m]$ to the level-$m$ moduli space.\footnote{See
Section~\ref{sec:level}.} It extends to a holomorphic mapping $S \to
\Mbar_{1,1}[m]$. Since $S$ is compact and the period mapping is non-constant, $S
\to \Mbar_{1,1}[m]$ is surjective. Exercise~\ref{ex:fte_index} (below) implies
that the image of $\pi_1(S') \to \pi_1(\Mbar_{1,1}[m]) = \SL_2(\Z)[m]$ has
finite index. Since the diagram
$$
\xymatrix{
\pi_1(S') \ar[r]\ar[d] & \pi_1(T') \ar[d] \cr
\SL_2(\Z)[m] \ar[r] & \SL_2(\Z)
}
$$
commutes, the image of $\pi_1(T') \to \SL_2(\Z)$ has finite index in
$\SL_2(\Z)$.
\end{proof}

\begin{exercise}
\label{ex:fte_index}
Suppose that $f: X\to Y$ is a non-constant mapping of compact Riemann surfaces.
Show that if $F_X$ and $F_Y$ are finite subsets of $X$ and $Y$, respectively
such that $f(F_X) \supseteq F_Y$, then the image of
$$
f_\ast : \pi_1(X - F_X,x) \to \pi_1(Y - F_Y,f(x))
$$
has finite index in $\pi_1(Y-F_Y,f(x))$. Hints: (1) first show that if $F_Z$ is
a discrete subset of a Riemann surface $Z$ then $Z-F_Z \hookrightarrow Z$
induces a surjection on fundamental groups; (2) reduce to the case where $f$ is
an unramified covering by enlarging $F_X$ and $F_Y$.
\end{exercise}

\subsection{The Hodge bundle*}

The {\em Hodge bundle} is defined to be the line bundle
$$
\pi_\ast \Omega^1_{\Ebar/\Mbar_{1,1}}(\log E_0)
$$
over $\Mbar_{1,1}$, where $\pi : \Ebar \to \Mbar_{1,1}$ is the universal curve
and $E_0$ is the fiber of $\Ebar$ over $q=0$.\footnote{If $X$ is a smooth
variety and $D$ is a normal crossings divisor in $X$, then $\Omega^1_X(\log D)$
is the $\O_X$-module that is generated locally by $du_1/u_1,\dots,du_r/u_r$ and
$du_{r+1},\dots,du_n$, where $D$ is defined locally by $u_1u_2\dots u_r = 0$
with respect to local holomorphic coordinates $(u_1,\dots,u_n)$. It is a locally
free $\O_X$-module of rank equal to $\dim X$. If $f : X \to T$ is a holomorphic
family over a smooth curve $T$ whose fiber $X_t$ over $t\in T$ is smooth when
$t$ is not in the finite subset $F$ of $T$ and where the fiber $X_t$ over each
$t\in F$ is reduced and has normal crossings, then $\Omega^1_{X/T}(\log D)$ is
defined to be the sheaf $\Omega^1_X(\log D)/f^\ast \Omega^1_T(F)$, where
$D=f^{-1}F$. It is a locally free $\O_X$-module of rank $\dim X - 1$.} It (and
its generalizations in higher genus) play an important role in the enumerative
geometry of algebraic curves and their moduli. In Section~\ref{sec:picard} we
show that the Picard groups of $\M_{1,1}$ and $\Mbar_{1,1}$ are both generated
by the Hodge bundle. In this section, we show that the Hodge bundle is
isomorphic to $\Lbar$.

The first step in proving that the Hodge bundle is $\Lbar$ is to show that its
restriction to $\M_{1,1}$ is $\L$. The restriction of the Hodge bundle to
$\M_{1,1}$ is the line bundle $\pi_\ast \Omega^1_{\E/\M_{1,1}}$, whose fiber
over $[E] \in \M_{1,1}$ is the space of holomorphic differentials
$H^0(E,\Omega_E^1)$ of $E$. That this is isomorphic to $\L$ follows from the
next result:

\begin{lemma}
\label{lem:hodge}
The set of isomorphism classes of triples $(X,P,\w)$, where $(X,P)$ is an
elliptic curve and $\w$ is a holomorphic differential on $X$, is isomorphic in
bijective correspondence with $\L$.
\end{lemma}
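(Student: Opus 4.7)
The plan is to upgrade the bijection from the first lecture between $\h$ and isomorphism classes of framed elliptic curves to a bijection between $\C\times\h$ and isomorphism classes of framed triples $(X,P,\omega;\aa,\bb)$, and then quotient by the $\SL_2(\Z)$-action that forgets the framing. Explicitly, to a framed triple I associate
$$
(z,\tau) := \big(\textstyle{\int_\aa}\omega,\ \tau(X,P;\aa,\bb)\big) \in \C\times\h,
$$
where $\tau(X,P;\aa,\bb)\in\h$ is the period already assigned to $(X,P;\aa,\bb)$. Equivalently, choose $\omega_0\neq 0$ normalized so that $\int_\aa\omega_0=1$; then by Proposition~\ref{prop:torus} there is an isomorphism of framed elliptic curves $(X,P;\aa,\bb)\cong(\C/\Lambda_\tau,0;1,\tau)$ sending $\omega_0\mapsto dw$, and under this isomorphism $\omega=z\omega_0$ corresponds to $z\,dw$. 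The inverse map sends $(z,\tau)$ to the framed triple $(\C/\Lambda_\tau,0,z\,dw;1,\tau)$, and the two are mutually inverse at the level of framed triples up to isomorphism.

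The key step is to verify that changing the framing by $\gamma=\begin{pmatrix}a&b\cr c&d\end{pmatrix}\in\SL_2(\Z)$ via (\ref{eqn:frames}), i.e.\ replacing $(\aa,\bb)$ by $(\aa',\bb')=(c\bb+d\aa,\,a\bb+b\aa)$, transforms $(z,\tau)$ by exactly the $\SL_2(\Z)$-action on $\C\times\h$ defining $\L=\L_1$ in Exercise~\ref{ex:L}. The transformation $\tau\mapsto(a\tau+b)/(c\tau+d)$ is already established. For $z$, since $\dim H^0(X,\Omega^1_X)=1$ we may write $\omega=z\omega_0$, giving $\int_\bb\omega=z\tau$; combining with $\aa'=c\bb+d\aa$ yields
$$
z' = \textstyle{\int_{\aa'}}\omega = c\textstyle{\int_\bb}\omega + d\textstyle{\int_\aa}\omega = (c\tau+d)z,
$$
which is precisely the required transformation law. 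Thus $[X,P,\omega;\aa,\bb]\mapsto(z,\tau)$ is $\SL_2(\Z)$-equivariant and descends to a well-defined map
$$
\{\text{isomorphism classes of }(X,P,\omega)\}\longrightarrow \SL_2(\Z)\bs(\C\times\h)=\L.
$$

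Surjectivity is immediate from the explicit representative $(\C/\Lambda_\tau,0,z\,dw)$; injectivity follows by pulling back framings under an isomorphism of triples, reducing to the statement that two framed triples correspond to the same $(z,\tau)\in\C\times\h$ precisely when they are isomorphic (which is built into the framed bijection, with the $z$-coordinate recovered intrinsically as $\int_\aa\omega$). The only substantive computation is the transformation formula for $z$ above; everything else is a matter of assembling pieces already in place, so I do not anticipate any real obstacle beyond bookkeeping.
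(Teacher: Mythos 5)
Your proposal is correct and follows essentially the same route as the paper: both identify framed triples with $\C\times\h$ (your coordinate $z=\int_\aa\w$ is the paper's $u$ in $\w=u\w_\tau$ with $\int_\aa\w_\tau=1$) and then check that a change of framing acts by $(z,\tau)\mapsto\big((c\tau+d)z,\gamma\tau\big)$, matching the action defining $\L$. The only cosmetic difference is that you verify the cocycle directly from the period integrals, whereas the paper reads it off from the explicit isomorphism $z\mapsto z/(c\tau+d)$ of tori.
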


\begin{proof}
We begin by considering isomorphism classes of framed triples $(X,P,\w)$. That
is, isomorphism classes of 5-tuples $(X,P,\w;\aa,\bb)$, where $\aa,\bb$ is a
basis of $H_1(X;\Z)$ with $\aa\cdot\bb = 1$.

Since every framed elliptic curve is isomorphic to one of the form
$$
(\C/\Lambda_\tau,0;1,\tau),
$$
we need only consider isomorphism classes of 5-tuples
$$
(\C/\Lambda_\tau,0,\w;1,\tau).
$$
The differential $\w_\tau := dz$ is the unique holomorphic differential on
$\C/\Lambda_\tau$ such that $\int_\aa \w_\tau = 1$. There is therefore a
bijection
\begin{equation}
\label{eqn:corresp}
\C\times \h \to
\big\{\text{isomorphism classes of 5-tuples $(X,P,\w;\aa,\bb)$} \big\}
\end{equation}
defined by
$$
(u,\tau)\mapsto (\C/\Lambda_\tau,0,u\w_\tau;1,\tau).
$$
To complete the proof, we will show that the correspondence is
$\SL_2(\Z)$-equivariant.

The element
$$
\gamma = \begin{pmatrix} a & b \cr c & d \end{pmatrix}
$$
of $\SL_2(\Z)$ takes $(u,\tau)$ to $((c\tau+d)u,\gamma \tau)$ and takes the
framing $(1,\tau)$ of $\C/\Lambda_\tau$ to the framing $(c\tau+d,a\tau+b)$. The
isomorphism
$$
(\C/\Lambda_\tau,0;c\tau+d,a\tau+b) \to (\C/\Lambda_{\gamma\tau},0;1,\gamma\tau)
$$
is obtained by dividing by $c\tau + d$, which implies that this isomorphism
takes $\w_\tau$ to $(c\tau+d)^{-1}\w_{\gamma\tau}$. Since $u\w_\tau = (c\tau+d)u
\w_{\gamma\tau}$ the mapping (\ref{eqn:corresp}) is $\SL_2(\Z)$-equivariant.
\end{proof}

\begin{corollary}
The restriction of the Hodge bundle to $\M_{1,1}$ is isomorphic to $\L$.
\end{corollary}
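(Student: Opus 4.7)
The plan is to deduce the corollary directly from Lemma~\ref{lem:hodge} by reinterpreting it as a statement about the total space of the Hodge bundle. By definition, the fiber of $H := \pi_\ast\Omega^1_{\E/\M_{1,1}}$ over $[X,P]$ is $H^0(X,\Omega^1_X)$, so a point of the total space of $H$ is precisely an isomorphism class of a triple $(X,P,\w)$. Lemma~\ref{lem:hodge} already supplies a bijection between such triples and points of $\L = \SL_2(\Z)\bbs(\C\times\h)$, and this bijection visibly commutes with the projections to $\M_{1,1}$.

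To upgrade this bijection to an isomorphism of holomorphic line bundles, I would check two things. First, it is fiberwise $\C$-linear: for fixed $\tau$, the map $u\mapsto(\C/\Lambda_\tau,0,u\,dz)$ is linear in $u$, and this descends to linearity on the fibers over $[\C/\Lambda_\tau]$. Second, it is holomorphic. The cleanest way is to work on the cover $\h$: the pullback $p^\ast H$ admits the section $\tau\mapsto dz_\tau$, which is holomorphic because $dz$ is a globally defined holomorphic section of the relative cotangent sheaf $\Omega^1_{\E_\h/\h}$ (this is exactly the frame whose existence drives the proof that period maps are holomorphic). This section trivializes $p^\ast H$, matching the tautological trivialization of $p^\ast\L = \C\times\h$ from Exercise~\ref{ex:L}.

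It remains to verify that the $\SL_2(\Z)$-equivariant structures on these two trivializations agree. The isomorphism $(\C/\Lambda_\tau,0)\to(\C/\Lambda_{\gamma\tau},0)$ induced by $z\mapsto z/(c\tau+d)$ pulls $dz_{\gamma\tau}$ back to $(c\tau+d)^{-1}dz_\tau$; equivalently, the element $u\,dz_\tau$ of the fiber at $\tau$ corresponds to $u(c\tau+d)\,dz_{\gamma\tau}$ at $\gamma\tau$. This is precisely the automorphy factor $(c\tau+d)^k$ defining $\L_k$ with $k=1$, so the two $\SL_2(\Z)$-actions coincide and $H|_{\M_{1,1}}\cong\L$.

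The main obstacle is ensuring that the section $\tau\mapsto dz_\tau$ really is holomorphic as a section of $p^\ast H$, rather than a merely pointwise choice. This amounts to the observation that the coordinate $z$ on $\C$ gives a $\Z^2$-invariant holomorphic $1$-form on $\C\times\h$ that restricts non-trivially to every fiber of $\E_\h\to\h$, so it descends to a nowhere-vanishing section of $\pi_\ast\Omega^1_{\E_\h/\h}$; once this is granted, the remaining verification is routine bookkeeping of the automorphy cocycle.
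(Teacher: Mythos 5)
Your proposal is correct and follows the same route as the paper, which deduces the corollary directly from Lemma~\ref{lem:hodge}: the fiber of $\pi_\ast\Omega^1_{\E/\M_{1,1}}$ over $[X,P]$ is $H^0(X,\Omega^1_X)$, and the lemma's $\SL_2(\Z)$-equivariant bijection $(u,\tau)\mapsto(\C/\Lambda_\tau,0,u\,dz)$ with automorphy factor $c\tau+d$ is exactly your computation. The only difference is that you make explicit the holomorphic trivialization of the pullback to $\h$ by the relative form $dz$, a point the paper leaves implicit (it appears in its proof that period mappings are holomorphic), so your write-up is a slightly more careful version of the same argument.
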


\begin{exercise}
Show that the fiber of the Hodge bundle over $\Mbar_{1,1}$ over the moduli
points $[E_0]$ of the nodal
cubic is
$$
H^0(\P^1,\Omega_{\P^1}([0]+[\infty])) = \C\frac{dw}{w}.
$$
Here we are identifying $E_0$ with $\P^1$ (coordinate $w$) with $0$ and $\infty$
identified.
\end{exercise}

\begin{exercise}
Show that the rational differential $dx/y$ on $\P^2$ pulls back to a section of
the restriction of the Hodge bundle $\pi_\ast \Omega^1_{E/\D}(\log E_0)$ to the
$q$-disk. Deduce that it trivializes the Hodge bundle over the $q$-disk.
\end{exercise}

Proposition~\ref{prop:embedding} and Exercise~\ref{ex:E0} imply that the local
framing $dx/y$ of the Hodge bundle takes the value $\w_\tau \in
H^0(E_\tau,\Omega^1_{E_\tau})$ if $q = \exp(2\pi i \tau)$ and $(2\pi
i)^{-1}dw/w$ when $q=0$. In other words, the local framing of the Hodge bundle
about $q=0$ agrees with the local framing of $\Lbar$ about $q=0$ when restricted
to  the punctured $q$-disk when the Hodge bundle over $\M_{1,1}$ is identified
with $\L$. This implies that the Hodge bundle over $\Mbar_{1,1}$ is isomorphic
to $\Lbar$.

\begin{theorem}
\label{thm:hodge_bdle}
The Hodge bundle over $\Mbar_{1,1}$ is isomorphic to $\Lbar$.
\end{theorem}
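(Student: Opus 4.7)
The plan is to extend the isomorphism of Lemma~\ref{lem:hodge} across the cusp $\infty$. Recall that $\Mbar_{1,1}$ is assembled from the two orbifold charts $\M_{1,1}$ and $C_2\bbs\D$ glued along $C_2\bbs\D^\ast$, and $\Lbar$ is defined chart-wise, so a line-bundle isomorphism over $\Mbar_{1,1}$ amounts to compatible $C_2$-equivariant isomorphisms of the pullbacks of the two bundles to $\h$ and to $\D$ whose restrictions to the overlap $\D^\ast$ agree. I will construct these chart-wise isomorphisms and then verify their compatibility.

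On $\M_{1,1}$, I invoke Lemma~\ref{lem:hodge}, which furnishes an explicit isomorphism $\pi_\ast\Omega^1_{\E/\M_{1,1}}\cong\L_1$; at the level of pullbacks to $\h$ it sends the section $\tau\mapsto \w_\tau = dz$ to the constant section $1$ of the trivial bundle $\C\times\h$ whose $\SL_2(\Z)$-quotient defines $\L_1$. On the chart $C_2\bbs\D$, the preceding exercise shows that the rational differential $dx/y$ trivializes the pullback of the Hodge bundle to $\D$, while by the very definition of $\Lbar_1$, the pullback of $\Lbar_1$ to $\D$ is the trivial bundle $\C\times\D$, framed by the constant section $1$. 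The isomorphism on this chart is the one sending $dx/y\mapsto 1$. Both framings are $C_2$-anti-invariant: $dx/y$ transforms by $-1$ under the hyperelliptic involution $([x,y,z],q)\mapsto([x,-y,z],q)$, and the constant $1$ transforms by $-1$ under the defining $C_2$-action $(z,q)\mapsto(-z,q)$ on $\C\times\D$ used to build $\Lbar_1$. Hence the identification $dx/y\leftrightarrow 1$ is genuinely $C_2$-equivariant.

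The remaining step, and the substantive one, is to verify compatibility on the overlap $C_2\bbs\D^\ast$. This reduces to Proposition~\ref{prop:embedding}: under the Weierstrass embedding $(z,\tau)\mapsto[\wp_\tau(z),\wp_\tau'(z),1]$, the rational differential $dx/y$ on $\P^2$ pulls back to $dz=\w_\tau$ on each smooth fiber $\C/\Lambda_\tau$. Consequently, on $\D^\ast$ the section $dx/y$ of the Hodge bundle coincides with the section $\w_\tau$ of $\pi_\ast\Omega^1_{\E/\M_{1,1}}$, which under Lemma~\ref{lem:hodge} corresponds to the constant section $1$ of $\L_1$ — exactly the framing used on the $C_2\bbs\D$ side. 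The two chart-wise isomorphisms thus agree on the overlap and glue to a global isomorphism $\pi_\ast\Omega^1_{\Ebar/\Mbar_{1,1}}(\log E_0)\cong\Lbar$.

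I expect the only real obstacle to be bookkeeping around the $C_2$-equivariance at the cusp: one must confirm that the candidate trivializations on the $q$-disk both transform with weight $-1$ under the hyperelliptic involution, so that the matching descends to the orbifold quotient and is consistent with how $\Lbar_1$ was glued from its two chart-wise models in the first place. Everything else is either quoted from Lemma~\ref{lem:hodge}, the exercise identifying $dx/y$ as a framing of the Hodge bundle on the $q$-disk, or Proposition~\ref{prop:embedding}.
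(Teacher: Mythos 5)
Your proof is correct and follows essentially the same route as the paper: identify the restriction of the Hodge bundle to $\M_{1,1}$ with $\L$ via Lemma~\ref{lem:hodge}, use $dx/y$ as a framing of the Hodge bundle over the $q$-disk, and invoke Proposition~\ref{prop:embedding} to see that this framing restricts to $\w_\tau = dz$ over the punctured disk, matching the framing of $\Lbar$. Your explicit check that both framings are $C_2$-anti-invariant on the $q$-disk chart is a worthwhile detail that the paper leaves implicit.
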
 

In the elliptic curve case, the Hodge bundle is also isomorphic to the conormal
bundle of the identity section of $\Ebar$. Denote the identity section of $\pi :
\Ebar \to \Mbar_{1,1}$ and its image by $Z$. The {\em relative cotangent bundle
of $\pi$} is defined to be the dual $\check{N}$ of the normal bundle $N$ of $Z$
in $\Ebar$.

\begin{proposition}
\label{prop:lambda_equals_psi}
The Hodge bundle is isomorphic to the relative cotangent bundle
$s^\ast \check{N}$ of the zero section.
\end{proposition}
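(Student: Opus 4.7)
The plan is to construct a natural ``evaluation at the identity'' map from the Hodge bundle to $s^\ast\check{N}$ and then check it is an isomorphism by verifying it is nonzero on every fiber.

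First I would define the evaluation map. Observe that the zero section $Z$ lies in the smooth locus of $\pi : \Ebar \to \Mbar_{1,1}$: in the Weierstrass model of Section~\ref{sec:extended_curve} the identity is $[0,1,0]$, whereas the unique node of the nodal fiber $E_0$ is $[1/3,0,1]$ by Proposition~\ref{prop:nodal_cubic}. Equivalently, in the parametrization of $E_0\cong \P^1/(0\sim \infty)$ given in Exercise~\ref{ex:E0}, the identity corresponds to $w=1$, which is distinct from the node. Hence the logarithmic structure on $\Omega^1_{\Ebar/\Mbar_{1,1}}(\log E_0)$ is trivial in a neighbourhood of $Z$, so
$$
s^\ast \Omega^1_{\Ebar/\Mbar_{1,1}}(\log E_0) = s^\ast \Omega^1_{\Ebar/\Mbar_{1,1}} = s^\ast \check{N}.
$$
Restriction along $s$ then defines a morphism of line bundles
$$
\mathrm{ev} : \pi_\ast \Omega^1_{\Ebar/\Mbar_{1,1}}(\log E_0) \longrightarrow s^\ast \check{N}.
$$

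Next I would check that $\mathrm{ev}$ is an isomorphism on each fiber. Over a smooth moduli point $[X,P]\in \M_{1,1}$ the map is evaluation $H^0(X,\Omega^1_X) \to T^\ast_P X$, $\w \mapsto \w(P)$. By Proposition~\ref{prop:torus}, every such $X$ is a complex torus $\C/\Lambda$ whose holomorphic 1-forms are constant multiples of $dz$; these have no zeros, so evaluation at $P$ is an isomorphism. Over the moduli point of $E_0$, Exercise~\ref{ex:E0} identifies the Hodge fiber with $\C\cdot dw/w$, and the value of $dw/w$ at $w=1$ is the nonzero cotangent vector $dw|_{w=1}\in T^\ast_{[0,1,0]}E_0$. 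So $\mathrm{ev}$ is an isomorphism on every fiber.

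Since source and target are both line bundles on $\Mbar_{1,1}$ and $\mathrm{ev}$ is a fiberwise isomorphism, it is an isomorphism of line bundles. The main subtlety will be verifying carefully that $Z$ avoids the node of $E_0$ and hence that the log structure is trivial there; once one grants that, the rest is largely bookkeeping, resting on the fundamental fact that a nonzero holomorphic 1-form on an elliptic curve (or, in the nodal case, the distinguished generator $dw/w$ of $H^0(\P^1,\Omega^1([0]+[\infty]))$) is nonvanishing at any smooth point.
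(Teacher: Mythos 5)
Your proposal is correct and is essentially the paper's own argument: the paper also identifies the Hodge fiber with the cotangent space at the identity via the triviality of the (co)tangent bundle of a smooth elliptic curve, and handles the nodal fiber through $T_1\C^\ast \cong H^0(\P^1,\Omega^1_{\P^1}([0]+[\infty]))^\ast$, i.e.\ evaluation of $dw/w$ at $w=1$. Your additional check that the zero section avoids the node (so the log structure is trivial along $Z$) is a point the paper leaves implicit, but it is the same proof.
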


\begin{proof}
Since the holomorphic tangent bundle of every smooth elliptic curve is
trivial, there is a natural isomorphism
$$
T_0 E \cong H^0(E,\Omega_E^1)^\ast
$$
for all smooth elliptic curves $E$. There is also a natural isomorphism
$$
T_1 E_0 \cong T_1 \C^\ast \cong H^0(\P^1,\Omega^1_{\P^1}([0]+[\infty]))
$$
for the nodal cubic. The result follows.
\end{proof}

\begin{remark}
In Hao Xu's talk, $\psi_1$ denotes the first Chern class of the relative
cotangent bundle of the universal elliptic curve $\Ebar \to \Mbar_{1,1}$ and
$\lambda_1$ denotes the first Chern class of the Hodge bundle.
Theorem~\ref{thm:hodge_bdle} implies that $\lambda_1$ is the class of $\Lbar$
and Proposition~\ref{prop:lambda_equals_psi} implies (in the case of elliptic
curves) that $\lambda_1 = \psi_1$. Xu denotes the class of the nodal cubic by
$
\xymatrix@-2.5pc{
\text{---}\ar@{-}[r] & \dot&*+[o][F][r]{\phantom{x}},
}
$
which, by Proposition~\ref{prop:nodal_cubic}, is the class of the boundary point
$\infty$. Exercise~\ref{ex:relation} states that $[\infty] = 12 \lambda_1$. In
Xu's notation, this reads:
$$
\psi_1 = \lambda_1 = \frac{1}{12}\bigg(
\xymatrix@-2.5pc{
\text{---}\ar@{-}[r] & \dot&*+[o][F][r]{\phantom{x}} \cr
}
\bigg)
$$
\end{remark}

\subsection{Natural metrics*}

The restriction of the Hodge bundle to $\M_{1,1}$ has a natural metric. This is
because there is a natural metric on the space of holomorphic 1-forms on an
elliptic curve $X$. Namely:
$$
\|\w\|^2 = \frac{i}{2}\int_X \w\wedge \overline{\w},
\qquad \w \in H^0(X,\Omega^1_X).
$$
In particular,
$$
\|w_\tau\|^2 = \int_{X_\tau} dz\wedge d\overline{z} = \Im(\tau).
$$
Since this metric is intrinsically defined, it follows that the metric
$$
\|(u,\tau)\|^2 = |u|^2 \Im(\tau)
$$
on the line bundle $\C\times \h \to \h$ is invariant under the action
(\ref{eqn:action}) of $\SL_2(\Z)$ and thus descends to a metric on $\L \to
\M_{1,1}$. (This is easy to check directly.) The $k$th power of this metric
$$
\|(u,\tau)\|^2 = |u|^2 \Im(\tau)^k
$$
defines a metric on $\L_k \to \M_{1,1}$.

These metrics do not extend to metrics on $\Lbar_k$. To see this, write $q = r
e^{i\theta}$, so that
$$
\Im(\tau) = \Im((\log q)/2\pi i) = -(\log r)/2\pi = -(\log|q|)/2\pi,
$$
which blows up as $|q| \to 0$. Nonetheless, this metric is still useful as it is
$L^1$ on the $q$ disk as $|\log r|$ is $L_1$ on the unit disk.

\begin{exercise}
Show that the metric on the tangent bundle $T \M_{1,1}$ induced by the
isomorphism $T\M_{1,1} \cong \L_{-2}$ equals the hyperbolic metric
$$
ds^2 = \Im(\tau)^{-2}d\tau d\overline{\tau}.
$$
Show that the punctured $q$-disk $\D^\ast_R$ has finite volume in this metric,
where $R=\exp(-2\pi)$. Deduce that $\M_{1,1}[m]$ has finite volume for all $m\ge
1$.
\end{exercise}

The metric on $\L_{k}$ can be used to define an inner product of two modular
forms $f$ and $g$ of weight $k$ of $\SL_2(\Z)[m]$ by integrating the
$\SL_2(\Z)[m]$-invariant function
$$
f(\tau) \overline{g(\tau)}\Im(\tau)^{k}
$$
over a fundamental domain of the action of $\SL_2(\Z)[m]$ on $\h$ with respect
to the invariant volume form. This defines a positive definite hermitian form on
the space of modular forms $H^0(\Mbar_{1,1}[m],\Lbar_{k})$ of weight $k$ of
$\SL_2(\Z)[m]$:
$$
(f,g) :=
\frac{i}{2}\int_{\M_{1,1}[m]} f(\tau) \overline{g(\tau)}\Im(\tau)^{k-2}
d\tau \wedge d\overline{\tau}.
$$
It is called the {\em Petersson inner product}.

\section{The Picard Groups of $\M_{1,1}$ and $\Mbar_{1,1}$}
\label{sec:picard}

In this section we compute the Picard groups of $\M_{1,1}$ and $\Mbar_{1,1}$.
This requires a detailed discussion of divisors and line bundles on orbifold
Riemann surfaces.

\subsection{Assumptions}

We consider only orbifolds that are locally of the form $\G\bbs X$ where $\G$
acts virtually freely on $X$. In particular, the isotropy group of $X$
$$
\G_X := \{g \in \G : g x = x \text{ for all } x \in X\}
$$
is finite. In addition, we will always assume that $\G_X$ is cyclic and central
in $\G$. These conditions are satisfied by $\M_{1,1}$, $\Mbar_{1,1}$ and the
universal curves over them.

The group $\G/\G_X$ acts effectively on $X$. The {\em reduced orbifold}
associated to $\G\bbs X$ is defined by
$$
(\G\bbs X)^\red := (\G/\G_X)\bbs X.
$$
We say that $\G\bbs X$ is {\em reduced} if $\G$ acts effectively on $X$. That
is, when $\G_X$ is trivial.

\begin{example}
The moduli space $\M_{1,1}$ is not reduced. The corresponding reduced orbifold
$\M_{1,1}^\red$ is $\PSL_2(\Z)\bbs \h$.
\end{example}

There are natural morphisms
$$
\G\bbs X \to (\G\bbs X)^\red \to \G\bs X
$$
which are induced by the obvious morphisms
$$
(X,\G) \to (X,\G/\G_X) \to (\G\bs X,\triv).
$$
If $\G\bbs X$ is an orbifold in the category of Riemann surfaces, then so  is
$(\G\bbs X)^\red$ and the natural morphisms above are both holomorphic.

We define the {\em degree} of each of the morphisms
$$
\G\bbs X \to (\G\bbs X)^\red\text{ and } \G\bbs X \to \G\bs X
$$
to be $|\G_X|$. Note that $(\G\bbs X)^\red \to \G\bs X$ has degree 1.

\subsection{Local theory}

Here we develop the theory for basic orbifolds. For simplicity, we consider
only the 1-dimensional case.

Suppose that $\G\bbs X$ is a Riemann surface in the category of orbifolds, where
$\G$ acts virtually freely on $X$. Denote by $[x]$ the $\G$-orbit of $x\in X$.
To this we can associate the order $|\G_x|$ of the isotropy group of $x$. This
depends only on the orbit $[x]$ and not on the choice of the representative $x$.

Define a {\em divisor} on $\G\bbs X$ to be a locally finite, formal linear
combination
$$
\sum_{[x]\in \G\bs X} \frac{n_x}{|\G_x|} [x]
$$
of points of $\G\bs X$, where each $n_x \in \Z$. Denote the group of divisors on
$\G\bbs X$ by $\Div(\G\bbs X)$.

\begin{remark}
Motivation for the definition of a divisor as an integral linear combinations of
the $[x]/|\G_x|$ comes from the discussion of orbifold Euler characteristic in
Paragraph~\ref{para:euler}.
\end{remark}

To each section of a holomorphic line bundle over $\G\bbs X$, we can associate a
divisor. A section $s$ of a holomorphic line bundle $\G\bbs L \to \G\bbs X$ is a
$\G$-equivariant holomorphic section $\stilde$ of $L\to X$. Define the {\em
order} $\nu_{[x]}(s)$ of $s$ at $[x]$ to be the order of $\stilde$ at $x\in X$.
This is well defined as $s$ is $\G$-equivariant. Define the {\em divisor} of a
non-zero section $s$ by
$$
\div(s) = |\G_X| \sum_{[x]\in \G\bs X} \frac{\nu_{[x]}(s)}{|\G_x|} [x]
\in \Div(\G\bbs X).
$$
The factor $|\G_X|$ is present as all non-zero sections $s$ are
$\G_X$-invariant, which means that such $s$ are pulled back from $(\G\bbs
X)^\red$.

\begin{exercise}
Suppose that $f : \G'\bbs X' \to \G\bbs X$ is a holomorphic mapping between
orbifolds. Show that a holomorphic line bundle $L \to \G'\bbs X'$ pulls back to
a holomorphic line bundle $f^\ast L \to \G\bbs X$ and that a section $s$ of $L$
pulls back to a section $f^\ast s$ of $f^\ast L$. Show that if $\G$ and $\G'$
act virtually freely on $X$ and $X'$, respectively, then there is a homomorphism
$$
f^\ast : \Div(\G'\bbs X') \to \Div(\G\bbs X)
$$
such that $f^\ast \div(s) = \div(f^\ast s)$.
\end{exercise}

Suppose that $D$ is divisor on $\G\bbs X$. Let $\pi : X \to \G\bs X$ be the
natural projection.  For each open subset $U$ of $\G\bs X$, define $\O_{\G\bbs
X}(D)$ to consist of the $\G$-invariant sections $\O_X(\pi^\ast D)(\pi^{-1}(U))$
of $\O_X(\pi^\ast D)$ over $\pi^{-1}(U)$. Then $\O_{\G\bbs X}(D)$ is an example
of a sheaf on $\G\bbs X$.

The group of divisors of the Riemann surface $\G\bs X$ consists of all formal
linear combinations
$$
\sum_{[x]\in \G\bs X} n_x [x]
$$
where each $n_x \in \Z$.

\begin{exercise}
\label{ex:pic=cl}
Show that the mapping
$$
\Div(\G\bs X) \to \Div\big((\G\bbs X)^\red\big) \to \Div(\G\bbs X)
$$
induced by the canonical quotient mappings $\G\bbs X \to (\G\bbs X)^\red \to
\G\bs X$ satisfy $[x] \mapsto [x]$. In particular, these mappings are injective.
\end{exercise}

\begin{exercise}
Show that if $Y$ is a Riemann surface, then every holomorphic mapping $\G\bbs X
\to Y$ factors through the quotient mapping $\pi : \G\bbs X \to \G\bs X$. In
particular, every meromorphic function $\G\bbs X \to \P^1$ is pulled back from a
meromorphic function $\G\bs X \to \P^1$.
\end{exercise}

The definitions of divisor class groups and Picard groups can be extended to
basic orbifolds.

\begin{definition}
A {\em principal divisor} on $\G\bbs X$ is the divisor of a non-zero meromorphic
function $f : \G\bbs X \to \P^1$. The {\em divisor class group} of $\G\bbs X$
is the group
$$
\Cl(\G\bbs X) := \Div(\G\bbs X)/\{\text{principal divisors}\}.
$$
The {\em Picard group} of $\G\bbs X$ is defined to be the group of isomorphism
classes of holomorphic line bundles over $\G\bbs X$, where the group operation
is tensor product of line bundles. Denote it by $\Pic(\G\bbs X)$.
\end{definition}

\begin{exercise}
Show that if $\G\bbs X$ is reduced (i.e., $\G_X$ is trivial), there is a well
defined group homomorphism
$$
\Pic(\G\bbs X) \to \Cl(\G\bbs X)
$$
that takes the isomorphism class of a holomorphic line bundle to the divisor
class of a non-zero meromorphic section.\footnote{For this you will need to show
that every orbifold line bundle $L \to \G\bbs X$ has a non-zero meromorphic
section. This can be proved by first noting that, since the action of $\G$ on
$X$ is virtually free and effective, $L \to \G\bbs X$ is the quotient of a line
bundle $M\to Y$ over a Riemann surface by a finite group $G$. One can then use
standard results about Riemann surfaces to show that such a line bundle has a
non-zero $G$-invariant meromorphic section. When $Y$ is compact, you can do this
using Riemann-Roch. When $Y$ is non-compact, you can use the fact that $Y$ is
Stein, so that $M$ is trivial.} Show that it is an isomorphism.
\end{exercise}

When $\G\bbs X$ is not reduced, there are line bundles that have no meromorphic
sections.

\begin{example}
The line bundle $\L_k \to \M_{1,1}$ has no non-zero meromorphic sections when
$k$ is odd.
\end{example}

Because of this, we compute $\Pic \M_{1,1}^\red$ before computing $\Pic
\M_{1,1}$.

\begin{proposition}
\label{prop:pic_m11}
There are natural isomorphisms
$$
\Pic \M_{1,1}^\red \cong \Cl(\M_{1,1}^\red) \cong \Z/6\Z.
$$
The Picard group is generated by the class of $\L_2$.
\end{proposition}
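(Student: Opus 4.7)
The plan is to apply the preceding exercise identifying $\Pic \cong \Cl$ for reduced orbifolds, then compute $\Cl(\M_{1,1}^\red)$ by hand and locate the class of $\L_2$ using the Eisenstein series $G_4$ and $G_6$.

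First I would unpack the structure: $\M_{1,1}^\red = \PSL_2(\Z)\bbs\h$ has coarse space $M_{1,1}$ biholomorphic to $\C$ via the $j$-invariant, and exactly two orbifold points, $[i]$ of isotropy order~$2$ and $[\rho]$ of isotropy order~$3$. Consequently $\Div(\M_{1,1}^\red)$ is the free abelian group on $[i]/2$, $[\rho]/3$, and the smooth points of $\C$.

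Next I would determine the principal divisors. Any meromorphic function $f$ on $\M_{1,1}^\red$ factors through the coarse space $\C$. At a smooth point $P$ the order of $f$ is an arbitrary integer, and the Weierstrass factorization theorem on $\C$ shows that every locally finite integer divisor supported at smooth points is principal. At $[i]$, the $\Z/2$-stabilizer acts on a local coordinate $w$ near $\tau=i$ by $w \mapsto -w$, so $\nu_{[i]}(f) \in 2\Z$, contributing an integer multiple of $[i] = 2 \cdot [i]/2$ to $\div(f)$; and $[i]$ is itself principal, since $\div(j-1728) = [i]$. Similarly, orders at $[\rho]$ are multiples of $3$ and $\div(j) = [\rho]$. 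Passing to the quotient yields
$$
\Cl(\M_{1,1}^\red) \cong \Z/2 \oplus \Z/3 \cong \Z/6\Z,
$$
with basis $[i]/2$ and $[\rho]/3$.

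Finally I would identify $[\L_2]$ via the two Eisenstein series. The form $G_4$ is a holomorphic section of $\L_4 = \L_2^{\otimes 2}$; the transformation law at $\rho$ forces $G_4(\rho)=0$, and the valence formula (together with $G_4(\infty)=2\zeta(4)\ne 0$) pins down this zero as simple on $\h$ with no other zeros in a fundamental domain, giving $\div(G_4) = [\rho]/3$ in $\Cl$. Analogously $\div(G_6) = [i]/2$, with $G_6 \in H^0(\M_{1,1}, \L_6)$ and $\L_6 = \L_2^{\otimes 3}$. Hence $2[\L_2] = [\rho]/3$ has order~$3$ in $\Z/6$ and $3[\L_2] = [i]/2$ has order~$2$, which forces $[\L_2]$ itself to have order~$6$ and therefore to generate $\Cl(\M_{1,1}^\red)$.

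The main obstacle will be the orbifold divisor bookkeeping at the two orbifold points: correctly converting the vanishing order on the cover $\h$ into the orbifold divisor by dividing by the isotropy order, and verifying that $j$ and $j-1728$ realize the principal divisors $[\rho]$ and $[i]$ respectively. Once this is in place, the valence formula hands us the divisors of $G_4$ and $G_6$ immediately, and the identification of $[\L_2]$ as a generator reduces to a small arithmetic check in $\Z/6$.
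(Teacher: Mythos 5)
Your proposal is correct and follows essentially the same route as the paper: reduce to $\Cl(\M_{1,1}^\red)$ via the preceding exercise, compute it as $\Z/2\oplus\Z/3$ using the orbifold points $[i]$ and $[\rho]$, and locate $[\L_2]$ using the orders of vanishing of $G_4$ and $G_6$ given in (\ref{eqn:orderG}). The only (immaterial) difference is at the last step: the paper exhibits the single meromorphic section $G_6/G_4$ of $\L_2$ with divisor generating $\Cl(\M_{1,1}^\red)$, whereas you deduce that $[\L_2]$ has order $6$ from the orders of $2[\L_2]=[\L_4]$ and $3[\L_2]=[\L_6]$.
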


\begin{proof}
To compute $\Pic \M_{1,1}^\red$, it suffices, by Exercise~\ref{ex:pic=cl}, to
compute $\Cl(\M_{1,1}^\red)$. Since $M_{1,1} \cong \C$, it follows that $[\tau]$
is trivial in $\Cl(\M_{1,1})$ for all $\tau \in \h$. Consequently,
$$
\Cl(\M_{1,1}^\red) \cong 
\Big\{\frac{n_i}{2}[i] + \frac{n_\rho}{3}[\rho] :
n_i,\ n_\rho \in \Z\Big\}/\Z[i]\oplus \Z[\rho],
$$
which is isomorphic to $\Z/6\Z$.

To see that $\L_2$ generates $\Pic\M_{1,1}^\red$ we use the facts
\cite[p.~80]{serre}\footnote{A direct proof of these facts using results 
developed in these notes can be given.  See Exercise~\ref{ex:mod_forms}.}
\begin{equation}
\label{eqn:orderG}
\nu_i(G_4) = 0,\ \nu_\rho(G_4) = 1,\quad \nu_i(G_6) = 1,\ \nu_\rho(G_6) = 0.
\end{equation}
where $G_k$ denotes the Eisenstein series of weight $k$. Since $G_6/G_4$ is a
meromorphic section of $\L_2$, its divisor $[\rho]/3 - [i]/2$ generates
$\Cl(\M_{1,1}^\red)$. This implies that $\L_2$ generates $\Pic \M_{1,1}^\red$.
\end{proof}

To compute $\Pic \M_{1,1}$, we need to relate it to $\Pic \M_{1,1}^\red$.

\begin{exercise}
\label{ex:ses}
Show that if $X$ is a simply connected Riemann surface, then there is an exact
sequence
$$
0 \to \Pic\big((\G\bbs X)^\red\big) \to \Pic(\G_X\bbs X) \to \Char(\G_X) \to 1
$$
where $\Char(\G_X)$ denotes the group of characters $\chi : \G_X \to \C^\ast$.
\end{exercise}

\begin{theorem}
The group $\Pic \M_{1,1}$ is cyclic of order $12$. It is generated by the class
of $\L_1$.
\end{theorem}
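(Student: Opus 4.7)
The plan is to apply the exact sequence of Exercise~\ref{ex:ses} with $\G = \SL_2(\Z)$, $X = \h$, and $\G_X = \{\pm I\} = C_2$ (the full kernel of the action of $\SL_2(\Z)$ on $\h$), and then pin down the class of $\L_1$ on both sides. Since $\M_{1,1}^\red = \PSL_2(\Z)\bbs \h$, the sequence reads
$$
0 \to \Pic(\M_{1,1}^\red) \to \Pic(\M_{1,1}) \to \Char(C_2) \to 1.
$$
By Proposition~\ref{prop:pic_m11} the left-hand group is $\Z/6\Z$, and $\Char(C_2) \cong \Z/2\Z$, so $|\Pic\M_{1,1}| = 12$.

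Next I would identify where $\L_1$ sits in this sequence. The action of $\SL_2(\Z)$ on $\C\times \h$ defined in Exercise~\ref{ex:L} (with $k=1$) has $-I$ acting as $(z,\tau)\mapsto (-z,\tau)$ on the fiber over $\tau$, so the image of $[\L_1]$ under $\Pic(\M_{1,1}) \to \Char(C_2)$ is the nontrivial character. On the other hand $\L_1^{\otimes 2}\cong \L_2$ in $\Pic(\M_{1,1})$ by the same exercise, and by Proposition~\ref{prop:pic_m11} the class of $\L_2$ generates $\Pic(\M_{1,1}^\red) \cong \Z/6\Z$.

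Combining these two facts gives the order of $[\L_1]$: if $\L_1^{\otimes n} \cong \O$ in $\Pic(\M_{1,1})$, then $n$ must be even (look at the image in $\Char(C_2)$), and then $\L_2^{\otimes n/2}\cong \O$ in $\Pic(\M_{1,1}^\red)$, which forces $6 \mid n/2$, i.e.\ $12 \mid n$. Conversely $\L_1^{\otimes 12} = \L_2^{\otimes 6}$ is trivial in $\Pic(\M_{1,1}^\red)$, and its image in $\Char(C_2)$ is also trivial, so by exactness $\L_1^{\otimes 12} \cong \O$ in $\Pic(\M_{1,1})$. Hence $[\L_1]$ has order exactly $12$, and since $|\Pic\M_{1,1}|=12$ it must be a generator.

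The only real subtlety is deducing cyclicity: the exact sequence alone leaves open the possibility that $\Pic \M_{1,1} \cong \Z/6\Z \oplus \Z/2\Z$, and this is ruled out precisely by the existence of the element $[\L_1]$ whose square generates the kernel and whose image generates the quotient. Everything else is a direct application of previously established facts: Exercise~\ref{ex:L} for the relations among the $\L_k$, Proposition~\ref{prop:pic_m11} for $\Pic \M_{1,1}^\red$, and Exercise~\ref{ex:ses} for the exact sequence.
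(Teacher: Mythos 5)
Your proposal is correct and follows essentially the same route as the paper: both use the exact sequence of Exercise~\ref{ex:ses} together with Proposition~\ref{prop:pic_m11}, the fact that $[\L_1]^{\otimes 2}=[\L_2]$ generates $\Pic\M_{1,1}^\red\cong\Z/6\Z$, and the fact that $[\L_1]$ maps to the nontrivial character of $C_2$. Your version just spells out the order computation (and the exclusion of $\Z/6\Z\oplus\Z/2\Z$) a bit more explicitly than the paper does.
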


\begin{proof}
Since  the square of $[\L] \in \Pic \M_{1,1}$ generates $\Pic \M_{1,1}^\red$,
which has order 6, it follows that $[\L]$ has order 12 in $\Pic \M_{1,1}$.
By Exercise \ref{ex:ses} the sequence
$$
0 \to \Pic\M_{1,1}^\red \to \Pic \M_{1,1} \to \Char(C_2) \to 0
$$
is exact. Since $[\L]$ maps to the non-trivial character $C_2 \to \C^\ast$, it
follows that $\Pic \M_{1,1}$ is generated by $[\L]$ and has order 12.
\end{proof}

\begin{remark}
These definitions in this section generalize easily to complex analytic
orbifolds of higher dimension.
\end{remark}

\subsection{The Picard group of $\Mbar_{1,1}$}

The constructions of the previous section generalize to all orbifold Riemann
surfaces. In this section we explain how to do this for $\Mbar_{1,1}$ and the
corresponding reduced orbifold $\Mbar_{1,1}^\red$, which we define below.

We shall view $\Mbar_{1,1}$ as the union of the basic orbifolds $\M_{1,1}$ and
$C_2\bbs\D_R$, where $R = e^{-2\pi}$, which ``intersect'' in the basic orbifold
$C_2\bbs \D_R^\ast$. In both cases, the $C_2$-action is trivial. Thus, to each
$P\in \overline{M}_{1,1}$, we can associate an ``automorphism group'' $\Aut(P)$,
which is well defined up to isomorphism. If $P = [x] \in \G\bbs X$, where
$(X,\G) = (\h,\SL_2(\Z))$ or $(\D_R,C_2)$, and $x$ is a lift of $P$ to $X$, then
$\Aut(P)$ is the isomorphism class of $\G_x$. For all but $[i],[\rho]\in
\M_{1,1}$, this isotropy group is isomorphic to $C_2$; $\Aut([i]) = \bmu_4$ and
$\Aut([\rho]) = \bmu_6$.

The orbifold $\Mbar_{1,1}^\red$ is obtained by gluing $\M_{1,1}^\red :=
\PSL_2(\Z)\bs \h$ to $\D$ along the orbifold $\D_R^\ast$. The only points in
$\M_{1,1}^\red$ with non-trivial automorphism groups are $[i]$ and $[\rho]$,
whose automorphism groups are cyclic of orders 2 and 3, respectively.

A divisor on $\Mbar_{1,1}$ is a finite sum
$$
\sum_{P\in \overline{M}_{1,1}} \frac{n_P}{|\Aut(P)|} P
$$
where each $n_P \in \Z$ for all $P \in \coarseM_{1,1}$. These form a group
$\Div(\Mbar_{1,1})$.

Since the size of the stabilizer of $P$ in $\PSL_2(\Z)$ is $|\Aut(P)|/2$,
divisors on $\Mbar_{1,1}^\red$ are finite sums
$$
\sum_{P\in \overline{M}_{1,1}} \frac{2 n_P}{|\Aut(P)|} P
$$
where $n_P \in \Z$ for all $P\in \coarseM_{1,1}$. These form a subgroup
$\Div(\Mbar_{1,1}^\red)$ of $\Div(\Mbar_{1,1})$.

The group of divisors on the Riemann surface
$\coarseM_{1,1}$ is the free abelian group generated by the $[x]\in
\coarseM_{1,1}$. The quotient mappings $\Mbar_{1,1} \to \Mbar_{1,1}^\red \to
\coarseM_{1,1}$ induce the inclusions
$$
\Div(\coarseM_{1,1}) \hookrightarrow \Div(\Mbar_{1,1}^\red)
\hookrightarrow \Div(\Mbar_{1,1})
$$
which take $P$ to $P$.

The divisor
$$
\div(s) \in \Div(\Mbar_{1,1})
$$
of a section $s$ of a holomorphic line bundle $L \to \Mbar_{1,1}$ is computed
locally on the two basic orbifold patches as in the previous section. A
principal divisor on $\Mbar_{1,1}$ is the divisor of a non-zero rational
function $f : \Mbar_{1,1} \to \P^1$. All such functions are pulled back from
rational functions $\coarseM_{1,1} \to \P^1$.

At this point, one can define the sheaves $\O_{\Mbar_{1,1}}(D)$ locally in the
two patches $\M_{1,1}$ and $C_2\bbs\D$.

\begin{exercise}
Show that if $D\in \Div(\Mbar_{1,1})$ and $L$ is a holomorphic line bundle over
$\Mbar_{1,1}$, then a meromorphic section $s$ of $L$ is a holomorphic section of
$L(D)$ if and only if $\div(s) + D \ge 0$. In particular, if $P\in
\coarseM_{1,1}$, then $\O_{\Mbar_{1,1}}(P)$ is the pullback of
$\O_{\coarseM_{1,1}}(P)$ to $\Mbar_{1,1}$. This proves that the ad hoc
definition of the twist $L(d\infty)$ of $L$ given in the discussion preceding
Exercise~\ref{ex:relation} is consistent with the definitions given in this
section.
\end{exercise}

As in the local case, not every line bundle over $\Mbar_{1,1}$ has a meromorphic
section. Because of this, we define the divisor class group only for
$\Mbar_{1,1}^\red$. 

Define the divisor class group of $\Mbar_{1,1}^\red$ to be
$$
\Cl(\Mbar_{1,1}^\red) := \Div(\Mbar_{1,1}^\red)/ \{\text{principal divisors}\}
$$

\begin{exercise}
Show that there is an exact sequence
$$
0 \to \Cl(\coarseM_{1,1}) \to \Cl(\Mbar_{1,1}^\red) \to\Cl(\M_{1,1}^\red)\to 0.
$$
Show that $\Cl(\coarseM_{1,1})$ is infinite cyclic and is generated by the class
of any point of $\coarseM_{1,1}$. Show directly that $\Cl(\M_{1,1}^\red)$ is
cyclic of order 6.  Deduce that $\Cl(\Mbar_{1,1}^\red)$ is infinite
cyclic.\footnote{It is not necessary to use Proposition~\ref{prop:pic_m11}.
Bypassing Prop.~\ref{prop:pic_m11} is desirable as one can then deduce
(\ref{eqn:orderG}).}
\end{exercise}

Define the Picard group of an orbifold Riemann surface $\X$ to be the group of
isomorphism classes of holomorphic line bundles over $\X$ with operation tensor
product. Holomorphic mappings between orbifolds induce mappings on their Picard
groups. In particular, we have natural pullback homomorphisms
$$
\Pic \coarseM_{1,1} \to \Pic \Mbar_{1,1}^\red \to \Pic \Mbar_{1,1}.
$$

\begin{proposition}
There is a natural isomorphism
$$
\Pic\Mbar_{1,1}^\red \overset{\simeq}{\longrightarrow} \Cl(\Mbar_{1,1}^\red).
$$
Both are isomorphic to $\Z$ and generated by the class of $\Lbar_2$. For all
$P\in \coarseM_{1,1}$,
$$
[\O_{\Mbar_{1,1}^\red}(P)] = 6[\Lbar_2] \in \Pic\Mbar_{1,1}^\red.
$$
\end{proposition}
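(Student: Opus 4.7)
The plan is to adapt the proof of Proposition~\ref{prop:pic_m11} to the compactified setting, with the new ingredients being the boundary point $\infty$ and the nontrivial class group of $\coarseM_{1,1}\cong\P^1$.

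First I would define the homomorphism $\Pic\Mbar_{1,1}^\red \to \Cl(\Mbar_{1,1}^\red)$ by sending $[L]$ to the class of $\div(s)$ for any nonzero meromorphic section $s$ of $L$, verifying well-definedness modulo principal divisors exactly as in the local case. Injectivity is immediate: a section with $\div(s)=\div(f)$ for a meromorphic function $f$ on $\coarseM_{1,1}$ yields a nowhere-vanishing holomorphic section $s/f$, trivializing $L$. For surjectivity one checks that the sheaves $\O_{\Mbar_{1,1}^\red}(D)$ introduced earlier are line bundles; this reduces to the two reduced basic charts $\M_{1,1}^\red$ and $\D$ (the $q$-disk about $\infty$), where the local theory gives the required local triviality.

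Next I would invoke the preceding exercise's identification of $\Cl(\Mbar_{1,1}^\red)$ as infinite cyclic, situated in the exact sequence
$$
0 \to \Cl(\coarseM_{1,1}) \to \Cl(\Mbar_{1,1}^\red) \to \Cl(\M_{1,1}^\red) \to 0,
$$
with $\Cl(\coarseM_{1,1})=\Cl(\P^1)\cong\Z$ generated by $[\infty]$ and $\Cl(\M_{1,1}^\red)\cong\Z/6\Z$. The crucial computation is the divisor of $G_6/G_4$: by (\ref{eqn:orderG}) one has $\nu_i(G_6/G_4)=1$ and $\nu_\rho(G_6/G_4)=-1$, and since (\ref{eqn:q_exp}) shows $G_4$ and $G_6$ are nonvanishing at $q=0$, the meromorphic section $G_6/G_4$ of $\Lbar_2$ has divisor $\tfrac{1}{2}[i]-\tfrac{1}{3}[\rho]$ on $\Mbar_{1,1}^\red$. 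Therefore
$$
6[\Lbar_2] = 3[i] - 2[\rho] \in \Cl(\Mbar_{1,1}^\red),
$$
and since this integer divisor has degree $1$ on $\coarseM_{1,1}=\P^1$, it is linearly equivalent to $[\infty]$; hence $6[\Lbar_2]=[\infty]$. It now follows that $[\Lbar_2]$ generates $\Cl(\Mbar_{1,1}^\red)\cong\Z$: its image in $\Cl(\M_{1,1}^\red)\cong\Z/6\Z$ generates the quotient (by Proposition~\ref{prop:pic_m11}), and $6[\Lbar_2]=[\infty]$ generates the kernel subgroup $\Cl(\coarseM_{1,1})$, forcing $[\Lbar_2]$ to be a generator. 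Finally, for any $P\in\coarseM_{1,1}$ the divisor $[P]$ is linearly equivalent to $[\infty]$ on $\P^1$, so $[\O_{\Mbar_{1,1}^\red}(P)]=[\O_{\Mbar_{1,1}^\red}(\infty)]=6[\Lbar_2]$.

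The main obstacle I anticipate is the surjectivity step for $\Pic\to\Cl$, namely that every holomorphic line bundle on $\Mbar_{1,1}^\red$ admits a nonzero global meromorphic section. The cleanest route uses the global quotient presentation $\Mbar_{1,1}^\red\cong S_3\bbs\P^1$ (the reduction of Proposition~\ref{prop:quot}): line bundles on $\Mbar_{1,1}^\red$ then correspond to $S_3$-equivariant line bundles on $\P^1$, each underlying bundle is some $\O_{\P^1}(n)$, and nonzero $S_3$-equivariant meromorphic sections can be constructed by symmetrizing ordinary meromorphic sections of $\O_{\P^1}(n)$, with a small amount of representation-theoretic bookkeeping to ensure that the symmetrization does not vanish identically.
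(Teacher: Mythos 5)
Your overall strategy matches the paper's: construct $\Pic\Mbar_{1,1}^\red \to \Cl(\Mbar_{1,1}^\red)$ via meromorphic sections, get injectivity from the standard $s/f$ argument, use the exact sequence $0 \to \Cl(\coarseM_{1,1}) \to \Cl(\Mbar_{1,1}^\red) \to \Cl(\M_{1,1}^\red) \to 0$ to see that $\Cl(\Mbar_{1,1}^\red)\cong\Z$, and pin down the generator. The two places where you diverge are both worth comparing. For the existence of a nonzero meromorphic section of an arbitrary line bundle, you pass to $S_3\bbs\P^1$ and symmetrize, while the paper passes to $\Mbar_{1,1}[m]$ ($m\ge 3$) and takes the trace $\Tr(s)=\sum_{g\in G} g\cdot s$; these are the same averaging idea with different choices of finite cover, and both leave the nonvanishing of the average as a point to be checked. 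For the key identity $6[\Lbar_2]=[\O_{\Mbar_{1,1}^\red}(\infty)]$, the paper uses $\Delta$ as a section of $\Lbar_{12}=\Lbar_2^{\otimes 6}$ with $\div(\Delta)=[\infty]$ exactly, which is self-contained: the absence of zeros of $\Delta$ in $\h$ is Corollary~\ref{cor:tau_zerofree} (proved geometrically) and the simple zero at $q=0$ comes from the product formula. Your route through $G_6/G_4$ and linear equivalence of degree-one integral divisors on $\P^1$ is also valid, but note one load-bearing assumption you make silently.

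The assumption is that $\div(G_6/G_4)$ is \emph{exactly} $\tfrac12[i]-\tfrac13[\rho]$, i.e., that $G_4$ and $G_6$ have no zeros in $\h$ beyond those recorded in (\ref{eqn:orderG}) (you do handle $q=0$). In the proof of Proposition~\ref{prop:pic_m11} this doesn't matter, because every integral point is trivial in $\Cl(\M_{1,1}^\red)$ ($M_{1,1}\cong\C$). On the compactification it matters: an unaccounted-for integral divisor $E$ of degree $e$ would give $6[\Lbar_2]=(1+6e)[\infty]$, and then $[\Lbar_2]$ would fail to generate unless $e=0$. The needed fact follows from the valence formula (Serre, loc.\ cit.), but the paper deliberately arranges things so that the valence formula is a \emph{consequence} of this proposition (Exercise~\ref{ex:mod_forms} and the footnote about bypassing Prop.~\ref{prop:pic_m11} to deduce (\ref{eqn:orderG})); using $\Delta$ instead of $G_6/G_4$ is precisely what keeps the argument free of that input. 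So your proof is correct if you import the full statement of the valence formula, but the paper's choice of section buys independence from it.
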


\begin{proof}[Sketch of Proof]
We first construct a homomorphism
\begin{equation}
\label{eqn:homom}
\Pic \Mbar_{1,1}^\red \to \Cl(\Mbar_{1,1}^\red).
\end{equation}
To do this, we have to show that every holomorphic line bundle over
$\Mbar_{1,1}^\red$ has a non-zero meromorphic section. The homomorphism is
defined by taking the isomorphism class of a line bundle to the divisor class of
any non-zero meromorphic section.

To see that every holomorphic line bundle over $\Mbar_{1,1}^\red$ has a non-zero
meromorphic section, we use the fact that every line bundle $L \to
\Mbar_{1,1}^\red$ is the quotient of a holomorphic line bundle $N \to Y$ over a
Riemann surface $Y$ by the action of a finite group $G$, where $G$ acts
effectively on $Y$.\footnote{For example, one can take $Y$ to be the level $m$
moduli space $\Mbar_{1,1}[m]$ for any $m\ge 3$. It is constructed in
Section~\ref{sec:level}.}   The Riemann-Roch Theorem implies that $N \to Y$ has
a non-zero meromorphic section $s$ such that
$$
\Tr(s) := \sum_{g\in G} g\cdot s
$$
is a non-zero, $G$-invariant, meromorphic section of $N$ over $Y$. It is an easy
exercise to show that the homomorphism (\ref{eqn:homom}) is injective. This
implies that $\Pic\Mbar_{1,1}^\red$ is infinite cyclic. 

Since the Ramanujan tau function $\Delta$ is a section of $\Lbar_{12} =
\Lbar_2^{\otimes 6}$ and since $\div(\Delta) = [\infty]$, it follows that
$6[\Lbar_2] = [\O_{\Mbar_{1,1}^\red}(P)]$. This establishes the surjectivity
(\ref{eqn:homom}) and that $\Pic(\M_{1,1}^\red)$ is generated by $[\Lbar_2]$.
\end{proof}

\begin{exercise}[cf.\ {\cite[Thm.~3, p.~80]{serre}}]
\label{ex:mod_forms}
Use the preceding result to show that the meromorphic modular form $f :\h \to
\C$ is a section of $\Lbar_{2k}$ over $\Mbar_{1,1}$ (and $\Mbar_{1,1}^\red$) if
and only if
$$
\nu_i(f)/2 + \nu_\rho(f)/3
+ \sum_{\substack{P\in \coarseM_{1,1} \cr P \neq [i],[\rho]}} \nu_P(f)
= \frac{k}{6}.
$$
Use this to prove the statements (\ref{eqn:orderG}).\footnote{These results
imply Proposition~\ref{prop:dimensions} as in \cite[p.~88]{serre}.}
\end{exercise}

As in the local case, there is a short exact sequence
$$
0 \to \Pic \Mbar_{1,1}^\red \to \Pic \Mbar_{1,1} \to \Char(C_2) \to 1.
$$
Since the class of $\Lbar$ in $\Pic \Mbar_{1,1}$ maps to the generator of
$\Char(C_2)$, the previous result implies:

\begin{theorem}
\label{thm:pic_mbar}
The Picard group of $\Mbar_{1,1}$ is infinite cyclic and is generated by
the class of the Hodge bundle $\Lbar$. For all
$P\in \coarseM_{1,1}$,
$$
[\O_{\Mbar_{1,1}}(P)] = 12[\Lbar] \in \Pic\Mbar_{1,1}.
$$
Consequently, the sequence
$$
0 \to \Z[\O_{\Mbar_{1,1}}(\infty)] \to \Pic\Mbar_{1,1} \to \Pic\M_{1,1} \to 0
$$
is exact.
\end{theorem}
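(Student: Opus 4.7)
The plan is to bootstrap from the preceding computation of $\Pic \Mbar_{1,1}^\red$ via the non-reduced/reduced extension, then combine with Exercise~\ref{ex:relation} and the known structure of $\Pic \M_{1,1}$ to read off the three-term sequence.

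First I would establish, in exact parallel with the local argument of Exercise~\ref{ex:ses}, a short exact sequence
$$
0 \to \Pic \Mbar_{1,1}^\red \to \Pic \Mbar_{1,1} \to \Char(C_2) \to 1.
$$
This uses the fact that the generic stabilizer $C_2 = \{\pm I\}$ acts trivially on the underlying space $\Mbar_{1,1}^\red$, so a line bundle on $\Mbar_{1,1}$ is pulled back from $\Mbar_{1,1}^\red$ precisely when the constant $C_2$-action on its fibers is trivial, and otherwise it remembers a character of $C_2$. Surjectivity onto $\Char(C_2)$ is witnessed by $[\Lbar]$, since $-I$ acts as $-1$ on the fibers of $\L$ over $\h$, and likewise on the quotient bundle over the $q$-disk by construction. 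By the preceding proposition, $\Pic \Mbar_{1,1}^\red$ is infinite cyclic on $[\Lbar_2]$; since $[\Lbar]$ has infinite order (its square generates the torsion-free group $\Pic \Mbar_{1,1}^\red$) and maps to the non-trivial character, it generates the middle term. This proves $\Pic\Mbar_{1,1} \cong \Z \cdot [\Lbar]$.

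Next, for the identification $[\O_{\Mbar_{1,1}}(P)] = 12[\Lbar]$: the coarse space $\coarseM_{1,1}$ is isomorphic to $\P^1$, so any two closed points are linearly equivalent as divisors; pulling back via $\Mbar_{1,1} \to \coarseM_{1,1}$ gives $[\O_{\Mbar_{1,1}}(P)] = [\O_{\Mbar_{1,1}}(\infty)]$ for every $P \in \coarseM_{1,1}$. By Exercise~\ref{ex:relation}, $\Lbar_{12} \cong \O_{\Mbar_{1,1}}(\infty)$, so this common class equals $12[\Lbar]$.

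Finally, exactness of
$$
0 \to \Z[\O_{\Mbar_{1,1}}(\infty)] \to \Pic \Mbar_{1,1} \to \Pic \M_{1,1} \to 0
$$
follows by direct inspection. The restriction map sends $[\Lbar] \mapsto [\L]$; since $[\L]$ generates the cyclic group $\Pic \M_{1,1}$ of order $12$ (established earlier in the text), the map is surjective and its kernel is precisely $12\Z \cdot [\Lbar]$, which by the previous step equals the image of $\Z[\O_{\Mbar_{1,1}}(\infty)]$. Injectivity on the left is immediate from the infinite order of $[\Lbar]$. The main obstacle is the first step---checking that a line bundle on $\Mbar_{1,1}$ descends to $\Mbar_{1,1}^\red$ iff the generic $C_2$-action on fibers is trivial, and that this yields an exact sequence globally and not just on each basic orbifold chart. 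I would handle this by working chart-by-chart on the two basic orbifolds $\M_{1,1}$ and $C_2 \bbs \D$ that cover $\Mbar_{1,1}$, applying Exercise~\ref{ex:ses} on each, and noting that gluing data on the overlap $C_2 \bbs \D^\ast$ is compatible because the character of $C_2$ assigned to a bundle is locally constant.
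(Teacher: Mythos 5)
Your argument is correct and follows essentially the same route as the paper: both rest on the short exact sequence $0 \to \Pic \Mbar_{1,1}^\red \to \Pic \Mbar_{1,1} \to \Char(C_2) \to 1$, the observation that $[\Lbar]$ maps to the non-trivial character while $[\Lbar]^{\otimes 2}=[\Lbar_2]$ generates $\Pic\Mbar_{1,1}^\red\cong\Z$, and the divisor of $\Delta$ (via Exercise~\ref{ex:relation}, equivalently the relation $[\O_{\Mbar_{1,1}^\red}(P)]=6[\Lbar_2]$) to get $[\O_{\Mbar_{1,1}}(P)]=12[\Lbar]$. The only difference is that you spell out the chart-by-chart verification of the exact sequence, which the paper leaves implicit with the phrase ``as in the local case.''
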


\section{The Algebraic Topology of $\Mbar_{1,1}$}

The homotopy type of a basic orbifold has already been discussed in
Section~\ref{sec:htpy_type}. Global orbifolds, such as $\Mbar_{1,1}$, also have
a well defined homotopy type. In this section we discuss the homotopy type of
$\Mbar_{1,1}$ and use it to compute its low dimensional (co)homology groups.

\subsection{The homotopy type of $\Mbar_{1,1}$}
Let $U$ be a contractible topological space on which $\SL_2(\Z)$ acts properly
discontinuously and fixed point freely, such as the standard model of
$E\SL_2(\Z)$. The groups
$$
C_2 = \{\pm \id\} \text{ and } \PZ
$$
are subgroups of $\SL_2(\Z)$, and thus act freely and discontinuously on $U$
as well. We can therefore consider the diagram
\begin{equation}
\label{eqn:diag}
\xymatrix@!C=3pc{
& (C_2\times\Z)\bs(U \times \h)
\ar[dl]_{\id_U \times p} \ar[dr]^{\id_U \times q} \cr
\SL_2(\Z)\bs (U\times \h) && C_2\bs (U\times \D)
}
\end{equation}
of topological spaces, where $\SL_2(\Z)$ acts diagonally on $U\times \h$, etc.
Because $U\times \h$ and $U\times \D$ are contractible and each of the groups
acts freely and discontinuously, the homotopy type of the 3 pieces
are:\footnote{These spaces are not that exotic: $BC_2 \simeq \R\P^\infty$, $B\Z
\simeq S^1$ and $B(\Z\times C_2) \simeq B\Z\times BC_2 \simeq S^1\times
\R\P^\infty$.}
$$
B\SL_2(\Z),\quad B C_2,\quad B(C_2\times \Z).
$$
So we can represent the diagram above as
$$
\xymatrix@!C=3pc{
& B(C_2\times\Z) \ar[dl]_P \ar[dr]^Q\cr
B\SL_2(\Z) && B C_2
}
$$
One can form the space
$$
\M_{1,1} \cup_{BC_2 \times \D^\ast} (BC_2\times\D^\ast) :=
B\SL_2(\Z) \cup_{B(C_2\times\Z)} B C_2
$$
by taking the pushout of the diagram (\ref{eqn:diag}) in the homotopy category.
Explicitly, it is the homotopy type of the space
$$
\big[\SL_2(\Z)\bs (U\times \h) \disjt [0,1]\times (C_2\times\Z)\bs(U \times \h)
\disjt C_2\bs (U\times \D)\big]/\sim
$$
obtained by identifying the $C_2\times \Z$ orbit of $(0,u,\tau) \in [0,1]\times
(C_2\times\Z)\bs(U \times \h)$ with the $\SL_2(\Z)$ orbit of $(u,\tau) \in
U\times \h$, and the orbit of $(1,u,\tau) \in [0,1]\times (C_2\times\Z)\bs(U
\times \h)$ with the $C_2$ orbit of $(u,q(\tau))$. Its homotopy type is well
defined. There is a well defined morphism
\begin{equation}
\label{eqn:covering}
\M_{1,1} \cup_{BC_2 \times \D^\ast} (BC_2\times\D^\ast) \to \Mbar_{1,1}
\end{equation}
of topological
orbifolds obtained by projecting the diagram (\ref{eqn:diag}) to the atlas of
$\Mbar_{1,1}$ along $U$.

\begin{definition}
The homotopy type of $\Mbar_{1,1}$ is defined to be the homotopy type of the
space $B\SL_2(\Z) \cup_{B(C_2\times\Z)} B C_2$ defined above.
\end{definition}

\begin{exercise}
Use the presentation (\ref{eqn:presentation}) to prove that $\PSL_2(\Z)$ is
isomorphic to the free product of $C_2\ast C_3$ and that $\SL_2(\Z)$ is
an extension
$$
1 \to C_2 \to \SL_2(\Z) \to C_2\ast C_3 \to 1.
$$
Deduce that $B\SL_2(\Z)$ is a $BC_2$ bundle over $BC_2 \vee BC_3$.
\end{exercise}

\begin{exercise}
Show that $\M_{1,1}$ and $\Mbar_{1,1}$  are both homotopy equivalent to a
CW-complex with only a finite number of cells in each dimension. Deduce that
their homology and cohomology groups are finitely generated in each degree.
\end{exercise}

Invariants of the homotopy type of $\Mbar_{1,1}$, such as its homotopy, homology
and cohomology groups are defined to be those of its homotopy type.

\subsection{The fundamental group of $\Mbar_{1,1}$}

We can apply van~Kampen's theorem to compute the fundamental group of
$\Mbar_{1,1}$. It is the amalgamated free product
\begin{equation}
\label{eqn:free_prod}
\SL_2(\Z)\ast_{C_2 \times \Z} C_2
\end{equation}
obtained by pushing out the diagram
$$
\xymatrix@C=4pc{\SL_2(\Z) & C_2 \times \PZ \ar[l]_(.55){\text{inclusion}}
\ar[r]^(.65){\text{projection}} & C_2}
$$
in the category of groups.

\begin{exercise}
Recall the definition of $S,T,U \in \SL_2(\Z)$ and the presentation
(\ref{eqn:presentation}) of $\SL_2(\Z)$. Set
$$
\That = \begin{pmatrix} 1 & 0 \cr 1 & 1\end{pmatrix}.
$$
Verify that $U=T^{-1}\That$ and that $S=T^{-1}\That T^{-1}$. Deduce that
$\SL_2(\Z)$ is generated by $T$ and $\That$. Use this to show that the
amalgamated free product (\ref{eqn:free_prod}) is the trivial group.
\end{exercise}

This proves:

\begin{proposition}
The orbifold $\Mbar_{1,1}$ is simply connected. Consequently,
$H_1(\Mbar_{1,1};\Z)=0$.
\end{proposition}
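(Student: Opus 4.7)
The plan is to exploit the amalgamated free product description of $\pi_1(\Mbar_{1,1})$ that has just been established, namely
$$
\pi_1(\Mbar_{1,1}) \cong \SL_2(\Z)\ast_{C_2 \times \Z} C_2,
$$
and then to show that every generator of $\SL_2(\Z)$ is forced to be trivial in this pushout. Once $\pi_1(\Mbar_{1,1}) = 1$ is established, $H_1(\Mbar_{1,1};\Z) = 0$ follows immediately from the Hurewicz theorem.

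First I would read off two tautological consequences of the pushout construction. The map $C_2\times \Z \to C_2$ is the projection, so the image of the $\Z$ factor dies; but the image of that same $\Z$ factor in $\SL_2(\Z)$ is $\PZ$, generated by $T$. Hence $T$ becomes trivial in $\pi_1(\Mbar_{1,1})$. Similarly, the image of the $C_2$ factor in $\SL_2(\Z)$ is $\{\pm I\}$, generated by $-I$, and this element is identified with the generator of the right-hand $C_2$; so $-I$ and the outer $C_2$-generator coincide in the pushout.

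Next I would use the identities $U = T^{-1}\That$ and $S = T^{-1}\That T^{-1}$ (from the preceding exercise, where $\That = \bigl(\begin{smallmatrix}1&0\\1&1\end{smallmatrix}\bigr)$), together with $T=1$ just established, to conclude that $S = U = \That$ in $\pi_1(\Mbar_{1,1})$. Now I invoke the presentation (\ref{eqn:presentation}), which gives the relation $S^2 = U^3$ in $\SL_2(\Z)$. Under our identifications this reads $\That^2 = \That^3$, hence $\That = 1$. Since $\SL_2(\Z)$ is generated by $T$ and $\That$ (by the exercise), the entire image of $\SL_2(\Z)$ in $\pi_1(\Mbar_{1,1})$ is trivial. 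In particular, $-I = S^2 = 1$; but $-I$ is identified with the generator of the external $C_2$, so that factor is trivial as well.

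I do not expect a serious obstacle here: the whole argument reduces to algebra in a two-generator amalgam, and the main content has been pre-packaged in the preceding exercise. The only thing to be a little careful about is orienting the two homomorphisms out of $C_2\times \Z$ correctly so that $T$ really is killed (rather than, say, identified with the outer $C_2$-generator); once that bookkeeping is right, the collapse of $\That$ via $S^2 = U^3$ is automatic, and the pushout is the trivial group.
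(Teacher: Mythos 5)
Your argument is correct and is exactly the route the paper takes: van Kampen gives the amalgam $\SL_2(\Z)\ast_{C_2\times\Z}C_2$, the $\Z$ factor kills $T$, and the identities $U=T^{-1}\That$, $S=T^{-1}\That T^{-1}$ together with $S^2=U^3$ force $\That=1$ and hence collapse everything, including $-I$ and the outer $C_2$. The paper leaves this computation as the preceding exercise; you have simply carried it out.
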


\subsection{Chern classes}

Orbifold vector bundles over $\M_{1,1}$ give rise to genuine vector bundles over
its homotopy type $B\SL_2(\Z)$. Similarly, an orbifold vector bundle over
$\Mbar_{1,1}$ determines a genuine vector bundle over its homotopy type.
One can therefore define Chern classes
$$
c_j(E) \in H^{2j}(\M_{1,1};\Z),\quad c_j(F) \in H^{2j}(\Mbar_{1,1};\Z)
$$
of orbifold vector bundles $E$ over $\M_{1,1}$ and $F$ over $\Mbar_{1,1}$.
In particular, we have Chern class homomorphisms
$$
c_1 : \Pic \M_{1,1} \to H^2(\M_{1,1};\Z) \text{ and }
c_1 : \Pic \Mbar_{1,1} \to H^2(\Mbar_{1,1};\Z)
$$
such that the diagram
$$
\xymatrix{
\Pic \Mbar_{1,1} \ar[r]^(.45){c_1} \ar[d] & H^2(\Mbar_{1,1};\Z) \ar[d]\cr
\Pic \M_{1,1} \ar[r]^(.45){c_1} & H^2(\M_{1,1};\Z)
}
$$
commutes.

\begin{exercise}
Show that $c_1 : \Pic \M_{1,1} \to H^2(\M_{1,1};\Z)$ is an isomorphism.
\end{exercise}

\subsection{Low dimensional cohomology of $\Mbar_{1,1}$}

The homology and cohomology groups of $\Mbar_{1,1}$ can be computed using the
Mayer-Vietoris sequence
\begin{multline*}
\cdots \to H^k(\Mbar_{1,1}) \to H^k(\M_{1,1}) \oplus H^k(BC_2) \cr
  \to H^k(\D^\ast\times BC_2) \to H^{k+1}(\Mbar_{1,1}) \to \cdots
\end{multline*}
associated to the covering (\ref{eqn:covering}) or
use the ``Gysin sequence''
$$
\cdots \to H^k(\Mbar_{1,1}) \to H^k(\M_{1,1}) \to
\widetilde{H}^{k-1}(BC_2) \to H^{k+1}(\Mbar_{1,1}) \to \cdots
$$
associated to the cofibration sequence
$$
\M_{1,1} \to \Mbar_{1,1} \to (\D,S^1)\times BC_2.
$$

\begin{exercise}
Justify these sequences.
\end{exercise}

Since $\Mbar_{1,1}$ is simply connected, $H_1(\Mbar_{1,1})$ and
$H^1(\Mbar_{1,1})$ vanish with all coefficients.

\begin{proposition}
The first Chern class
$$
c_1 : \Pic\Mbar_{1,1} \to H^2(\Mbar_{1,1};\Z)
$$
is an isomorphism. Consequently, $H^2(\Mbar_{1,1};\Z)$ is infinite cyclic.
\end{proposition}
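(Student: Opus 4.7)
The plan is to combine the already-computed facts $\Pic\Mbar_{1,1}\cong\Z$ (Theorem~\ref{thm:pic_mbar}, generator $[\Lbar]$), $H^2(\M_{1,1};\Z)=\Z/12$, and $H_1(\Mbar_{1,1};\Z)=0$, by feeding them into the long exact sequence of the pair $(\Mbar_{1,1},\M_{1,1})$ that is encoded in the Gysin/cofibration sequence given just above the proposition. The only nontrivial input we still need is that the connecting map and the Thom isomorphism identify the image of $H^0(BC_2)=\Z$ inside $H^2(\Mbar_{1,1};\Z)$ with $c_1(\O_{\Mbar_{1,1}}(\infty))$; this is just the standard fact that the Gysin (or Poincar\'e-dual) image of the fundamental class of a codimension-one submanifold is the Chern class of its associated line bundle, and the orbifold analogue works here because the isotropy $C_2$ at $\infty$ acts trivially on the normal bundle (so the normal bundle, as an orbifold line bundle over $BC_2\bbs\ast$, is trivializable).

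First I would assemble the numerical data. Using $H^0(BC_2;\Z)=\Z$, $H^1(BC_2;\Z)=0$, $H^1(\M_{1,1};\Z)=0$, $H^2(\M_{1,1};\Z)=\Z/12\Z$, the portion of the long exact sequence of the pair $(\Mbar_{1,1},\M_{1,1})$ around degree $2$ reads
$$
0=H^1(\M_{1,1};\Z)\longrightarrow H^0(BC_2;\Z)\longrightarrow H^2(\Mbar_{1,1};\Z)\longrightarrow H^2(\M_{1,1};\Z)\longrightarrow H^1(BC_2;\Z)=0,
$$
so that
$$
0\longrightarrow \Z\longrightarrow H^2(\Mbar_{1,1};\Z)\longrightarrow \Z/12\Z\longrightarrow 0. \qquad (\ast)
$$

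Next I would pin down the isomorphism type of the middle group. Since $H_1(\Mbar_{1,1};\Z)=0$ (the preceding proposition) and $\Mbar_{1,1}$ has the homotopy type of a CW-complex with finitely generated homology (an earlier exercise), the universal coefficient theorem gives
$$
H^2(\Mbar_{1,1};\Z)\cong \Hom(H_2(\Mbar_{1,1};\Z),\Z)\oplus \Ext(H_1(\Mbar_{1,1};\Z),\Z)=\Hom(H_2(\Mbar_{1,1};\Z),\Z),
$$
which is torsion free. A torsion-free finitely generated abelian group fitting in $(\ast)$ must be free of rank one, so $H^2(\Mbar_{1,1};\Z)\cong\Z$.

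Finally I would identify the generator. The image of $H^0(BC_2;\Z)=\Z$ in $H^2(\Mbar_{1,1};\Z)$ is the first Chern class of the orbifold line bundle associated to the divisor $\infty$, that is, $c_1(\O_{\Mbar_{1,1}}(\infty))$; by Theorem~\ref{thm:pic_mbar} we have $[\O_{\Mbar_{1,1}}(\infty)]=12[\Lbar]$, so this image is $12\,c_1(\Lbar)$. On the other hand, the restriction map in $(\ast)$ sends $c_1(\Lbar)$ to $c_1(\L)$, and $c_1(\L)$ generates $H^2(\M_{1,1};\Z)=\Z/12$ (the $c_1$ for $\M_{1,1}$ is an isomorphism by the preceding exercise). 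Hence $c_1(\Lbar)$ lifts a generator of the cokernel, while $12\,c_1(\Lbar)$ generates the image of $\Z$; together these force $c_1(\Lbar)$ to generate the free rank-one group $H^2(\Mbar_{1,1};\Z)$. Since $[\Lbar]$ generates $\Pic\Mbar_{1,1}$, $c_1$ is an isomorphism.

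The main obstacle I expect is the step identifying the boundary map in $(\ast)$ with the class $c_1(\O_{\Mbar_{1,1}}(\infty))$ in the orbifold setting; once one verifies that the $C_2$-isotropy at the cusp acts trivially on the normal direction (which is visible from the construction of $\Mbar_{1,1}$ via gluing $C_2\bbs\D$, with $C_2$ acting trivially on $\D$), the Thom isomorphism and the usual Poincar\'e-Lefschetz duality argument carry over unchanged.
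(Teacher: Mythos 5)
Your argument is correct, and its skeleton is the same as the paper's: play the Gysin sequence $0 \to H^0(BC_2;\Z) \to H^2(\Mbar_{1,1};\Z) \to H^2(\M_{1,1};\Z)\to 0$ off against the Picard-group sequence $0 \to \Z[\O_{\Mbar_{1,1}}(\infty)] \to \Pic\Mbar_{1,1} \to \Pic\M_{1,1}\to 0$ via $c_1$. You diverge from the paper in how the two delicate sub-steps are handled, and both of your variants are legitimate. First, you pin down $H^2(\Mbar_{1,1};\Z)\cong\Z$ by universal coefficients together with $H_1(\Mbar_{1,1};\Z)=0$, which settles the isomorphism type of the middle group \emph{before} you know anything about the boundary map; the paper instead gets this only after identifying the left-hand vertical map $e$ and (implicitly) applying the five lemma, so your order of quantifiers is cleaner. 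Second, for the crux --- that the Gysin image of $1\in H^0(BC_2;\Z)$ is $c_1(\O_{\Mbar_{1,1}}(\infty))=12\,c_1(\Lbar)$ --- you invoke the Thom-class/Poincar\'e-duality description of the Gysin map directly, checking that the $C_2$-isotropy at the cusp acts trivially on the normal disk so no factor of $2$ intervenes. The paper reaches the same conclusion by naturality: it maps the whole diagram to the corresponding diagram for $(\coarseM_{1,1},M_{1,1})\cong(\P^1,\C)$, where every arrow is classically an isomorphism, and checks that the two left-hand corners pull back isomorphically (the bottom one being $H^1(\{\infty\}\times(\D,S^1))\to H^1(BC_2\times(\D,S^1))$, an isomorphism precisely because the $C_2$-action on $\D$ is trivial). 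If you want to avoid setting up an orbifold Thom isomorphism from scratch, the paper's comparison with the coarse space is the cheaper way to justify exactly the step you flag as your main obstacle; otherwise your direct argument is fine. Your closing deduction --- that $12\,c_1(\Lbar)$ generates an index-$12$ subgroup of $H^2(\Mbar_{1,1};\Z)\cong\Z$, forcing $c_1(\Lbar)$ to be a generator --- is sound.
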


\begin{proof}
Consider the diagram
\begin{equation}
\label{eqn:diag2}
\xymatrix{
0 \ar[r] & \Z[\O_{\Mbar_{1,1}}(\infty)] \ar[r] \ar@{.>}[d]_e &
\Pic \Mbar_{1,1} \ar[r]\ar[d]_{c_1} & \Pic \M_{1,1} \ar[r]\ar[d]_{c_1} & 0 \cr
0 \ar[r] & H^0(BC_2;\Z) \ar[r] & H^2(\Mbar_{1,1};\Z) \ar[r] &
H^2(\M_{1,1};\Z) \ar[r] & 0
}
\end{equation}
The top row is exact by Theorem~\ref{thm:pic_mbar}. The second row is a  portion
of the Gysin sequence. It is exact as $H^1(\M_{1,1};\Z)=0$ and as $H^1(BC_2;\Z)
= \Hom(C_2,\Z)=0$. Since $c_1 : \Pic \M_{1,1} \to H^2(\M_{1,1};\Z)$ is an
isomorphism, there is a map $e : \Z[\O_{\coarseM_{1,1}}(\infty)] \to
H^0(BC_2;\Z)$ making the diagram commute. This implies that
$H^2(\Mbar_{1,1};\Z)$ is infinite cyclic. To see that the middle vertical map is
an isomorphism requires more work. To complete the proof we will sketch a proof
that $e$ is an isomorphism.

Consider the portion
$$
\xymatrix{
\Z[\O_{\coarseM_{1,1}}(\infty)] \ar[r] \ar[d]_e &
\Pic \coarseM_{1,1} \ar[d]_{c_1} \cr
H^0(\{\infty\};\Z) \ar[r] & H^2(\coarseM_{1,1};\Z)
}
$$
of the analogue of the diagram (\ref{eqn:diag2}) for $(\coarseM_{1,1},M_{1,1})$.
Since $(\coarseM_{1,1},M_{1,1})$ is isomorphic to $(\P^1,\C)$, all four maps in
this diagram are isomorphisms. The map $\pi : (\Mbar_{1,1},\M_{1,1}) \to
(\coarseM_{1,1},M_{1,1})$ induces a morphism of Gysin sequences that is
compatible with Chern classes. It maps the commutative square in this diagram to
the left-hand square in (\ref{eqn:diag2}). The map on the top left corner is an
isomorphism. The map on the bottom left hand corner is the map
$$
H^0(\{\infty\};\Z) \cong H^1(\infty\times (\D,S^1)) \to
H^1(BC_2\times (\D,S^1)) \cong H^0(BC_2)
$$
which is an isomorphism. It follows that $e$ is an isomorphism as claimed.
\end{proof}

\section{Concluding Remarks}

Our goal in this final section is to tie together several loose ends to explain
how the moduli space $\Mbar_{1,1}$ can be viewed as a Deligne-Mumford stack in
the category of schemes over $\Q$. Along the way, we identify the fundamental
group of several moduli spaces of elliptic curves with the braid group on 3
strings, the group of the trefoil knot, and with a canonical central extension
of $\SL_2(\Z)$.

\subsection{The moduli space $\M_{1,\vec{1}}$}

In this section we will consider the problem of determining the moduli space
$\M_{1,\vec{1}}$ of triples $(X,P,\v)$, where $(X,P)$ is an elliptic curve and
$\v \in T_P X$ is a non-zero holomorphic tangent vector to $X$ at $P$. Since the
holomorphic cotangent bundle of $X$ is trivial, such a triple is determined by
and determines a triple $(X,P,\w)$, where $\w$ is a non-zero holomorphic
differential on $X$. The correspondence is given by insisting that $\langle
\w,\v \rangle = 1$.

It follows from Lemma~\ref{lem:hodge} that $\M_{1,\vec{1}} = \L^\ast$ where
$\L^\ast$ is the $\C^\ast$-bundle obtained by removing the zero section from
$\L$. This is a genuine complex surface, not just an orbifold.

\begin{exercise}
Show that the action of $\SL_2(\Z)$ on $\C^\ast\times\h$
\begin{equation}
\label{eqn:action}
\begin{pmatrix} a & b \cr c & d \end{pmatrix} : (u,\tau)
\mapsto \big((c\tau+d)u,(a\tau+b)/(c\tau+d)\big)
\end{equation}
is fixed point free. Deduce that $\L^\ast$ is a genuine complex surface whose
universal covering is $\C\times \h$ and whose fundamental group is an extension
$$
0 \to \Z \to \pi_1(\L^\ast) \to \SL_2(\Z) \to 1.
$$
\end{exercise}

\begin{exercise}
\label{ex:fine_moduli}
Show that the complex surface $\M_{1,\vec{1}}$ is a fine moduli space of triples
$(X,P,\w)$, where $(X,P)$ is an elliptic curve and $\w$ is a non-zero
holomorphic differential on $X$.
\end{exercise}

The moduli space $\M_{1,\vec{1}}$ has a natural {\em partial} compactification.
Namely
$$
\Mbar_{1,\vec{1}} := \Lbar^\ast,
$$
the $\C^\ast$-bundle associated to $\Lbar^\ast$.\footnote{This is {\em not}
compact. However, $\M_{1,\vec{1}}$ does admit a natural smooth compactification
by adding the sections $0$ and $\infty$ to $\Lbar^\ast$. Explicitly, this is:
$$
\Lbar \cup_{\Lbar^\ast} \Lbar_{-1} = \P(\Lbar\oplus \O_{\Mbar_{1,1}})
$$
which is a $\P^1$-bundle over $\Mbar_{1,1}$. We will not use this
compactification.}

\subsection{The topology of $\M_{1,\vec{1}}$}

Since the maximal compact subgroup of $\SL_2(\R)$ is the circle $\SO(2)$, its
fundamental group is isomorphic to $\Z$. This implies that the universal
covering group of $\SL_2(\R)$ is an extension
$$
0 \to \Z \to \SLtilde_2(\R) \to \SL_2(\R) \to 1.
$$
Denote the inverse image of $\SL_2(\Z)$ in $\SLtilde_2(\R)$ by $\SLtilde_2(\Z)$.
It is an extension
$$
0 \to \Z \to \SLtilde_2(\Z) \to \SL_2(\Z) \to 1.
$$

\begin{proposition}
There is a natural isomorphism
$$
\pi_1(\M_{1,\vec{1}},p) \overset{\simeq}{\longrightarrow} \SLtilde_2(\Z),
$$
where $p:\C \times \h \to \C^\ast \times \h \to \M_{1,\vec{1}}$ is the base
point that takes $(v,\tau)$ to the $\SL_2(\Z)$ orbit of $(e^v,\tau) \in
\C^\ast\times \h$.
\end{proposition}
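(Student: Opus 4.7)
The plan is to identify $\pi_1(\M_{1,\vec{1}}, p)$ directly with the deck group of the universal cover $p$, and to match it, via the classical automorphy cocycle $c\tau + d$, with a concrete model of $\SLtilde_2(\Z)$ built from the same cocycle. First I note that by the previous exercise $\SL_2(\Z)$ acts freely and properly discontinuously on $\C^\ast \times \h$, so $\M_{1,\vec{1}} = \SL_2(\Z) \bs (\C^\ast \times \h)$ is a genuine complex manifold and the quotient is a covering. The map $\C \times \h \to \C^\ast \times \h$, $(v,\tau) \mapsto (e^v,\tau)$, is the universal cover of $\C^\ast \times \h$ (its $\C^\ast$-factor being an exponential cover); composing gives that $p$ is the universal cover of $\M_{1,\vec{1}}$, so $\pi_1(\M_{1,\vec{1}}, p)$ is canonically the deck group of $p$.

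Next I would write the deck transformations by hand. Any deck transformation covers an element $\gamma = \begin{pmatrix}a & b \cr c & d\end{pmatrix} \in \SL_2(\Z)$ acting on $\C^\ast \times \h$, and hence has the form $(v, \tau) \mapsto (v + \phi(\tau),\, \gamma \tau)$ where $\phi : \h \to \C$ is a holomorphic branch of $\log(c\tau + d)$. Since $c\tau + d$ is nowhere zero on the simply connected domain $\h$, such branches exist and form a $\Z$-torsor, supplying the central $\Z$. The cocycle identity $(c_1\gamma_2 \tau + d_1)(c_2\tau + d_2) = c_3 \tau + d_3$ with $\gamma_3 = \gamma_1 \gamma_2$ then identifies $\pi_1(\M_{1,\vec{1}}, p)$ with the group of pairs $(\gamma, \phi)$ of the above form under the multiplication $(\gamma_1, \phi_1)(\gamma_2, \phi_2) = (\gamma_1 \gamma_2,\, \phi_1 \circ \gamma_2 + \phi_2)$.

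Finally I would present $\SLtilde_2(\R)$ by the very same recipe and compare. Let $\Gtilde$ be the set of pairs $(\gamma, \phi)$ with $\gamma \in \SL_2(\R)$ and $\phi : \h \to \C$ holomorphic satisfying $e^\phi = c\tau + d$, with the multiplication above; then $\Gtilde \to \SL_2(\R)$ is a $\Z$-cover. To verify it is the universal cover I would lift the standard generator $\theta \mapsto \gamma_\theta = \begin{pmatrix}\cos\theta & -\sin\theta \cr \sin\theta & \cos\theta\end{pmatrix}$ of $\pi_1(\SL_2(\R)) = \pi_1(\SO(2)) \cong \Z$: taking $\phi_\theta$ to be the branch of $\log(\sin\theta \cdot \tau + \cos\theta)$ with $\phi_0 = 0$ and evaluating at $\tau = i$ gives $\phi_\theta(i) = i\theta$, so the lift travels from $(I, 0)$ to $(I, 2\pi i)$. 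This shows simultaneously that $\Gtilde$ is connected and that the generator of $\pi_1(\SL_2(\R))$ does not lift to a loop, so $\pi_1(\Gtilde) = 0$ and $\Gtilde \cong \SLtilde_2(\R)$. Its preimage of $\SL_2(\Z)$ is $\SLtilde_2(\Z)$, which by construction is the same group, with the same multiplication, as $\pi_1(\M_{1,\vec{1}}, p)$, the specified base point $p$ corresponding to the identity of $\SLtilde_2(\R)$. The main obstacle, and essentially the only substantive point, is convention bookkeeping: deck transformations compose inner-first while the automorphy cocycle is most naturally written left-to-right, and one must verify the two orders match up in the $\phi \circ \gamma$ term. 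Once that is settled the isomorphism is tautological.
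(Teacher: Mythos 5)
Your proof is correct, but it takes a genuinely different route from the paper's. The paper identifies the unit circle bundle $S^1\times\h$ of $\L$ with $\SL_2(\R)$ via the transitive, isotropy-free action $g\mapsto g(1,i)$, lifts this to a diffeomorphism $\SLtilde_2(\R)\cong\R\times\h$ (incidentally proving $\SLtilde_2(\R)$ is contractible), and then reads off the circle bundle of $\L$ as $\SLtilde_2(\Z)\bs\SLtilde_2(\R)$; the homotopy equivalence of the circle bundle with $\L^\ast$ finishes the argument. You instead compute the deck group of $\C\times\h\to\L^\ast$ directly as the group of pairs $(\gamma,\phi)$ with $e^{\phi}=c\tau+d$ under the cocycle multiplication, and then must separately verify that the corresponding group over $\SL_2(\R)$ really is the universal covering group --- which you do correctly by lifting the $\SO(2)$ loop and computing $\phi_\theta(i)=i\theta$, so that the monodromy $\Z\to\Z$ is an isomorphism and the cover is connected and simply connected. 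The trade-off: the paper's homogeneous-space argument gets simple connectivity of the cover for free and is coordinate-free, while yours avoids the invariant metric and the Lie-theoretic input entirely and produces, as a by-product, the standard explicit model of $\SLtilde_2(\Z)$ as pairs consisting of a matrix together with a branch of $\log(c\tau+d)$ --- the model used for half-integral weight automorphy factors. Your closing caution about matching the order of composition of deck transformations with the left-to-right cocycle is the right thing to check, and your formula $(\gamma_1,\phi_1)(\gamma_2,\phi_2)=(\gamma_1\gamma_2,\ \phi_1\circ\gamma_2+\phi_2)$ is the consistent choice.
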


\begin{proof}
The group $\SL_2(\R)$ acts on $\C\times\h$ by the formula (\ref{eqn:action}). It
preserves the metric $\|(u,\tau)\| = |u|\Im(\tau)^{-1/2}$ and therefore
restricts to an action on $S^1\times\h$. This action is easily checked to be
transitive. The isotropy group of $(1,i)$ is trivial. The mapping
$$
\SL_2(\R) \to S^1\times \h; \qquad g\mapsto g(1,i)
$$
is therefore a diffeomorphism. It is also $\SL_2(\R)$-equivariant with respect
to the two natural left $\SL_2(\R)$-actions. The mapping therefore lifts to a
diffeomorphism
$$
\SLtilde_2(\R) \to \R \times \h,
$$
which implies that $\SLtilde_2(\R)$ is contractible.

The $\SL_2(\R)$-action on $S^1\times \h$ can be lifted to an
$\SLtilde_2(\R)$-action on $\R\times \h$ by defining the previous mapping to be
$\SLtilde_2(\R)$-equivariant. It follows that the unit circle bundle of $\L$ is
the quotient
$$
\SLtilde_2(\Z)\bs \SLtilde_2(\R) \cong \SLtilde_2(\Z)\bs (\R\times\h)
\cong \SL_2(\Z)\bs (S^1\times\h).
$$
Since the inclusion of the unit circle bundle into $\L^\ast$ is a homotopy
equivalence, it follows that the fundamental group of $\L^\ast$ is isomorphic
to $\SLtilde_2(\Z)$.
\end{proof}

\subsection{Plane cubics and $\M_{1,\vec{1}}$}

Consider the universal family of cubics $E \to \C^2$ where
$$
E = 
\big\{([x,y,z],(a,b) \in \P^2 \times \C^2  : zy^2 = 4 x^3 - axz^2 - bz^3 \big\}.
$$
The total space $E$ is smooth except over the origin $a=b=0$. The point
$[0,1,0]$ lies in each fiber and therefore defines a section of $E \to \C^2$.

Recall that $D(a,b) = a^3 - 27 b^2$ is the discriminant of the cubic $4x^3 - ax
-b$.  Let $\Delta$ be the divisor in $\C^2$ defined by $D=0$. It is called the
{\em discriminant locus}. Over $\C^2 - \Delta$ the fibers of $E \to \C^2$ are
smooth; over $\Delta-\{0\}$ they are nodal cubics; and over the origin the fiber
is the cuspidal cubic. As we have seen in Section~\ref{sec:extended_curve} the
rational differential $dx/y$ on $E$ restricts to a non-zero holomorphic
differential on each smooth fiber of $E$. Since $\M_{1,\vec{1}}$ is a fine
moduli space for triples $(X,P;\w)$ (Cf.\  Exercise~\ref{ex:fine_moduli}), there
is a holomorphic mapping
$$
F : \C^2 - \Delta \to \M_{1,\vec{1}}
$$
that classifies the tautological family of cubics $E$ over $\C^2 - \Delta$ and
the differential $dx/y$.

Define $\C^\ast$-actions on $\C^2$ and $\M_{1,\vec{1}}$ by
$$
\lambda\cdot (a,b) := (\lambda^{-4}a,\lambda^{-6}b) \text{ and }
\lambda \cdot [X,P;\w] := [X,P;\lambda \w],
$$
respectively. The $\C^\ast$-action restricts to an action on $\C^2 - \Delta$.

\begin{proposition}
\label{prop:equivariant}
The mapping $F$ is a $\C^\ast$-equivariant biholomorphism.
\end{proposition}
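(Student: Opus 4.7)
The plan is to verify $\C^*$-equivariance by an explicit substitution in the Weierstrass form, deduce set-theoretic bijectivity from Proposition~\ref{prop:plane_cubics}, and then upgrade the bijective holomorphic map to a biholomorphism by invoking the classical theorem that a holomorphic bijection between equidimensional complex manifolds is a biholomorphism.

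First, for equivariance, fix $\lambda \in \C^*$ and write $X_{a,b}$ for the plane cubic $y^2 = 4x^3 - ax - b$ with marked point $[0,1,0]$. The substitution $X = \lambda^{-2}x$, $Y = \lambda^{-3}y$ realises an isomorphism of pointed elliptic curves $X_{a,b} \to X_{\lambda^{-4}a,\,\lambda^{-6}b}$, since $(\lambda^3 Y)^2 = 4(\lambda^2 X)^3 - a(\lambda^2 X) - b$ rearranges to $Y^2 = 4X^3 - \lambda^{-4}a\,X - \lambda^{-6}b$. Moreover $dx/y = d(\lambda^2 X)/(\lambda^3 Y) = \lambda^{-1}\,dX/Y$, so the differential $dx/y$ on $X_{a,b}$ is identified with $\lambda^{-1}\,dX/Y$ on $X_{\lambda^{-4}a,\lambda^{-6}b}$. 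Hence $F(\lambda^{-4}a,\lambda^{-6}b) = [X_{\lambda^{-4}a,\lambda^{-6}b},[0,1,0];dX/Y]$ while $F(a,b) = [X_{\lambda^{-4}a,\lambda^{-6}b},[0,1,0];\lambda^{-1}\,dX/Y]$, and multiplying the latter differential by $\lambda$ gives exactly $F(\lambda^{-4}a,\lambda^{-6}b)$. This is the desired equivariance $F(\lambda\cdot(a,b)) = \lambda\cdot F(a,b)$.

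Next I would show $F$ is bijective. The preceding scaling calculation, together with Proposition~\ref{prop:plane_cubics}, says that two triples $(X_{a,b},[0,1,0];dx/y)$ and $(X_{A,B},[0,1,0];dx/y)$ are isomorphic if and only if there exists $u \in \C^*$ with $(A,B) = (u^{-4}a,\,u^{-6}b)$, and that the associated isomorphism sends $dx/y$ to $u^{-1}\,dX/Y$. For injectivity of $F$, the requirement that the differential be preserved forces $u=1$, so $(A,B)=(a,b)$. For surjectivity, given any $[X,P;\w]\in\M_{1,\vec{1}}$, Proposition~\ref{prop:plane_cubics} lets us choose an isomorphism $(X,P) \cong (X_{a_0,b_0},[0,1,0])$ with $(a_0,b_0) \in \C^2-\Delta$ (smoothness of $X$ forces non-vanishing discriminant by Exercise~\ref{ex:smooth_cubics}). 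Under this isomorphism, $\w$ corresponds to $c\cdot dx/y$ for a unique $c\in\C^*$, and applying the scaling with $u=c$ produces $F(c^{-4}a_0,\,c^{-6}b_0) = [X,P;\w]$.

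Finally, $F$ is holomorphic because $\M_{1,\vec{1}}$ is a fine moduli space for the triples in question (Exercise~\ref{ex:fine_moduli}) and the tautological data $(E, [0,1,0], dx/y)$ over $\C^2 - \Delta$ constitute a genuine holomorphic family of such triples. Both $\C^2 - \Delta$ and $\M_{1,\vec{1}} = \L^*$ are connected complex manifolds of dimension two, so the holomorphic bijection $F$ is automatically a biholomorphism (by the classical theorem that a holomorphic injection between equidimensional complex manifolds has everywhere nonzero Jacobian, hence is locally, and therefore globally, biholomorphic). The main obstacle here is not logical but notational: one must carefully track which \emph{direction} of the Weierstrass scaling applies, since the exponents in the $\C^*$-action on $\C^2$ and the exponents appearing in the isomorphism criterion of Proposition~\ref{prop:plane_cubics} are related by inversion, and a sign error at this step would destroy the matching of the differentials.
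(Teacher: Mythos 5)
Your argument is correct, and the front half (the explicit Weierstrass rescaling $X=\lambda^{-2}x$, $Y=\lambda^{-3}y$, the bookkeeping of how $dx/y$ transforms, and the deduction of bijectivity from Proposition~\ref{prop:plane_cubics}) is a more detailed version of what the paper compresses into one sentence. Where you genuinely diverge is in upgrading the holomorphic bijection to a biholomorphism: you invoke the several-variables theorem that an injective holomorphic map between equidimensional complex manifolds has nowhere-vanishing Jacobian and is therefore an open embedding, whereas the paper instead \emph{constructs} the inverse explicitly, defining $\widetilde{G}(u,\tau)=(u^{-4}g_2(\tau),u^{-6}g_3(\tau))$ on $\C^\ast\times\h$, checking that it is $\SL_2(\Z)$-equivariant so that it descends to a holomorphic map $G:\L^\ast\to\C^2-\Delta$, and then verifying via Proposition~\ref{prop:embedding} that $G$ inverts $F$. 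Your route is softer and shorter, at the cost of citing a nontrivial theorem from several complex variables; the paper's route is more hands-on and has the added payoff of identifying the inverse of $F$ concretely in terms of the Eisenstein series $g_2,g_3$ (i.e., the Weierstrass uniformization), which is the link the surrounding discussion wants to emphasize. Both are complete proofs; your caution about tracking the direction of the scaling is well placed, and your exponents do come out consistent with the action $\lambda\cdot(a,b)=(\lambda^{-4}a,\lambda^{-6}b)$.
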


\begin{proof}
Proposition~\ref{prop:plane_cubics} and the results in
Section~\ref{sec:extended_curve} imply that $F$ is a bijection. We use modular
forms to construct the inverse of $F$.

Define
$ \tilde{G} : \C^\ast \times \h \to \C^2 - D^{-1}(0)$ by
$$
f(u,\tau) = \big(u^{-4}g_2(\tau),u^{-6}g_3(\tau)\big).
$$
It is $\C^\ast$-invariant and also $\SL_2(\Z)$-equivariant with respect to the
action
$$
\gamma : (u,\tau) \mapsto \big((c\tau + d)u,\gamma(\tau)\big).
$$
It therefore induces a holomorphic function
$$
G : \L^\ast \to \C^2 - \Delta.
$$
This is an inverse of $F$ as it is $\C^\ast$-equivariant and
$(1,\tau) \in \C\times \h$ corresponds to $(\C/\Lambda_\tau;dz)$ and
$G(1,\tau)$ corresponds to the curve
$y^2 = 4x^3 - g_2(\tau)x - g_3(\tau)$ with the differential
$dx/y$ which corresponds to $dz$ by Proposition~\ref{prop:embedding}.
\end{proof}

\begin{corollary}
The fundamental group of $\M_{1,\vec{1}}$ is isomorphic to the braid group $B_3$
on $3$ strings and also to the fundamental group of the complement of the
trefoil knot in the $3$-sphere. Both groups are isomorphic to $\SLtilde_2(\Z)$.
\end{corollary}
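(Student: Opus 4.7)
The plan proceeds in three stages. The first isomorphism $\pi_1(\M_{1,\vec{1}}) \cong \SLtilde_2(\Z)$ has just been established in the preceding proposition, and Proposition~\ref{prop:equivariant} gives a $\C^\ast$-equivariant biholomorphism
$$
\M_{1,\vec{1}} \cong \C^2 - \Delta,\qquad \Delta = \{(a,b) : a^3 - 27 b^2 = 0\},
$$
so it suffices to identify $\pi_1(\C^2-\Delta)$ both with $B_3$ and with the fundamental group of the complement of the trefoil in $S^3$.

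For the braid group, I would interpret $\C^2-\Delta$ as the space of monic cubic polynomials $4 x^3 - ax - b$ with three distinct roots, i.e.\ unordered triples $\{r_1, r_2, r_3\}$ of distinct points in $\C$ satisfying $r_1+r_2+r_3 = 0$. The affine-linear map
$$
\{r_1,r_2,r_3\} \longmapsto \bigl(\{r_1-\bar r,\ r_2-\bar r,\ r_3-\bar r\},\ \bar r\bigr),\quad \bar r = (r_1+r_2+r_3)/3,
$$
exhibits the unordered configuration space $\mathrm{UConf}_3(\C)$ as a trivial $\C$-bundle over $\C^2-\Delta$. Since $\C$ is contractible and $\pi_1(\mathrm{UConf}_3(\C)) = B_3$ is the standard definition of the braid group, this gives $\pi_1(\C^2-\Delta) \cong B_3$.

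For the trefoil, the key point is that $\Delta$ is a cone for the weighted $\R_{>0}$-action $t\cdot(a,b) = (t^{-4}a,t^{-6}b)$, since $D(t^{-4}a, t^{-6}b) = t^{-12}D(a,b)$. Pick any smooth positive function $\rho:\C^2-\{0\} \to \R_{>0}$ that is homogeneous of positive degree for this action (for instance $\rho(a,b) = |a|^3 + |b|^2$). The level set $\Sigma := \{\rho = 1\}$ is a smooth compact $3$-manifold transverse to the $\R_{>0}$-orbits and, being diffeomorphic to the orbit space of a free $\R_{>0}$-action on $\C^2-\{0\}$, is a $3$-sphere; the $\R_{>0}$-action gives a diffeomorphism $\C^2-\Delta \cong (\Sigma - K)\times \R_{>0}$ with $K := \Sigma\cap\Delta$, so $\C^2-\Delta$ is homotopy equivalent to $S^3-K$. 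It remains to identify $K$ as the trefoil: parametrizing $\Delta$ by $t\mapsto(3t^2,t^3)$, the link $K$ becomes the image of a circle $|t|=r_0$ under this map, and the $\theta$-arguments of the two coordinates wind $2$ and $3$ times respectively as $t$ traverses the circle once. Hence $K$ is the $(2,3)$-torus knot, i.e.\ the trefoil, matching the classical computation of the link of the cuspidal singularity $a^3 = 27b^2$.

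The main obstacle will be the link computation in the third step: one needs to be careful that the weighted sphere $\Sigma$ is genuinely $S^3$ (a short argument via the free $\R_{>0}$-action on $\C^2-\{0\}$) and that the parametrized curve $t\mapsto(3t^2,t^3)$ on the sphere is unknotted from the trefoil in $S^3$ — i.e.\ that the relative winding numbers are $(2,3)$ as claimed. Everything else is either formal or a citation to a standard result (definition of $B_3$ via configuration spaces, Brauner/Milnor on links of plane-curve singularities).
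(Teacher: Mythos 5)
Your proposal is correct and follows essentially the same route as the paper: reduce via Proposition~\ref{prop:equivariant} to computing $\pi_1(\C^2-\Delta)$, identify it with $B_3$ through the coefficients-of-cubics description of the unordered configuration space, and identify it with the trefoil complement by retracting $\C^2-\Delta$ along the weighted $\R_{>0}$-action onto a $3$-sphere minus the $(2,3)$-torus knot $t\mapsto(3t^2,t^3)$. The only differences are cosmetic (a weighted sphere $\{|a|^3+|b|^2=1\}$ in place of the round $S^3$, and splitting off the center-of-mass $\C$-factor rather than quotienting the centered ordered configuration space by $S_3$).
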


\begin{proof}[Sketch of Proof]
Since $\M_{1,\vec{1}}$ is the variety $\C^2$ minus the cusp $D(a,b)=0$, we need
only compute the fundamental group of the space $\C^2 - \Delta$. The
intersection $L$ of the cusp $D=0$ with the unit sphere $S^3$ in $\C^2$ is the
trefoil knot. This can be seen by writing $S^3$ as the union of the two solid
tori that intersect along the 2-torus $|a| = c|b|$ for suitably chosen $c$. The
discriminant locus $a^3 = 27 b^2$ intersects this in the torus knot $\theta
\mapsto (1+c^2)^{-1/2}(c e^{2i\theta}, e^{3i\theta})$ of type $(2,3)$ --- the
trefoil knot.

The $\R_+$-action 
$$
t : (a,b) \mapsto (t^2 a,t^3 b)
$$
restricts to an action on $\C^2-\Delta$. Each orbit intersects $S^3$
transversely in a unique point, which implies that the action induces
a diffeomorphism
$$
\R_+ \times (S^3 - L) \overset{\simeq}{\longrightarrow} \C^2 - \Delta.
$$
The inclusion $S^3 - L \hookrightarrow \C^2 - \Delta$ is therefore a homotopy
equivalence so that $\pi_1(\M_{1,\vec{1}})$ is isomorphic to $\pi_1(S^3 - L)$.

The braid group $B_n$ is the fundamental group of the quotient of
$$
Y_n :=
\{(\lambda_1,\dots,\lambda_n) \in \C^n : \lambda_1 + \dots + \lambda_n = 0.
\quad \lambda_j \neq \lambda_k \text{ when } j\neq k\}
$$
by the natural action of the symmetric group $S_n$. The quotient is the space
of polynomials
$$
X_n := \big\{p(T) = T^n + a_{n-2}T^{n-2} + \dots + a_0 :
\text{ discriminant of }p(T) \neq 0\big\}.
$$
The coordinate $a_j$ of $X_n$ is $(-1)^j$ the $j$th elementary symmetric
function of the ``roots'' $\lambda_j$ of $p(T)$.

Specializing to the case $n=3$, we see that $\pi_1(\C^2 - D) \cong B_3$.
\end{proof}

\begin{remark}
The decomposition of $S^3$ into two solid tori described in the proof
restricts to a decomposition of $S^3 - L$. Van Kampen's Theorem then gives
a presentation
$$
\SLtilde_2(\Z) \cong \pi_1(\C^2 - \Delta) \cong \pi_1(S^3-L)
\cong \langle S,U : S^2 = U^3 \rangle
$$
where $U$ and $S$ represent the positive generators of the circles $a =0$ and
$b=0$, respectively, in $S^3$. These map to the generators $S$ and $U$ of
$\SL_2(\Z)$ given in the presentation (\ref{eqn:presentation}) of $\SL_2(\Z)$.
The kernel of the homomorphism to $\pi_1(\M_{1,1}) \cong \SL_2(\Z)$ is generated
by the central element $S^4 = U^6$.

The braid group $B_3$ has presentation
$$
B_3 = \langle\sigma_1, \sigma_2 :
\sigma_1\sigma_2\sigma_1 = \sigma_2\sigma_1\sigma_2 \rangle.
$$
An isomorphism with $\pi_1(S^3-L)$ is given by $S \mapsto
\sigma_1\sigma_2\sigma_1$ and $T\mapsto \sigma_1\sigma_2$. The center of $B_3$
is generated by the full twist $S^3$; the kernel of the homomorphism to
$\SL_2(\Z)$ is generated by the square of this --- a double twist.
\end{remark}

\begin{exercise}
Show that the orbifold $\L_2^\ast$ is isomorphic to the orbifold quotient $C_2
\bbs \L^\ast$ of the complex manifold $\L^\ast$ by the trivial $C_2$-action.
Deduce that $\L^\ast_2 = C_2 \bbs (\C^2 - \Delta)$ where $C_2$ acts trivially on
$\C^2 - \Delta$.
\end{exercise}

\begin{remark}
It is not difficult to show that, as {\em stacks}, $\M_{1,1}$ is isomorphic to
the quotient of $\L^\ast$ by the natural $\C^\ast$-action. Combining this with
Proposition~\ref{prop:equivariant}, we have a stack isomorphism
$$
\M_{1,1} \cong \C^\ast \bbs (\C^2 - \Delta).
$$  
This is significant for two reasons. First, it shows that $\M_{1,1}$ is the
quotient of an affine variety by an algebraic action of a reductive group.
Second, this description works over any field of characteristic not equal to 2
or 3 to give an algebraic description of the moduli stack of elliptic curves.
Below we shall explain briefly how to generalize this to write $\Mbar_{1,1}$ as
a stack over $\Q$.
\end{remark}

Note that the $\C^\ast$-action on $\C^2 - \Delta$ factors through the
homomorphism $\C^\ast \to \C^\ast$ that takes $u$ to $u^2$. This is related
to the fact that the automorphism group of every point of $\M_{1,1}$ contains
$C_2$.

\begin{exercise}
Show that if $G$ is an algebraic group and if $X$ is a variety over an
algebraically closed field $F$ of characteristic zero on which $G$ acts
transitively, then for each $x\in X$, the natural mapping
$$
G_x \bbs \{x\} \to G\bbs X
$$
is an isomorphism of stacks, where $G_x$ denotes the isotropy group of $x$.
(That is, it is an equivalence of categories when viewed as a functor of
groupoids.) In particular, if $\Gm$ acts on itself by the character $u \mapsto
u^d$, then the stacks $\Gm\bbs \Gm$ and $\bmu_d \bbs \Spec(F)$ are isomorphic,
where $\bmu_d$ acts trivially on $\Spec F$.
\end{exercise}

\subsection{$\Mbar_{1,1}$ as a stack over $\Q$}
\label{sec:mbar_Q}

The results in this section lead to a construction of $\Mbar_{1,1}$ as a 
Deligne-Mumford stack in the category of schemes over $\Q$. The starting point
is the statement that $\M_{1,1}$ is isomorphic to the stack $\C^\ast \bbs (\C^2
- \Delta)$ where the $\C^\ast$-action on $\C^2$ is defined by $\lambda\cdot(a,b)
= (\lambda^{-4} a,\lambda^{-6}b)$. One can show that if $F$ is a field of
characteristic not equal to 2 or 3, then the moduli stack $\M_{1,1/F}$ of smooth
elliptic curves over $F$ is
$$
\M_{1,1/F} \cong \Gm_{/F} \bbs (\A^2_F - \Delta)
$$
where $\Gm_{/F}$ denotes the multiplicative group over $F$.

The next observation is that, over $\C$, there is a stack isomorphism
$$
\Mbar_{1,1} \cong \C^\ast \bbs (\C^2 -\{0\}).
$$
This is a Deligne-Mumford stack. That is, for each isomorphism class $[E] \in
\coarseM_{1,1}$, there is a morphism $T \to \C^2 - \{0\}$ from a smooth
algebraic curve $T$ that is transverse to each $\C^\ast$-orbit. Such a morphism
$T=\C \to \C^2 - \{0\}$ corresponds to the family
$$
E_t :\qquad  y^2 = 4x(x-1)(x-t),\quad t\in \C
$$
of cubics, each with differential $dx/y$. This family is considered to be an
``\'etale neighbourhood'' of $[E_t]$ in $\Mbar_{1,1}$ for each $t\in \C$.

This construction works equally well over any field of characteristic not equal
to 2 or 3 to give a construction of the moduli stack $\Mbar_{1,1/F}$ of stable
elliptic curves in the category of schemes over $F$:
$$
\Mbar_{1,1/F} \cong \Gm_{/F} \bbs (\A^2_F - \{0\})
$$
It is a Deligne-Mumford stack.

\appendix
\section{Background on Riemann Surfaces}
\label{app:RS}

This is a very brief summary of some basic facts about Riemann surfaces.
Detailed expositions can be found in \cite{forster,griffiths,griffiths-harris}.

\subsection{Topology}

Riemann surfaces, like all complex manifolds, have a natural orientation.

\begin{exercise}
\label{ex:pos}
Denote the complex parameter on the disk $\D$ by $z$. Write $z = x+iy$ where $x$
and $y$ are real. Show that if $\w$ is a non-vanishing holomorphic 1-form on
$\D$, then $i\,\w\wedge\wbar$ is a positive multiple of $dx\wedge dy$. Deduce
that every Riemann surface has a natural orientation which locally agrees with
the standard orientation of the complex plane. Deduce that if $\w$ is a
holomorphic 1-form on a compact Riemann surface, then
$$
i\int_X \w\wedge \wbar \ge 0
$$
with equality if and only if $\w = 0$. This is equivalent to Riemann's second
bilinear relation.
\end{exercise}

\begin{exercise}
Suppose that $X$ is a Riemann surface and that $\w$ is a holomorphic 1-form
on $X$. Show that $\w$ is closed and therefore determines an element of
$$
H^1(X;\C) := \Hom_\Z(H_1(X),\C)
$$
by the formula
$$
\w : \gamma \mapsto \int_\gamma \w.
$$
Show that if $\w$ is an exact 1-form, then $\w = df$, where $f: X \to \C$ is
holomorphic. Deduce that if $X$ is compact, then the mapping
$$
H^0(X,\Omega^1_X) \to H^1(X;\C)
$$
is injective, and therefore that $H^0(X,\Omega^1_X)$ is finite dimensional.
\end{exercise}

\begin{exercise}
Suppose that $\phi : X \to \C$
is a smooth function. Show that if
$$
d\phi = \w_1 + \wbar_2
$$
where $\w_1$ and $\w_2$ are holomorphic 1-forms, then $\phi$ his harmonic.
Deduce that if $X$ is compact, then $\phi$ is constant and $\w_1=\w_2=0$. Use
this to show that the $\C$-linear mapping
\begin{equation}
\label{eqn:hodge}
H^0(X,\Omega^1_X) \oplus \overline{H^0(X,\Omega^1_X)} \to H^1(X;\C),
\end{equation}
where overlining denotes complex conjugation, that takes $(\w_1,\wbar_2)$ to
$$
\gamma \mapsto \int_\gamma (\w_1 + \wbar_2)
$$
is injective.
\end{exercise}

The {\em genus} of a compact Riemann surface\footnote{Our Riemann surfaces are
assumed to be connected.} $X$ is, by definition, the dimension of its space of
holomorphic 1-forms:
$$
g(X) := \dim H^0(X,\Omega^1_X).
$$
The topological surface that underlies $X$ is orientable, and is therefore 
determined up to diffeomorphism by its first Betti number
$$
b_1(X) := \rank H_1(X;\Z).
$$
A basic fact in the theory of Riemann surfaces, which can be proved using Hodge
theory, is that $b_1(X) = 2g(X)$. This equality is equivalent to the statement
that (\ref{eqn:hodge}) is an isomorphism and is a special case of the Hodge
theorem for compact K\"ahler manifolds.

\subsection{Local structure of holomorphic mappings}

Recall from complex analysis that if $w=f(z)$ is meromorphic and defined in a
neighbourhood of $z=0$, then we can write
$$
f(z) = z^k g(z)
$$
where $g$ is holomorphic near $z=0$ and $g(0) \neq 0$. The integer $k$ is called
the {\em order of $f$ at $z=0$}. We shall write $k = \ord_0 f$. 

If $f(z)$ is holomorphic and satisfies $f(0) = 0$, then the order $k$ of $f$ at
$z=0$ is positive. In this case, by basic complex analysis, there is a
holomorphic function $\phi(z)$ defined in a neighbourhood of the origin such
that $g(z) = \phi(z)^k$. Then $u = z\phi(z)$ is a local holomorphic coordinate
defined in a neighbourhood of the origin and, with respect to the new coordinate
$u$, $f$ is given by $w=u^k$.

From this it follows that every non-constant holomorphic mapping  between
Riemann surfaces is locally of the form $z \mapsto z^k$ for some positive
integer $k$. More precisely, given $x \in X$, there is a local holomorphic
coordinate $z$ about $x\in X$ and a local holomorphic coordinate $w$ about $y\in
Y$ such that $f$ is locally given by $w=z^k$ in terms of these coordinates.
We call $k$ the {\em local degree} of $f$ at $x$ and denote it by $\nu_x(f)$.

\begin{exercise}
Prove that every non-constant holomorphic mapping $f : X \to Y$ between Riemann
surfaces is open. (That is, the image of every open set is open.)
\end{exercise}

\begin{exercise}
Show that if $X$ is compact and $Y$ is connected, then every holomorphic mapping
$f : X \to Y$ is surjective. Deduce that every holomorphic function $f:X \to Y$
is constant when $Y$ is non-compact.
\end{exercise}

\begin{exercise}
\label{ex:order}
Show that if $f:X \to Y$ is a non-constant mapping between compact Riemann
surfaces, then the function
$$
y \mapsto \sum_{x\in f^{-1}(y)} \nu_x(f)
$$
is locally constant and therefore constant if $Y$ is connected. The common value
is called the {\em degree} of $f$. Show that if $\nu_x(f) = 1$ for all $x\in X$,
then $f$ is a covering map.
\end{exercise}

\begin{exercise}[Riemann-Hurwitz formula]
\label{ex:RH}
Suppose that $f : X \to Y$ is a non-constant mapping of degree $d$ between
compact Riemann surfaces. For each $x \in X$, define
$$
b = \sum_{x\in X} (\nu_x(f)-1).
$$
(This is well define as $\nu_x(f) = 1$ for all but a finite number of $x\in X$.)
Show that
$$
\chi(Y) = d\chi(X) - b.
$$
(Hint: Triangulate $Y$ so that each critical value is a vertex. Lift this
triangulation to $X$. Compute Euler characteristics.) Deduce that $b$ is even.
Show that if $Y=\P^1$, then the genus of $X$ is
$$
g(X) = b/2 + 1 - d.
$$
In particular, if $g=1$ and $d=2$, then $b=4$.
\end{exercise}

\begin{exercise}
Show that a 1-1 holomorphic mapping $f : X \to Y$ between compact Riemann
surfaces is a biholomorphism. (That is, $f$ has a holomorphic inverse $g: Y \to
X$.)
\end{exercise}

\subsection{Divisors and line bundles}

A divisor $D$ on a Riemann surface $X$ is a locally finite formal linear
combination
$$
\sum_{x\in X} n_x [x]
$$
of points of $X$. We say that $D$ is {\em effective} and write $D\ge 0$ when
each $n_x \ge 0$. When $X$ is compact, the sum is finite. Thus one can define
the {\em degree} $\deg D$ of a divisor $D$ on a compact Riemann surface to be
the sum of its coefficients:
$$
\deg D = \sum_{x\in X} n_x.
$$

To a holomorphic line bundle $L \to X$ with a meromorphic section $s$, we may
associate the divisor
$$
\div s := \sum_{x\in X} \ord_x s.
$$
The section $s$ is holomorphic precisely when $\div s \ge 0$. Every other
meromorphic section of $L$ is of the form $fs$, where $f$ is a meromorphic
function. The space of holomorphic sections $H^0(X,L)$ of $L$ thus equals
$$
L(D) := \{\text{meromorphic functions $f$ on $X$: } \div f + D \ge 0\}.
$$
The corresponding sheaf is denoted by $\O_X(D)$. Its space of sections
$\O_X(D)(U)$ over the open subset $U$ of $X$ is defined by
$$
\O_X(D)(U) = \{\text{meromorphic functions $f$ on $U$: } \div f + D|_U \ge 0\}
$$
where the restriction $D|_U$ of $D$ to $U$ is defined by
$$
D|_U := \sum_{x\in U} n_x [x].
$$
Evidently, $L(D) = H^0(X,\O_X(D))$.

Two divisors $D$ and $D'$ on a Riemann surface are {\em linearly equivalent} if
there is a non-zero meromorphic function $f$ on $X$ such that
$$
D' = D + \div f.
$$
A divisor of the form $\div f$ is said to be {\em principal}. Every principal
divisor on a compact Riemann surface has degree 0. Thus linearly equivalent
divisors have the same degree.

\begin{exercise}
Suppose that $f : X \to \P^1$ is a non-constant meromorphic function. To each $a
\in \P^1$, define
$$
D_a = \sum_{x\in f^{-1}(a)} \nu_x(f) [x].
$$
Show that if $a\neq \infty$, then
$$
D_a - D_\infty = \div (f-a)
$$
Deduce that any two fibers $D_a$ of $f$ are linearly equivalent. Note that
the degree of each $D_a$ equals the degree of $f : X \to \P^1$.
\end{exercise}

It is a fact that every holomorphic line bundle on a Riemann surface has a
non-zero meromorphic section.\footnote{If $X$ is not compact, then every
holomorphic line bundle has a holomorphic section.}

\begin{exercise}
Show that for every divisor $D$ on a Riemann surface $X$ there is a holomorphic
line bundle $\L \to X$ with a meromorphic section $s$ whose divisor is $D$. Show
that the map
$$
\O_X(D)(U) \to H^0(U,\L)
$$
that takes $f$ to $fs$ induces an isomorphism between $\O_X(D)$ and the sheaf of
holomorphic sections of $\L$. In particular, $L(D)$ is isomorphic to
$H^0(X,\L)$.
\end{exercise}

Suppose that $\L$ is a line bundle over $X$ and that $D$ is a divisor on $X$.
Define $\L(D) = \L\otimes \O_X(D)$. 

\begin{exercise}
Show that the group (under tensor product) of holomorphic line bundles on $X$ is
isomorphic to the group of divisors on $X$ modulo principal divisors.
\end{exercise}

\subsection{Riemann-Roch formula}
\label{sec:rr}

The canonical divisor class $K_X$ of a compact Riemann surface $X$ is the
divisor class associated to it holomorphic cotangent bundle. In concrete terms,
the canonical class is the divisor class of a non-zero meromorphic 1-form on
$X$.

For a divisor $D$ on $X$, define
$$
\ell(D) := \dim L(D) \in \N \cup \{\infty\}.
$$
\begin{exercise}
Note that $g(X) = \ell(K_X)$. Suppose that $X$ is a compact Riemann surface of
genus $g$ and that $D$ is a divisor on $X$. Show that
\begin{enumerate}

\item if $\deg D = 0$, then $\ell(0) = 0$ or $1$ and that $\ell(D)=1$ if and
only if $D$ is principal;

\item if $\deg D < 0$, then $\ell(D) = 0$;

\item if $P \in X$, then $\ell(D)\le \ell(D+P) \le 1+\ell(D)$;

\item if $D$ is effective, then $\ell(D) \le 1 + \deg D$;

\item $\ell(D)$ is finite for all $D$.

\end{enumerate}
\end{exercise}

\begin{theorem}[Riemann-Roch formula]
\label{thm:rr}
If $X$ is a compact Riemann surface of genus $g$ and $D$ is a divisor on $X$,
then
$$
\ell(D) - \ell(K_X - D) = \deg D + 1 - g.
$$
\end{theorem}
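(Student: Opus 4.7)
The plan is to organize the argument around the sheaf-cohomological Euler characteristic $\chi(\O_X(D)) := \dim H^0(X,\O_X(D)) - \dim H^1(X,\O_X(D))$ and then to identify the second term with $\ell(K_X - D)$ by invoking Serre duality. Since $L(D) = H^0(X,\O_X(D))$, the first term is already $\ell(D)$, and higher cohomology vanishes on a Riemann surface for dimensional reasons, so the content of the theorem splits cleanly into an additive calculation and a duality statement.

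First I would invoke Serre duality in the form
$$
H^1(X,\O_X(D)) \cong H^0(X,\Omega^1_X \otimes \O_X(-D))^\ast = L(K_X - D)^\ast,
$$
so that $\dim H^1(X,\O_X(D)) = \ell(K_X - D)$. With this identification the theorem reduces to the purely additive assertion
$$
\chi(\O_X(D)) = \deg D + 1 - g.
$$
To prove this by induction on divisors, I would use the skyscraper short exact sequence
$$
0 \to \O_X(D) \to \O_X(D+P) \to \C_P \to 0
$$
for each point $P\in X$, where $\C_P$ is the skyscraper sheaf with fiber $\C$ at $P$. The associated long exact sequence in cohomology, together with $\chi(\C_P) = 1$ and the fact that Euler characteristics are additive in short exact sequences, yields $\chi(\O_X(D+P)) = \chi(\O_X(D)) + 1$. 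Since both $\chi(\O_X(D))$ and $\deg D + 1 - g$ change by exactly $+1$ under $D \mapsto D+P$ and any divisor can be connected to $0$ by a finite sequence of such moves, it suffices to verify the formula at $D=0$.

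The base case is immediate: $\ell(0) = 1$ (only constants), and by definition $g = \dim H^0(X,\Omega^1_X) = \ell(K_X)$, so by Serre duality $\dim H^1(X,\O_X) = g$, and the asserted identity reads $1 - g = 0 + 1 - g$. The main obstacle is the appeal to Serre duality; I would either cite this from Forster \cite{forster}, or sketch it by first computing $H^1(X,\O_X) \cong \overline{H^0(X,\Omega^1_X)}$ from the Dolbeault resolution and the Hodge decomposition (cf.\ Exercise~\ref{ex:pos} and the isomorphism (\ref{eqn:hodge})), and then upgrading to the twisted version using the fact that $\Omega^1_X$ is a dualizing sheaf. The finite-dimensionality of $H^1(X,\O_X(D))$, which is needed to make the Euler characteristic well defined, is the one genuinely analytic input and is absorbed into the Hodge-theoretic argument already cited in Appendix~\ref{app:RS}.
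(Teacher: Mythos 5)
The paper does not actually prove Theorem~\ref{thm:rr}: it is stated in Appendix~\ref{app:RS} purely as background, with the reader referred to \cite{forster,griffiths,griffiths-harris}. So there is no in-paper argument to compare yours against; judged on its own, your proposal is the standard sheaf-cohomological proof and its logical skeleton is correct. Granting (a) finite-dimensionality of $H^0$ and $H^1$ of $\O_X(D)$ and (b) Serre duality $H^1(X,\O_X(D))\cong L(K_X-D)^\ast$, the skyscraper sequence $0\to\O_X(D)\to\O_X(D+P)\to\C_P\to 0$ and additivity of $\chi$ correctly reduce everything to $\chi(\O_X)=1-g$, which follows from $\ell(0)=1$, the paper's definition $g=\dim H^0(X,\Omega^1_X)=\ell(K_X)$, and one more application of duality. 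The one point to be honest about is where the weight sits: on a compact Riemann surface, inputs (a) and (b) carry essentially all of the analytic content, and are of comparable depth to Riemann--Roch itself; self-contained treatments such as Forster's must establish a finiteness theorem and a duality (residue) pairing by hard analysis before either statement can be deduced. Your Dolbeault/Hodge sketch handles only the untwisted case $H^1(X,\O_X)\cong\overline{H^0(X,\Omega^1_X)}$, and the phrase ``upgrading to the twisted version using the fact that $\Omega^1_X$ is a dualizing sheaf'' is circular as written, since that $\Omega^1_X$ is dualizing \emph{is} Serre duality. Provided you cite duality in its twisted form as an external input rather than claiming to derive it, the argument is complete and is exactly the one given in the references the paper points to.
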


\begin{exercise}
Show that $\deg K_X = 2g-2$.
\end{exercise}

\subsection{Moduli of genus 0 Riemann surfaces}

\begin{exercise}
Show that if $X$ is a compact Riemann surface of genus 0, then there is a degree
1 holomorphic mapping $f: X \to \P^1$. Deduce that $f$ is a biholomorphism.
\end{exercise}

\begin{exercise}
Show that the group of biholomorphisms of $\P^1$ is isomorphic to $\PSL_2(\C)$,
where $\SL_2(\C)$ acts on $\P^1$ via fractional linear transformations:
$$
\begin{pmatrix} a & b \cr c & d \end{pmatrix} : z \mapsto
\frac{az+b}{cz+d}.
$$
Show that $\PSL_2(\C)$ acts 3-transitively on $\P^1$. That is, given two sets of
distinct points $\{x_1,x_2,x_3\}$ and $\{y_1,y_2,y_3\}$, there exists $\phi \in
\PSL_2(\C)$ such that $\phi(x_j) = y_j$, $j=1,2,3$. Show that $\phi$ is unique.
\end{exercise}

\begin{exercise}
Suppose that $x_1,\dots, x_n \in \P^1$ are distinct. Define
$$
\Aut(\P^1,\{x_1,\dots,x_n\}) 
= \{\phi \in \Aut \P^1 : f(x_j) = x_j,\ j=1,\dots,n\}.
$$
Show that this group is finite (resp.\ trivial) if and only $n\ge 3$.
\end{exercise}

\subsection{The action of $\SL_2(\Z)$ on the upper half plane}

The group $\SL_2(\Z)$ acts on the upper half plane $\h$ by
$$
\begin{pmatrix}a & b\cr c & d\end{pmatrix} : z \mapsto \frac{az+b}{cz+d}.
$$
The boundary of the upper half plane is $\P^1(\R) = \R \cup \{\infty\}$. This is
a circle on the Riemann sphere which forms the boundary of $\h$. Let $\hbar$ be
the closure of $\h$ in the Riemann sphere $\P^1$; it is the union of $\h$ and
$\P^1(\R)$. Recall that every non-trivial element of $\PSL_2(\C)$ has at most
two fixed points in $\P^1$. Note that the fixed points of elements of
$\PSL_2(\R)$ are real or occur in complex conjugate pairs. Consequently, each
element of $\PSL_2(\R)$ has at most one fixed point in $\h$.

\begin{exercise}
Suppose that $A \in \SL_2(\Z)$ is not a scalar matrix. Show that $A$ has
exactly
\begin{enumerate}
\item one fixed point in $\h$ if and only if $|\tr A| < 2$;
\item one fixed point in $\P^1(\R)$ if and only if $|\tr A| = 2$;
\item two fixed points in $\P^1(\R)$ if and only if $|\tr A| > 2$.
\end{enumerate}
Show that $T\in \SL_2(\R)$ has finite order if and only if $A$ has a fixed
point in $\h$.
\end{exercise}

Suppose that $m\in \N$. The {\em level $m$ congruence subgroup} of $\SL_2(\Z)$
is defined by
$$
\SL_2(\Z)[m] := \big\{A \in \SL_2(\Z) : A \equiv \id \mod m \big\}.
$$

\begin{exercise}
\label{ex:level_tf}
Show that if $m>0$, then $\SL_2(\Z)[m]$ has finite index in $\SL_2(\Z)$ and that
$\SL_2(\Z)[m]$ is torsion free when $m\ge 3$. Show that the torsion subgroup of
$\SL_2(\Z)[2]$ is its center $C_2 = \{\pm I\}$. Use the fact that
$\SL_2(\Z)[m]\bs \h$ is a non-compact Riemann surface to prove that
$\SL_2(\Z)[m]$ is a free group for all $m\ge 3$.
\end{exercise}

\subsection{Quotients by discrete group actions}

The action of a discrete group $\G$ on a topological space $X$ is said to be
{\em properly discontinuous} if each $x \in X$ has a neighbourhood $U$ such that
if $\gamma \in \G$, then $\gamma U \cap U \neq \emptyset$ implies that $\gamma x
= x$. An action of $\G$ on $X$ is {\em free} or {\em fixed point free} if $x\in
X$ and $\gamma \in \G$, then $\gamma x = x$ implies that $\gamma$ is the
identity. If the action of $\G$ on $X$ is both properly discontinuous and free,
then $X \to \G\bs X$ is a covering projection. (Exercise: prove this.)

\begin{exercise}
Prove that the action of $\SL_2(\Z)$ on $\h$ is properly discontinuous and that
$\SL_2(\Z)[m]$ acts fixed point freely (and properly discontinuously) on $\h$
when $m\ge 3$.
\end{exercise}

\begin{exercise}
Suppose that $p: X \to Y$ is a Galois (i.e., normal or regular) covering map
with Galois group (i.e., group of deck transformations) $\G$. Suppose that $X$
is a Riemann surface and that $\G$ acts on $X$ as a group of biholomorphisms.
Show that $Y$ has a unique complex structure such that $p$ is holomorphic.
Deduce that if $X$ is a Riemann surface and $\G$ is a subgroup of $\Aut X$ that
acts properly discontinuously and fixed point freely on $X$, then $\G\bs X$ has
a unique Riemann surface structure such that the covering projection $X \to
\G\bs X$ is holomorphic.
\end{exercise}

Coverings of Riemann surfaces with punctures can be extended across the
punctures. This is a local problem.

\begin{exercise}
Show that all finite coverings of the punctured disk $\D^\ast$ are isomorphic to
$p_n : \D^\ast \to \D^\ast$ where $p_n(z) = z^n$. Deduce that all such coverings
$U \to \D^\ast$ can be completed to a proper holomorphic map $X \to \D$ where $X
$ is a Riemann surface containing $U$ as an open dense subset.
\end{exercise}

\begin{exercise}
\label{ex:completion}
Suppose that $Y$ is a compact Riemann surface and that $F$ is a finite subset of
$Y$. Set $Y'=Y-F$. Show that if $f: X'\to Y'$ is a finite, unramified covering,
there exists a compact Riemann surface $X$, a finite subset $F_X$ of $X$ and a
holomorphic mapping $\tilde{f} : X \to Y$ such that $X'= X-F_X$ and the
restriction of $\tilde{f}$ to $X'$ is $f$.
\end{exercise}

\begin{exercise}
Supposed that $X$ is a Riemann surface and that $P \in X$. Let
$$
\Aut (X,P) = \{\phi \in \Aut X : \phi(P) = P\}.
$$
Show that taking $\phi$ to its derivative at $P$ defines a homomorphism
$\rho:\Aut(X,P) \to \C^\ast$. Denote its kernel by $\Aut_0(X,P)$.
\begin{enumerate}

\item Show that $\Aut_0(X,P)$ is torsion free. (Hint: use power series.)

\item Deduce that if $\G$ is a finite subgroup of $\Aut(X,P)$ of order $d$, then
the restriction of $\rho$ to $\G$ is injective and that $\rho(\G)$ is the group
$\bmu_d$ of $d$th roots of unity.

\item With $\G$ as above, show that there is a holomorphic coordinate $z$ in
$X$, centered at $P$, such that (for $Q$ in a neighbourhood of $P$) the action
of $\G$ is given by $\gamma : z \mapsto \rho(\gamma)z$. More precisely,
$$
z(\gamma(Q)) = \rho(\gamma)z(Q).
$$
Hint: Let $w$ be any holomorphic coordinate centered at $P$ and consider how
$\G$ acts on a $d$th root of $\prod_{\gamma\in \G} \gamma^\ast w$.

\item Show that $\G\bs X$ has a natural Riemann surface structure such that the
projection $X \to \G\bs X$ is holomorphic. (Hint: Localize about each fixed
point of $\G$.)

\end{enumerate}
\end{exercise}

Recall that the action of a group $\G$ on a set $X$ is {\em virtually free} if
$\G$ has a finite index subgroup $\G'$ such that the restriction of the action
to $\G'$  is free.

\begin{exercise}
Show that if the discrete group $\G$ acts properly discontinuously and virtually
freely on a Riemann surface $X$, then $\G\bs X$ has a unique Riemann surface
structure such that the projection $X \to \G\bs X$ is holomorphic. 
\end{exercise}

\begin{exercise}
\label{ex:quotient}
Show that the action of $\SL_2(\Z)$ on $\h$ is virtually free. Deduce that
$\SL_2(\Z)\bs \h$ has a unique Riemann surface structure such that the
projection $\h \to \SL_2(\Z)\bs \h$ is holomorphic.
\end{exercise}

\section{A Very Brief Introduction to Stacks}
\label{app:stacks}

This appendix is a very brief and informal introduction to stacks. The book
\cite{laumon-mb} by Laumon and Moret-Bailly is a standard reference. There are
also the notes \cite{stacks} by Fulton et al. Recall that a {\em
groupoid} is a category in which every morphism is an isomorphism. A starting
observation, explained in Remark~\ref{rem:groupoid}, is that an orbifold may be
viewed as a groupoid in the category of (say) topological spaces.

Suppose that $\cC$ is a category in which fibered products always exist, such as
the category of complex manifolds, the category of varieties over a field, or
the category of schemes over a fixed base. A {\em stack} $\X$ in $\cC$ is a
groupoid in $\cC$. A groupoid in $\cC$ consists of two objects $U$ and $R$ of
$\cC$ together with five morphisms $s,t,e,m,i$ called the source, target,
identity, multiplication, and inverse:
$$
\xymatrix{R \ar@<.6ex>[r]^s\ar@<-.6ex>[r]_t & U},\quad
\xymatrix{U \ar[r]^e & R},\quad
\xymatrix{R_t \times_s R \ar[r]^(.65)m & R},\quad
\xymatrix{R \ar[r]^i & R}.
$$
These satisfy natural axioms which can be worked out from the example in
Remark~\ref{rem:groupoid}, where $U = X$, $R = \G\times X$, and
\begin{equation}
\label{eqn:quotientstack1}
s(\gamma,x) = x,\ t(\gamma,x) = \gamma x,\ e(x) = (\id,x),\
i(\gamma,x) = (\gamma^{-1},\gamma x)
\end{equation}
and
\begin{equation}
\label{eqn:quotientstack2}
m\big((\mu,\gamma x),(\gamma,x)\big) = (\mu\gamma,x).
\end{equation}
The structure $(U,R,s,t,e,m,i)$ is called an {\em atlas} on $\X$. It is the
analogue of an open covering of a topological space. One can define equivalence
classes of atlases and consider a stack to be an equivalence class of atlases,
just as one can consider a manifold to be an equivalence class of atlases.
Roughly speaking, an equivalence of atlases is induced by an equivalence of
categories that induces the identity on isomorphism classes and which has ``good
descent properties". Morphisms of stacks in $\cC$ are induced by functors in
$\cC$ from one atlas to another.

\begin{exercise}
Show that if $m\ge 3$, then the atlases of $\SL_2(\Z)\bbs \h$ and
$\SL_2(\Z/m\Z)\bbs \M_{1,1}[m]$ are equivalent, where $\M_{1,1}[m]$ is the
Riemann surface defined in Section~\ref{sec:level}.
\end{exercise}

Basic orbifolds are stacks in the category of topological spaces (or smooth
manifolds, complex manifolds, etc.) In particular, we may regard $\M_{1,1}$ as a
stack in the category of Riemann surfaces.

The rules (\ref{eqn:quotientstack1}) and (\ref{eqn:quotientstack2}) above can be
used to define the quotient $\G\bbs X$ of an object $X$ of a category $\cC$ by a
group object $\G$ of $\cC$. A typical example is taking the quotient of a
variety (or scheme) $X$ by the action of an algebraic group $\G$.

\subsection{The stack $\Mbar_{1,1}$}

Crudely speaking, $\Mbar_{1,1}$ is the stack in the category of Riemann surfaces
that is obtained by attaching the disk $\D$ to $\M_{1,1}$ along the morphism
$\D^\ast \to \M_{1,1}$ constructed in Exercise~\ref{ex:disk}:
$$
\Mbar_{1,1} = \M_{1,1}\cup_{\D^\ast} \D.
$$
The coordinate in the disk will be denoted by $q$. It is related to the
coordinate $\tau$ of $\h$ by $q = \exp(2\pi i \tau)$.

Set $R = e^{-2\pi}$ and $\h_a = \{\tau:\Im \tau > a\}$. Denote the open $q$-disk
of radius $R$ by $\D_R$. The mapping $q=\exp(2\pi i \tau)$ defines a covering
$\h_1 \to \D^\ast_R$.

The analytic stack $\Mbar_{1,1}$ is defined by the atlas where
$$
U = \h \disjt \D_R
$$
and
$$
R = \Iso(\h,\h) \disjt \Iso(\D_R,\h) \disjt \Iso(\h,\D_R)
\disjt \Iso(\D_R,\D_R)
$$
where these and the source and target maps are defined by
\begin{xalignat*}{2}
&\Iso(\h,\h)= \SL_2(\Z)\times \h, &(s,t) : (\gamma,\tau) &
\mapsto (\tau,\gamma\tau)\cr
&\Iso(\D,\D)= C_2 \times \D_R, &(s,t) : q \mapsto q\cr
&\Iso(\h,\D_R)= C_2 \times \Z \times \h_1,
&(s,t) : (n,\tau)&\mapsto (\tau,q(\tau))  \cr
&\Iso(\D_R,\h)= C_2 \times \Z \times \h_1,
&(s,t) : (n,\tau)&\mapsto  (q(\tau),\tau + n).
\end{xalignat*}
The identity maps
$$
e : \D_R \to \Iso(\D_R,\D_R) = C_2\times \D_R \text{ and }
e : \h \to \Iso(\h,\h) = \SL_2(\Z)\times \h
$$
are $q\mapsto (\id,q)$ and $\tau \mapsto (\id,\tau)$, respectively.

\begin{exercise}
Define the composition mappings
$$
m : \Iso(Y,Z) \times \Iso(X,Y)  \to \Iso(X,Z)
$$
where $X,Y,Z \in \{\h,\D\}$.
\end{exercise}

Two other constructions of $\Mbar_{1,1}$ are sketched in these notes. The
construction given in Section~\ref{sec:level} is as the stack quotient of the
compact Riemann surface (algebraic curve) $\Mbar_{1,1}[m]$, where $m\ge 3$. The
construction given in Section~\ref{sec:mbar_Q} is as a quotient $\C^\ast\bbs
(\C^2-\{0\})$. Each construction has advantages and disadvantages: the
construction above makes clear the connection with modular forms, but is
transcendental; the other two constructions are as quotients of an algebraic
variety by an algebraic group and lie within algebraic geometry; the third works
over any field of characteristic not equal to 2 or 3.

\begin{exercise}
Show that if $m\ge 3$, then $\Mbar_{1,1}$ is isomorphic to the stack
$\SL_2(\Z/m\Z)\bbs \Mbar_{1,1}[m]$.
\end{exercise}

\begin{exercise}
Construct stack morphisms $\M_{1,1} \to \Mbar_{1,1}$ and
$\D \to \Mbar_{1,1}$ such that the diagram
$$
\xymatrix{
\D^\ast \ar[r]\ar[d] & \D \ar[d] \cr
\M_{1,1} \ar[r] & \Mbar_{1,1}
}
$$
where the left hand vertical mapping is the one constructed in
Exercise~\ref{ex:disk}.
\end{exercise}

\subsection{Bundles over stacks}

A vector bundle $\cV$ over a stack $\X$ in $\cC$ with atlas
$$
\xymatrix{R \ar@<.6ex>[r]^s\ar@<-.6ex>[r]_t & U}
$$
consists of a vector bundle $V$ over $U$ in $\cC$ together with isomorphisms
(the ``transition functions'')
\begin{equation}
\label{eqn:trans}
s^\ast V \to t^\ast V
\end{equation}
of vector bundles over $R$ whose pullback along the identity section $e : U \to
R$ is the identity. This can be thought of as a family of linear isomorphisms
$f_\ast : V_{s(f)} \to V_{t(f)}$ between the fibers of $V$ over $s(f)$ and
$t(f)$, indexed by the $f \in R$. It is the identity when $f$ is. The map
(\ref{eqn:trans}) is also required to be compatible with multiplication in the
sense that if $x,y,z \in U$ and $f,g\in R$ such that 
$$
\xymatrix{x \ar[r]^f & y \ar[r]^g & z}
$$
are morphisms, then the diagram
$$
\xymatrix{
V_x \ar[r]^{f_\ast}\ar[dr]_{m(g,f)_\ast} & V_y \ar[d]^{g_\ast} \cr & V_z
}
$$
commutes.

\begin{exercise}
Show that the orbifold line bundle $\Lbar_k \to \Mbar_{1,1}$ is a line bundle
when $\Mbar_{1,1}$ is viewed as a stack.
\end{exercise}

Bundles with other structure groups can be defined similarly.

\end{document}